\newtheorem{theorem}{Theorem}[section]
\newtheorem{lemma}[theorem]{Lemma}
\newtheorem{proposition}[theorem]{Proposition}
\newtheorem{definition}{Definition}
\renewenvironment{proof}[1]{\noindent {\it Proof~:} #1}
{\ \rule{1mm}{2mm}\medskip}
\renewcommand\qed{\rule{1mm}{2mm}\medskip}
\newcommand{\remove}[1]{}
\renewcommand{\tilde}{\widetilde}
\newcommand\nd{\noindent}
\newlength{\dhatheight}
\def\supp{\qopname\relax{no}{supp}}
\def\rank{\qopname\relax{no}{rk}}
\def\card{\qopname\relax{no}{card}}
\newcommand\nc\newcommand
\nc\bfa{{\boldsymbol a}}\nc\bfA{{\bf A}}\nc\cA{{\mathcal A}}
\nc\bfb{{\boldsymbol b}}\nc\bfB{{\bf B}}\nc\cB{{\mathcal B}}
\nc\bfc{{\boldsymbol c}}\nc\bfC{{\bf C}}\nc\cC{{\mathcal C}}
\nc\bfd{{\boldsymbol d}}\nc\bfD{{\bf D}}\nc\cD{{\mathcal D}}
\nc\bfe{{\boldsymbol e}}\nc\bfE{{\bf E}}\nc\cE{{\mathcal E}}
\nc\bff{{\boldsymbol f}}\nc\bfF{{\bf F}}\nc\cF{{\mathcal F}}
\nc\bfg{{\boldsymbol g}}\nc\bfG{{\bf G}}\nc\cG{{\mathcal G}}
\nc\bfh{{\boldsymbol h}}\nc\bfH{{\bf H}}\nc\cH{{\mathcal H}}
\nc\bfi{{\boldsymbol i}}\nc\bfI{{\bf I}}\nc\cI{{\mathcal I}}
\nc\bfj{{\boldsymbol j}}\nc\bfJ{{\bf J}}\nc\cJ{{\mathcal J}}
\nc\bfk{{\boldsymbol k}}\nc\bfK{{\bf K}}\nc\cK{{\mathcal K}}
\nc\bfl{{\boldsymbol l}}\nc\bfL{{\bf L}}\nc\cL{{\mathcal L}}
\nc\bfm{{\boldsymbol m}}\nc\bfM{{\bf M}}\nc\cM{{\mathcal M}}
\nc\bfn{{\boldsymbol n}}\nc\bfN{{\bf N}}\nc\cN{{\mathcal N}}
\nc\bfo{{\boldsymbol o}}\nc\bfO{{\bf O}}\nc\cO{{\mathcal O}}
\nc\bfp{{\boldsymbol p}}\nc\bfP{{\bf P}}\nc\cP{{\mathcal P}}
\nc\bfq{{\boldsymbol q}}\nc\bfQ{{\bf Q}}\nc\cQ{{\mathcal Q}}
\nc\bfr{{\boldsymbol r}}\nc\bfR{{\bf R}}\nc\cR{{\mathcal R}}
\nc\bfs{{\boldsymbol s}}\nc\bfS{{\bf S}}\nc\cS{{\mathcal S}}
\nc\bft{{\boldsymbol t}}\nc\bfT{{\bf T}}\nc\cT{{\mathcal T}}
\nc\bfu{{\boldsymbol u}}\nc\bfU{{\bf U}}\nc\cU{{\mathcal U}}
\nc\bfv{{\boldsymbol v}}\nc\bfV{{\bf V}}\nc\cV{{\mathcal V}}
\nc\bfw{{\boldsymbol w}}\nc\bfW{{\bf W}}\nc\cW{{\mathcal W}}
\nc\bfx{{\boldsymbol x}}\nc\bfX{{\bf X}}\nc\cX{{\mathcal X}}
\nc\bfy{{\boldsymbol y}}\nc\bfY{{\bf Y}}\nc\cY{{\mathcal Y}}
\nc\bfz{{\boldsymbol z}}\nc\bfZ{{\bf Z}}\nc\cZ{{\mathcal Z}}
\nc\od{{\bar d}}\nc\ow{{\bar w}}\nc\odelta{{\bar\delta}}
\nc\ox{{\bar x}}\nc\oy{{\bar y}}\nc\ou{{\bar u}}
\nc\oh{{\bar h}}
\newcommand\reals{{\mathbb R}}
\newcommand\complexes{{\mathbb C}}
\newcommand\ff{{\mathbb F}}
\newcommand\integers{{\mathbb Z}}
\newcommand\naturals{{\mathbb N}}
\renewcommand\epsilon{\varepsilon}
\newcommand{\I}{\Upsilon}
\newcommand{\R}{{\mathfrak R}}
\nc\ellone{{\ell_1}}
\nc\elltwo{{\ell_2}}
\nc\ellinf{{{\ell_\infty}}}
\nc\ip[2]{\langle #1,#2\rangle}
\newcommand{\beeq}{\begin{eqnarray*}}
\newcommand{\eneq}{\end{eqnarray*}}
\newcommand{\half}{{\nicefrac12}}
\numberwithin{equation}{section}
\newcommand{\bfit}{\bfseries\itshape}
\DeclareSymbolFont{bbold}{U}{bbold}{m}{n}
\DeclareSymbolFontAlphabet{\mathbbold}{bbold}
\newglossaryentry{I}{  name={\ensuremath{\I}},  description={index set of classes},  sort=I}
\newglossaryentry{cR}{name={\ensuremath{\cR=\{R_i,i\in \I\}}}, description={partition of $X\times X$}, sort=R}
\newglossaryentry{cX} {name={\ensuremath{\cX(X,\mu,\cR)}}, description={association scheme}, sort=X}
\newglossaryentry{mui} {name={\ensuremath{\mu_i}}, description={valency of the scheme}, sort=m}
\newglossaryentry{Ri}  {name={\ensuremath{R_i, i\in\I}}, description={classes of the scheme}, sort=R}
\newglossaryentry{frakA} {name={\ensuremath{{\mathfrak A}(\cX)}}, description={adjacency algebra of $\cX$}, sort=A}
\newglossaryentry{J} {name={\ensuremath{J}}, description={operator with kernel $j(x,y)=1$ for all $x,y\in X$}, sort=J}
\newglossaryentry{chii} {name={\ensuremath{\chi_i(x,y)}}, description={indicator function of $R_i$}, sort=x}
\newglossaryentry{Ai}  {name={\ensuremath{A_i}}, description={adjacency operator, Eq.~\eqref{eq:Ai}}, sort=A}
\newglossaryentry{pij} {name={\ensuremath{p_{i}(j)}}, description={eigenvalues of adjacency operators}, sort=p2}
\newglossaryentry{mj} {name={\ensuremath{m_j}}, description={multiplicities of $\cX$}, sort=m}
\newglossaryentry{Fsim} {name={\ensuremath{\cF^\sim}}, description={Fourier transform $L_2(X,\mu)\to L_2(\hat X,\hat \mu)$},sort=F}
\newglossaryentry{Fhat} {name={\ensuremath{\cF^\natural}}, description={Fourier transform $L_2(\hat X,\hat \mu) \to L_2(X,\mu)$},
  sort=F1}
\newglossaryentry{N} {name={\ensuremath{\cN}}, description={partition of the group $X$}, sort=N}
\newglossaryentry{Ni} {name={\ensuremath{N_i}}, description={block of the partition}, sort=N1}
\newglossaryentry{I0} {name={\ensuremath{\I_0}}, description={$\{i\in\I: \mu(N_i)>0\}$}, sort=I1}
\newglossaryentry{P} {name={\ensuremath{P}}, description={first eigenvalue matrix},sort=P}
\newglossaryentry{Q} {name={\ensuremath{Q}}, description={second eigenvalue matrix},sort=Q}
\newglossaryentry{Ek} {name={\ensuremath{E_k}}, description={orthogonal projector in $L_2(X,\mu)$},sort=E}
\newglossaryentry{pijk} {name={\ensuremath{p_{ij}^l}}, description={intersection number of $\cX$}, sort=p}
\newglossaryentry{Xj} {name={\ensuremath {X_j}}, description={subgroup of $X$}, sort=X}
\newglossaryentry{nu} {name={\ensuremath{\nu(x)}}, description={discrete valuation}, sort=N10}
\newglossaryentry{rho} {name={\ensuremath{\rho(x)}}, description={metric on $X$}, sort=r}
\newglossaryentry{omegaj} {name={\ensuremath{\omega(j)}}, description={$|X/X_j|$}, sort=o}
\newglossaryentry{ni} {name={\ensuremath{n_i}}, description={$|X_{i-1}/X_i|$,\nopostdesc},sort=n,}
\newglossaryentry{Xjbot}{name={\ensuremath{X_j^\bot}}, description={annihilator subgroup of $X_j$}, sort=X}
\newglossaryentry{Br}{name={\ensuremath{B(r)}},description={ball of radius r},sort=B}
\newglossaryentry{barr}{name={\ensuremath{\bar r}},description={maximum radius in $X$}, sort=r}
\newglossaryentry{tau-}{name={\ensuremath{\tau_-(s)}},description={$\max\{r:r<s\}$}, sort=t}
\newglossaryentry{tau+}{name={\ensuremath{\tau_+(s)}},description={$\min\{r:r>s\}$}, sort=t}
\newglossaryentry{S}{name={\ensuremath{S(r)}},description={sphere of radius $r$ in $X$},sort=S}
\newglossaryentry{pr1r2r3}{name={\ensuremath{p_{r_1,r_2}^{r_3}}},description={intersection numbers of metric schemes},sort=p}
\newglossaryentry{frakR}{name={\ensuremath{{\mathfrak R}}}, description={set of values of the radius in $X$},sort=R}
\newglossaryentry{omegaij}{name={\ensuremath{\omega_{ij}(r)}}, description={},sort=o}
\newglossaryentry{alpha0}{name={\ensuremath{\alpha_0(x)}},description={indicator function of $B(\bar r)$},sort=a}
\newglossaryentry{alphali}{name={\ensuremath{\alpha_{l,i}(x)}}, description={indicator functions of the balls},sort=a}
\newglossaryentry{cA}{name={\ensuremath{\cA}}, description={adjacency algebra of $\cX$},sort=A}
\newglossaryentry{cAm}{name={\ensuremath{\cA_m}}, description={adjacency algebra of $\cX$},sort=A}
\newglossaryentry{cXm}{name={\ensuremath{\cX_m}},description={finite subscheme of $\cX$},sort=X}
\newglossaryentry{cAsph}{name={\ensuremath{\cA^{\text{\rm (sph)}}}},description={adjacency algebra of metric scheme},sort=A}
\newglossaryentry{cErf}{name={\ensuremath{\cE_r f}},description={average value of $f$ on the ball},sort=E}
\newglossaryentry{Deltar}{name={\ensuremath{\Delta_r}},description={increment of $\cE_r$}, sort=D}
\newglossaryentry{psi}{name={\ensuremath{\psi_{r,j,z}(x)}},description={wavelets on $X$},sort=p3}
\begin{document}
\title[Association schemes on general measure spaces]
{Association schemes on general measure spaces and zero-dimensional Abelian groups}
\thanks{{\em Date}\/: October 4, 2013, revised May 1, 2014.\/ }%

\author[A. Barg]{Alexander Barg$^\ast$}\thanks{$^\ast$
Institute for Systems
Research, University of Maryland, College Park, MD 20742,
and Institute for Information Transmission Problems,
Russian Academy of Sciences, Moscow, Russia. Email: abarg@umd.edu. Research
supported in part by NSA grant H98230-12-1-0260 and NSF grants DMS1101697,
CCF1217245, and CCF1217894.}
\author[M. Skriganov]{Maxim Skriganov$^\dag$} \thanks{$^\dag$
St.~Petersburg Department of Steklov Institute of Mathematics, Russian Academy of Sciences, nab. Fontanki 27, St.~Petersburg, 191023, Russia. Email: maksim88138813@mail.ru.}

\begin{abstract} Association schemes form one of the main objects of algebraic
combinatorics, classically defined on finite sets. At the same time, direct extensions of this concept to infinite sets encounter
some problems even in the case of countable sets, for instance, countable discrete Abelian groups. 
In an attempt to resolve these difficulties, we define association schemes on arbitrary, possibly uncountable sets
with a measure. We study operator realizations of the adjacency algebras of schemes and derive simple properties of 
these algebras. However, constructing a complete theory in the general case faces a set of obstacles related to the properties
of the adjacency algebras and associated projection operators.
To develop a theory of association schemes, we focus on schemes on topological Abelian groups where we can employ duality theory
and the machinery of harmonic analysis. Using the language of spectrally dual partitions, we prove that such groups support
the construction of general Abelian (translation) schemes and establish properties of their spectral parameters (eigenvalues).

Addressing the existence question of spectrally dual partitions, we show that they arise naturally on topological zero-dimensional Abelian groups, for instance, Cantor-type groups or the groups of $p$-adic numbers. This enables us to construct large
classes of examples of dual pairs of association schemes on zero-dimensional groups with respect to their Haar measure, and to compute their
eigenvalues and intersection numbers (structural constants). 
We also derive properties of infinite metric schemes, connecting them with the properties of the non-Archimedean
metric on the group.

Next we focus on the connection between schemes on zero-dimensional groups and harmonic analysis. We show that the eigenvalues
have a natural interpretation in terms of Littlewood-Paley wavelet bases, and in the (equivalent) language of martingale theory.
For a class of nonmetric schemes constructed in the paper, the eigenvalues coincide with values of orthogonal function systems
on zero-dimensional groups. We observe that these functions, which we call Haar-like bases, have the properties of wavelet
bases on the group, including in some special cases the self-similarity property. This establishes a seemingly new
link between algebraic combinatorics and (non-Archimedean) harmonic analysis.

We conclude the paper by studying some analogs of problems of classical coding theory related to the theory of association schemes.
\end{abstract}
\maketitle
{\small{\tableofcontents}}

\section{Introduction}
\subsection{Motivation of our research} Association schemes form a fundamental object in algebraic combinatorics. 
They were defined in the works of Bose
and his collaborators \cite{Bose52,bos59} and became firmly established after the groundbreaking work of Delsarte \cite{del73a}. Roughly speaking, an association scheme
$\cX$ is a partition of the Cartesian square $X\times X$ of a finite set $X$ into subsets, or classes, whose incidence matrices generate a complex commutative algebra, called the adjacency algebra of the scheme $\cX.$ Properties of the adjacency algebra provide numerous insights into the structure of combinatorial objects related to the set $X$ such as distance regular graphs and error correcting codes, and find applications in other areas of discrete mathematics such as distance geometry, spin models, experimental design, to name a few.
The theory of association schemes is presented from several different perspectives in the books 
by Bannai and Ito \cite{ban84}, Brouwer et al. \cite{bro89}, and Godsil \cite{god93}. A recent survey of the 
theory of commutative association schemes was given by Martin and Tanaka \cite{mar09}. Applications of association schemes in 
coding theory are summarized in the survey of Delsarte and Levenshtein \cite{del98}. The approach to association schemes on 
finite groups via permutation groups and Schur rings is discussed in detail by Evdokimov and Ponomarenko \cite{EvdPon09}. 

Classically, association schemes are defined on finite sets. Is it possible to define them on infinite sets? In the case
of countable discrete sets, such a generalization was developed by Zieschang \cite{zie05}. However, this extension
does not include an important part of the classical theory, namely, duality of association schemes.  
Indeed, our results imply existence of translation-invariant association schemes on Abelian groups that are countable, discrete and periodic. 
Such schemes can of course be described in usual terms \cite{zie05}. However their duals are defined on uncountable, compact and zero-
dimensional groups, so the classical definition of association schemes does not apply: in particular, the intersection numbers of the dual
scheme are not well defined. 
The above discussion shows that the notion of association schemes on infinite sets cannot be restricted
to the case of countable discrete sets. Such a theory would not include the important concept of the dual scheme
and would therefore be incomplete.

Association schemes on arbitrary measure spaces are also important in applications, notably, in harmonic analysis and approximation
theory. In particular, we became interested in these problems while discussing combinatorial aspects of 
papers \cite{skr06,skr11} devoted to the theory of uniformly distributed point sets. 
The connection of association schemes to harmonic analysis is well known in the finite case: in particular, 
there are classes of the so-called $P$- and $Q$-polynomial association schemes, i.e., schemes whose eigenvalues
coincide with the values of classical orthogonal polynomials of a discrete
variable \cite{ban84}. The most well-known example is given by the Hamming scheme for which the
polynomials belong to the family of Krawtchouk polynomials; see \cite{del73a}. Generalizing this link to the infinite
case is another motivation of this work. 
{Of course, these generalizations rely on general methods of harmonic analysis: for instance,
the Littlewood-Paley theory, martingale theory, and the theory of Haar-like wavelets arise naturally while studying
association schemes on measure spaces.}

\subsection{Overview of the paper} In an attempt to give a general definition of the association scheme, 
we start with a measure space (a set equipped with some fixed $\sigma$-additive
measure) and define intersection numbers as measures of the corresponding subsets. 
It is possible to deduce several properties of such association schemes related to their adjacency 
algebras. These algebras are generated by bounded commuting operators whose kernels
are given by the indicator functions of the blocks of $\cX.$ Common eigenspaces of these operators play an important role
in the study of the parameters and properties of the scheme. The set of projectors on the eigenspaces in the finite case
forms another basis of the adjacency algebra, and provides a starting point for the study of duality theory of schemes.
At the same time, in the general case, proving that the projectors are contained in the algebra and computing the
associated spectral parameters of the scheme becomes a difficult problem.

Specializing the class of spaces considered, we focus on the case of translation association 
schemes defined on topological Abelian groups. A scheme defined on a group $X$ is said to have the translation property 
if the partition of $X\times X$ into classes is invariant under the group operation. Translation schemes on Abelian groups 
come in dual pairs that follow the basic duality theory for groups themselves. Formally, the definition of the dual scheme 
in the general case is analogous to the definition for the finite Abelian group, see \cite{del73a,bro89}. At the same time,  unlike the finite case, for infinite topological Abelian groups the structure of the group and the structure of its group of characters can be totally different.
This presents another obstacle in the analysis of the adjacency algebras and their spectral parameters. To overcome it,
we define association schemes in terms of ``spectrally dual partitions" of $X$ and the dual group $\hat X.$ 
Roughly speaking, a spectrally dual partition is a partition of $X$ and $\hat X$ into blocks such that the Fourier 
transform is an isomorphism between the spaces of functions on $X$ and $\hat X$ that are constant on the blocks. 
In the finite case such partitions constitute an equivalent language in the description of translation schemes on Abelian groups \cite{zin96},
\cite{zin09}. We show that in the general case, 
spectrally dual partitions form a sufficient condition for the projectors to be contained in the adjacency algebra.
Using the language of partitions, we develop a theory of translation association schemes in the general case 
of infinite, possibly uncountable topological Abelian groups. Of course, in the finite or countably 
infinite case with the counting measure, our definition coincides with the original definition of the scheme.

While the above discussion motivates the general definitions given in the paper, 
the main question to be answered before moving on is whether this generalization is of interest, i.e.,
whether there are informative examples of generalized association schemes on uncountable sets. 
We prove that spectrally dual partitions do not exist if either $X$ or $\hat X$ is connected.
This observation suggests that one should study totally disconnected (zero-dimensional) Abelian groups. 
Indeed, we construct a large class of examples of translation 
association schemes on topological zero-dimensional Abelian groups with respect to their Haar measure.
These schemes occur in dual pairs, including in some cases self-dual schemes.

In classical theory, many well-known examples of translation schemes, starting with the Hamming scheme, are metric, in the sense that the partitions of the group $X$ are defined by the distance to the identity element. In a similar way, we construct
classes of metric schemes defined by the distance on zero-dimensional groups. The metric on such groups is non-Archimedean,
which gives rise to some interesting general properties of the metric schemes considered in the paper.
We also construct classes of nonmetric translation schemes on zero-dimensional groups and compute their parameters.

One of the important results of classical theory states that a finite association scheme is metric if and only if it is
$p$-polynomial, i.e., if its eigenvalues coincide with values of orthogonal polynomials of a discrete variable; see \cite[Sect.~3.1]{ban84},
\cite[Sect.~2.7]{bro89}.
This result establishes an important link between algebraic combinatorics and harmonic analysis and is the source of a large number of fundamental combinatorial theorems. {In the finite case the metric on $X$ is a graphical distance, which implies that the triangle inequality can be satisfied with equality. This condition can be taken as an equivalent definition of the metric scheme. At the same time, in the non-Arcimedean case this condition is not satisfied
because of the ultrametric property of the distance, and so the schemes are not polynomial (an easy way to see
this is to realize that the coefficients in the three-term relation for the adjacency matrices turn into zeros).} Therefore we are faced with the question of describing the functions whose values coincide with eigenvalues of metric schemes with non-Archimedean distances. We note that even in the finite case this question is rather nontrivial; see, e.g., \cite{mar99,bar09b} for more about this. At the same time, the characterization of metric schemes is of utmost importance for our study because zero-dimensional groups are metrizable precisely by non-Archimedean metrics. 

In order to resolve this question, we note that the chain of nested subgroups of $X$ defines a sequence of increasingly 
refined partitions of the group.
Projection operators on the spaces of functions that are constant on the blocks of a given partition play an important
role in our analysis: namely, we show that eigenvalues of the scheme on $X$ coincide with the values of the kernels of these operators.
This enables an interpretation of the eigenvalues in terms of the Littlewood-Paley theory \cite{edw77}, connecting the
eigenvalues of metric schemes and orthogonal systems known as Littlewood-Paley wavelets \cite[p.115]{Daub92}. We also discuss briefly an interpretation of these results in terms of martingale theory.

Another observation in the context of harmonic analysis on zero-dimensional topological groups relates to
the uncertainty principle. We note that the Fourier transforms of the indicator functions of compact subgroups of $X$ 
are supported on the annihilator subgroups which are compact as well. Developing this observation, we note that
there exist functions on $X$ that ``optimize'' the uncertainty principle, in stark contrast to the Archimedean case.

Perhaps the most interesting result in this part concerns eigenvalues of nonmetric schemes on zero-dimensional groups.
{We observe that the eigenvalues coincide with the values of orthogonal functions on zero-dimensional groups defined in terms of {\em multiresolution analyses}, a basic concept in wavelet theory \cite{Wojta99,NPS11}.}
 We introduce a new class of orthogonal functions on zero-dimensional
groups, calling them Haar-like wavelets. We also isolate a sufficient condition for these wavelets to have self-similarity 
property. While there is a large body of literature on self-similar wavelets on zero-dimensional groups, e.g., \cite{Lang96,Benedetto04,Lukom10}, to the best of our knowledge their connection to algebraic combinatorics so far has not been observed.
{
Concluding this discussion, we would like to stress that the choice of zero-dimensional groups and the associated 
wavelet-like functions is naturally suggested by the logic of our study and is by no means arbitrary. This construction arises naturally as the main example of the abstract theory developed in the paper.}

\vspace*{.05in}{\em Outline of the paper:} We begin with the definition of an association scheme on a general measure space. In Section \ref{sect:BM} we derive simple
properties of the adjacency (Bose-Mesner) algebra of the scheme. Then in Section \ref{sect:Abelian} we consider translation schemes
on topological Abelian groups. Assuming existence of spectrally dual partitions of the
group $X$ and its dual group $\hat X,$ we prove the main results
of duality theory for schemes, including the fact that orthogonal projectors on common eigenspaces are contained
in the adjacency algebra, and perform spectral analysis of the adjacency operators.
The main results of this part of the paper are contained in Section \ref{sect:vil} where we show that
spectrally dual partitions and dual pairs of translation schemes exist for the case of compact and 
locally compact Abelian zero-dimensional groups with the second countability axiom such as the additive
group of $p$-adic integers or groups of the Cantor type (countable direct products of cyclic groups). 
{In Sect.~\ref{sect:metric} we study metric schemes from
the geometric viewpoint and prove that they are nonpolynomial.}
In Sect.~\ref{sect:nonmetric} we construct classes of nonmetric schemes.
The question of characterizing the adjacency algebras of the constructed schemes turns out to be nontrivial. It is addressed
in Section \ref{sect:aa} where we construct these algebras as algebras of functions closed with respect to multiplication and convolution
(Schur rings), addressing both the metric and nonmetric schemes.
Section \ref{sect:eigenvalues} offers several different viewpoints of the eigenvalues of the schemes constructed in the paper 
in the framework of harmonic analysis.
In Section \ref{sect:coding} we consider analogs of some basic results of coding theory related to the theory of association schemes.
To make the paper accessible to a broad mathematical audience, we have included some background information on zero-dimensional Abelian groups; see Sect.~\ref{sect:zero}. 

\vspace*{.05in}{\em Further directions:} Further problems related to the theory developed in this paper include in particular, a general study of infinite association schemes
in terms of Gelfand pairs and spherical functions on homogeneous spaces, an extension of the construction of the paper to 
noncommutative zero-dimensional groups, a study of the connection with (inductive and projective limits) of Schur rings 
outlined in Section~\ref{sect:aa}, and a more detailed
investigation of the new classes of orthogonal bases constructed in the paper.

\vspace*{.05in}{\em Remarks on notation and terminology:}
Throughout the paper we denote by $X$ a second-countable topological space that is endowed with a countably additive measure $\mu.$ A partition 
of $X\times X$ is written as $\cR=\{R_i\}$ where the $R_i$ denote the blocks (classes) of the partition. An association scheme on $X$ defined 
by $\cR$ is denoted by $\cX=\cX(X,\mu,\cR).$ For a subset $D\subset X$ we denote by $\chi[D;x]=\1\{x
\in D\}$ the indicator function of $D$ in $X$, 
and use the notation $\chi_i(x,y):=\chi[R_i; (x,y)]$ as a shorthand for the indicators of the classes. The notation $\naturals_0$ refers
to nonnegative integers. The cardinality of a finite set $Y$ is denoted by $\card(Y)$ or $|Y|.$

Constructing schemes on groups, we consider compact and locally compact Abelian groups. 
When the group $X$ is compact, we explicitly say so, reserving the term
``locally compact'' for noncompact locally compact groups.

\section{Association schemes on measure spaces}\label{sect:scheme}
In this section we define association schemes on an arbitrary set with a measure. For reader's convenience
we begin with the standard definition in the finite case \cite{del73a,ban84,bro89}.

\subsection{The finite case}
\addtocounter{definition}{-1}
\begin{definition}\label{def0}
Let $\I=\{0,1,\dots,d\},$ where
$d$ is some positive integer.
Let $X$ be a finite set and let $\cR=\{R_i\subset X\times X, i\in \I\}$ be a family of disjoint subsets that have the following properties:\\
($i_0$) $R_0=\{(x,x),x\in X\},$\\
($ii_0$) $X\times X=R_0\cup R_1\cup\dots\cup R_d;$ $R_i\cap R_j=\emptyset \text{ if } i\ne j$\\
($iii_0$) $^t R_i=R_{i'},$ where $i'\in \I$ and $^t R_i=\{(y,x)\mid (x,y)\in R_i\}.$\\
($iv_0$) For any $i,j\in \I$ and $x,y\in X$ let
$$
p_{ij}(x,y)=\card\{z\in X: (x,z)\in R_i, (z,y)\in R_j\}.
$$
For any $(x,y)\in R_k$, the quantities $p_{ij}(x,y)=p_{ij}^k$ are constants that depend only on $k.$
Moreover, $p_{ij}^k=p_{ji}^k.$

The configuration $\cX=(X,\cR)$ is called a {\em commutative association scheme}.
\index{association scheme!finite}
The quantities $p_{ij}^k$ are called the intersection numbers, \index{intersection numbers}
and the quantities $\mu_i=p_{ii}^0, i\in \I$ are called the valencies
of the scheme.
If $i=i'$, then $\cX$ is called {\em symmetric.}
\end{definition}
The adjacency matrices $A_i$ of an association scheme are defined by
$$
(A_i)_{xy}=\begin{cases} 1&\text{if }(x,y)\in R_i\\
0&\text{ otherwise}.\end{cases}
$$
The definition of the scheme implies that
\begin{equation}\label{eq:A}
(i) A_0=I,\quad (ii) \sum_{i=1}^d A_i=J, \quad(iii) A_i^T=A_{i'},\quad (iv) A_iA_j=\sum_{k=0}^d p_{ij}^k A_k,
\end{equation}
where $J$ is the all-one matrix.
The matrices $A_i$ form a complex $(d+1)$-dimensional commutative algebra $\mathfrak A(\cX)$ called the {\em adjacency} ({\em Bose-Mesner}) {\em algebra} \cite{bos59}. \index{adjacency algebra}
The space $\complexes^{\card(X)}$ decomposes into $d+1$ common eigenspaces of $\mathfrak A(\cX)$ of multiplicities
$m_i, i\in \I.$
This algebra has a basis of primitive idempotents $\{E_i, i\in \I\}$ given by projections on the eigenspaces.
of the matrices $A_i.$ We have $\rank E_i=m_i, i\in \I.$ The adjacency algebra is closed with respect
to matrix multiplication as well as with respect to the element-wise (Schur, or Hadamard) multiplication $\circ$. We have
\begin{equation}\label{eq:shur}
E_i\circ E_j=\frac 1{\card(X)}\sum_{k\in\I}q_{ij}^k E_k
\end{equation}
where the real numbers $q_{ij}^k$ are called the {\em Krein parameters} of $\cX.$ 
\index{Krein parameters}
If two association schemes have the property
that the intersection numbers of one are the Krein parameters of the other, then the converse is also true. Two such
schemes are called {\em formally dual}. A scheme that is isomorphic to its dual is called {\em self-dual}.
In the important case of schemes on Abelian groups, there is a natural way to construct
dual schemes. This duality will be the subject of a large part of our work.

Finally, since $E_i\in \mathfrak A(\cX)$ for all $i$, we have
\begin{align}
A_i&=\sum_{j\in \I}p_i(j)E_j, \quad i\in\I\label{eq:AE}\\
E_j&=\sum_{i\in\I} q_j(i)A_i \quad j\in \I\label{eq:EA}
\end{align}
(we have changed the normalization slightly from the standard form of these relations). The matrices $P=(p_i(j))$ and
$Q=(q_j(i))$ are called the first and the second {\em eigenvalue matrices} of the scheme. 
\index{eigenvalues}
They satisfy the relations $PQ=QP=I.$

An association scheme $\cX=(X,\cR)$ is called {\em metric} 
\index{association scheme!metric}
if it is possible to define a metric $\rho$ on $X$ so that
any two points $x,y\in X$ satisfy $(x,y)\in R_i$ if and only if $\rho(x,y)=f(i)$ for some strictly monotone function $f.$ Equivalently, $\cX$ is metric if for some ordering of its classes we have $p_{ij}^k\ne 0$ only if $k\le i+j.$ 
Metric schemes have the important property that their eigenvalues $p_i(j)$ are given by (evaluations of) some discrete
orthogonal polynomials; see \cite{del73a,ban84,bro89}. 

An association scheme $\cX=(X,\cR)$ is noncommutative if it satisfies Definition 
\ref{def0} without the condition $p_{ij}^k=p_{ji}^k.$ 
If the definition is further
relaxed so that the diagonal $\{(x,x),x\in X\}$ is a union of some classes $R_i\in\cR,$
then $\cX$ is called a coherent configuration \cite{Higman75}. \index{coherent configuration}

Before moving to the general case of uncountable sets $X$ we comment on the direction of our work. Once we give the definition
of the scheme (Def.~\ref{def1} below), it is relatively easy to construct the corresponding adjacency algebra. The main
problem arises in describing duality, in particular, in finding conditions under which the relations \eqref{eq:AE}
can be inverted to yield relations of the form \eqref{eq:EA}. While a general answer proves elusive, we find classes
of schemes for which this can be accomplished, thereby constructing an analog of the classical theory in the infinite case.

\subsection{The general case}
Let us extend the above definition to infinite, possibly uncountable sets with a measure.

\begin{definition}\label{def1} Let $X$ be an arbitrary set equipped with a $\sigma$-additive measure $\mu$ and let \gls{I} be
finite or countably infinite set. Consider the direct product $X\times X$ with measure $\mu\times\mu.$ Let \gls{cR} be a
collection of measurable sets in $X\times X.$
Assume that the following conditions are true.

\vspace*{.05in}\nd (i) For any $i\in \I$ and any $x\in X$ the set
\begin{equation}\label{eq:set}
\{y\in X: (x,y)\in R_i\}
\end{equation}
is measurable, and its measure is finite. If $\mu(X)<\infty,$ then the last condition can be omitted.

\nd$(ii)$
\begin{equation}\label{eq:R0}
R_0:=\{(x,x): x\in X\}\in \cR
\end{equation}

\nd$(iii)$ The set $\{R_i,i\in \I\}$ forms a partition of $X\times X,$ i.e.,
\begin{equation}\label{eq:Ri}
X\times X=\cup_{i\in \I}R_i, \quad R_i\cap R_j=\emptyset \text{ if } i\ne j
\end{equation}

\nd$(iv)$ $^t R_i=R_{i'},$ where $i'\in \I$ and $^t R_i=\{(y,x)\mid (x,y)\in R_i\}$ is the transpose of $R_i.$

\nd$(v)$ For any $i,j\in \I$ and $x,y\in X$ let
\begin{equation}\label{eq:pijk}
p_{ij}(x,y)=\mu\big(\{z\in X: (x,z)\in R_i, (z,y)\in R_j\}\big).
\end{equation}
For any $(x,y)\in R_k, k\in \I,$ the quantities $p_{ij}(x,y)=p_{ij}^k$ are constants that depend only on $k$.
Moreover, $p_{ij}^k=p_{ji}^k.$

The configuration $\gls{cX}$ is called a {\em commutative association scheme} 
\index{association scheme!general} (or simply a scheme) on
the set $X$ with respect
to the measure $\mu$. The sets $R_i,i \in \I$ are called {\em classes} of the scheme and the nonnegative numbers $p_{ij}^k$
are called {\em intersection numbers} of the scheme. The notion of symmetry is unchanged from the finite case. 
\end{definition}

We will not devote special attention to the 
noncommutative schemes and coherent configurations restricting ourselves to the
remark that the corresponding definitions carry over to the general case without
difficulty. It is also straightforward to define metric schemes, which will be studied in more detail in Sect.~\ref{sect:metric} below.

For the time being it will be convenient not to specialize the index set $\I$, leaving it to be an abstract set.
We note that any scheme $\cX$ can be symmetrized by letting $\tilde\cX$ to be a scheme with
$\tilde R_i=R_i\cup R_{i'},i\in \I.$
The intersection numbers of $\tilde \cX$ can be expressed via the intersection numbers of $\cX$; see \cite[p.57]{ban84}.

Define the numbers
\begin{equation}\label{eq:mu_i}
\mu_i=p_{ii'}^0=\mu\big(\{y\in X:(x,y)\in R_i\}\big), \quad i\in\I.
\end{equation}
These quantities are finite because of condition (i) and do not depend on $x\in X$ because of (v). Call $\mu_i$ the {\em valency}
of the relation $R_i.$ Clearly
\begin{equation}\label{eq:mu}
\gls{mui}=\mu_{i'} \quad\text{and}\quad \sum_{i\in \I}\mu_i=\mu(X).
\end{equation}
The intersection numbers and valencies of finite schemes satisfy a number of well-known relations;
see \cite[Prop.2.2]{ban84} or \cite[Lemma 2.1.1]{bro89}.
All these relations can be established for schemes on sets with a measure without difficulty. In particular,
the following statement, which is analogous to \cite[Prop.2.2(vi)]{ban84}, will be used below in the paper.
\begin{lemma}
\begin{equation}\label{eq:iij}
\mu_k p_{ij}^k=\mu_{i}p_{kj'}^i=\mu_j p_{i'k}^j.
\end{equation}
For symmetric schemes this means that the function
\begin{equation}\label{eq:sigma}
\sigma(i,j,k)=\mu_k p_{ij}^k
\end{equation}
is invariant under permutations of its arguments.
\end{lemma}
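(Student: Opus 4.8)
The statement is the measure-theoretic transcription of the classical double-counting identity for finite schemes (\cite[Prop.~2.2(vi)]{ban84}), and the plan is to prove the first equality in \eqref{eq:iij} by a single Tonelli argument and to obtain the remaining equalities and the symmetry assertion formally.

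First I would fix $x\in X$ and consider the set
$$
T=\{(y,z)\in X\times X:\ (x,y)\in R_k,\ (x,z)\in R_i,\ (z,y)\in R_j\}.
$$
Since $\{(y,z):(x,y)\in R_k\}$ and $\{(y,z):(x,z)\in R_i\}$ are measurable cylinders by condition (i) of Definition~\ref{def1}, while $\{(y,z):(z,y)\in R_j\}$ equals ${}^tR_j=R_{j'}$ by condition (iv) and is therefore measurable, the set $T$ is $(\mu\times\mu)$-measurable. Moreover $T$ is contained in the finite-measure product $\{y:(x,y)\in R_k\}\times\{z:(x,z)\in R_i\}$ (of measure $\mu_k\mu_i$), so Tonelli's theorem applies there and all the quantities occurring below are finite.

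I would then evaluate $(\mu\times\mu)(T)$ in the two orders of integration. Integrating in $z$ first: for $y$ with $(x,y)\in R_k$ the $z$-section of $T$ is exactly the set whose measure is $p_{ij}(x,y)$ in \eqref{eq:pijk}, which by condition (v) equals $p_{ij}^k$; integrating this constant over the set $\{y:(x,y)\in R_k\}$ of measure $\mu_k$ gives $(\mu\times\mu)(T)=\mu_kp_{ij}^k$. Integrating in $y$ first: for $z$ with $(x,z)\in R_i$, the $y$-section of $T$ is $\{y:(x,y)\in R_k,\ (z,y)\in R_j\}$, and rewriting $(z,y)\in R_j$ as $(y,z)\in{}^tR_j=R_{j'}$ identifies this section with $\{y:(x,y)\in R_k,\ (y,z)\in R_{j'}\}$, whose measure is $p_{kj'}(x,z)=p_{kj'}^i$ by \eqref{eq:pijk} and condition (v); integrating over $\{z:(x,z)\in R_i\}$ of measure $\mu_i$ gives $(\mu\times\mu)(T)=\mu_ip_{kj'}^i$. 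Comparing the two values yields $\mu_kp_{ij}^k=\mu_ip_{kj'}^i$. The second equality requires no further counting: $\mu_kp_{ij}^k=\mu_kp_{ji}^k$ by commutativity, and the first equality applied with $i$ and $j$ interchanged rewrites this as $\mu_jp_{ki'}^j=\mu_jp_{i'k}^j$, the last step again by commutativity.

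Finally, for a symmetric scheme $i'=i$ for every $i\in\I$, so \eqref{eq:iij} becomes $\mu_kp_{ij}^k=\mu_ip_{kj}^i=\mu_jp_{ik}^j$; combined with $p_{ij}^k=p_{ji}^k$ this shows that $\sigma(i,j,k)=\mu_kp_{ij}^k$ is invariant under the transposition of its first two arguments and under the transposition of its last two arguments, hence under all of $S_3$. The only genuinely delicate point in the whole argument is the measurability and finiteness bookkeeping that legitimizes the application of Tonelli's theorem to $T$; I expect this to be the main (and only mild) obstacle, since the remainder is the finite-scheme computation with measures in place of cardinalities.
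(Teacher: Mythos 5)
Your proof is correct and is essentially the paper's own argument: the paper likewise fixes $x$, forms the triangle set $\cE_{ij}^k=\{(z,y):(x,z)\in R_i,\ (y,z)\in R_j,\ (x,y)\in R_k\}$, and equates the two iterated integrals of its indicator function via Fubini--Tonelli, deriving the remaining equalities from commutativity exactly as you do. Your extra care with the measurability and finiteness of $T$, and the explicit generation of $S_3$ by the two transpositions in the symmetric case, merely spell out what the paper leaves implicit.
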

\begin{proof} Let us prove the first equality in \eqref{eq:iij}.
Let $x\in X$ and consider the measurable subsets $\cE_{ij}^k=\cE_{ij}^k(x)\subset X\times X$ and
$\cE_k=\cE_k(x)\subset X$:
\begin{align}
\cE_{ij}^k&=\big\{ (z,y)\in X\times X: (x,z)\in R_i, (y,z)\in R_j, (x,y)\in R_k\big\} \label{eq:eijk}\\
\cE_k&=\{y\in X: (x,y)\in R_k\} \label{eq:ek}
\end{align}
and let $\chi[\cE_{ij}^k;\cdot]$ and $\chi[\cE_k;\cdot]$ be their indicator functions.
The definition of the scheme implies that
\begin{align}
\int_X\chi[\cE_{ij}^k;(y,z)]d\mu(z)&=p_{ij}^k\chi[\cE_k;y]\label{eq:a4}\\
\int_X\chi[\cE_{ij}^k;(y,z)]d\mu(y)&=p_{kj'}^i \chi[\cE_i;z]\label{eq:a5}
\end{align}
as well as (see \eqref{eq:mu_i})
\begin{equation}\label{eq:a6}
\int_X\chi[\cE_l;y]d\mu(y)=\mu_l.
\end{equation}
Since the indicator function of the subset $\cE_{ij}^k$ is nonnegative and measurable, we can
use the Fubini theorem to write
\begin{align}
\iint_{X\times X} \chi[\cE_{ij}^k;(y,z)]d\mu(y)d\mu(z)&=\int_Xd\mu(y)\int_X\chi[\cE_{ij}^k;(y,z)]d\mu(z)\nonumber\\
&=\int_Xd\mu(z)\int_X\chi[\cE_{ij}^k;(y,z)]d\mu(y).
\end{align}
Substituting in this equation expressions \eqref{eq:a4} and \eqref{eq:a5} and using \eqref{eq:a6}
with $l=k$ and $l=i$, we obtain the first equality in \eqref{eq:iij}.
The remaining equalities can be proved by a very similar argument or derived from
the first one using commutativity.
\end{proof}

Note the following important difference between schemes on countable and uncountable sets.
Consider the valency $\mu_0$ of the diagonal relation $R_0.$ It equals the measure of a point:
$\mu_0=\mu(\{x\}),$ and is the same for all $x\in X.$ Thus, if $\mu_0>0,$ then $X$ is at most
countably infinite and $\mu(\cdot)=\mu_0 \card\{\cdot\},$ while if $\mu_0=0,$ then $X$ is uncountable 
and the measure $\mu$ is non-atomic.

In accordance with the above we can introduce the following

{\bfit Classification of association schemes $\cX=(X,\mu,\cR):$} \index{association schemes!classification}
\begin{enumerate}
\item[$(S_1)$] $\mu(X)<\infty$ and $\mu_0>0.$ In this case $\cX$ is a classical scheme on the finite set given by Definition \ref{def0}
\cite{Bose52,del73a}. This is the most studied case.
\item[$(S_2)$] $\mu(X)=\infty$ and $\mu_0>0.$ In this case $\cX$ is a scheme on a countable discrete set.
Such schemes are studied in \cite{zie05}.
\item[$(S_3)$] $\mu(X)<\infty$ and $\mu_0=0.$ Examples of such schemes can be constructed on uncountable compact zero-dimensional
Abelian groups, see Sect.~\ref{sect:dualpairs}. Note that their duals are schemes of type $(S_2).$
\item[$(S_4)$] $\mu(X)=\infty$ and $\mu_0=0.$ Schemes of this kind can be constructed on locally compact zero-dimensional Abelian groups. Examples will be considered in Sect.~\ref{sect:dualpairs} below. We note that in this case, similarly to the case $(S_1)$, a scheme can be self-dual.
\end{enumerate}

%
%

\section{Adjacency algebras}\label{sect:BM}
Generalization of the Bose-Mesner algebras to the infinite case is nontrivial. Here we consider only
the main features of such generalized adjacency algebras that follow directly from the definition
of the scheme on an arbitrary measure set. 
For a given scheme $\cX=(X,\mu,\cR)$ consider the indicator functions of its relations:
\begin{equation}\label{eq:bm}
\gls{chii}=\begin{cases} 1&\text{if }(x,y)\in R_i\\
0&\text{otherwise }.\end{cases}
\end{equation}
Definition \ref{def1} immediately implies the following properties of the indicators:

\begin{lemma}\label{lemma:ai} (i) For a fixed $x$, the functions $\chi_i(x,y)$ are measurable and integrable functions of $y;$

(ii) $\chi_0(x,y)=\chi_0(y,x),$ and for each $x$, $\chi_0(x,y)$ is the indicator of a single point $y=x;$

(iii) $\chi_i(x,y)=\chi_{i'}(y,x),$ and if $\cX $is symmetric then $\chi_i(x,y)=\chi_i(y,x);$

(iv) For any $x,y\in X,$
$$
\sum_{i\in \I} \chi_i(x,y)=j(x,y),
$$
where $j(x,y)\equiv 1$ for all $x,y.$

(v) The following equality holds true:
\begin{equation}\label{eq:conv}
\int_X \chi_i(x,z)\chi_j(z,y)d\mu(z)=\sum_{k\in\I}p_{ij}^k \chi_k(x,y).
\end{equation}
In particular, we have
\begin{equation*} 
\int_X \chi_i(x,z)\chi_j(z,y)d\mu(z)=\int_X \chi_j(x,z)\chi_i(z,y)d\mu(z).
\end{equation*}

(vi) The valencies of $\cX$ satisfy the relation
\begin{equation}\label{eq:val}
\mu_i=\int_X \chi_i(x,y)d\mu(y)=\int_X \chi_i(x,y)d\mu(x).
\end{equation}
\end{lemma}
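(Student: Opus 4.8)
The statement to prove is Lemma~\ref{lemma:ai}, whose six items (i)--(vi) translate the defining conditions (i)--(v) of Definition~\ref{def1} into the language of the indicator functions $\chi_i(x,y)=\chi[R_i;(x,y)]$. The plan is to go through the items essentially one at a time, in each case unwinding the definition of $\chi_i$ and invoking the corresponding clause of Definition~\ref{def1}. Nothing here is deep; the only point requiring a little care is the integral identity (v), which is exactly the measure-theoretic version of the matrix relation $A_iA_j=\sum_k p_{ij}^k A_k$ and where one must check that Fubini/measurability hypotheses are met.

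First I would dispose of (i)--(iv). For (i): for fixed $x$, the set $\{y:(x,y)\in R_i\}$ is the $x$-section of the measurable set $R_i\subset X\times X$, hence measurable by definition, and condition~(i) of Definition~\ref{def1} says its measure $\mu_i$ is finite; thus $\chi_i(x,\cdot)$ is a $\{0,1\}$-valued measurable function with finite integral, i.e.\ integrable. For (ii): by \eqref{eq:R0}, $R_0=\{(x,x)\}$, which is symmetric under transposition and whose $x$-section is the single point $\{x\}$, giving both claims. For (iii): condition~(iv) of Definition~\ref{def1}, $^tR_i=R_{i'}$, says precisely that $(x,y)\in R_i\iff(y,x)\in R_{i'}$, i.e.\ $\chi_i(x,y)=\chi_{i'}(y,x)$; symmetry of $\cX$ means $i=i'$, so $\chi_i(x,y)=\chi_i(y,x)$. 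For (iv): condition~(iii) of Definition~\ref{def1} says the $R_i$ partition $X\times X$, so for each $(x,y)$ exactly one $\chi_i(x,y)$ equals $1$ and the rest vanish, whence $\sum_{i\in\I}\chi_i(x,y)=1=j(x,y)$.

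The substantive item is (v). Fix $x,y\in X$. The integrand $z\mapsto\chi_i(x,z)\chi_j(z,y)$ is a product of measurable functions of $z$ (the first measurable in $z$ by (i) applied to $R_i$, the second by (iii) rewriting $\chi_j(z,y)=\chi_{j'}(y,z)$ and then (i) applied to $R_{j'}$), so it is measurable, and it is bounded by $\chi_i(x,\cdot)$, hence integrable. Moreover this integrand is the indicator of the set $\{z:(x,z)\in R_i,\ (z,y)\in R_j\}$, whose measure is by \eqref{eq:pijk} the quantity $p_{ij}(x,y)$. Now use (iv): the point $(x,y)$ lies in exactly one class, say $R_k$, so by condition~(v) of Definition~\ref{def1} we have $p_{ij}(x,y)=p_{ij}^k$; rewriting this uniformly over all classes via the indicators gives $\int_X\chi_i(x,z)\chi_j(z,y)\,d\mu(z)=\sum_{k\in\I}p_{ij}^k\chi_k(x,y)$, which is \eqref{eq:conv}. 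The ``in particular'' symmetry statement follows immediately from $p_{ij}^k=p_{ji}^k$ (the commutativity clause of Definition~\ref{def1}\,(v)), or alternatively by substituting the respective right-hand sides. Finally (vi) is just the definition \eqref{eq:mu_i} of $\mu_i$ together with (iii): $\int_X\chi_i(x,y)\,d\mu(y)=\mu(\{y:(x,y)\in R_i\})=\mu_i$, and using $\chi_i(x,y)=\chi_{i'}(y,x)$ and $\mu_{i'}=\mu_i$ from \eqref{eq:mu} gives the second equality $\int_X\chi_i(x,y)\,d\mu(x)=\mu_{i'}=\mu_i$.

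The only place where an obstacle could conceivably arise is in justifying that all the relevant sets and functions are measurable and that the measure of the ``triangle'' set in (v) is finite so that everything is well-defined; but condition~(i) of Definition~\ref{def1} (finiteness of $\mu_i$, or $\mu(X)<\infty$) together with the measurability of the $R_i$ and of their transposes handles this, and no appeal to Fubini across $X\times X$ is even needed here since $x,y$ are held fixed throughout (v). So the proof is essentially a bookkeeping exercise translating Definition~\ref{def1}; I would write it as a short itemized verification mirroring the list (i)--(vi).
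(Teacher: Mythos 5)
Your proof is correct and follows exactly the route the paper intends: the paper offers no written proof, merely asserting that Lemma~\ref{lemma:ai} is ``immediately implied'' by Definition~\ref{def1}, and your item-by-item unwinding (including the measurability bookkeeping for (v) and the use of $\mu_{i'}=\mu_i$ for the second equality in (vi)) is the straightforward verification being left to the reader.
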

These properties parallel the finite case; see the relations in \eqref{eq:A}.

Consider the set \gls{frakA} of finite linear combinations
\begin{equation}\label{eq:lc}
a(x,y)=\sum_{i\in \I} c_i\chi_i(x,y),
\end{equation}
where $c_i\in \complexes$
for all $i.$ This is a linear space of functions $f:X\times X\to\complexes$ piecewise constant on the classes $R_i,i\in \I$. We have $\dim ({\mathfrak A}(\cX))
=\card(\I).$ Multiplication on this space can be introduced in two ways. Clearly, ${\mathfrak A}(\cX)$ is closed
with respect to the usual product of functions $a(x,y)\cdot b(x,y)$ (because $\chi_i(x,y) \chi_j(x,y)=\delta_{i,j}$).
Define the convolution of functions $a$ and $b$ as follows:
\begin{equation}\label{eq:conv0}
(a\ast b)(x,y)=\int_X a(x,z)b(z,y)d\mu(z).
\end{equation}
By \eqref{eq:conv} and the commutativity condition of $\cX$, convolution is commutative on ${\mathfrak A}(\cX).$
We conclude that linear space ${\mathfrak A}(\cX)$ is a complex commutative algebra with respect to the product of functions.
By \eqref{eq:conv} this algebra is
also closed with respect to convolution. It is called the \emph{adjacency algebra} of the scheme $\cX.$

The question of multiplicative identities of ${\mathfrak A}(\cX)$ with respect to each of the product operations
deserves a separate discussion.

In the classical case $(S_1),$ the adjacency algebra contains units for both operations. For the usual product of functions
(Schur product of matrices) the identity is the function $j(x,y),$ while for convolution (matrix product) the identity is given by $\mu_0^{-1}\chi_0(x,y).$
Clearly, both these functions are contained in ${\mathfrak A}(\cX).$

In the case $(S_2)$ the identity for convolution is given by $\mu_0^{-1}\chi_0(x,y);$ however, the identity for the usual multiplication
$j(x,y)$ is not contained in ${\mathfrak A}(\cX)$ because the convolution $j\ast j$ is not well defined.

In the case $(S_3)$ there is an identity for the usual multiplication, but no identity for convolution (this should be
an atomic measure, viz., the Dirac delta-function $\delta(x,y),$ but the product $\delta\cdot\delta$ cannot be given any meaning).

Finally, in the case $(S_4)$ the algebra ${\mathfrak A}(\cX)$ generally contains neither the usual multiplicative identity nor the
identity for convolution.

{\em Remark:} Note that if an algebra has only one multiplication operation, we can always adjoin to it its identity element.
At the same time, if there is more than one multiplication, it is generally impossible to adjoin several identities in a
coordinated manner.

\subsection{Operator realizations of adjacency algebras}\label{sect:or}
Consider the space $L_2(X,\mu)$ of square-integrable functions on $X$.
Given a measurable function $a(x,y),x,y\in X$, define the integral operator with the kernel $a(x,y):$
\begin{equation}\label{eq:op}
A f(x)=\int_X a(x,y)f(y)d\mu(y).
\end{equation}
Let $\cX=(X,\mu,\cR)$ be an association scheme on $X$. With every class $R_i,i\in\I$ associate an integral
operator with the kernel $\chi_i(x,y)$ \eqref{eq:bm} defined by \eqref{eq:op}:
\begin{equation}\label{eq:Ai}
\gls{Ai} f(x)=\int_X \chi_i(x,y)f(y)d\mu(y).
\end{equation}
Linear combinations \eqref{eq:lc} give rise to operators of the form
\begin{equation}\label{eq:AA}
A=\sum_{i\in\I} c_iA_i.
\end{equation}
Operators of this kind will be used to describe association schemes and their adjacency algebras, therefore
we will devote some space to the study of their basic properties.
\begin{lemma}\label{lemma:bound} For any scheme $\cX=(X,\mu,\cR),$ operators \eqref{eq:op} with 
$a(x,y)\in {\mathfrak A}(\cX)$ are bounded in $L_2(X,\mu).$
\end{lemma}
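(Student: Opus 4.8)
The plan is to show that each adjacency operator $A_i$ is bounded on $L_2(X,\mu)$ and then conclude by linearity, since a finite linear combination of bounded operators is bounded.

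First I would bound a single $A_i$. The kernel $\chi_i(x,y)$ is the indicator of the class $R_i$, and by condition (i) of Definition \ref{def1} together with \eqref{eq:val}, the "row sums" and "column sums" of this kernel are controlled: for every $x$,
\[
\int_X \chi_i(x,y)\,d\mu(y)=\mu_i,\qquad \int_X \chi_i(x,y)\,d\mu(x)=\mu_i,
\]
and $\mu_i<\infty$ by condition (i). This is exactly the hypothesis of the Schur test (the generalized Young/Holmgren inequality for integral operators). Concretely, for $f\in L_2(X,\mu)$, apply Cauchy--Schwarz under the integral sign:
\[
|A_i f(x)|^2=\Big|\int_X \chi_i(x,y)f(y)\,d\mu(y)\Big|^2
\le \Big(\int_X \chi_i(x,y)\,d\mu(y)\Big)\Big(\int_X \chi_i(x,y)|f(y)|^2\,d\mu(y)\Big),
\]
which equals $\mu_i\int_X \chi_i(x,y)|f(y)|^2\,d\mu(y)$. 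Integrating in $x$ and using Tonelli (the integrand is nonnegative and measurable, so the interchange is justified) together with $\int_X\chi_i(x,y)\,d\mu(x)=\mu_i$ gives
\[
\|A_i f\|_2^2\le \mu_i\int_X\Big(\int_X \chi_i(x,y)\,d\mu(x)\Big)|f(y)|^2\,d\mu(y)=\mu_i^2\,\|f\|_2^2,
\]
so $\|A_i\|\le\mu_i$. (One should note in passing that $A_i f$ is measurable, which follows from Tonelli applied to the nonnegative measurable kernel, and that the computation also shows $A_i f\in L_2$, so $A_i$ genuinely maps $L_2(X,\mu)$ to itself.)

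Then for a general element $a(x,y)=\sum_{i\in\I}c_i\chi_i(x,y)\in{\mathfrak A}(\cX)$, the sum is finite by definition of ${\mathfrak A}(\cX)$, so $A=\sum_i c_iA_i$ and $\|A\|\le\sum_i |c_i|\,\mu_i<\infty$, proving the claim.

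The only genuine subtlety — not really an obstacle, but the point that needs care — is that in cases $(S_3)$ and $(S_4)$ the measure is non-atomic and possibly infinite, so one must make sure the Schur-test manipulations (the Cauchy--Schwarz step and the Tonelli interchange) are valid without any finiteness assumption on $\mu(X)$; this is fine because the kernel is nonnegative and its one-dimensional integrals are finite by condition (i), which is precisely what the Schur test requires. No appeal to $\sigma$-finiteness of $\mu$ beyond what is already built into Definition \ref{def1} is needed.
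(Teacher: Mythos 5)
Your proof is correct and follows essentially the same route as the paper: both establish $\|A_i\|\le\mu_i$ via the Schur test applied to the nonnegative kernel $\chi_i$, whose row and column integrals both equal $\mu_i$ by \eqref{eq:val}, and then conclude by finiteness of the linear combination. The only difference is that you prove the Schur test inline (Cauchy--Schwarz plus Tonelli) where the paper simply cites it from Halmos--Sunder.
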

\begin{proof} According to the Schur test of boundedness, if the kernel $a(x,y)$ of an integral operator
satisfies the conditions
\begin{align*}
\alpha_1&:=\text{ess\,sup}_{x\in X} \int_X|a(x,y)|d\mu(y)<\infty\\
\alpha_2&:=\text{ess\,sup}_{y\in X} \int_X |a(x,y)|d\mu(x)<\infty
\end{align*}
then the operator is bounded in $L_2(X,\mu)$ and its norm $\|A\|\le (\alpha_1\alpha_2)^{\half}$ \cite[p.~22]{Halmos78}.
For the operators $A_i$ we obtain, on account of \eqref{eq:val},
$$
\alpha_1=\int_X \chi_i(x,y)d\mu(y)=\mu_i,
$$
and $\alpha_2=\alpha_1$. We conclude that
\begin{equation}\label{eq:Am}
\|A_i\|\le\mu_i
\quad\text{for all }i.
\end{equation}
Since the sums in \eqref{eq:AA} are finite, the proof is complete.
\end{proof}

In fact, integral operators \eqref{eq:op} with kernels $a(x,y)\in {\mathfrak A}(\cX)$ belong to a special class of operators
called Carleman operators \cite{Halmos78}. Recall that $A$ is called a Carleman operator if
$$
\xi(A,x)=\Big(\int_X |a(x,y)|^2d\mu(y)\Big)^{\half}<\infty
$$
almost everywhere on $X$. For operators in \eqref{eq:AA} we obtain
\begin{align*}
\xi(A,x)&=\Big(\int_X\Big|\sum_{i\in \I} c_i\chi_i(x,y)\Big|^2d\mu(y)\Big)^{\half}
=\Big(\int_X\sum_{i\in\I}|c_i|^2 \chi_i(x,y)d\mu(y)\Big)^{\half}\\
&=\Big(\sum_{i\in\I}|c_i|^2\mu_i\Big)^{\half}
\end{align*}
where the last equality is obtained using \eqref{eq:val}. Thus, for finite sums in \eqref{eq:AA} these functions are finite constants.

If in addition $\mu(X)<\infty,$  then operators \eqref{eq:op} with
$a(x,y)\in {\mathfrak A}(\cX)$ are compact Hilbert-Schmidt.
Indeed, the  
Hilbert-Schmidt norm of $A_i$ is estimated as follows: 
\begin{equation}\label{eq:Fb}
\|A_i\|^2_{HS}=\iint_{X\times X} |\chi_i(x,y)|^2 d\mu(x)d\mu(y)=\int_X d\mu(x)\int_X \chi_i(x,y)d\mu(y)=\mu(X)\mu_i,
\end{equation}
where we have used Fubini's theorem and \eqref{eq:val}.

Let us introduce some notation. Define the set
\begin{equation}\label{eq:I0}
\gls{I0}=\{i\in\I: \mu_i>0\}
\end{equation}
and note that $A_i\ne 0$ only if $i\in \I_0.$ Let $\mu(X)<\infty$ and
let $J$ be an integral operator in $L_2(X,\mu)$ with kernel $j(x,y)\equiv 1$, and let $P$ be the orthogonal
projector on the subspace of constants.
Let us list basic properties of the operators $A_i$.
\begin{lemma}
(i) $A_0=\mu_0 I$, where $I$ is the identity operator in $L_2(X,\mu)$. In particular, for schemes of type
$(S_3)$ and $(S_4),$ $A_0$ is the zero operator.

(ii) \begin{equation}\label{eq:t*}
^tA_i=A_i^\ast=A_{i'}
\end{equation}
where $^tA$ is the transposed operator and $A^\ast$ is the adjoint operator of $A$.

(iii) $A_i A_j=A_j A_i;$ in particular $A_i A_i^\ast=A_i^\ast A_i$. Thus, the operators $A_i$ are normal,
and if the scheme $\cX$ is symmetric, they are self-adjoint.

(iv) Let $\mu(X)<\infty$ and let $P$ be the orthogonal projector on constants. Then
\begin{align}
\sum_{i\in\I_0} A_i&=\gls{J}=\mu(X) P \label{eq:J}\\
A_iA_j&=\sum_{k\in\I_0}p_{ij}^k A_k\label{eq:AiAj}
\end{align}
where both the series converge in the operator norm.
\end{lemma}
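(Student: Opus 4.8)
The plan is to read off all four claims from manipulations of the integral kernels $\chi_i(x,y)$ of \eqref{eq:bm}, using the properties of the $\chi_i$ collected in Lemma~\ref{lemma:ai} and the Schur test already employed in the proof of Lemma~\ref{lemma:bound}. For (i) I would simply note that, by Lemma~\ref{lemma:ai}(ii), $\chi_0(x,\cdot)$ is the indicator of the single point $\{x\}$, so $A_0f(x)=\int_X\chi_0(x,y)f(y)\,d\mu(y)=\mu(\{x\})f(x)=\mu_0 f(x)$; hence $A_0=\mu_0 I$, and $A_0=0$ for types $(S_3),(S_4)$ since there $\mu_0=0$. For (ii), the transposed operator $^tA_i$ has kernel $\chi_i(y,x)$ and the adjoint $A_i^\ast$ has kernel $\overline{\chi_i(y,x)}=\chi_i(y,x)$ because $\chi_i$ is $\{0,1\}$-valued; by Lemma~\ref{lemma:ai}(iii) both equal $\chi_{i'}(x,y)$, so $^tA_i=A_i^\ast=A_{i'}$. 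The operator identity $\langle A_if,g\rangle=\langle f,A_{i'}g\rangle$ is then a Fubini interchange, legitimate because $A_i$ is bounded (Lemma~\ref{lemma:bound}) with $\chi_i(x,\cdot),\chi_i(\cdot,y)\in L_1(X,\mu)$ of mass $\mu_i$ by \eqref{eq:val}.

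For (iii), I would compute the kernel of $A_iA_j$. Starting from $(A_iA_jf)(x)=\int_X\chi_i(x,z)\big(\int_X\chi_j(z,y)f(y)\,d\mu(y)\big)\,d\mu(z)$, the inner integral is bounded by $\|\chi_j(z,\cdot)\|_2\|f\|_2=\mu_j^{1/2}\|f\|_2$ (Cauchy--Schwarz and \eqref{eq:val}), and the Tonelli bound $\iint_{X\times X}\chi_i(x,z)\chi_j(z,y)|f(y)|\,d\mu(z)\,d\mu(y)\le\mu_i\mu_j^{1/2}\|f\|_2<\infty$ justifies interchanging the integrals; hence $A_iA_j$ is the integral operator with kernel $\int_X\chi_i(x,z)\chi_j(z,y)\,d\mu(z)=\sum_{k\in\I}p_{ij}^k\chi_k(x,y)$ by \eqref{eq:conv}. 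Since $p_{ij}^k=p_{ji}^k$ by commutativity of $\cX$, this kernel is symmetric in $i,j$, so $A_iA_j=A_jA_i$; in particular $A_iA_i^\ast=A_iA_{i'}=A_{i'}A_i=A_i^\ast A_i$, so each $A_i$ is normal, and self-adjoint when $i=i'$.

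For (iv), assume $\mu(X)<\infty$. Convergence of $\sum_{i\in\I_0}A_i$ in operator norm is immediate from $\|A_i\|\le\mu_i$ (see \eqref{eq:Am}) and $\sum_{i\in\I}\mu_i=\mu(X)<\infty$ (see \eqref{eq:mu}). If $i\notin\I_0$ then $\mu_i=0$, so $\chi_i(x,\cdot)=0$ $\mu$-a.e.\ for each $x$ and $A_i=0$; as the $i\notin\I_0$ are countably many, for each $x$ we get $\sum_{i\in\I_0}\chi_i(x,y)=\sum_{i\in\I}\chi_i(x,y)=1$ for a.e.\ $y$ by Lemma~\ref{lemma:ai}(iv). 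Thus $\sum_{i\in\I_0}A_i$ is the integral operator $J$ with kernel $j(x,y)\equiv1$, and since $Jf$ and $\mu(X)Pf$ are both the constant function with value $\int_Xf\,d\mu$, we get $J=\mu(X)P$. For the last identity I would estimate tails: $A_iA_j-\sum_{k\in F}p_{ij}^kA_k$ is the integral operator with kernel $\sum_{k\notin F}p_{ij}^k\chi_k(x,y)$, whose norm is at most $\sum_{k\notin F}p_{ij}^k\mu_k$ by the Schur test and \eqref{eq:val}; and from \eqref{eq:iij} together with $\sum_{k}p_{kj'}^i=\mu_{j'}=\mu_j$ one gets $\sum_{k\in\I}p_{ij}^k\mu_k=\mu_i\mu_j<\infty$, so the tail tends to $0$ as $F\uparrow\I$ and $\sum_{k\in\I_0}p_{ij}^kA_k\to A_iA_j$ in operator norm.

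I expect the only genuine work to be the convergence and interchange issues that are invisible in the classical case $(S_1)$: the Fubini step in (iii) for a possibly infinite measure, and in (iv) the operator-norm convergence of the (generally infinite) series $\sum A_i$ and $\sum p_{ij}^kA_k$. Both are controlled by the ``marginal'' identities $\sum_{i}\mu_i=\mu(X)$ and $\sum_kp_{ij}^k\mu_k=\mu_i\mu_j$ obtained from \eqref{eq:mu} and \eqref{eq:iij}, which reduce everything to the Schur test already available from the proof of Lemma~\ref{lemma:bound}.
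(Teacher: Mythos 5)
Your proof is correct and follows essentially the same route as the paper's (which is much terser): everything is read off the kernels via Lemma~\ref{lemma:ai} and controlled by the Schur test, with convergence of the two series in (iv) coming from $\sum_i\mu_i=\mu(X)$ and bounds on $p_{ij}^k$. The only refinement is that for the tail of $\sum_k p_{ij}^kA_k$ you derive the exact marginal identity $\sum_k p_{ij}^k\mu_k=\mu_i\mu_j$ from \eqref{eq:iij}, where the paper settles for the cruder bound $p_{ij}^k\le\mu(X)$ giving $\|\sum_k p_{ij}^kA_k\|\le\mu(X)^2$; both suffice.
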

\nd{\em Proof: }
Part (i) is immediate from the definitions, Part (ii) follows from Lemma \ref{lemma:ai}(iii), Part (iii) follows
from \eqref{eq:conv0}, and equations \eqref{eq:J} and \eqref{eq:AiAj} are implied by parts (iv) and (v) of Lemma \ref{lemma:ai},
respectively. The convergence of the series in \eqref{eq:J} follows from \eqref{eq:mu} and \eqref{eq:Am}:
$$
\big\|\sum_{i\in\I_0} A_i\big\|\le\sum_{i\in\I_0} \|A_i\|\le \sum_{i\in\I_0} \mu_i=\mu(X)
$$
As for the series in \eqref{eq:AiAj}, Eq. \eqref{eq:pijk} implies that $p_{ij}^k\le\min\{\mu_i,\mu_j\}\le\mu(X),$ and so
$$
\big\|\sum_{k\in\I_0}p_{ij}^k A_k\big\|\le\mu(X)^2.\eqno \qed
$$

{\em Remark:} Let us make an important remark about the definition of the adjacency algebras. Since the adjacency algebra
of the scheme $\cX$ generally is infinite-dimensional, the notion of the basis as well as relations of the form \eqref{eq:Ai}, \eqref{eq:AiAj} 
become rigorous only upon defining a topology on the algebra that supports the needed convergence of the series. So far we have understood
convergence in sense of the operator (Hilbert-Schmidt) norm, but this norm is generally not closed with respect to the Schur product of operators: for instance, in part (iv) of the previous lemma we need the compactness assumption for convergence.
Thus, in general, our arguments in this part are of somewhat heuristic nature. We make them fully rigorous for the case of association
schemes on zero-dimensional groups; see Sect.~\ref{sect:aa}.


\subsection{Spectral decomposition} In the previous subsection we established that the operators $A_i, i\in \I_0$ are bounded in $L_2(X,\mu),$ commuting normal operators
(in the symmetric case, even self-adjoint). By the spectral theorem \cite[p.895]{Dunford63}, they can be simultaneously diagonalized.
 The analysis of spectral decomposition of $\mathfrak A(\cX)$
is simple in the case $\mu(X)<\infty.$ In this case, all the operators $A_i,i\in\I_0$ are compact Hilbert-Schmidt, and the situation resembles the most the classical case of finite sets when the $A_i$s are finite-dimensional
matrices. Namely, if $\mu(X)<\infty,$ then the space $L_2(X,\mu)$ contains a complete orthonormal system of functions $f_j, j\in\I_1$ that are simultaneous
eigenfunctions of all $A_i,i\in \I_0,$ viz.
$$
A_i f_j=\lambda_i(j)f_j;
$$
here $\I_1$ is some set of indices. For every $i\in \I_0$ the nonzero eigenvalues $\lambda_i(j)$ have finite multiplicity. The sequence $\lambda_i(j),
j\in \I_1$ has at most one accumulation point $\lambda=0$. (These two statements follow from 
general spectral theory, e.g. \cite{Dunford63}, Cor. X.4.5.) Moreover,
\begin{align}
\sum_{j\in\I_1}|\lambda_i(j)|^2&=\mu(X)\mu_i, \quad i\in \I_0 \label{eq:l2}\\
\overline{\lambda_i(j)}&=\lambda_{i'}(j).\label{eq:conj}
\end{align}
Indeed, Eq. \eqref{eq:l2} follows from \eqref{eq:Fb}, and Eq. \eqref{eq:conj} is a consequence of \eqref{eq:t*}.

Our next goal is to define the minimal idempotents (cf. \eqref{eq:shur}). Let
\begin{equation}\label{eq:ds}
L_2(X,\mu)=\bigoplus_{j\in \I_2} V_j
\end{equation}
be the expansion of $L_2(X,\mu)$ into an orthogonal direct sum of common eigenspaces of all the operators $A_i,i\in\I_0,$ so that
\begin{equation}\label{eq:max}
A_i f=p_i(j)f \quad\text{for all }f\in V_j
\end{equation}
where $V_j$ is the maximal eigenspace in the sense that for any $V_{j_1},V_{j_2}, j_1\ne j_2$ there exists
an operator $A_i,i\in\I_0$ such that $p_i(j_1)\ne p_i(j_2).$ 
Now let $E_j,j\in \I_2$ be the orthogonal
projectors on the subspaces $V_j.$ Then we can write
\begin{equation}\label{eq:pij}
A_i=\sum_{j\in\I_2}p_i(j)E_j, \quad i\in\I_0,
\end{equation}
where $\gls{pij}$ are the {\em eigenvalues} of the operators $A_i$ on the subspace $V_j$ (cf. Eq.\eqref{eq:AE}).
Call the quantities
\begin{equation}\label{eq:mult}
\gls{mj}=\dim V_j, \quad j\in\I_2
\end{equation}
the {\em multiplicities} of the scheme $\cX.$ \index{association scheme!multiplicities}
In accordance
with \eqref{eq:l2}-\eqref{eq:conj}, taking into account the multiplicities, we have
\begin{align}\label{eq:dim}
\sum_{j\in\I_2}m_j|p_i(j)|^2&=\mu(X)\mu_i\\
\overline{p_i(j)}&=p_{i'}(j).
\end{align}
These relations have their finite analogs; see e.g., \cite[pp.59,63]{ban84}.

The projectors $E_j$ clearly satisfy the relation
\begin{equation}\label{eq:I}
\sum_{j\in\I_2}E_j=I,
\end{equation}
where $I$ is the identity operator in $L_2(X,\mu).$ Recall that for schemes of type $(S_3)$, the operator $I$
is not an integral operator.

We will return to relations \eqref{eq:dim}-\eqref{eq:I} in the next section in the context of duality theory.
This theory is well developed in the finite case, where relations \eqref{eq:pij} can be inverted so that
the projectors are written in terms of the adjacency operators \eqref{eq:EA} \cite[p.60]{ban84}.
These relations and their corollaries form one of the main parts of the classical theory of association schemes.
Unfortunately, in the general case of measure spaces we did not manage to prove the invertibility of relations \eqref{eq:pij}
even in the case $\mu(X)<\infty.$ In the case of $\mu(X)=\infty$ the situation becomes even more complicated
because the operators $A_i,i\in I_0$ can have continuous spectrum. Thus, in the general case it is not known whether
the spectral projectors $E_j,j\in \I_2$ are contained in the adjacency algebra $\mathfrak A(\cX).$

In the next section we show that relations \eqref{eq:pij} can be inverted in the case of schemes on topological
Abelian groups, leading to relations \eqref{eq:EA}. This will enable us to introduce Krein parameters
of schemes and develop a duality theory in the infinite case.

%
%

\section{Association schemes and spectrally dual partitions on topological Abelian groups}\label{sect:Abelian}

\subsection{Harmonic analysis on topological Abelian groups} We begin with reminding the reader the basics about topological Abelian groups.
Details can be found, e.g., in \cite{pon66,hew6370}.

Let $X$ be a second countable topological compact or locally compact Abelian group written additively. 
Let $\hat X$ be the character group of $X$ (the group of continuous characters of $X$) written multiplicatively.
Just as $X$, $\hat X$ is a topological compact or locally
compact Abelian group. By Pontryagin's duality theorem \cite[Thm.~39]{pon66}, its character group $\hat{\hat X}$ is topologically canonically isomorphic to $X.$

For any $x,y\in X$ and $\phi,\psi\in \hat X$ we have
\begin{equation}\label{eq:+.}
\begin{matrix}\phi(x+y)=\phi(x)\phi(y), \quad(\phi\cdot\psi)(x)=\phi(x)\psi(x)\\[.05in]
\overline{\phi(x)}=\phi(-x).\end{matrix}
\end{equation}

Let $\mu$ and $\hat\mu$ be the Haar measures on $X$ and $\hat X$, respectively. 
Define the {\em Fourier transforms} $\cF^\sim:L_2(X,\mu) \to L_2(\hat X,\hat\mu)$ and $\cF^\natural: L_2(\hat X,\hat\mu)\to L_2(X,\mu)$ as follows: \index{Fourier transform}
\begin{align}
&\gls{Fsim}: f(x) \to \tilde f(\xi)=\int_X\xi(x)f(x)d\mu(x), \quad\xi\in \hat X \label{eq:ft}\\
&\gls{Fhat}: g(\xi)\to g^\natural(x)=\int_{\hat X}\overline{\xi(x)}g(\xi)d\hat\mu(\xi), \quad x\in X. \label{eq:ift}
\end{align}
Assume for the moment that $f\in L_2(X,\mu)\cap L_1(X,\mu)$ and similarly, $g\in L_2(\hat X,\hat\mu)\cap L_1(\hat X,\hat\mu).$
It is known that the Haar measures can be normalized to fulfill the Parseval identities and equalities for the inner products
    \begin{align}
       \int_X |f(x)|^2d\mu(x)&=\int_{\hat X} |\tilde f(\xi)|^2 d\hat\mu(\xi), \quad\;\; 
          \int_{\hat X} |g(\xi)|^2 d\hat\mu(\xi)=\int_X |g^\natural(x)|^2 d\mu(x)
    \label{eq:Parseval}\\[.1in]
     &\begin{array}{c}{\displaystyle \int_X f_1(x)\overline{f_2(x)}d\mu(x)=\int_{\hat X} \tilde f_1(\xi) \overline{\tilde f_2(\xi)}d\hat\mu(\xi)}
     \\ {\displaystyle \int_{\hat X} g_1(\xi)\overline{g_2(\xi)}d\hat\mu(\xi)=\int_X g_1^\natural(x)\overline{g_2^\natural(x)}d\mu(x)}
     \end{array}
    \label{eq:ip}     
    \end{align}
In what follows we always assume that equalities \eqref{eq:Parseval}, \eqref{eq:ip} are fulfilled. The corresponding normalizations
of the Haar measures $\mu$ and $\hat\mu$ will be given below.
Equalities \eqref{eq:Parseval}, \eqref{eq:ip} imply that the mappings $\cF^\sim$ and $\cF^\natural$ can be extended to mutually inverse isometries of
the Hilbert spaces $L_2(X,\mu)$ and $L_2(\hat X,\hat\mu)$ that preserve inner products:
$$
\cF^\sim\cF^\natural=I,\quad \cF^\natural\cF^\sim=\hat I,
$$
where $I$ and $\hat I$ are the identity operators in $L_2(X,\mu)$ and $L_2(\hat X,\hat\mu),$ respectively, and
so $\cF^\natural=(\cF^\sim)^{-1}.$

Recall the formulas for convolutions and their Fourier transforms. Let
\begin{align}
(f_1\ast f_2)(x)=\int_X f_1(x-y)f_2(y)d\mu(y), \quad x\in X\label{eq:conv1}\\
(g_1\ast g_2)(\phi)=\int_{\hat X} g_1(\phi\xi^{-1})g_2(\xi)d\hat\mu(\xi)\quad \phi\in\hat X\label{eq:conv2}
\end{align}
According to the Young inequality \cite[p.157]{Edwards82}, for any functions $f_1\in L_p, f_2\in L_q$
  \begin{equation}\label{eq:Young}
\|f_1\ast f_2\|_r\le \|f_1\|_p\|f_2\|_q,
   \end{equation}
where $p,q\in[1,\infty]$ and $\frac 1r=\frac 1p+\frac 1q-1\ge0.$ Thus, the convolutions \eqref{eq:conv1},\,\eqref{eq:conv2}
are in $L_2$ if one of the functions is in $L_2$ and the other in $L_1.$ We have
    \begin{equation} \begin{array}{c}
(\tilde{f_1\ast f_2})(\xi)=\tilde f_1(\xi)\tilde f_2(\xi), \quad \xi\in\hat X \\
(g_1\ast g_2)^\natural(x)=g_1^\natural(x)g_2^\natural(x), \quad x\in X.\end{array}\label{eq:Fc}
    \end{equation}
These formulas are useful for spectral analysis of integral operators that commute with translations on the groups $X$ and $\hat X.$
Consider the integral operators on the spaces $L_2(X,\mu)$ and $L_2(\hat X,\hat\mu)$ given by
\begin{align*}
Af(x)&=\int_X a(x-y)f(y)d\mu(y)\\
Bg(\xi)&=\int_{\hat X} b(\xi\eta^{-1})g(\eta)d\hat\mu(\eta),
\end{align*}
where $a(x), x\in X$ and $b(\xi),\xi\in \hat X$ are $L_1$ kernels. We have
\begin{align*}
\tilde{Af}(\xi)&=\tilde a(\xi)\tilde f(\xi)\\
(Bg)^\natural(x)&=b^\natural(x)g^\natural(x),
\end{align*}
which shows that the operators $\cF^\sim A\cF^\natural$ and $\cF^\natural B\cF^\sim$ are diagonal (i.e., are multiplication operators).
Their spectra are given by the values of the functions $\tilde a(\xi),\xi\in\hat X$ and $b^\natural(x), x\in X.$ In particular,
if $X$ is an infinite compact group, then the spectrum of $A$ is discrete and the spectrum of $B$ of continuous. If $X$ is locally
compact, then both the spectra of $A$ and $B$ are continuous. This happens, for instance, when the graph of the function $b^\natural(x),
x\in X$ has constant segments supported on sets of positive measure, which corresponds to the continuous spectrum in spectral theory. We will encounter this situation later in the paper.

\remove{{\em Remark on notation:} The symmetry of Fourier transforms \eqref{eq:ft}-\eqref{eq:ift} is more apparent if the dual group $\hat X$ is written additively, similarly to $X.$ Then one can define a bilinear map $\langle\phi,x\rangle: X\times \hat X\to \reals/\integers$ with the natural
properties
   \begin{align*}
      \langle\phi,x_1+x_2\rangle=\langle \phi,x_1\rangle+\langle\phi,x_2,\rangle, \quad \langle\phi,-x\rangle=-\langle\phi,x\rangle\\
      \langle\phi_1+\phi_2,x\rangle=\langle \phi_1,x\rangle+\langle\phi_2,x,\rangle, \quad \langle-\phi,x\rangle=-\langle\phi,x\rangle
  \end{align*}
Then for a fixed $\phi\in\hat X,$ the function $\langle\phi,x\rangle$ is a character on $X$;  for a fixed $x\in X$, the function
$\langle\phi,x\rangle$ is a character on $\hat X$, and all the characters of $X$ and $\hat X$ are of this form.
Define the pairing $\{\phi,x\}:X\times \hat X\to \complexes^\ast$ by $\{\phi,x\}=\exp(2\pi\sqrt{-1}\langle\phi,x\rangle),$ then the Fourier
transforms \eqref{eq:ft}-\eqref{eq:ift} are written as follows
  \begin{align*}
  \tilde f(\xi)= \int_X\{\xi,x\}f(x)d\mu(x), \quad g^{\sharp}(x)=\int_{\hat X}\overline{\{\xi,x\}}g(\xi) d\hat\mu(\xi)
  \end{align*}
Even though this notation makes some formulas of this work to look more natural and also more symmetric, we have opted
for the multiplicative notation which is more standard in the current combinatorial literature.}


\subsection{Translation association schemes}
\begin{definition}\label{def2} Let $X$ be an Abelian group. A scheme $\cX(X,\mu,\cR)$ is called \emph{translation invariant} if for every $R_i,i\in \I$ and every \index{association scheme!translation {\em or} Abelian}
pair $(x,y)\in R_i,$ the pair $(x+z,y+z)\in R_i$ for all $z\in X.$ If the group $\hat X$ is written multiplicatively, then
the scheme $\hat X(\hat X,\hat\mu,\hat\cR)$ is called translation invariant if for every relation $R_i,i\in \hat\I$ and
every pair $(\phi,\psi)\in \hat R_i,$ the pair $(\phi\xi,\psi\xi)\in \hat R_i$ for all $\xi\in \hat X.$
\end{definition}
This definition is the same as in the classical case \cite{bro89}. Following it, we note that the partition $\{R_i,i\in \I\}$
of $X\times X$ can be replaced with the partition $\{N_i,i\in \I\}$ of the set $X$ itself: indeed,
letting
\begin{equation}
\gls{Ni}:=\{x\in X:(x,0)\in R_i\}, \quad i\in \I,
\end{equation}
we note that $(x,y)\in R_i$ if and only if $x-y\in N_i.$ The indicator function of the set $R_i\subset X\times X$ has the
form
\begin{equation}\label{eq:sim}
\chi_i(x,y)=\chi_i(x-y)
\end{equation}
where $\chi_i(x)$ is the indicator function of the set $N_i\subset X.$ The adjacency operators \eqref{eq:Ai} have the form
\begin{equation}\label{eq:Aii}
A_i f(x)=\int_X \chi_i(x-y)f(y)d\mu(y).
\end{equation}
Similarly, let
\begin{equation}\label{eq:hatNi}
\hat N_i:=\{\phi\in \hat X: (\phi,1)\in \hat R_i\}, \quad i\in \hat\I,
\end{equation}
where $1$ is the unit character. We obtain that $(\phi,\psi)\in\hat R_i$ if and only if $\phi\psi^{-1}\in \hat N_i,$ and the
indicator function of the relation $\hat R_i\in \hat X\times\hat X$ has the form
$$
\hat \chi_i(\phi,\psi)=\hat \chi_i(\phi\psi^{-1}),
$$
where $\hat \chi_i(\phi)$ is the indicator function of the set $\hat N_i\subset \hat X.$ The corresponding adjacency operators
have the form
\begin{equation}\label{eq:hatAi}
\hat A_ig(\phi)=\int_{\hat X} \hat \chi_i(\phi\psi^{-1})g(\psi)d\mu(\psi).
\end{equation}
We also easily obtain the following expressions for the valencies of $\cX$ and $\widehat\cX:$
\begin{align*}
\mu_i=\int_X \chi_i(x)d\mu(x),\quad i\in \I\\
\hat\mu_i=\int_{\hat X}\hat \chi_i(\phi)d\hat\mu(\phi), \quad i\in \hat \I.
\end{align*}
The classical counterparts of these expressions are found in \cite[Sect.2.10]{bro89}.

The general approach to infinite schemes developed above applies to translation schemes defined on infinite Abelian groups. However, following this
path, we would again face the question of inverting relations \eqref{eq:pij} and developing the duality theory.
For general translation-invariant schemes we could not find conditions that would enable us to express spectral
projectors $E_j$ via the adjacency
operators \eqref{eq:Ai}. Instead, we will formulate a sufficient condition for invertibility in terms of the partitions $\{N_j\}$ and
$\{\hat N_i\}.$ This condition will enable us to obtain a rather complete duality theory of translation invariant schemes on the
groups $X$ and $\hat X.$

\subsubsection{Spectrally dual partitions} Let $\cN=\{N_i,i\in\I\}$ and $\widehat\cN=\{\hat N_i, i\in \hat\I\}$ be finite or
countable partitions of the groups $X$ and $\hat X.$ Assume that the blocks of these partitions are measurable with respect to the Haar
measures $\mu$ and $\hat\mu,$ and their measure is finite. 
For every $N_i\in\cN$ and every $\hat N_j\in\widehat\cN$ define
\begin{equation}\label{eq:N}
N_{i'}=\{-x: x\in N_i\}, \quad \hat N_{j'}=\{\phi^{-1}: \phi\in \hat N_j\}.
\end{equation}
If $i'=i$ and $j'=j,$ we call such partitions {\em symmetric}.
\index{group!partition, symmetric}
 We also assume that $N_0=\{0\}\in\cN$ and $\hat N_0=\{1\}\in\widehat\cN$
(here 1 is the trivial, or unit character), and that only these subsets can have measure zero, i.e.,
$$
\mu(N_i)>0 \;\text{for }i\ne 0\quad\text{and}\quad \hat\mu(\hat N_i)>0\;\text{for }i\ne 0.
$$
Introduce the following notation (cf. \eqref{eq:I0}):
\begin{align*}
\gls{I0}&=\{i\in \I: \mu(N_i)>0\}=\begin{cases} \I\backslash \{0\} &\text{if }\mu(N_0)=0\\
\I&\text{if }\mu(N_0)>0
\end{cases}\\
\hat \I_0&=\{i\in \hat\I: \hat\mu(\hat N_i)>0\}=\begin{cases} \hat\I\backslash \{0\} &\text{if }\hat\mu(\hat N_0)=0\\
\hat\I&\text{if }\hat\mu(\hat N_0)>0.
\end{cases}
\end{align*}
The partitions $\cN$ and $\widehat\cN$ give rise to partitions of the sets $X\times X$ and $\hat X\times \hat X$ given by
\begin{align}
\cR=\{R_i, i\in\I\}, \quad R_i=\{(x,y)\in X\times X: x-y\in N_i\}\label{eq:NR}\\
\hat\cR=\{\hat R_i: i\in\hat I\}, \quad \hat R_i=\{(\phi,\psi)\in \hat X\times\hat X: \phi\psi^{-1}\in\hat N_i\}.\label{eq:hNR}
\end{align}
Denote by $\chi_i(x)$ and $\hat \chi_i(\xi)$ the indicator functions of the subsets $N_i, i\in \I$ and $\hat N_i, i\in\hat \I,$
respectively. Define the space $\Lambda_2(\cN)\subset L_2(X,\mu)$ of functions piecewise constant on the partition $\cN.$
In other words, a function $f:X\to\complexes$ is contained in $\Lambda_2(\cN)$ if and only if
\begin{equation}\label{eq:fi}
f(x)= \sum_{i\in\I_0}f_i \chi_i(x)
\end{equation}
and
$$
\int_X|f(x)|^2d\mu(x)=\sum_{i\in\I_0}|f_i|^2\mu(N_i)<\infty
$$
where $f_i$ is the value of $f$ on $N_i.$ Note that the sum in \eqref{eq:fi} contains just one nonzero term, so the issue of convergence
does not arise.
In a similar way, let us introduce the space $\Lambda_2(\widehat\cN)\subset L_2(\hat X,\hat\mu)$ of functions
piecewise constant on the partition $\widehat\cN$, letting $g\in\Lambda_2(\widehat\cN)$ if and only if
\begin{equation}\label{eq:if}
g(\xi)= \sum_{i\in\hat\I_0} g_i\hat \chi_i(\xi)
\end{equation}
and
$$
\int_{\hat X}|g(\xi)|^2d\hat\mu(\xi)=\sum_{i\in\hat\I_0}|g_i|^2\hat\mu(\hat N_i)<\infty.
$$
Obviously, relations \eqref{eq:fi} and \eqref{eq:if} hold pointwise because the sums in these expressions involve 
indicator functions of disjoint sets.

\begin{definition}\label{def:sdp} Let $\cN=\{N_i,i\in\I\}$ and $\widehat\cN=\{\hat N_i, i\in \hat\I\}$ be partitions of mutually
dual topological Abelian groups $X$ and $\hat X,$ respectively. The partitions $\cN$ and $\widehat\cN$ are called
\emph{spectrally dual} \index{group!partitions, spectrally dual}
if the Fourier transform $\cF^\sim$ is an isomorphism of the subspaces $\Lambda_2(\cN)$ and $\Lambda_2(\widehat\cN)$:
$$
\cF^\sim \Lambda_2(\cN)=\Lambda_2(\widehat \cN), \quad \Lambda_2(\cN)=\cF^\natural\Lambda_2(\widehat \cN).
$$
In other words, the Fourier transform \eqref{eq:ft} of any function of the form \eqref{eq:fi} is a function of
the form \eqref{eq:if} and conversely, the Fourier transform \eqref{eq:ift} of any function of the form \eqref{eq:if}
is a function of the form \eqref{eq:fi}.
\end{definition}

We remark that in the finite case, spectrally dual partitions were introduced by Zinoviev and Ericson \cite{zin96}
(these papers used the term {\em Fourier-invariant}, while Gluesing-Luerssen in a recent work \cite{hgl12}, calls them {\em Fourier-reflexive} partitions).
In particular,  \cite{zin96} showed that such partitions can be used to prove a Poisson summation formula
for subgroups of finite Abelian groups. Forney \cite{Forney98} discussed a generalization of this result for
locally compact Abelian groups, however he made no attempt to study spectrally dual partitions and association schemes
in the infinite case.
Later, Zinoviev and Ericson showed \cite{zin09} that the existence of spectrally dual partitions of a finite Abelian group and its dual group
is equivalent to the existence of a pair of dual translation schemes on these groups.
As a consequence, the existence of such partition forms a necessary and sufficient condition for the existence of a dual pair of
relations \eqref{eq:AE}-\eqref{eq:EA} between the bases of the Bose-Mesner algebra.

While in the finite case, the duality theory of translation schemes can be derived independently of the language of spectrally dual
partitions, we find this language quite useful for infinite schemes. In the remainder of this section,
we show that the existence of such partitions is sufficient for the invertibility of
the relation \eqref{eq:pij} in the general case of infinite groups.

\vspace*{.1in}We begin with deriving some results implied by the definition of spectrally dual partitions based on the fact that the Fourier transform
is an isometry that preserves the inner products; see \eqref{eq:ip}, \eqref{eq:Parseval}.
Let $\tilde \chi_i(x)$ and $\hat \chi_i^\natural(x)$ be the Fourier transforms of the indicator functions of the blocks:
\begin{align}
\tilde \chi_i(\phi)&=\int_X \chi_i(x)\phi(x)d\mu(x)=\int_{N_i}\phi(x)d\mu(x) \label{eq:fa}\\
\hat \chi_i^\natural(x)&=\int_{\hat X}\hat \chi_i(x)\overline{\phi(x)}d\hat\mu(\phi)=\int_{\hat N_i}\overline{\phi(x)}d\hat\mu(\phi).\label{eq:ifa}
\end{align}
By definition, we obtain
\begin{align}
\tilde \chi_i(\phi)&\simeq \sum_{k \in \hat\I_0} p_i(k)\hat \chi_k(\phi), \quad i\in \I_0 \label{eq:pik1}\\
\hat \chi_i^\natural(x)&\simeq \sum_{k\in\I_0} q_i(k) \chi_k(x), \quad i\in\hat \I_0,\label{eq:qik1}
\end{align}
where $p_i(k)$ and $q_i(k)$ are some complex coefficients.
Define the matrices
\begin{equation}\label{eig}
\gls{P}=(p_i(j)), i\in \hat \I_0, j\in\I_0; \quad \gls{Q}=(q_i(j)), i\in \I_0, j\in\hat \I_0
\end{equation}
Here we use notation $q_i(j)$ instead of more logical $\hat p_i(j)$ to conform with the classical case.
Note that by the definition of spectrally dual partitions, the matrices $P$ and $Q$ have equal ``dimensions," i.e.,
the sets $\I_0$ and $\hat\I_0$ are equicardinal. 
%
%
\begin{lemma} The coefficients $p_i(k)$ and $q_i(k)$ satisfy the relations
\begin{align}
\sum_{k\in \hat \I_0} p_i(k)\overline {p_j(k)}\hat\mu(\hat N_k)&=\delta_{ij}\mu(N_i)\label{eq:pipj}\\
\sum_{k\in \I_0} q_i(k)\overline{q_j(k)} \mu(N_k)&=\delta_{ij}\hat\mu(\hat N_i)\label{eq:qiqj}\\
p_i(j)\hat \mu(\hat N_j)&=\overline{q_j(i)}\mu(N_i)\label{eq:mv}\\
\sum_{k\in\I_0} \frac 1{\mu(N_k)} p_k(i)\overline{p_k(j)}&=\delta_{ij}\frac 1{\hat\mu(\hat N_i)}\label{eq:o2}\\
\sum_{k\in\hat\I_0}\frac 1{\hat\mu(\hat N_k)} q_k(i)\overline{q_k(j)}&=\delta_{ij}\frac 1{\mu(N_j)}.\label{eq:o3}
\end{align}
The matrices $P$ and $Q$ satisfy
\begin{equation}\label{eq:PQ}
PQ=\hat I, \;QP=I
\end{equation}
where $I$ and $\hat I$ are the identity operators in the spaces of sequences indexed by $\I$ and $\hat \I$,
respectively.
\end{lemma}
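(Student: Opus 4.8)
The plan is to do everything inside the two finite-measure subspaces $\Lambda_2(\cN)\subset L_2(X,\mu)$ and $\Lambda_2(\widehat\cN)\subset L_2(\hat X,\hat\mu)$, where, once Definition \ref{def:sdp} is invoked, the statement collapses to elementary Hilbert-space bookkeeping. First I would record that $\{\chi_i\}_{i\in\I_0}$ is an orthogonal basis of $\Lambda_2(\cN)$ with $\langle\chi_i,\chi_j\rangle_{L_2(X,\mu)}=\delta_{ij}\mu(N_i)$, and likewise $\{\hat\chi_i\}_{i\in\hat\I_0}$ is an orthogonal basis of $\Lambda_2(\widehat\cN)$ with $\langle\hat\chi_i,\hat\chi_j\rangle_{L_2(\hat X,\hat\mu)}=\delta_{ij}\hat\mu(\hat N_i)$. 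Since $\mu(N_i)<\infty$ and $\hat\mu(\hat N_i)<\infty$, each $\chi_i,\hat\chi_i$ lies in $L_1\cap L_2$, so the pointwise formulas \eqref{eq:fa}, \eqref{eq:ifa} are legitimate; moreover the coefficients in \eqref{eq:pik1}, \eqref{eq:qik1} are the genuine expansion coefficients $p_i(k)=\hat\mu(\hat N_k)^{-1}\langle\tilde\chi_i,\hat\chi_k\rangle$ and $q_i(k)=\mu(N_k)^{-1}\langle\hat\chi_i^\natural,\chi_k\rangle$, the series converging in $L_2$. By Definition \ref{def:sdp} the isometry $\cF^\sim$ restricts to an isomorphism $\Lambda_2(\cN)\to\Lambda_2(\widehat\cN)$ whose inverse is the restriction of $\cF^\natural$; thus $P$ and $Q$ are the matrices, in these bases, of two mutually inverse (unitary) maps between $\Lambda_2(\cN)$ and $\Lambda_2(\widehat\cN)$.

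Next I would establish the matrix identities \eqref{eq:PQ}. Since $\cF^\natural$ is bounded on $L_2$ and equals $(\cF^\sim)^{-1}$, applying it term by term to \eqref{eq:pik1} and then expanding each $\cF^\natural\hat\chi_k$ by \eqref{eq:qik1} gives $\chi_i=\sum_{l\in\I_0}\big(\sum_{k\in\hat\I_0}p_i(k)q_k(l)\big)\chi_l$; equating coefficients in the orthogonal basis $\{\chi_l\}$ yields $\sum_k p_i(k)q_k(l)=\delta_{il}$, and the mirror computation starting from \eqref{eq:qik1} gives $\sum_k q_i(k)p_k(l)=\delta_{il}$. With the indexing fixed by \eqref{eq:pik1}, \eqref{eq:qik1}, these are exactly $QP=I$ and $PQ=\hat I$.

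The three quadratic relations then come from the inner-product identities \eqref{eq:Parseval}, \eqref{eq:ip}. For \eqref{eq:pipj} I apply \eqref{eq:ip} to $f_1=\chi_i$, $f_2=\chi_j$: the left-hand side is $\delta_{ij}\mu(N_i)$, while substituting \eqref{eq:pik1} and using orthogonality of the $\hat\chi_k$ turns the right-hand side into $\sum_k p_i(k)\overline{p_j(k)}\hat\mu(\hat N_k)$. Relation \eqref{eq:qiqj} is obtained symmetrically from \eqref{eq:ip} applied to $g_1=\hat\chi_i$, $g_2=\hat\chi_j$ together with \eqref{eq:qik1}. For \eqref{eq:mv} I use that $\cF^\natural$ is the Hilbert-space adjoint of $\cF^\sim$ (being its inverse isometry, $\cF^\natural=(\cF^\sim)^\ast$): the quantity $\langle\tilde\chi_i,\hat\chi_j\rangle_{L_2(\hat X,\hat\mu)}$ equals $p_i(j)\hat\mu(\hat N_j)$ directly from \eqref{eq:pik1}, and equals $\langle\chi_i,\cF^\natural\hat\chi_j\rangle_{L_2(X,\mu)}=\overline{q_j(i)}\mu(N_i)$ after using \eqref{eq:qik1}.

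Finally, \eqref{eq:o2} and \eqref{eq:o3} I would deduce by combining \eqref{eq:mv} with \eqref{eq:PQ} rather than proving them from scratch. Conjugating \eqref{eq:mv} gives $\overline{p_k(j)}=q_j(k)\mu(N_k)/\hat\mu(\hat N_j)$, so $\sum_{k\in\I_0}\mu(N_k)^{-1}p_k(i)\overline{p_k(j)}$, after the $\mu(N_k)$ factors cancel, equals $\hat\mu(\hat N_j)^{-1}\sum_k q_j(k)p_k(i)=\delta_{ij}\hat\mu(\hat N_i)^{-1}$ by \eqref{eq:PQ}; relation \eqref{eq:o3} follows identically with $P,Q$ and $\mu,\hat\mu$ interchanged. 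As for where the difficulty lies: there is essentially no analytic obstacle here — the delicate topology/convergence issues flagged in Section \ref{sect:BM} do not arise, because each $\Lambda_2$ is an honest Hilbert space equipped with an explicit orthogonal basis of indicator functions and $\cF^\sim$ is already a global isometry — so the only thing that genuinely needs care is keeping straight the two index sets $\I_0,\hat\I_0$ and the two measures $\mu,\hat\mu$; all the real content has been pushed into the hypothesis that $\cN$ and $\widehat\cN$ are spectrally dual.
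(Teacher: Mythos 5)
Your proposal is correct and follows essentially the same route as the paper: the quadratic relations \eqref{eq:pipj}, \eqref{eq:qiqj} come from the Parseval/inner-product identities \eqref{eq:ip} applied to the indicators together with orthogonality of the blocks, \eqref{eq:mv} from computing $\int_{\hat X}\hat\chi_j\tilde\chi_i\,d\hat\mu$ two ways, and \eqref{eq:PQ} from composing the two expansions \eqref{eq:pik1}, \eqref{eq:qik1}. The only (immaterial) difference is that you derive \eqref{eq:o2}--\eqref{eq:o3} from \eqref{eq:mv} combined with \eqref{eq:PQ}, whereas the paper combines \eqref{eq:mv} with \eqref{eq:pipj}--\eqref{eq:qiqj}; both substitutions are one line and equally valid.
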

{\em Remark:} The finite analogs of these relations are well known in the classical theory; see \cite{ban84}, Theorem 2.3.5 or \cite{bro89}, Lemma 2.2.1(iv).

\begin{proof} To prove \eqref{eq:pipj}, observe that the isometry property of the Fourier transform \eqref{eq:ip} implies that
\begin{align*}
\int_{\hat X} \tilde \chi_i(\phi)\overline{\tilde \chi_j(\phi)}d\hat\mu(\phi)&=\int_X \chi_i(x)\overline{\chi_j(x)}d\mu(x)
=\delta_{ij}\int_X \chi_i(x)d\mu(x)\\
&=\delta_{ij}\mu(N_i).
\end{align*}
Substituting \eqref{eq:pik1} and using the fact that $\hat \chi_k$ is
$\{0,1\}$-valued and that $\hat \chi_k\hat \chi_{k'}=0$ if $k\ne k',$ we obtain
\begin{align*}
\int_{\hat X}\tilde \chi_i(\phi)\overline{\tilde \chi_j(\phi)}d\hat\mu(\phi)&=
\sum_{k\in\hat\I_0}p_i(k)\overline{p_i(k)}\int_{\hat X}\hat \chi_k(\phi)d\hat\mu(\phi)\\
&=\sum_{k\in\hat\I_0}p_i(k)\overline{p_j(k)}
\hat\mu(\hat N_k).
\end{align*}
The proof of \eqref{eq:qiqj} is completely analogous and will be omitted.
To prove \eqref{eq:mv}, multiply both sides of \eqref{eq:pik1} by $\hat \chi_i(\phi)$ and integrate on $\phi$. We obtain
$$
\int_{\hat X} \hat \chi_j(\phi)\tilde \chi_i(\phi)d\hat \mu(\phi)=\sum_{k\in \hat \I_0} p_i(k)
\int_{\hat X}\hat \chi_j(\phi) \hat \chi_k(\phi)d\hat\mu(\phi)=p_i(j)\hat \mu(\hat N_j).
$$
On the other hand, using \eqref{eq:ip} and \eqref{eq:qik1}, we obtain
\begin{align*}
\int_{\hat X} \hat \chi_j(\phi)\tilde \chi_i(\phi)d\hat \mu(\phi)&=\int_{X} \overline{\hat \chi_j^\natural(x)} \chi_i(x)d\mu(x)
=\sum_{k\in\I_0}\overline{q_j(k)}\int_X \chi_i(x)\chi_k(x)d\mu(x)\\
&=\overline{q_j(i)}\mu(N_i).
\end{align*}
The last two relations imply \eqref{eq:mv}. Relations \eqref{eq:o2} and \eqref{eq:o3} follow from \eqref{eq:mv} and
\eqref{eq:qiqj}-\eqref{eq:pipj}.
Finally, \eqref{eq:PQ} follows directly from \eqref{eq:pik1}, \eqref{eq:qik1}.
\end{proof}

We can also consider the matrices
\begin{align*}
T&=\bigg[\Big(\frac{\hat\mu(\hat N_i)}{\mu(N_i)}\Big)^{\half} p_i(j)\bigg], i\in \I_0, j\in\hat \I_0;
& U&=\bigg[\Big(\frac{\mu( N_i)}{\hat\mu(\hat N_i)}\Big)^{\half} q_i(j)\bigg], i\in \hat \I_0, j\in \I_0,\\
\end{align*}
then \eqref{eq:pipj}, \eqref{eq:qiqj}, and \eqref{eq:mv} are equivalently written as
\begin{equation}\label{eq:TU}
T T^\ast=I, \quad UU^\ast=\hat I, \quad U=T^\ast.
\end{equation}
All these relations express in different ways the fact (implied by the definition of spectrally dual partitions)
that the Fourier transform is an isometric isomorphism of the subspaces $\Lambda_2(\cN)$ and $\Lambda_2(\widehat\cN)$
of functions constant on the blocks of the partitions.

\vspace*{.2in} So far it sufficed to assume that relations \eqref{eq:pik1}, \eqref{eq:qik1} hold as equalities
of functions in $L_2(X,\mu)$ and $L_2(\hat X,\hat\mu),$ respectively.
With a minor adjustment, relations \eqref{eq:pik1} and \eqref{eq:qik1} can be shown to hold pointwise.
\begin{lemma}\label{lemma:pointwise} Let $p_i(0)=\mu(N_i)$ and $q_i(0)=\hat\mu(\hat N_i)$. Then
\begin{align}
\tilde \chi_i(\phi)&=\sum_{k\in \I} p_i(k)\hat \chi_i(\phi) \quad \text{for all } \phi\in \hat X \label{eq:pw1}\\
\hat \chi_i^\natural(x)&=\sum_{k\in \I} q_i(k)\chi_k(x) \quad \text{for all } x\in X.\label{eq:pw2}
\end{align}
\end{lemma}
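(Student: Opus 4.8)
The plan is to upgrade the $L_2$-equalities \eqref{eq:pik1}, \eqref{eq:qik1} to pointwise equalities by carefully accounting for the single ambiguous block, namely the measure-zero set $N_0=\{0\}$ (resp. $\hat N_0=\{1\}$) in the schemes of type $(S_3)$ and $(S_4)$. For $i\in\I_0$ the function $\tilde\chi_i$ is, by definition of spectrally dual partitions, an element of $\Lambda_2(\widehat\cN)$, so it agrees almost everywhere with a function of the form $\sum_{k\in\hat\I_0} p_i(k)\hat\chi_k$. Since the blocks $\hat N_k$, $k\in\hat\I_0$, all have positive measure and partition $\hat X$ up to the single point of $\hat N_0$, two piecewise-constant functions on $\widehat\cN$ that agree a.e. must in fact agree at every point of $\hat X\setminus\hat N_0$; this pins down all the coefficients $p_i(k)$, $k\in\hat\I_0$, unambiguously and shows \eqref{eq:pik1} holds everywhere except possibly at $\phi=1$. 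So the only thing to check is the value at the unit character (resp. at $x=0$), and this is exactly what the normalization $p_i(0)=\mu(N_i)$, $q_i(0)=\hat\mu(\hat N_i)$ is designed to supply.

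First I would observe that $\tilde\chi_i(\phi)$, being given by the absolutely convergent integral $\int_{N_i}\phi(x)\,d\mu(x)$ in \eqref{eq:fa} with $\mu(N_i)<\infty$, is a genuine continuous function of $\phi$ on $\hat X$ (uniform continuity of characters on compact sets, or dominated convergence). Likewise $\hat\chi_i^\natural(x)=\int_{\hat N_i}\overline{\phi(x)}\,d\hat\mu(\phi)$ is continuous in $x$. Then I would evaluate at the distinguished point: $\tilde\chi_i(1)=\int_{N_i} 1\,d\mu(x)=\mu(N_i)$ directly from \eqref{eq:fa}, and similarly $\hat\chi_i^\natural(0)=\hat\mu(\hat N_i)$ from \eqref{eq:ifa}. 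On the other side, the claimed expansion $\sum_{k\in\I}p_i(k)\hat\chi_i(\phi)$ evaluated at $\phi=1$ picks out only the $k=0$ term, since the unit character lies in $\hat N_0$ and in no other block; hence it equals $p_i(0)$. Setting $p_i(0):=\mu(N_i)$ makes the two sides match at $\phi=1$, and off $\hat N_0$ they already match by the previous paragraph. This gives \eqref{eq:pw1}; the argument for \eqref{eq:pw2} is identical with the roles of $X$ and $\hat X$ interchanged.

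The one point requiring a little care — and the only place where the argument is not entirely mechanical — is the claim that two elements of $\Lambda_2(\widehat\cN)$ which coincide as $L_2$-functions must coincide pointwise on $\hat X\setminus\hat N_0$. This is where the standing hypothesis that all blocks other than the trivial one have strictly positive Haar measure is essential: on each such block the a.e.-equality forces equality of the (constant) values, because a block of positive measure cannot be a null set. I would state this as a one-line remark. Everything else — continuity of the Fourier transforms of indicator functions of finite-measure sets, and the evaluation at the identity — is routine and I would not belabor it. No genuine obstacle arises; the content of the lemma is entirely in choosing the correct value at the atom, and the normalization $p_i(0)=\mu(N_i)$, $q_i(0)=\hat\mu(\hat N_i)$ is precisely that choice.
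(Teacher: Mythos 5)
Your proof is correct and follows essentially the same route as the paper: both arguments note that the two sides of \eqref{eq:pik1} can differ only on a null set, that on each positive-measure block a.e.\ equality of constants forces pointwise equality, and that the only remaining discrepancy at the atom $\hat N_0=\{1\}$ is removed by evaluating $\tilde\chi_i(1)=\mu(N_i)$ and defining $p_i(0)$ accordingly. The continuity observation you include is harmless but not needed; the paper dispenses with it.
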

\begin{proof}
For instance, let us prove \eqref{eq:pw1}. Since the function $\tilde \chi_i(\phi)$ is piecewise constant, the two sides of
\eqref{eq:pik1} can be different only on a set of measure 0. At the same time, by our assumption, $\hat\mu(\hat N_i)>0$ for $i\ne 0,$ so
for $\phi\ne 1$, Eq.~\eqref{eq:pik1} holds pointwise on $\hat X.$ Thus, the two sides of this equation can be different on the set $\hat N_0=\{1\}$ if
and only if $\hat\mu(\hat N_0)>0.$ At the same time, $\tilde \chi_i(1)=\mu(N_i)$ by \eqref{eq:ft}, so our definition of
$p_i(0)$ ensures that \eqref{eq:pw1} holds for all $\phi\in \hat X.$ The proof of \eqref{eq:pw2} follows by the same
arguments.
\end{proof}


\subsubsection{Spectral decomposition}
Recall that we defined adjacency operators of the schemes $\cX$ and $\widehat\cX$ in \eqref{eq:Aii} and \eqref{eq:hatAi}, with
kernels $\chi_i(x-y)$ and $\hat \chi_i(\phi\psi^{-1}),$ respectively.
Following our plan of developing a duality theory, let us also introduce orthogonal projectors (cf. \eqref{eq:pij}).
\index{orthogonal projector}
 Apply
the Fourier transform \eqref{eq:ift} on both sides of \eqref{eq:pik1} and the Fourier transform \eqref{eq:ft} on both
sides of \eqref{eq:qik1}. We obtain
\begin{align}
\chi_i(x)&\simeq \sum_{k\in\hat\I_0} p_i(k)\hat \chi_k^\natural(x), \quad i\in \I_0\label{eq:pik2}\\
\hat \chi_i(\phi)&\simeq \sum_{k\in\I_0} q_i(k)\tilde \chi_k(\phi), \quad i\in\hat\I_0. \label{eq:qik2}
\end{align}
Define the operator $E_k$ with the kernel $\hat \chi_k^\natural(x-y), k\in \hat\I_0$:
\begin{equation}\label{eq:Ek}
\gls{Ek} f(x)=\int_X \hat \chi_k^\natural(x-y)f(y)d\mu(y)
\end{equation}
and the operator $\hat E_k$ with the kernel $\tilde \chi_k(\phi\psi^{-1}), k\in\I_0:$
\begin{equation}\label{eq:hatEk}
\hat E_k g(\phi)=\int_{\hat X} \tilde \chi_k(\phi\psi^{-1})g(\psi)d\hat\mu(\psi).
\end{equation}
Then relations \eqref{eq:qik1} and \eqref{eq:pik2} can be expressed as the following operator relations in $L_2(X,\mu)$:
\begin{align}
E_i&=\sum_{k\in\I_0} q_i(k) A_k, \quad i\in\hat \I_0\label{eq:Ei2}\\
A_j&=\sum_{k\in\hat\I_0} p_j(k)E_k, \quad j\in\I_0.   \label{eq:Aj2}
\end{align}
Likewise, relations \eqref{eq:pik1} and \eqref{eq:qik2} can be written as operator equalities in $L_2(\hat X,\hat\mu)$ as follows:
\begin{align}
\hat E_i=\sum_{k\in\hat\I_0}p_i(k)\hat A_k, \quad i\in \I_0\label{eq:Ei3}\\
\hat A_j=\sum_{k\in\I_0}q_j(k)\hat E_k, \quad j\in \hat\I_0.\label{eq:Aj3}
\end{align}
The pairs \eqref{eq:Ei2}-\eqref{eq:Aj2} and \eqref{eq:Ei3}-\eqref{eq:Aj3} are mutually inverse; cf \eqref{eq:PQ}, \eqref{eq:TU} and also
\eqref{eq:AE}-\eqref{eq:EA}.
\begin{lemma}\label{lemma:idem}
The operators from the families $\{E_k, k\in\I_0\}$ and $\{\hat E_k, k\in\I_0\}$ are self-adjoint commuting orthogonal projectors in their respective
$L_2$ spaces.
Furthermore,
\begin{equation}\label{eq:spectral}
\left.
\begin{array}{c}
V_k:=E_k L_2(X,\mu)=\cF^\natural L_2(\hat N_k,\hat\mu)\\
\hat V_k:=\hat E_k L_2(\hat X,\hat \mu)=\cF^\sim L_2(N_k,\mu)\\
L_2(X,\mu)=\bigoplus_{k\in\hat \I_0} V_k\\
L_2(\hat X,\hat\mu)=\bigoplus_{k\in\I_0} \hat V_k
\end{array}\right.
\end{equation}
\end{lemma}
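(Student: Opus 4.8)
The plan is to establish each claim of Lemma~\ref{lemma:idem} by transporting statements through the Fourier isometry $\cF^\sim$, which turns the convolution operators $E_k$ and $\hat E_k$ into multiplication operators. First I would observe that by \eqref{eq:Fc}, the operator $E_k$ with kernel $\hat\chi_k^\natural(x-y)$ satisfies $\cF^\sim E_k \cF^\natural = M_{\hat\chi_k}$, the operator of multiplication by $\hat\chi_k(\phi)$ on $L_2(\hat X,\hat\mu)$; similarly $\cF^\natural \hat E_k \cF^\sim = M_{\chi_k}$, multiplication by $\chi_k(x)$ on $L_2(X,\mu)$. Since $\hat\chi_k$ is a $\{0,1\}$-valued function (an indicator of a measurable set), $M_{\hat\chi_k}$ is the orthogonal projection onto $L_2(\hat N_k,\hat\mu)$, hence self-adjoint and idempotent; because $\cF^\sim$ is a unitary isometry by \eqref{eq:Parseval}--\eqref{eq:ip}, these properties are inherited by $E_k = \cF^\natural M_{\hat\chi_k}\cF^\sim$. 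Commutativity $E_j E_k = E_k E_j$ follows from $M_{\hat\chi_j} M_{\hat\chi_k} = M_{\hat\chi_j \hat\chi_k} = M_{\hat\chi_k}M_{\hat\chi_j}$, and in fact $E_j E_k = 0$ for $j\ne k$ since the blocks $\hat N_j$ are disjoint. The same argument with the roles of $X$ and $\hat X$ exchanged handles the family $\{\hat E_k\}$.

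Next I would identify the ranges. The range of $M_{\hat\chi_k}$ is exactly $L_2(\hat N_k,\hat\mu)$ (viewed as the subspace of $L_2(\hat X,\hat\mu)$ of functions supported on $\hat N_k$), so applying $\cF^\natural$ gives $V_k := E_k L_2(X,\mu) = \cF^\natural L_2(\hat N_k,\hat\mu)$, which is the first line of \eqref{eq:spectral}; the second line is symmetric. For the direct-sum decompositions, note that the blocks $\{\hat N_k, k\in\hat\I_0\}$ together with possibly $\hat N_0$ partition $\hat X$, and by the convention of Lemma~\ref{lemma:pointwise} we have $\hat\mu(\hat N_0)$ either zero (in which case $\hat N_0$ contributes nothing to $L_2$) or positive (in which case $0\in\hat\I_0$). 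Hence $L_2(\hat X,\hat\mu) = \bigoplus_{k\in\hat\I_0} L_2(\hat N_k,\hat\mu)$ as an orthogonal direct sum of the subspaces of functions supported on the blocks, and applying the unitary $\cF^\natural$ gives $L_2(X,\mu) = \bigoplus_{k\in\hat\I_0} V_k$; the last line of \eqref{eq:spectral} is again symmetric. One should also remark that this is consistent with \eqref{eq:I}: summing the projectors $E_k$ over $k\in\hat\I_0$ gives the identity because the $\hat\chi_k$ sum to the constant function $1$ on $\hat X$, whose image under $M_{(\cdot)}$ is $\hat I$, which pulls back to $I$.

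I expect the only genuine subtlety—and the point that needs care rather than cleverness—to be the bookkeeping around the block $N_0=\{0\}$ (resp.\ $\hat N_0=\{1\}$) in the cases $(S_3)$ and $(S_4)$ where $\mu_0 = 0$: one must be sure that the index set governing the decomposition is $\hat\I_0$ (not $\hat\I$), that omitting a measure-zero block does not affect any $L_2$ identity, and that the pointwise conventions of Lemma~\ref{lemma:pointwise} are compatible with the $L_2$-level statements used here. A secondary point to verify cleanly is that $E_k$ is indeed a bounded operator to begin with, so that ``self-adjoint orthogonal projector'' is meaningful; this follows from Lemma~\ref{lemma:bound} applied to the translation scheme $\widehat\cX$ after passing through the Fourier transform, or simply from the fact that $M_{\hat\chi_k}$ has operator norm $\le 1$ and conjugation by a unitary preserves the norm. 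Everything else is a direct transcription through the Fourier isometry, and the operator relations \eqref{eq:Ei2}--\eqref{eq:Aj2} (together with \eqref{eq:PQ}) confirm that the $V_k$ are precisely the common eigenspaces of the adjacency operators $A_j$, as in the finite case.
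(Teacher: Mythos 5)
Your proof is correct and follows essentially the same route as the paper's: both rest on the Fourier isometry together with the fact that the indicators $\hat\chi_k$ are mutually orthogonal idempotents under pointwise multiplication — the paper carries this out at the level of kernels (Hermitian symmetry of $\hat\chi_k^\natural(x-y)$ and the composition identity $\int_X\hat\chi_k^\natural(x-z)\hat\chi_l^\natural(z-y)\,d\mu(z)=\delta_{kl}\hat\chi_k^\natural(x-y)$ via the Parseval relations), whereas you conjugate the operators themselves into the multiplication operators $M_{\hat\chi_k}$. Your packaging has the mild advantage of making the range identification $V_k=\cF^\natural L_2(\hat N_k,\hat\mu)$ and the direct-sum decompositions explicit, which the paper dismisses as ``immediate.''
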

\begin{proof} Using \eqref{eq:+.},
the kernels of the operators $E_k$ and $\hat E_k$ can be written as
\begin{align}
\hat \chi_k^\natural(x-y)=\int_{\hat N_k} \overline{\phi(x)}\phi(y)d\hat\mu(\phi)  \label{eq:Ei4}\\
\tilde \chi_k(\phi\psi^{-1})=\int_{N_k}\phi(x)\overline{\psi(x)}d\mu(x)\label{eq:^Ei4}
\end{align}
implying that
\begin{equation}\label{eq:ad}
\hat \chi_k^\natural(x-y)=\overline{\hat \chi_k^\natural(y-x)}, \quad \tilde \chi_k(\phi\psi^{-1})=
\overline{\tilde \chi_k(\psi\phi^{-1})},
\end{equation}
i.e.,
$$
E_k=E_k^\ast, k\in \hat\I_0,\quad \hat E_k=\hat E_k^\ast, k\in \I_0.
$$
Using the isometry conditions \eqref{eq:ip}, we obtain the relations
\begin{align*}
\int_X \hat \chi_k^\natural(x-z)\hat \chi_l^\natural(z-y)d\mu(z)&=\delta_{kl}\hat \chi_k^\natural( x-y)\\
\int_{\hat X} \tilde \chi_k(\phi \xi^{-1})\tilde \chi_l(\xi\psi^{-1})d\hat \mu(\xi)&=\delta_{kl}\tilde \chi_k(\phi\psi^{-1}),
\end{align*}
which in the operator form are expressed as
$$
E_k E_l=\delta_{kl}E_k, \quad \hat E_k\hat E_l=\delta_{kl}\hat E_k.
$$
Now the claims in \eqref{eq:spectral} are immediate.
\end{proof}

\vspace*{.1in}{\em Remark:} Formulas \eqref{eq:Ei4} and \eqref{eq:^Ei4} generalize the following well-known expressions for the idempotents of finite translation schemes \cite[Eq.(2.21)]{bro89}:
   $$
   E_k=\frac 1{\card(X)}\sum_{\phi\in \hat N_k} \phi\phi^\dag,\;k\in\hat\I; \qquad \hat E_k=\frac 1{\card(\hat X)}\sum_{x\in N_k} \bfx \bfx^\dag,\;k\in\I,
  $$
where $\phi=\{\phi(x), x\in X\}$ and $\bfx=\{\hat{\hat x}(\phi), \phi\in\hat X\}$ (here the coordinates of the vector $\bfx$ are the values
of the character $\hat{\hat x}\in \hat{\hat X}$, viz., $\hat{\hat x}(\phi)=\phi(x), x\in X$).

As a conclusion, the spectral decomposition of families of commuting normal operators $\{A_i, i\in\hat\I_0\}$ and
$\{\hat A_i, i\in\I_0\}$ is given by Equations \eqref{eq:Aj2} and \eqref{eq:Aj3}. The coefficients $\{p_j(k), j\in \I_0,k\in \hat\I_0\}$ and $\{q_j(k), j\in \hat\I_0, k\in\I_0\}$ give the
eigenvalues of the operators $A_j$ and $\hat A_j,$ respectively. An important related observation is that for infinite
groups, eigenvalues in at least one of these series have infinite multiplicity. Indeed, \eqref{eq:spectral} implies that
\begin{align*}
\text{mult}\, p_j(k)&=\dim V_k=\dim \cF^\natural L_2(\hat N_k,\hat\mu)=\dim L_2(\hat N_k,\hat\mu)\\
\text{mult}\, q_j(k)&=\dim \hat V_k=\dim\cF L_2(N_k,\mu)=\dim L_2(N_k,\mu).
\end{align*}
Therefore, the multiplicity of the eigenvalues $p_j(k)$ is finite if and only if the group $\hat X$ is discrete, and
the multiplicity of $q_j(k)$ is finite if and only if $X$ is discrete. However, if both $X$ and $\hat X$ are discrete,
Pontryagin's duality theory implies that they are both compact and therefore finite (see \cite{hew6370}, Theorem 23.17).
We obtain the following alternative for infinite groups.
\begin{lemma} If the group $X$ is compact and $\hat X$ is discrete then
$$
\text{\rm mult}\, p_j(k)<\infty,\quad \text{\rm mult}\, q_j(k)=\infty
$$
If both $X$ and $\hat X$ are noncompact (and therefore, not discrete), then
$$
\text{\rm mult}\, p_j(k)=\infty, \quad \text{\rm mult}\, q_j(k)=\infty.
$$
\end{lemma}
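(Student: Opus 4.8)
The plan is to obtain the lemma as a direct consequence of the two multiplicity identities displayed immediately above it, $\mathrm{mult}\,p_j(k)=\dim L_2(\hat N_k,\hat\mu)$ and $\mathrm{mult}\,q_j(k)=\dim L_2(N_k,\mu)$, which rest only on the fact that $\cF^\natural$ and $\cF^\sim$ are isometric isomorphisms (cf.\ \eqref{eq:spectral} and Lemma \ref{lemma:idem}). After that, everything reduces to deciding, for a single block of positive finite Haar measure, whether the associated $L_2$ space is finite- or infinite-dimensional.

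So I would first record the elementary dichotomy. If a second-countable locally compact Abelian group $G$ is discrete, its Haar measure is a positive multiple of counting measure; hence any measurable set of finite measure is finite, and its $L_2$ space has finite dimension equal to its cardinality. If $G$ is not discrete, then every compact neighbourhood of the identity is infinite (otherwise the identity would be an isolated point) yet carries finite Haar measure, which forces singletons to have measure zero; so by translation invariance the Haar measure is non-atomic, and then $L_2$ of any set of positive measure is infinite-dimensional — repeatedly split the set into two pieces of positive measure and use the pairwise-orthogonal indicator functions. In particular an infinite compact Abelian group is never discrete, so its Haar measure is non-atomic.

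Then I would feed in Pontryagin duality in the form already cited in the text, \cite[Thm.~23.17]{hew6370}: $X$ is discrete if and only if $\hat X$ is compact, and symmetrically. Since $X$ is infinite, $X$ and $\hat X$ cannot both be compact. In the first case of the lemma, $X$ is compact and infinite, hence non-discrete, so $\mu$ is non-atomic and $\dim L_2(N_k,\mu)=\infty$ for every $k\in\I_0$, giving $\mathrm{mult}\,q_j(k)=\infty$; at the same time $\hat X$ is discrete, so each block $\hat N_k$ with $k\in\hat\I_0$ has finite counting measure, hence is a finite set, and $\mathrm{mult}\,p_j(k)=\dim L_2(\hat N_k,\hat\mu)<\infty$. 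In the second case $X$ and $\hat X$ are both noncompact, so by the cited theorem neither is discrete; both $\mu$ and $\hat\mu$ are then non-atomic, both $L_2(N_k,\mu)$ and $L_2(\hat N_k,\hat\mu)$ are infinite-dimensional, and $\mathrm{mult}\,p_j(k)=\mathrm{mult}\,q_j(k)=\infty$. (The remaining logical possibility for an infinite group, $X$ discrete and $\hat X$ compact, is the mirror image of the first case, with the roles of $p$ and $q$ interchanged.)

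There is no real obstacle here: the only ingredients beyond bookkeeping are the two standard measure-theoretic facts used in the second step — non-atomicity of Haar measure on a non-discrete locally compact group, and infinite-dimensionality of $L_2$ of a positive-measure subset of a non-atomic $\sigma$-finite measure space — so the lemma is essentially a corollary of the computation preceding it combined with Pontryagin's duality theorem.
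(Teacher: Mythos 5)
Your argument is correct and is essentially the paper's own: the lemma is derived from the identities $\mathrm{mult}\,p_j(k)=\dim L_2(\hat N_k,\hat\mu)$ and $\mathrm{mult}\,q_j(k)=\dim L_2(N_k,\mu)$ established just before the statement, combined with the observation that these dimensions are finite precisely when the corresponding group is discrete, plus Pontryagin duality (\cite[Thm.~23.17]{hew6370}) to rule out both groups being discrete. Your proposal merely spells out the non-atomicity/splitting argument that the paper leaves implicit, which is a harmless elaboration.
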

By a convention in spectral theory, eigenvalues of infinite multiplicity account for a continuous spectrum. Thus, according to
this lemma, in the case of infinite groups at least one of the sequences of operators $A_j,\hat A_j$ necessarily has continuous spectrum.

\subsubsection{Maximality of eigenspaces}
\begin{lemma} Spectral decomposition \eqref{eq:Aj2} has the property that for any $k_1,k_2\in \hat\I_0, k_1\ne k_2$ there exists
an operator $A_j,j\in\I_0$ such that $p_j(k_1)\ne p_j(k_2).$ The decomposition \eqref{eq:Aj3} has an analogous property
with respect to the eigenvalues $q_j(k)$ and operators $\hat A_j.$ Such decompositions are called \emph{maximal}; see
\eqref{eq:max}.
\end{lemma}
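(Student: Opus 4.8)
The plan is to deduce maximality directly from the (weighted) orthogonality relations \eqref{eq:o2}--\eqref{eq:o3} already established for the eigenvalue matrices, which say precisely that the relevant coordinate vectors of $P$ and of $Q$ are pairwise distinct. Recall from \eqref{eq:Aj2} that $A_j=\sum_{k\in\hat\I_0}p_j(k)E_k$ with the $E_k$ mutually orthogonal projectors summing to the identity, so $p_j(k)$ is exactly the scalar by which $A_j$ acts on $V_k=E_kL_2(X,\mu)$. Fix $k_1\ne k_2$ in $\hat\I_0$; the failure of maximality that we must exclude is the situation $p_j(k_1)=p_j(k_2)$ for every $j\in\I_0$.

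Assume this holds, for contradiction. First I would apply \eqref{eq:o2} with $i=j=k_1$, which gives $\sum_{k\in\I_0}\mu(N_k)^{-1}|p_k(k_1)|^2=\hat\mu(\hat N_{k_1})^{-1}$; note this is legitimate and strictly positive because $\mu(N_k)>0$ for every $k\in\I_0$ by the definition of $\I_0$, and because $0<\hat\mu(\hat N_{k_1})<\infty$. Next I would apply \eqref{eq:o2} with $i=k_1$, $j=k_2$, which gives $\sum_{k\in\I_0}\mu(N_k)^{-1}p_k(k_1)\overline{p_k(k_2)}=0$. But the standing assumption, read with the summation index $k\in\I_0$ playing the role of $j$, says $p_k(k_1)=p_k(k_2)$ for every $k\in\I_0$, so the left-hand side of the second identity equals the left-hand side of the first, and we arrive at $\hat\mu(\hat N_{k_1})^{-1}=0$, which is absurd. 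Hence some $A_j$, $j\in\I_0$, separates $V_{k_1}$ from $V_{k_2}$, i.e.\ the decomposition \eqref{eq:Aj2} is maximal in the sense of \eqref{eq:max}.

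The statement for \eqref{eq:Aj3} I would prove by the same argument with $X$ and $\hat X$ interchanged: \eqref{eq:Aj3} gives $\hat A_j=\sum_{k\in\I_0}q_j(k)\hat E_k$, so $q_j(k)$ is the eigenvalue of $\hat A_j$ on $\hat V_k$; now use \eqref{eq:o3} in place of \eqref{eq:o2}, replacing $p$ by $q$, the weights $\mu(N_{\cdot})$ by $\hat\mu(\hat N_{\cdot})$, and $\I_0$ by $\hat\I_0$, and the assumption $q_j(k_1)=q_j(k_2)$ for all $j\in\hat\I_0$ (with $k_1\ne k_2$ in $\I_0$) forces $\mu(N_{k_1})^{-1}=0$. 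An equivalent route, which I would only mention, is to observe that \eqref{eq:PQ} makes $P$ and $Q$ mutually inverse, hence injective as operators on sequence spaces, hence without repeated coordinate vectors.

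I do not expect a genuine obstacle in this lemma. The only point requiring a moment of care is bookkeeping: one must keep straight which index of $p_i(j)$ (resp.\ $q_i(j)$) labels the adjacency operator and which labels the eigenspace (cf.\ \eqref{eq:pik1}, \eqref{eq:Aj2}), and one must make sure that every weight $\mu(N_k)$ or $\hat\mu(\hat N_k)$ occurring in a denominator of \eqref{eq:o2}--\eqref{eq:o3} is strictly positive --- which is exactly why those orthogonality relations are stated with summation restricted to $\I_0$ and $\hat\I_0$, respectively.
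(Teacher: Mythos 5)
Your proof is correct, but it takes a different route from the paper's. The paper argues directly from the definition of spectrally dual partitions: writing $\cF^\sim f=g$ with $f=\sum_k f_k\chi_k\in\Lambda_2(\cN)$ and $g=\sum_k g_k\hat\chi_k\in\Lambda_2(\widehat\cN)$, one has $g_k=\sum_i p_i(k)f_i$, so the hypothesis $p_j(k_1)=p_j(k_2)$ for all $j$ would force $g_{k_1}=g_{k_2}$ for every $f$, i.e.\ $\cF^\sim$ would map $\Lambda_2(\cN)$ into the proper subspace of $\Lambda_2(\widehat\cN)$ cut out by $g_{k_1}=g_{k_2}$, contradicting the surjectivity required by Definition \ref{def:sdp}. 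You instead invoke the second orthogonality relations \eqref{eq:o2}--\eqref{eq:o3}: under the same hypothesis the inner product $\sum_k\mu(N_k)^{-1}p_k(k_1)\overline{p_k(k_2)}$, which must vanish for $k_1\ne k_2$, coincides with the strictly positive norm $\hat\mu(\hat N_{k_1})^{-1}$, a contradiction. Both arguments are ultimately powered by the same fact --- that $\cF^\sim$ is an isometric isomorphism of $\Lambda_2(\cN)$ onto $\Lambda_2(\widehat\cN)$ --- since \eqref{eq:o2}--\eqref{eq:o3} are themselves consequences of that isomorphism via \eqref{eq:mv} and \eqref{eq:pipj}--\eqref{eq:qiqj}; but yours is the quantitative version (orthogonal nonzero columns cannot coincide) while the paper's is the qualitative one (surjectivity fails). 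Your bookkeeping is right: in \eqref{eq:o2} the summation index ranges over $\I_0$ while the two fixed indices lie in $\hat\I_0$, matching the index placement in \eqref{eq:pik1}, and the needed positivity and finiteness of $\hat\mu(\hat N_{k_1})$ hold because blocks indexed by $\hat\I_0$ have positive finite measure. The aside about $PQ=QP=I$ is also a legitimate third route, modulo justifying the interchange of sums.
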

\begin{proof} Let
\begin{equation}\label{eq:mat}
\begin{array}{ll}
f(x)\simeq \sum_{k\in\I_0} f_k \chi_k(x)\in\Lambda_2(\cN), &g(\xi)\simeq \sum_{k\in\hat\I_0} g_k\hat \chi_k(\xi)\in\Lambda_2(\widehat\cN)\\
\cF^\sim f=g& f=\cF^\natural g.\end{array}
\end{equation}
Then \eqref{eq:pik1} and \eqref{eq:qik1} imply that the coefficients $f_k$ and $g_k$ are related as follows:
\begin{align}
g_k=\sum_{i\in\I_0} p_i(k) f_i, \quad k\in \hat\I_0\\
f_k=\sum_{i\in\hat\I_0} q_i(k)g_i, \quad k\in \I_0. \label{eq:fg}
\end{align}
Now suppose that for some $k_1,k_2\in\hat\I_0, k_1\ne k_2$ the equality $p_j(k_1)=p_j(k_2)$ is valid for all $j\in\I_0.$
This means that $g_{k_1}=g_{k_2}$ for all functions $f$ in \eqref{eq:mat}. But then the Fourier transform maps
$\Lambda_2(\cN)$ on the proper subspace of $\Lambda_2(\widehat\cN)$ defined by the condition $g_{k_1}=g_{k_2}$ rather than
on the entire space $\Lambda_2(\widehat\cN)$. This contradiction proves maximality of the spectral decomposition \eqref{eq:Aj2}.
Maximality of \eqref{eq:Aj3} follows in a similar way from \eqref{eq:fg}.
\end{proof}

\subsubsection{Intersection numbers} In the finite case, the product of adjacency matrices can be expanded into a linear combination
of these matrices. The coefficients of this expansion are nonnegative and are called the {\em intersection numbers} of the scheme, see
Def.~\ref{def0} and Eq.~\eqref{eq:A}(iv). In this section we establish similar relations in the general case.
\begin{theorem}\label{thm:int} We have \index{intersection numbers}
\begin{equation}\label{eq:AAij}
A_i A_j=\sum_{l\in \I_0} \gls{pijk} A_l,
\end{equation}
where
\begin{align}
p_{ij}^l=\sum_{k\in \hat \I_0} p_i(k)p_j(k)q_k(l)&=\frac 1{\mu(N_l)}\sum_{k\in \hat \I_0} p_i(k)p_j(k)\overline {p_l(k)}\hat\mu(\hat N_k),
\quad i,j,l\in\I_0.\label{eq:ser1}
\end{align}
Similarly,
\begin{equation}\label{eq:hatAAij}
\hat A_i\hat A_j=\sum_{l\in \I_0} \hat p_{ij}^l \hat A_l,\
\end{equation}
where
\begin{align}
\hat p_{ij}^l=\sum_{k\in \hat I_0} q_i(k)q_j(k)p_k(l)&
=\frac 1{\hat \mu(\hat N_l)} \sum_{k\in\I_0} q_i(k)q_j(k)\overline {q_l(k)}\mu(N_k),
\quad i,j,l\in\hat\I_0.\label{eq:ser2}
\end{align}
The series in \eqref{eq:ser1},\eqref{eq:ser2} converge absolutely.

We also have $p_{ij}^l,\hat p_{ij}^l\ge 0$ and
\begin{equation}\label{eq:commute}
p_{ij}^l=p_{ji}^l,\quad \hat p_{ij}^l=\hat p_{ji}^l
\end{equation}
for all $i,j,l\in \I_0$ or $\hat\I_0$ as appropriate.
\end{theorem}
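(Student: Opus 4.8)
The plan is to realise $A_iA_j$ as a convolution operator on $X$ whose kernel is $K_{ij}(x-y)$ with $K_{ij}:=\chi_i\ast\chi_j$ (the group convolution \eqref{eq:conv1}), to use the spectrally dual structure to see that $K_{ij}$ is again piecewise constant on the blocks of $\cN$, and then to extract both \eqref{eq:AAij} and the eigenvalue formula \eqref{eq:ser1} by passing to the Fourier side. The hat-statements will follow by running the identical argument on $\hat X$ with $\cF^\sim$ and $\cF^\natural$, and $p$ and $q$, interchanged.

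First I would check from \eqref{eq:Aii} and translation invariance of the Haar measure that $A_iA_j$ is the integral operator with kernel $K_{ij}(x-y)$, $K_{ij}=\chi_i\ast\chi_j$; since $\mu(N_i),\mu(N_j)<\infty$ force $\chi_i,\chi_j\in L_1\cap L_2$, Young's inequality \eqref{eq:Young} (with exponents $1,2,2$) gives $K_{ij}\in L_2(X,\mu)$. On the Fourier side, \eqref{eq:Fc} yields $\widetilde{K_{ij}}=\tilde\chi_i\,\tilde\chi_j$, and each factor lies in $\Lambda_2(\widehat\cN)$ by the defining property of spectrally dual partitions (relation \eqref{eq:pik1}), hence so does their product, which moreover, being $\widetilde{K_{ij}}$, lies in $L_2(\hat X,\hat\mu)$. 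Therefore $K_{ij}=\cF^\natural(\tilde\chi_i\tilde\chi_j)\in\cF^\natural\Lambda_2(\widehat\cN)=\Lambda_2(\cN)$, so $K_{ij}$ is constant on each block $N_l$; writing $p_{ij}^l$ for this constant gives $K_{ij}=\sum_{l\in\I_0}p_{ij}^l\chi_l$, which, read as an identity of convolution kernels, is \eqref{eq:AAij}. Moreover $p_{ij}^l=K_{ij}(x-y)=\mu\{z:(x,z)\in R_i,\,(z,y)\in R_j\}\ge0$ for any $(x,y)\in R_l$, and $p_{ij}^l=p_{ji}^l$ because $X$ is abelian and $\chi_i\ast\chi_j=\chi_j\ast\chi_i$; this settles nonnegativity and \eqref{eq:commute}.

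For \eqref{eq:ser1} I would expand $\widetilde{K_{ij}}=\tilde\chi_i\tilde\chi_j$ using the pointwise identities $\tilde\chi_i=\sum_{k\in\hat\I_0}p_i(k)\hat\chi_k$ of Lemma~\ref{lemma:pointwise} together with the disjointness $\hat\chi_k\hat\chi_m=\delta_{km}\hat\chi_k$, obtaining $\widetilde{K_{ij}}=\sum_{k\in\hat\I_0}p_i(k)p_j(k)\hat\chi_k$; applying $\cF^\natural$ and substituting $\hat\chi_k^\natural=\sum_{l\in\I_0}q_k(l)\chi_l$ (again Lemma~\ref{lemma:pointwise}) gives $K_{ij}=\sum_{l\in\I_0}\bigl(\sum_{k\in\hat\I_0}p_i(k)p_j(k)q_k(l)\bigr)\chi_l$. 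Comparing coefficients of $\chi_l$ with the expansion of $K_{ij}$ from the previous step yields the first equality in \eqref{eq:ser1}, and inserting $q_k(l)\mu(N_l)=\overline{p_l(k)}\hat\mu(\hat N_k)$ from \eqref{eq:mv} gives the second. The exchange of the $k$- and $l$-summations, and the absolute convergence asserted in the theorem, are controlled by the estimate
\[
\sum_{k\in\hat\I_0}\bigl|p_i(k)p_j(k)\overline{p_l(k)}\bigr|\hat\mu(\hat N_k)\;\le\;\mu(N_l)\sqrt{\mu(N_i)\mu(N_j)}\;<\;\infty,
\]
which is Cauchy--Schwarz combined with $\sum_k|p_i(k)|^2\hat\mu(\hat N_k)=\mu(N_i)$ (relation \eqref{eq:pipj} with $i=j$) and the trivial bound $|p_l(k)|=\bigl|\int_{N_l}\phi\,d\mu\bigr|\le\mu(N_l)$; the expansions of $\tilde\chi_i$ and of $\hat\chi_k^\natural$ themselves collapse to a single term at any given point, so pose no convergence difficulty.

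The relations \eqref{eq:hatAAij}, \eqref{eq:ser2} then follow verbatim by running the same argument on $\hat X$, with $K_{ij}$ replaced by $\hat\chi_i\ast\hat\chi_j$, relation \eqref{eq:qiqj} used in place of \eqref{eq:pipj}, and the roles of $\cF^\sim,\cF^\natural$ interchanged. The one step that genuinely uses the hypothesis — and the place I would be most careful — is the assertion $K_{ij}\in\Lambda_2(\cN)$: for an arbitrary partition there is no reason the convolution of two block-indicators should be piecewise constant on the blocks, and it is precisely spectral duality (through $\cF^\natural\Lambda_2(\widehat\cN)=\Lambda_2(\cN)$) that forces this, thereby also showing a posteriori that the partition $\cN$ does define an association scheme in the sense of Definition~\ref{def1}.
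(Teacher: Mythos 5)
Your proof is correct, and it reaches the same formula by the same underlying computation (expand in the $p_i(k)$, multiply using disjointness of the blocks of $\widehat\cN$, re-expand via the $q_k(l)$, and control everything with Cauchy--Schwarz plus the orthogonality relation \eqref{eq:pipj}), but it is packaged differently from the paper. The paper works at the level of operators: it invokes the already-established spectral relations $A_j=\sum_k p_j(k)E_k$ and $E_k=\sum_l q_k(l)A_l$ (Eqs.~\eqref{eq:Aj2}, \eqref{eq:Ei2}) together with orthogonality of the projectors $E_k$, and obtains absolute convergence from the Schur-test bound $|p_i(k)|\le\|A_i\|\le\mu(N_i)^{1/2}$; you instead work at the level of kernels, identifying the kernel of $A_iA_j$ as the convolution $\chi_i\ast\chi_j$, pushing it through the Fourier transform via \eqref{eq:Fc}, and using the cruder but equally sufficient bound $|p_l(k)|\le\mu(N_l)$. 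What your route buys is twofold: the observation that spectral duality forces $\chi_i\ast\chi_j\in\Lambda_2(\cN)$ makes the closure of the adjacency algebra under convolution explicit \emph{before} any coefficient computation, and the convolution representation hands you $p_{ij}^l\ge 0$ and $p_{ij}^l=p_{ji}^l$ immediately (the paper's proof derives the symmetry only from the visible $i\leftrightarrow j$ symmetry of \eqref{eq:ser1} and leaves nonnegativity to the later kernel identification in Lemma~\ref{lemma:inter}). Two small points of care: the identity $K_{ij}=\sum_l p_{ij}^l\chi_l$ obtained from $\cF^\natural\Lambda_2(\widehat\cN)=\Lambda_2(\cN)$ is a priori only an a.e.\ statement, so your evaluation $p_{ij}^l=K_{ij}(x-y)$ ``for any $(x,y)\in R_l$'' should be read as holding for almost every such pair (which still yields $p_{ij}^l\ge0$, since $K_{ij}$ is everywhere a measure of a set); and the passage from the $L_2$-convergent expansion $\sum_k p_i(k)p_j(k)\hat\chi_k^\natural$ to the pointwise identity on each $N_l$, $l\in\I_0$, deserves the one-line remark that the absolutely convergent pointwise sum must agree a.e.\ with the $L_2$ limit. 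Neither issue is a gap.
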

\begin{proof} Let us first prove absolute convergence of the series in \eqref{eq:ser1}. Transformation between the
two forms of this series is performed using \eqref{eq:mv}, so it suffices to prove that one of them, say the one
on the right-hand side of \eqref{eq:ser1}, converges. The numbers $p_j(k)$ are contained in the spectrum of the operator $A_i$,
therefore, $|p_i(k)|\le\|A_i\|$. To estimate the norm of $A_i$ we proceed as in the proof of Lemma \ref{lemma:bound}. Using
the Schur test, we obtain
$$
|p_i(k)|\le\|A_i\|\le \bigg(\int_X \chi_i(x)d\mu(x)\bigg)^{\half}=\mu(N_i)^{\half}.
$$
At the same time, using the orthogonality relation \eqref{eq:pipj}, we obtain
$$
\sum_{k\in\hat\I_0} |p_j(k)|^2\hat\mu(\hat N_k)=\mu(N_j).
$$
By the Cauchy-Schwartz inequality
\begin{align*}
\sum_{k\in\hat\I_0}|p_j(k)\overline{p_l(k)}|\hat\mu(\hat N_k)&\le
\bigg(\sum_{k\in\hat\I_0}|p_j(k)|^2\hat\mu(\hat N_k)\bigg)^{\half}
\bigg(\sum_{m\in\hat\I_0}|p_l(m)|^2\hat\mu(\hat N_{m})\bigg)^{\half}\\
&=\Big(\mu(N_j)\mu(N_l)\Big)^{\half}.
\end{align*}
We obtain
\begin{align*}
\sum_{k\in\hat\I_0}|p_i(k)p_j(k)\overline {p_l(k)}|\hat\mu(\hat N_k)
\le  \Big(\mu(N_i)\mu(N_j)\mu(N_l)\Big)^\half
\end{align*}
where $i,j,l\in\I_0.$ Likewise, we obtain
$$
\sum_{k\in\I_0} |q_i(k)q_j(k)\overline {q_l(k)}|\mu(N_k)\le \Big( {\hat \mu(\hat N_i)\hat \mu(\hat N_j)}{\hat \mu(\hat N_l)}\Big)^\half.
$$
Thus, all the series in \eqref{eq:ser1},\eqref{eq:ser2} converge absolutely. 

Now let us prove \eqref{eq:AAij}-\eqref{eq:ser1}. Using \eqref{eq:Aj2}, \eqref{eq:Ei2}, and orthogonality of the projectors (Lemma~\ref{lemma:idem}), we find
$$
A_iA_j=\sum_{k\in\I_0} p_i(k)p_j(k)\sum_{l\in\I_0}q_k(l)A_l=\sum_{l\in\I_0}\Big(\sum_{k\in\hat\I_0} p_i(k)p_j(k)q_k(l)\Big)A_l.
$$

The proof of \eqref{eq:hatAAij}-\eqref{eq:ser2} is completely analogous. Finally, the commutativity conditions \eqref{eq:commute}
follow from \eqref{eq:ser1} and \eqref{eq:ser2}.
\end{proof}

The results of this theorem can be also expressed in terms of the kernels. They are summarized in the next lemma whose proof is immediate.
\begin{lemma}\label{lemma:kernel}
\begin{align*}
\int_X \chi_i(x-z) \chi_j(z-y) d\mu(z)\simeq \sum_{l\in\I_0} p_{ij}^l \chi_l(x-y)\\
\int_{\hat X} \hat \chi_i(\phi\xi^{-1})\hat \chi_j(\xi\psi^{-1})d\hat\mu(\xi) \simeq \sum_{l\in \hat\I_0} \hat p_{ij}^l \hat \chi_l(\phi\psi^{-1})
\end{align*}
\end{lemma}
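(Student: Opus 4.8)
The plan is to read the two kernel identities off from the operator equations $A_iA_j=\sum_{l\in\I_0}p_{ij}^lA_l$ and $\hat A_i\hat A_j=\sum_{l\in\hat\I_0}\hat p_{ij}^l\hat A_l$ of Theorem~\ref{thm:int}, using only the fact that each $A_l$ is the convolution operator on $L_2(X,\mu)$ with translation-invariant kernel $\chi_l(x-y)$ (see \eqref{eq:Aii}), and similarly for $\hat A_l$.

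First I would identify the kernel of the composition $A_iA_j$. Since $0\le\chi_i\le 1$ and $\int_X\chi_i\,d\mu=\mu(N_i)<\infty$, the convolution $\chi_i\ast\chi_j$ is bounded (by $\min\{\mu(N_i),\mu(N_j)\}$, bounding one factor by $1$) and lies in $L_1(X,\mu)\cap L_2(X,\mu)$ by the Young inequality \eqref{eq:Young}. Consequently, for $f\in L_1(X,\mu)\cap L_2(X,\mu)$ the iterated integral defining $A_iA_jf$ is absolutely convergent and Fubini's theorem yields
\[
(A_iA_jf)(x)=\int_X\Big(\int_X\chi_i(x-z)\chi_j(z-y)\,d\mu(z)\Big)f(y)\,d\mu(y)=\big((\chi_i\ast\chi_j)\ast f\big)(x),
\]
so that $A_iA_j$ is the convolution operator with kernel $(\chi_i\ast\chi_j)(x-y)=\int_X\chi_i(x-z)\chi_j(z-y)\,d\mu(z)$.

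Next, by Theorem~\ref{thm:int} this operator equals $\sum_{l\in\I_0}p_{ij}^lA_l$, which is the convolution operator with kernel $\sum_{l\in\I_0}p_{ij}^l\chi_l(x-y)$; here the series converges trivially, since for each fixed pair $x,y$ the difference $x-y$ lies in exactly one block $N_l$, so at most one summand is nonzero. A convolution operator determines its kernel up to a $\mu$-null set: applying the Fourier transform and \eqref{eq:Fc}, both kernels have Fourier transform $\tilde\chi_i(\phi)\tilde\chi_j(\phi)$, hence they agree almost everywhere. This is precisely the relation $\simeq$ in the first display. The second display is obtained verbatim by repeating the argument on the dual group $\hat X$, replacing $X,\chi_i,\mu,p_{ij}^l$ by $\hat X,\hat\chi_i,\hat\mu,\hat p_{ij}^l$ and invoking the companion identity $\hat A_i\hat A_j=\sum_l\hat p_{ij}^l\hat A_l$ of Theorem~\ref{thm:int}. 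The only step requiring more than a one-line check is the Fubini interchange, which is covered by the uniform bound on $\chi_i\ast\chi_j$ recorded above; everything else is a direct rephrasing of Theorem~\ref{thm:int} in the language of kernels, which is why the proof is immediate.
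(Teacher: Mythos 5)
Your proposal is correct and takes essentially the same route as the paper: the paper declares the lemma ``immediate'' from Theorem~\ref{thm:int}, i.e.\ it reads the operator identity $A_iA_j=\sum_{l\in\I_0}p_{ij}^lA_l$ as an identity of the translation-invariant kernels, which is precisely your argument. Your added justifications (the Young/Fubini step identifying the kernel of the composition as $\chi_i\ast\chi_j$, and the Fourier-transform argument for almost-everywhere uniqueness of the kernel) simply spell out what the paper leaves implicit.
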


Note that the integrals in this lemma can be easily computed. Indeed, we have
\begin{align*}
\int_X \chi_i(x-z)\chi_j(z-x)d\mu(z)&=\int_{X} \chi_i(x-z)\chi_{j'}(x-z)d\mu(z)=\delta_{ij'}\int_X \chi_i(t)d\mu(t)\\
&=\delta_{ij'}\mu(N_i).
\end{align*}
where we used Lemma~\ref{lemma:ai}(iii) together with \eqref{eq:sim}. Similarly, we obtain
$$
\int_{\hat X} \hat \chi_i(\phi\xi^{-1})\hat \chi_j(\xi\phi^{-1}) d\hat\mu(\xi)=\delta_{ij'}\hat \mu(\hat N_i).
$$
In analogy with Lemma \ref{lemma:pointwise}, we can make small modifications so that $L_2$ equalities in Lemma \ref{lemma:kernel}
hold pointwise.
\begin{lemma} \label{lemma:inter}
For all $x,y\in X$
$$
\int_X \chi_i(x-z)\chi_j(z-y)d\mu(z)=\sum_{i\in\I} p_{ij}^l \chi_l(x-y),
$$
where
$
p_{ij}^0=\delta_{ij'}\mu(N_i).
$
Similarly, for all $\phi,\xi\in\hat X$
$$
\int_{\hat X} \hat \chi_i(\phi\xi^{-1}) \hat \chi_j(\xi\psi^{-1})d\hat\mu(\xi)=\sum_{l\in\hat\I} \hat p_{ij}^l
\hat \chi_l(\phi\psi^{-1}),
$$
where
$
p_{ij}^0=\delta_{ij'}\hat \mu(\hat N_i).
$
Also,
\begin{equation}\label{eq:sym}
p_{ij}^0=p_{ji}^0, \quad \hat p_{ij}^0=\hat p_{ji}^0.
\end{equation}
\end{lemma}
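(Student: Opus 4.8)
The statement upgrades the $L_2$-identity of Lemma \ref{lemma:kernel} to a pointwise one, and the proof should run parallel to that of Lemma \ref{lemma:pointwise}. By translation invariance \eqref{eq:sim} the left-hand side depends only on $t=x-y$; write
\[
h_{ij}(t):=\int_X\chi_i(t-z)\chi_j(z)\,d\mu(z)=(\chi_i\ast\chi_j)(t),
\]
which is defined for every $t$ and bounded since $\chi_i\in L_\infty$, $\chi_j\in L_1$. Lemma \ref{lemma:kernel} says $h_{ij}=\sum_{l\in\I_0}p_{ij}^l\chi_l$ as elements of $L_2(X,\mu)$, hence the two sides agree for $\mu$-a.e.\ $t$. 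Moreover $h_{ij}\in\Lambda_2(\cN)$: its Fourier transform is $\tilde\chi_i\tilde\chi_j$ (by \eqref{eq:Fc}), and since $\tilde\chi_i,\tilde\chi_j\in\Lambda_2(\widehat\cN)$ this product is piecewise constant on $\widehat\cN$ and lies in $L_2$, because the bound $|p_i(k)|\le\mu(N_i)^{1/2}$ from the proof of Theorem \ref{thm:int} together with \eqref{eq:pipj} yields $\sum_{k\in\hat\I_0}|p_i(k)p_j(k)|^2\hat\mu(\hat N_k)\le\mu(N_i)\mu(N_j)<\infty$; applying $\cF^\natural$ returns a function in $\Lambda_2(\cN)$. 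Granting that the honest function $h_{ij}$ is then genuinely constant on each block $N_l$, one argues as in Lemma \ref{lemma:pointwise}: on every block $N_l$ with $l\ne0$ both sides are constant and agree a.e., hence agree, because $\mu(N_l)>0$; therefore $h_{ij}(t)=\sum_{l\in\I_0}p_{ij}^l\chi_l(t)$ holds for all $t\in\bigcup_{l\ne0}N_l=X\setminus\{0\}$.

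It remains to evaluate at $t=0$ and identify $p_{ij}^0$. Since $\chi_l(0)=\delta_{l0}$, the asserted identity at the origin reads $h_{ij}(0)=p_{ij}^0$; using $\chi_i(-z)=\chi_{i'}(z)$ (that is, $N_{i'}=-N_i$, see \eqref{eq:N}), disjointness of the blocks, the involution $(i')'=i$, and $\mu(N_{i'})=\mu(N_i)$ from \eqref{eq:mu}, we get
\[
h_{ij}(0)=\int_X\chi_{i'}(z)\chi_j(z)\,d\mu(z)=\mu(N_{i'}\cap N_j)=\delta_{i'j}\,\mu(N_j)=\delta_{ij'}\,\mu(N_i).
\]
Declaring $p_{ij}^0:=\delta_{ij'}\mu(N_i)$ — which is consistent with Lemma \ref{lemma:kernel} whenever $0\in\I_0$, and whose only effect when $0\notin\I_0$ is at the single point $t=0$ — the identity $\int_X\chi_i(x-z)\chi_j(z-y)\,d\mu(z)=\sum_{l\in\I}p_{ij}^l\chi_l(x-y)$ now holds for all $x,y\in X$. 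The symmetry $p_{ij}^0=p_{ji}^0$ follows at once from this value: $\delta_{ij'}=\delta_{ji'}$ (each says $i=j'$, equivalently $i'=j$, by the involution), and $\mu(N_i)=\mu(N_j)$ whenever $i'=j$ by \eqref{eq:mu}. The statements for $\widehat\cX$ on $\hat X$ are obtained verbatim by exchanging $(\cN,\mu,\I_0)$ for $(\widehat\cN,\hat\mu,\hat\I_0)$ and invoking the second half of Lemma \ref{lemma:kernel}; this gives $\hat p_{ij}^0=\delta_{ij'}\hat\mu(\hat N_i)$ and $\hat p_{ij}^0=\hat p_{ji}^0$.

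The one place where genuine care is needed — exactly the point passed over tersely in the proof of Lemma \ref{lemma:pointwise} — is the passage from the $\mu$-a.e.\ identity to a pointwise one on $X\setminus\{0\}$, i.e.\ knowing that the literal integral $h_{ij}$, and not merely its $L_2$-class, is piecewise constant on $\cN$ away from $N_0$. Here one uses that $h_{ij}=\chi_i\ast\chi_j$ is continuous (a convolution of two $L_2$ functions, using continuity of translation in $L_2$) together with the structure of the blocks of a spectrally dual partition; for the zero-dimensional groups of Section \ref{sect:vil}, whose partitions have open blocks, this is automatic, since a continuous function agreeing a.e.\ with a constant on an open set agrees with it everywhere there. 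I expect this to be the only step that is not routine bookkeeping.
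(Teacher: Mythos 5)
Your proof is correct and follows essentially the same route as the paper: both upgrade the $L_2$ identity of Lemma~\ref{lemma:kernel} to a pointwise one, evaluate the convolution at the origin directly to get $p_{ij}^0=\delta_{ij'}\mu(N_i)$, and deduce the symmetry \eqref{eq:sym} from $\mu(N_i)=\mu(N_{i'})$. The one point where you go beyond the paper's terse "follows by the same arguments" is the justification of the a.e.-to-everywhere step via continuity of $\chi_i\ast\chi_j$ and openness of the blocks; the paper leaves this implicit, and your observation that it is the only non-routine step is accurate.
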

\begin{proof} Pointwise equalities follow by the same arguments as Lemma \ref{lemma:kernel}. The symmetry conditions \eqref{eq:sym} follows
because $\mu(N_i)=\mu(N_{i'})$ and $\hat\mu(N_i)=\hat\mu(N_{i'})$ by \eqref{eq:N}.
\end{proof}

\subsubsection{Dual pairs of translation schemes} In the following theorem, which summarizes the results of this section, we define mutually dual translation association schemes.
\begin{theorem}\label{thm:SDP}
Let $\gls{N}=\{N_i,i\in\I\}$ and $\widehat N=\{\hat N_i, i\in\hat\I\}$ be spectrally dual partitions of mutually dual
topological Abelian groups $X$ and $\hat X.$
Let the partitions $\cR=\{R_i,i\in\I\}$ on $X\times X$ and $\hat \cR=\{R_i,i\in \hat\I\}$
on $\hat X\times\hat X$ be given by \eqref{eq:NR}, \eqref{eq:hNR}. Then the triples $\cX(X,\mu,\cR)$ and 
$\widehat\cX(\hat X,\hat\mu,\hat\cR)$ form translation invariant association schemes in the sense of Definition \ref{def1}. The intersection numbers $p_{ij}^l$ and $\hat p_{ij}^l$ of the schemes $\cX$ and $\widehat\cX$ are related to the spectral parameters $p_i(k)$ and $\hat p_i(k)$ of the partitions $\cN$ and $\widehat\cN$
according to \eqref{eq:AAij}-\eqref{eq:ser2}. If the partitions $\cN$ and $\widehat\cN$
are symmetric then the schemes $\cX$ and $\widehat\cX$ are symmetric.
\end{theorem}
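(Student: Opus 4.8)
The plan is to verify, one at a time, the five conditions of Definition~\ref{def1} for the triple $\cX(X,\mu,\cR)$, noting that essentially all of the substance has already been assembled in Theorem~\ref{thm:int} and Lemmas~\ref{lemma:kernel}--\ref{lemma:inter}. The first observation is that, because of the translation-invariant form \eqref{eq:NR} of the classes and the representation \eqref{eq:sim} of their indicators, every structural property of $\cR$ is inherited from the corresponding property of the partition $\cN$ of $X$; in particular translation invariance in the sense of Definition~\ref{def2} is immediate, since $(x,y)\in R_i$ means $x-y\in N_i$, whence $(x+z)-(y+z)=x-y\in N_i$ for every $z\in X$.

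Next I would dispatch conditions (i)--(iv). For (i), the set $\{y\in X:(x,y)\in R_i\}$ equals the translate $x-N_i$, which is measurable with $\mu(x-N_i)=\mu(N_i)<\infty$ by the assumed finiteness of $\mu(N_i)$ and translation invariance of the Haar measure. For (ii), $N_0=\{0\}$ forces $R_0=\{(x,x):x\in X\}$, and $R_0\in\cR$ because $N_0\in\cN$. Condition (iii) is precisely the statement that $\cN$ partitions $X$: every difference $x-y$ lies in exactly one block $N_i$. For (iv), $N_{i'}=-N_i$ from \eqref{eq:N} gives $^tR_i=\{(y,x):x-y\in N_i\}=\{(u,v):u-v\in N_{i'}\}=R_{i'}$.

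The heart of the matter is condition (v). Here I would first identify the quantity $p_{ij}(x,y)$ of \eqref{eq:pijk} with a convolution integral: since $\chi_i(x-z)=1$ exactly when $(x,z)\in R_i$ and $\chi_j(z-y)=1$ exactly when $(z,y)\in R_j$, one has $p_{ij}(x,y)=\int_X\chi_i(x-z)\chi_j(z-y)\,d\mu(z)$, and this is finite because the integrand is supported on $x-N_i$. Lemma~\ref{lemma:inter} then asserts that this integral equals $\sum_l p_{ij}^l\chi_l(x-y)$ \emph{pointwise}, so for $(x,y)\in R_k$ we obtain $p_{ij}(x,y)=p_{ij}^k$, a constant depending only on $k$, with value given by \eqref{eq:ser1} for $k\ne 0$ and by $p_{ij}^0=\delta_{ij'}\mu(N_i)$. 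Nonnegativity of the $p_{ij}^l$ and the symmetry $p_{ij}^k=p_{ji}^k$ come straight from Theorem~\ref{thm:int} (and \eqref{eq:sym} of Lemma~\ref{lemma:inter} for $k=0$). I expect the only slightly delicate point to be the passage from the $L_2$-equalities of Lemma~\ref{lemma:kernel} to the everywhere-equalities demanded by Definition~\ref{def1}(v); but this is already absorbed into Lemma~\ref{lemma:inter} (mirroring Lemma~\ref{lemma:pointwise}), via the careful treatment of the single possibly null block $N_0$, so no new argument is needed.

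Finally, the symmetry assertion is formal: if $\cN$ is symmetric then $N_i=N_{i'}$ for all $i$, hence $R_i=R_{i'}$, i.e.\ $i=i'$, so $\cX$ is symmetric. The entire argument for $\widehat\cX(\hat X,\hat\mu,\hat\cR)$ runs verbatim with $X$ and $\hat X$, the Fourier transforms $\cF^\sim$ and $\cF^\natural$, and the coefficient families $p_i(k)$ and $q_i(k)$ interchanged, using \eqref{eq:hNR}, \eqref{eq:hatAAij} and \eqref{eq:ser2} in place of their unhatted counterparts. In short, this theorem is largely a bookkeeping summary; the genuine work sits in Theorem~\ref{thm:int}, whose proof that the series \eqref{eq:ser1}--\eqref{eq:ser2} converge absolutely (via the Schur-test bound $|p_i(k)|\le\mu(N_i)^{\half}$ together with Cauchy--Schwarz) is what makes the intersection numbers well defined in the infinite setting.
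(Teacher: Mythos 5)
Your proposal is correct and follows essentially the same route as the paper: the paper's proof likewise reduces conditions (i)--(iv) of Definition~\ref{def1} to the definition of the partitions $\cR,\hat\cR$, derives condition (v) from Lemma~\ref{lemma:inter} (with the real analytic work residing in Theorem~\ref{thm:int}), and obtains the symmetry claim directly from the definition of symmetric partitions. Your write-up merely spells out the bookkeeping that the paper leaves implicit.
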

\begin{proof} The proof follows from the arguments given earlier in this section and the definitions of the association scheme and
translation scheme, Defns.\ref{def1},\ref{def2}. Namely, parts (i)-(iv) in Definition \ref{def1} follow immediately from the way we
defined the partitions $\cR,\hat \cR.$ The condition for the intersection numbers, Def.~\ref{def1}(v), follows from Lemma \ref{lemma:inter};
cf. also \eqref{eq:conv}.
The final claim (about symmetry) is implied by the definition of symmetric partitions.\end{proof}

\vspace*{.1in} 
It is interesting to
note that our definition of dual schemes does not directly generalize the classical definition for finite Abelian groups
\cite{del73a,bro89}. Indeed, in the finite case the classes $\hat R_i,i\in\hat \I$ of $\widehat\cX$ are defined as follows:
\begin{equation}\label{eq:sm}
\hat R_i=\{(\phi,\psi)\in\hat X\times\hat X: \phi\psi^{-1}\in V_i\}, i\in \hat \I,
\end{equation}
where $V_i\subset L_2(X,\mu)$ are the maximal eigenspaces of all the operators $A_j,j\in\I.$ Then the blocks $\hat N_i, i\in\hat \I$ of
the dual partition $\widehat\cN$ are given by \eqref{eq:hatNi}:
$$
\hat N_i=\{\phi\in X: \phi\in V_i\}, \quad i\in \I.
$$
For the finite case the two definitions are equivalent. However, for infinite groups, Eq. \eqref{eq:sm} loses its meaning because
for locally compact groups $X$, e.g., $\reals$, characters $\phi\in\hat X$ are not $L_2$ functions. Of course, if $X$ is infinite but compact,
we still can use definition \eqref{eq:sm}. At the same time, for the dual discrete group $\hat X$, \eqref{eq:sm} is not well defined.
Therefore, adopting this definition, we would not be able to claim that the dual of the dual scheme $\hat{\hat \cX}$ is isomorphic to $\cX$, while for the finite case the schemes $\hat{\hat \cX}$ and $\cX$ are canonically isomorphic.
Thus, out of several possibilities we chose the definition of duality that extends without difficulty to the case of infinite groups.

\section{Spectrally dual partitions and association schemes on zero-dimensional Abelian groups}\label{sect:vil}
In the previous section we developed a theory of translation invariant schemes on Abelian groups which relies
on spectrally dual partitions. In this section we investigate the question whether such partitions exist
on topological Abelian groups. Our main result, given in Theorems \ref{thm:m1},~\ref{thm:m2}, will be that such partitions arise naturally on zero-dimensional groups
and their duals.

We begin with a simple but important result that identifies an obstruction to the existence of spectrally dual partitions.
\begin{proposition}\label{prop:nos} Let $X$ and $\hat X$ be a pair of dual topological Abelian groups. If at least one of the groups $X$ and $\hat X$
is connected, then the pair $(X,\hat X)$ does not support spectrally dual partitions.
\end{proposition}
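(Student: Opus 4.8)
The plan is to combine the pointwise Fourier identities of Lemma~\ref{lemma:pointwise} with the elementary fact that a continuous complex-valued function on a connected space whose range is countable must be constant, and then to observe that such a constant cannot be the Fourier transform of a block indicator. Since spectral duality of partitions is symmetric under Pontryagin duality, I would reduce to the case in which $X$ itself is connected, the case of connected $\hat X$ being identical after passing to the dual pair $(\hat X,X)$; throughout I would assume $X$ (and hence $\hat X$) is nontrivial, so that the partitions have more than one block.

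Assuming, for contradiction, that $\cN=\{N_i\}$ and $\widehat\cN=\{\hat N_i\}$ are spectrally dual with $X$ connected, I would fix an index $i\ne 0$ with $\hat N_i\in\widehat\cN$ (one exists because $\hat X\ne\{1\}$), so that $i\in\hat\I_0$ and $\hat\mu(\hat N_i)>0$, and examine the function $\hat\chi_i^\natural$ on $X$ from \eqref{eq:ifa}. Since $\hat\mu(\hat N_i)<\infty$, the indicator $\hat\chi_i$ lies in $L_1(\hat X,\hat\mu)$, so $\hat\chi_i^\natural=\cF^\natural\hat\chi_i$ is a continuous function on $X$. On the other hand, relation \eqref{eq:pw2} of Lemma~\ref{lemma:pointwise} gives $\hat\chi_i^\natural(x)=\sum_{k\in\I}q_i(k)\chi_k(x)$ for \emph{every} $x\in X$, and since the $\chi_k$ are the indicators of the blocks of a partition, exactly one summand survives at each point; hence the range of $\hat\chi_i^\natural$ lies in the countable set $\{q_i(k):k\in\I\}$. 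Connectedness of $X$ then forces $\hat\chi_i^\natural$ to be constant, and evaluating at the identity $0\in X$ (where $\overline{\phi(0)}=1$ for every character) shows this constant equals $\hat\mu(\hat N_i)>0$.

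To finish I would exploit that $\cF^\natural\hat\chi_i=\hat\chi_i^\natural$ is now a nonzero constant function on $X$. As $\hat\chi_i\in L_2(\hat X,\hat\mu)$ and $\cF^\natural$ is an isometry, this constant lies in $L_2(X,\mu)$, which forces $\mu(X)<\infty$, i.e.\ $X$ is compact. Applying $\cF^\sim$ and invoking orthogonality of characters on a compact group ($\int_X\phi\,d\mu$ equals $\mu(X)$ for $\phi=1$ and $0$ otherwise) shows that $\hat\chi_i=\cF^\sim\hat\chi_i^\natural$ agrees $\hat\mu$-almost everywhere with a scalar multiple of the indicator of $\{1\}=\hat N_0$. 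Since $\hat N_i\cap\hat N_0=\emptyset$, this forces $\hat\mu(\hat N_i)=0$, contradicting $i\ne 0$; this contradiction establishes the proposition.

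The one step requiring genuine care is the use of the \emph{pointwise} identity \eqref{eq:pw2} rather than its $L_2$ version: connectedness can be played off only against the actual range of the continuous function $\hat\chi_i^\natural$, not against a range defined modulo null sets, and Lemma~\ref{lemma:pointwise} is precisely what supplies the pointwise statement. The remaining ingredients — Plancherel's theorem, character orthogonality, and continuity of the Fourier transform of an $L_1$ function — are entirely standard.
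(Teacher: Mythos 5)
Your argument is correct and takes essentially the same route as the paper's own proof: the decisive step in both is that $\hat\chi_i^\natural$ is continuous (as the transform of an $L_1$ indicator) yet takes at most countably many values because it is constant on the blocks $N_k$, so connectedness of $X$ forces it to be a nonzero constant, which is incompatible with it being the $L_2$ Fourier transform of a block indicator. The only difference is in how the final contradiction is extracted --- the paper uses the orthogonality relations $\mu(X)c_i\overline{c_j}=\delta_{ij}\hat\mu(\hat N_i)$ for two distinct blocks, while you use Plancherel to force $\mu(X)<\infty$ and then character orthogonality to localize $\hat\chi_i$ at the trivial character --- and both endgames are equally short and valid.
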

\begin{proof} Suppose that $X$ and $\hat X$ support a pair of spectrally dual
partitions $\cN=\{N_i, i\in\I\}$ and $\widehat\cN=\{\hat N_i, i\in \I\}$. Assume toward a contradiction that $X$ is connected. Note that $\hat\mu(\hat X)=\infty$ because otherwise
$\hat X$ is compact which implies that $X$ is discrete. This implies that the set $\I$ is infinite because by definition,
all the subsets $\hat N_i, i\in \hat \I$ have finite measure.
Consider the indicator functions $\hat \chi_i(\xi)=\mathbbold{1}\{\xi\in \hat N_i\}, i\in \hat \I$ and their Fourier transforms
$\hat \chi_i^\natural(x)$ defined in \eqref{eq:ifa}. By conditions \eqref{eq:ip}, we have
$$
\int_X \chi_i^\natural(x) \overline{\hat \chi_j^\natural(x)} d\mu(x)=\int_{\hat X} \hat \chi_i(\phi)\overline{\hat \chi_j(\phi)}
d\hat\mu(\phi)=\delta_{ij}\hat\mu(\hat N_i).
$$
By definition, the functions $\hat \chi_i^\natural(x), i\in\I$ are piecewise constant on $X$: they are constant on the blocks 
$N_i, i\in \I,$
and therefore, take at most countably many values. Now observe that the functions $\hat \chi_i(\phi),
i\in\hat\I$ are absolutely integrable, and therefore, their Fourier transforms $\hat \chi_j^\natural(x)$ are continuous functions on $X.$

Observe that a piecewise constant function on $X$ can be continuous only if it is identically a constant.
Indeed, the set of values of a continuous function $f:X\to \complexes$ is closed in the natural topology. The set $E_a:=\{x\in X: f(x)=a\}$
is a union of several blocks $N_i$ and thus is also closed in $X.$ Since the sets $E_a$ are disjoint for different $a,$
this defines a partition of $X$ into several (at most countably many) disjoint closed sets, in contradiction to our assumption.

Thus, $\hat \chi_i^\natural(x)=c_i$ are constant for all $x\in X$ and $i\in\hat \I$, and $c_i\ne 0$ for all $i\in\hat\I_0$ because $\hat \chi_i^\natural(x)$ is a Fourier transform of a
nonzero function $\hat \chi_i(\phi).$ Now we obtain
$$
\mu(X)c_i\overline{c}_j=\delta_{ij}\hat\mu(\hat N_i).
$$
However if $\mu(\hat X)<\infty,$ these equalities do not hold for $i\ne j, i,j\in\hat \I_0$, and if
$\mu(X)=\infty,$ then they do not hold for every $i,j\in\hat\I_0.$
\end{proof}

This implies that spectrally dual partitions on connected Abelian groups do not exist. There are not so many such groups: their list is exhausted by the pairs $(X=\reals^d,\hat X=\reals^d)$ and $(X=(\reals/\integers)^d,\hat X=\integers^d);$ see 
\cite[Thm.~9.14]{hew6370} which also classifies groups formed of more than
one connected component.
At the same time there are vast classes of topological Abelian groups such that both the groups $X$ and $\hat X$ are zero-dimensional, and at least
one of them is uncountable and non-discrete. 
For such groups, the arguments in the proof of Proposition \ref{prop:nos} do not hold, and it becomes possible to define spectrally dual
partitions and translation association schemes.

\subsection{Zero-dimensional Abelian groups}\label{sect:zero}

A topological group $X$ is called {\em zero-dimensional} if the connected component
of the identity element $e$ is formed of $e$ itself. In this case all of its connected components are points, and
it is {\em totally disconnected} as a topological space. Conversely, if $X$ is locally compact, Hausdorff and totally disconnected,
then it is zero-dimensional \cite[Thm.3.5]{hew6370}. For this reason the terms zero-dimensional
and totally disconnected are often used interchangeably. 
\index{group!zero-dimensional}
\index{group!totally disconnected}
Examples of zero-dimensional groups include the Cantor set, groups of the Cantor type, i.e., countable products of
finite Abelian groups such as $\{0,1\}^\omega,$ as well as additive groups of the rings and fields of $p$-adic numbers. 
These examples also typify the general situation that includes two kinds of zero-dimensional groups: namely, the group $X$ can be 
periodic, in which case it contains finite subgroups, or non-periodic, e.g., the additive group of $p$-adic integers. 
In regards to the structure of the dual group $\hat X$, it is known that 
if $X$ is compact, then $\hat X$ is discrete, and if $X$ is locally compact, then $\hat X$ is also locally compact \cite[Thm.36]{pon66}.
Aspects of the general theory of zero-dimensional groups are found in \cite{pon66,hew6370,aga81,Edwards82}.

A systematic study of harmonic analysis on zero-dimensional Abelian groups was initiated by the observation of I.M.~Gelfand
who noticed that Walsh functions are precisely the continuous
characters of the group $\{0,1\}^\omega$ (see \cite{aga81}). Vilenkin \cite{Vilenkin47} generalized this
result to other Cantor-type groups (independently these results were also obtained by Fine \cite{Fine49}).
Currently zero-dimensional groups form the subject of a
large body of literature in harmonic analysis
(e.g., \cite{aga81,Edwards82,Lang96,Benedetto04}) and approximation theory \cite{skr06}, while their finite analogs have been studied in connection with a problem in combinatorial coding theory \cite{mar99,skr01}.

In the first part of this section we remind the reader the basics about compact and locally compact zero-dimensional Abelian groups. 
We include some details to make the paper self-contained and accessible to combinatorialists working on association schemes.
Moreover, the calculations performed below are not immediately available in the literature, and lay the groundwork for the analysis of association schemes later in this section. Detailed treatment of zero-dimensional Abelian groups is contained in Hewitt and Ross \cite{hew6370}.  A good reference source on such groups is the book
by Agaev at al.~\cite{aga81} which unfortunately is not available in English.

\subsubsection{Compact groups} We begin with the compact case which will also be useful in describing the locally compact case.
In this case the topology on $X$ defined by a countable chain of decreasing subgroups:
\index{group!zero-dimensional!compact}
\begin{equation}\label{eq:chain}
X=X_0\supset X_1\supset\dots\supset X_j\supset\dots\supset\{0\},
\end{equation}
where $X_j$ are subgroups of finite index $|X/X_j|$, and $\cap_{j\ge 0}X_j=\{0\}.$ The embeddings	 in \eqref{eq:chain} are strict,
so the index $|X_j/X_{j+1}|\ge 2, j=1,2,\dots.$ We note that generally, there are many different ways of forming the chain \eqref{eq:chain}
that give rise to the same topology on $X.$ 

By definition, the subgroups $\{X_j,j=0,1,\dots\}$ are open sets that form a countable base of neighborhoods of zero,
and the cosets $\{X_j+z, z\in X/X_j,j=0,1,\dots\}$ are open sets that form a countable base of the topology on $X.$
Thus, the topology in $X$ satisfies the second countability axiom. The converse is also true: if $X$ is locally compact and
satisfies the second countability axiom, then the topology on it is defined by a decreasing chain of subgroups \cite{pon66}.

It is easy to see that $X$ with topology defined by \eqref{eq:chain} is totally disconnected.
Indeed, for each $j$ we have the following partition of $X$ into cosets
\begin{equation}\label{eq:d}
\begin{array}{c}
X=\displaystyle{\bigcup_{z\in X/X_j}}(X_j+z)\\
(X_j+z_1)\cap(X_j+z_2)=\emptyset, \quad z_1,z_2\in X/X_j, z_1\ne z_2.
\end{array}
\end{equation}
Likewise, for every $j\ge 0$ we have
\begin{equation}\label{eq:dj}
\begin{array}{c}
X_j=\displaystyle{\bigcup_{z\in X_j/X_{j+1}}}(X_{j+1}+z)\\
(X_{j+1}+z_1)\cap(X_{j+1}+z_2)=\emptyset, \quad z_1,z_2\in X_j/X_{j+1}, z_1\ne z_2.
\end{array}
\end{equation}
This implies the following for the topology of $X.$ First,
$$
X_j=X\backslash Y_j, \text{ where }Y_j=\bigcup_{z\in X/X_j, z\ne 0} (X_j+z).
$$
The set $Y_j$ is a union of open sets $X_j+z$ and therefore is itself open. Thus, $X_j$ is closed, and so
all of the $X_j+z$ are both closed and open in the topology given by \eqref{eq:chain} (such sets are sometimes aptly called clopen).

Further, the group $X$ as well as all the subgroups $X_j$ are unions of disjoint open sets. This means that the groups $X_j,j\ge 0$
are disconnected, and for every point $x$ its connected component is $x$ itself. Decompositions \eqref{eq:d},\eqref{eq:dj}
also imply that $X$ affords arbitrarily fine coverings with multiplicity 1. By definition of topological dimension \cite[p.~I.15]{hew6370},
we obtain that $\dim X=0.$

The group $X$ is metrizable, i.e., the topology on $X$ can be defined by a metric. Let $\nu(0)=\infty$ and
\begin{equation}\label{eq:vl}
\gls{nu}=\max\{j: x\in X_j\}, \quad x\ne 0.
\end{equation}
We have
\begin{equation}\label{eq:um}
\begin{array}{l}
\nu(x+y)\ge\min\{\nu(x),\nu(y)\}, \quad x,y\in X\\
\nu(x+y)=\min\{\nu(x),\nu(y)\} \quad\text{ if }\nu(x)\ne \nu(y).
\end{array}
\end{equation}
We see that $\nu(x)$ is a discrete valuation on $X$ and defines on it a non-Archimedean metric. For instance we can put\footnote{Strictly speaking, $\rho$ is a norm that induces a metric on $X$. By abuse of terminology we use the term ``metric'' in both cases.}
\begin{equation}\label{eq:metric}
\gls{rho}=2^{-\nu(x)},
\end{equation}
then $\rho(0)=0$ and
\begin{equation}\label{eq:um1}
\begin{array}{l}
\rho(x+y)\le\max\{\rho(x),\rho(y)\},\quad x,y\in X\\
\rho(x+y)=\max\{\rho(x),\rho(y)\} \quad \text{if }\rho(x)\ne\rho(y)
\end{array}
\end{equation}
We conclude that $\rho(x-y)$ is a non-Archimedean metric, and the balls in this metric coincide with the subgroups $X_j$: 
\begin{equation}\label{eq:ball}
X_j=\{x\in X:\;\rho(x)\le 2^{-j}\}=\{x\in X: \rho(x)<2^{-j+1}\},\; j=0,1,\dots.
\end{equation}
This again shows that the balls are both open and closed. Further, two balls of the same radius in the Non-Archimedean metric are either disjoint or
coincide completely, and every point of the ball is the center.

A well-known example of a non-Archimedean metric arises in the construction of $p$-adic integers. A less standard example is provided by a problem in coding theory in which a metric on finite-dimensional vectors over
$\ff_q$ is defined by a (finite) chain of decreasing subgroups \cite{mar99,skr01}. This metric is an instance of {\em poset distances}\index{poset distance}
(metrics defined by partial orders of the coordinates) which will appear again below when we construct association schemes.
Zero-dimensional groups also arise in the context of multiplicative systems of functions such as the Walsh or Haar functions \cite{aga81, Edwards82,Golubov91}. 

The following classical fact holds true \cite[pp.28-30]{aga81}:
\begin{proposition}\label{prop:52} Let $X$ be a compact zero-dimensional group. Then $X$
can be identified with the set of all infinite sequences
$$
x=(z_1,z_2,\dots), \quad z_i\in X_{i-1}/X_i, i\in \naturals.
$$
\end{proposition}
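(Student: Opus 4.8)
The plan is to build an explicit homeomorphism between $X$ and the product space $\prod_{i\ge 1}X_{i-1}/X_i$ of the finite discrete quotients, by repeatedly reducing a point of $X$ modulo the subgroups of the chain \eqref{eq:chain}. Note at the outset that the resulting identification is not canonical: it will depend on a choice of coset representatives (and on the chain itself).

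First I would fix, for each $i\ge 1$, a transversal $T_i\subset X_{i-1}$ of the cosets of $X_i$, with $0\in T_i$; by \eqref{eq:dj} every element of $X_{i-1}$ lies in exactly one coset $X_i+z$ with $z\in T_i$, and $T_i$ is canonically in bijection with the finite set $X_{i-1}/X_i$. Given $x\in X=X_0$, let $z_1\in T_1$ be the unique representative with $x-z_1\in X_1$; inductively, once $z_1,\dots,z_n$ have been chosen so that $x-(z_1+\cdots+z_n)\in X_n$, apply \eqref{eq:dj} to the element $x-(z_1+\cdots+z_n)$ of $X_n$ to obtain the unique $z_{n+1}\in T_{n+1}$ with $x-(z_1+\cdots+z_{n+1})\in X_{n+1}$. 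This defines a map $\Phi\colon X\to\prod_{i\ge1}T_i$, $\Phi(x)=(z_1,z_2,\dots)$.

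Next I would verify that $\Phi$ is a bijection. If $\Phi(x)=\Phi(x')$, then for every $n$ both $x-(z_1+\cdots+z_n)$ and $x'-(z_1+\cdots+z_n)$ lie in $X_n$, hence $x-x'\in\bigcap_{n}X_n=\{0\}$, which gives injectivity. For surjectivity, take an arbitrary sequence $(z_i)\in\prod_iT_i$ and put $s_n=z_1+\cdots+z_n$. Since $z_{n+1},\dots,z_{n+k}$ all lie in $X_n$, we have $s_{n+k}-s_n\in X_n$, so $\rho(s_{n+k}-s_n)\le 2^{-n}$ by \eqref{eq:ball}; thus $(s_n)$ is Cauchy in the metric on $X$. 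As $X$ is compact and metrizable it is complete, so $s_n\to x$ for some $x\in X$. The subgroup $X_n$ is closed (see the discussion following \eqref{eq:dj}), and $s_{n+k}-s_n\in X_n$ for all $k$, so passing to the limit gives $x-s_n\in X_n$ for every $n$; by the uniqueness built into the construction of $\Phi$ this forces $\Phi(x)=(z_i)$.

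Finally I would check that $\Phi$ is a homeomorphism for the product topology of the discrete finite sets $T_i$. The $n$-th coordinate $z_n$ of $\Phi(x)$ depends only on the coset $x+X_n$, so $x\mapsto(z_1(x),\dots,z_n(x))$ factors through the finite discrete quotient $X/X_n$; hence it is continuous, and therefore so is $\Phi$. The $\Phi$-preimage of a basic cylinder $\{(w_i):w_1=a_1,\dots,w_n=a_n\}$ is the coset $X_n+(a_1+\cdots+a_n)$, which is open in $X$ by \eqref{eq:d}. Since $X$ is compact, $\prod_iT_i$ is Hausdorff, and $\Phi$ is a continuous bijection, $\Phi$ is a homeomorphism; identifying each $T_i$ with $X_{i-1}/X_i$ then gives the asserted description of $X$. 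The only step with genuine content is the surjectivity of $\Phi$, which rests on completeness of $X$ (equivalently, compactness) together with the fact that the radii $2^{-n}$ of the balls $X_n$ shrink to $0$; everything else is bookkeeping with the coset decompositions \eqref{eq:d} and \eqref{eq:dj}.
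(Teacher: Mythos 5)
Your proof is correct and follows essentially the same route as the paper: iterated coset decomposition via fixed transversals for one direction, and Cauchy partial sums plus compactness (completeness) for surjectivity. The only addition is your explicit verification that the identification is a homeomorphism for the product topology, which the paper leaves implicit but which is a harmless (and welcome) strengthening.
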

\begin{proof} For $j=1,2,\dots $ let us fix a set of representatives $z_i(j), 0\le i\le n_j-1$ of the cosets
$X_{j-1}/X_j,$ so that
\begin{equation}\label{eq:cosets}
z_0(j)=0, \;z_1(j),\dots,z_{n_j-1}(j)\in X_{j-1}\backslash X_j,
\end{equation}
where $n_j$ is the index of $X_j$ in $X_{j-1}.$
Every element $x\in X$ can be represented uniquely as $x=z_1+y_1,$ where $z$ is one of the coset representatives of $X/X_1$ and $y\in X_1.$
Likewise, $y_1=z_2+y_2,$ and generally,
$$
x=z_1+\dots+z_j+y_j
$$
for all $j\ge 1,$ where $z_j$ are fixed according to \eqref{eq:cosets}. Note that the representatives found in earlier steps are not
changed in later steps, and that $y_j(x)\to 0$ in the non-Archimedean norm on $X.$ Therefore, every $x\in X$ can be written as
a convergent series
$$
x=\sum_{i\ge 0} z_i, \quad \text{where } z_i=z_i(x)\in X_{i-1}/X_i, i\in \naturals.
$$
Conversely, fixing arbitrary elements $z_i\in X_{i-1}/X_i, i\ge 1$, define $x_j=z_1+\dots +z_j, j\ge 1.$ Then
$$
x_{j+k}-x_j=z_{j+1}+\dots+z_{j+k}\in X_j,
$$
i.e., $x_{j+k}-x_j\to 0$ for $j\to\infty$ and every $k\in\naturals.$ We conclude that $(x_j,j\ge 1)$
is a Cauchy sequence $x_j$, and since $X$ is compact, the series
$\sum_{i\ge 1} z_i$ converges to a point $x\in X$. \end{proof}

The result of this proposition amounts to describing every point $x\in X$ as a sequence of nested balls that contain it:
$$
x=\bigcap_{j\ge 1}(X_j+x_j)=\bigcap_{j\ge 1}(X_j+z_1+\dots+z_j).
$$
Also, $X$ is a set of all such infinite sequences and therefore, clearly, is uncountable.

Using the result of Proposition~\ref{prop:52}, we can write the valuation \eqref{eq:vl} as follows: $\nu(0)=0$ and
$$
\nu(x)=\min\{j-1:z_j\ne 0\}, \quad x\in X\backslash\{0\}.
$$
Note that the metric on $X$ that gives rise to the same topology can be 
introduced in more than one way: for instance,  if $t(j),j\in\naturals_0$ is a strictly decreasing function on the set of nonnegative integers with $\lim_{j\to\infty} t(j)=0,$ then $t(\nu(x-y)),\, x,y\in X$ also defines a {\em non-Archimedean} metric on $X$ for which the balls are the same subgroups $X_j$ (the fact that $t(\cdot)$ defines a metric is specific to the non-Archimedean case). \index{metric!non-Archimedean}
The following distance will be useful below:
\begin{equation}\label{eq:rho0}
\rho_0(x)=|X/X_{\nu(x)}|^{-1}
\end{equation}
The function $t(\cdot)$ in this case is given by
$$
t(j)=\frac 1{\omega(j)}, \text{ where } \gls{omegaj}=|X/X_j|.
$$
\begin{lemma} The functions $t$ and $\omega$ have the following properties:
\begin{equation}\label{eq:omega}
\omega(0)=1; \quad\omega(j)=\prod_{i=1}^j n_i,  \;\text{where }\gls{ni}:=|X_{i-1}/X_i|
\end{equation}
\begin{equation}\label{eq:n}
\omega(j+1)=n_{j+1}\omega(j), \quad t(j+1)=\frac 1{n_{j+1}}t(j)
\end{equation}
\begin{align}
\sum_{i=j+1}^\infty (n_i-1)t(i)&=t(j), \quad j =0,1,\dots \label{eq:ti}\\
\sum_{i=1}^\infty (n_i-1)t(i)&=1\label{eq:1}
\end{align}
\end{lemma}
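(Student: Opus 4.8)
The plan is to prove the four assertions in the order stated, each one reducing to the previous. First I would note that $\omega(0)=|X/X_0|=1$, and that the product formula in \eqref{eq:omega} is just multiplicativity of the subgroup index along the finite chain $X=X_0\supset X_1\supset\dots\supset X_j$: since each $X_i$ has finite index in $X_{i-1}$, one gets $\omega(j)=|X/X_j|=|X/X_1|\,|X_1/X_2|\cdots|X_{j-1}/X_j|=\prod_{i=1}^j n_i$. Relation \eqref{eq:n} is then immediate, $\omega(j+1)=\prod_{i=1}^{j+1}n_i=n_{j+1}\omega(j)$, and dividing through, $t(j+1)=1/\omega(j+1)=\frac{1}{n_{j+1}}t(j)$.

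The key point for \eqref{eq:ti} is that the general term telescopes. Rewriting \eqref{eq:n} as $n_i t(i)=t(i-1)$, we obtain
$$
(n_i-1)t(i)=n_i t(i)-t(i)=t(i-1)-t(i),\qquad i\ge 1,
$$
so that for every $N>j$ the partial sum collapses, $\sum_{i=j+1}^N (n_i-1)t(i)=\sum_{i=j+1}^N\big(t(i-1)-t(i)\big)=t(j)-t(N)$.

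It then remains to let $N\to\infty$. Since the embeddings in \eqref{eq:chain} are strict, $n_i\ge 2$ for all $i\ge 1$, hence $\omega(N)=\prod_{i=1}^N n_i\ge 2^N$ and $t(N)=1/\omega(N)\to 0$; this shows that the series in \eqref{eq:ti} converges and equals $t(j)$ for each $j\ge 0$. Finally \eqref{eq:1} is the special case $j=0$ of \eqref{eq:ti}, using $t(0)=1/\omega(0)=1$. There is no serious obstacle here: once the telescoping identity is spotted the argument is purely algebraic, and the only step that requires a word of care is the vanishing of the tail $t(N)$, which I would dispatch with the crude bound $\omega(N)\ge 2^N$ coming from strictness of the chain.
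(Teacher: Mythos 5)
Your proof is correct and follows essentially the same route as the paper: the paper also derives \eqref{eq:ti} by writing $\sum(n_i-1)t(i)=\sum n_it(i)-\sum t(i)$ and using $n_it(i)=t(i-1)$, which is your telescoping identity in a different guise. Your version is slightly more careful in that you justify the vanishing of the tail $t(N)$ via $n_i\ge 2$, a point the paper leaves implicit.
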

{\em Proof:} Equalities \eqref{eq:omega} and \eqref{eq:n} are immediate from \eqref{eq:chain}.
Relations \eqref{eq:ti}-\eqref{eq:1} now follow from \eqref{eq:n}:
\begin{align*}
\sum_{i=j+1}^\infty(n_i-1)t(i)&=\sum_{i=j+1}^\infty n_it(i)-\sum_{i=j+1}^\infty t(i)=\sum_{i=j+1}^\infty t(i-1)-\sum_{i=j+1}^\infty t(i)\\
&=t(j) \hspace*{3.5in} \qed
\end{align*}

This lemma and Proposition \ref{prop:52} imply that the group $X$ can be mapped on the segment $[0,1].$ 
Let us number the coset representatives of $X_{j-1}/X_j$ from $0$ to $n_j-1$ starting from $z=0$ in an arbitrary way
and write $N(z)=N_j(z)$ for the number of $z$ (thus $N(0)=0$). Define a mapping $\lambda:X\to[0,1]$ as follows:
   \begin{equation}\label{eq:map}
     x=(z_1,z_2,\dots)\;\mapsto\; \lambda(x)=\sum_{j\ge 1} t(j)N(z_j).
   \end{equation}
Since $t(j+1)/t(j)\le 1/2$ by \eqref{eq:n}, the series $\lambda(x)$ converges, and its value lies in $[0,1]$ because of \eqref{eq:1}.
The mapping $\lambda$ is not injective because there is a countable subset of points in $a\in [0,1]$ that can be written
in two ways, viz.,
  $$
  a=\sum_{j=1}^m t(j)N(z_j)=\sum_{j=1}^{m-1}t(j)N(z_j)+ (N(z_m')-1)+\sum_{j=m+1}t(j)N(z_j').
  $$
The preimages of the first and the second expressions above are two different points in $X,$ namely $x=(z_1,\dots,z_{m-1},z_m,0,0,\dots)$
and $x=(z_1,\dots,z_{m-1},z_m',z_{m+1}',\dots).$ To resolve this, the point $a$
is split into two points, written symbolically as $a-0$ and $a+0,$ whereupon $\lambda$ becomes one-to-one. It is possible to define a
topology on such modified segment $[0,1]$ so that if addition is inherited from $X$, it becomes a topological Abelian group
isomorphic to $X$.

\vspace*{.1in}
We will also need the {\em Haar measure} on $X.$ First define measures of the cosets $X_j+z$ by putting
\index{Haar measure}
   \begin{equation}\label{eq:Hm}
\mu(X_j+z)=t(j), \quad z\in X/X_j, j=0,1,\dots.
   \end{equation}
For a countable union $\cE$ of pairwise disjoint cosets $X_j+z$ define the measure by
$$
\mu(\cE)=\sum_{j,z}\mu(X_j+z),
$$
where the convergence follows from the convergence of the series \eqref{eq:1}. On account of \eqref{eq:n} we
also have
$$
\mu(X_j)=\mu\big(\textstyle{\bigcup_{z\in X_j/X_{j+1}}}\{X_{j+1}+z\}\big)=n_{j+1}\mu(X_{j+1}),
\quad j=0,1,\dots.
$$
These relations imply $\sigma$-additivity of the measure. Finally, we extend the measure to the set $\cP$
of all Borel subsets of $X$ and note that this extension is unique. The resulting measure is $\sigma$-additive and
is invariant with respect to translations and symmetries:
$$
\mu(\cE+x)=\mu(\cE),\;\mu(\cE)=\mu(-\cE), \quad\cE\in\cP.
$$
Details of the construction of the Haar measure are found in \cite{hew6370}.

The {\em character group} of $X$ is easily described. Let
\index{group!zero-dimensional!character group of}
$$
X_j^\bot:=\{\phi\in \hat X: \phi(x)=1 \text{ for all }x\in X_j\}
$$
		be the {\em annihilator} \index{annihilator subgroup} of the subgroup $X_j\subset X.$ Clearly, $X_j^\bot$ is a subgroup of $\hat X.$ Since $X_j$ is a
closed subgroup of $X$, the group $X_j^\bot$ is topologically isomorphic to the character group of the quotient $X/X_j$;
see \cite[Thm.23.25, p.365]{hew6370}. Since $X/X_j$ is finite, the annihilator is also finite and
\begin{equation}\label{eq:ann}
|X_j^\bot|=|X/X_j|=\omega(j)
\end{equation}
(cf. \eqref{eq:omega},\eqref{eq:n}). Further, from \eqref{eq:chain} we obtain the following reverse chain for the annihilators:
\begin{equation}\label{eq:rc}
\{1\}=X_0^\bot\subset X_1^\bot\subset\dots\subset X_j^\bot\subset\dots\subset X,
\end{equation}
and $\cup_{j\ge 0} X_j^\bot=\hat X.$ Thus, the character group is obtained as an increasing chain of nested finite groups.
The characters are easily found from the characters of the finite groups $X^\bot_j, j\in\naturals_0.$

The group $\hat X$ is countable, discrete, and periodic (i.e., every element has a finite order, which holds because it is
contained in some finite group in the chain \eqref{eq:rc}). In fact, the group $\hat X$ is discrete if and only if the group $X$ is compact, and it is periodic if and only if $X$ is zero-dimensional. These claims form a part of the general duality
theory of topological Abelian groups \cite{pon66, hew6370}.

The group $\hat X$ is metrizable. Indeed, put 
\begin{equation}\label{eq:dmd}
\hat\rho(\phi)=\min\{j:\phi\in X_j^\bot\}.
\end{equation}
Clearly, $\hat\rho(\phi)\ge 0, $ $\hat\rho(\phi)=0$ iff $\phi=1$, and
$$
\hat\rho(\phi\psi^{-1})\le\max\{\hat\rho(\phi),\hat\rho(\psi)\}.
$$
Thus, $\hat\rho(\cdot)$ is a non-Archimedean metric on $\hat X,$
and the subgroups $X_j^\bot$ are the balls in this metric:
\begin{equation}\label{eq:balls1}
\hat X_j=\{\phi\in X: \hat\rho(\phi)\le j\}, \quad j\ge 0.
\end{equation}
The dual statement for Proposition \ref{prop:52} has the following form.
\begin{proposition}\label{prop:52d}
The countable discrete topological space $\hat X$ can be identified with the set of infinite sequences
$$
\phi=( \pi_1, \pi_2,\dots), \quad \pi_j\in X_j^\bot/X_{j-1}^\bot
$$
where only a finite number of entries $\pi_j\ne 1.$
\end{proposition}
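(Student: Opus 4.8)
The plan is to mirror the proof of Proposition~\ref{prop:52}, decomposing each character along the \emph{increasing} chain of annihilators \eqref{eq:rc} instead of along the decreasing chain of subgroups \eqref{eq:chain}; the crucial simplification compared with Proposition~\ref{prop:52} is that the relevant products here will be finite, so no convergence argument is needed.

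First I would fix, for each $j\ge 1$, a complete set of coset representatives $\{\pi_j(0)=1,\pi_j(1),\dots,\pi_j(n_j-1)\}$ of $X_{j-1}^\bot$ in $X_j^\bot$; this is possible since $|X_j^\bot/X_{j-1}^\bot|=\omega(j)/\omega(j-1)=n_j$ by \eqref{eq:ann} and \eqref{eq:n}. Identifying $X_j^\bot/X_{j-1}^\bot$ with this set of representatives, I claim every $\phi\in\hat X$ is a finite product $\phi=\pi_1\pi_2\cdots\pi_m$ with $\pi_j\in X_j^\bot/X_{j-1}^\bot$. Indeed, since $\cup_{j\ge0}X_j^\bot=\hat X$, the integer $m=\hat\rho(\phi)$ from \eqref{eq:dmd} is finite and $\phi\in X_m^\bot$. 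Let $\pi_m$ be the representative of the coset $\phi X_{m-1}^\bot$; then $\phi\pi_m^{-1}\in X_{m-1}^\bot$, and iterating this step downward produces $\pi_{m-1},\dots,\pi_1$ with $\phi\pi_m^{-1}\cdots\pi_1^{-1}\in X_0^\bot=\{1\}$, i.e. $\phi=\pi_1\cdots\pi_m$. Setting $\pi_j=1$ for $j>m$ assigns to $\phi$ an infinite sequence $(\pi_1,\pi_2,\dots)$ with only finitely many nontrivial entries.

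Next I would check that this assignment is a bijection onto the set of such sequences. For uniqueness, suppose $\pi_1\cdots\pi_m=\pi_1'\cdots\pi_m'$, where $m$ is taken at least as large as the support of both sequences (pad with $1$'s). Reducing modulo $X_{m-1}^\bot$ and using that $\pi_1,\dots,\pi_{m-1},\pi_1',\dots,\pi_{m-1}'\in X_{m-1}^\bot$, one finds that $\pi_m$ and $\pi_m'$ lie in the same coset of $X_{m-1}^\bot$, hence $\pi_m=\pi_m'$ since both are among the fixed representatives; then $\pi_1\cdots\pi_{m-1}=\pi_1'\cdots\pi_{m-1}'$ and we proceed by downward induction. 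Conversely, any sequence $(\pi_1,\pi_2,\dots)$ with $\pi_j\in X_j^\bot/X_{j-1}^\bot$ and only finitely many terms $\ne 1$ determines the finite product $\prod_{j\ge1}\pi_j$, which lies in $X_m^\bot\subset\hat X$ for $m$ the largest index with $\pi_m\ne1$; this is plainly the inverse assignment. Finally the topological statement is immediate: $\hat X$ is discrete and countable (as recorded after \eqref{eq:rc}), so the bijection is automatically a homeomorphism onto the countable set of finitely supported sequences carrying the discrete topology.

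I do not expect a genuine obstacle here. Unlike Proposition~\ref{prop:52}, where compactness is needed to sum a genuinely infinite series in the non-Archimedean norm, in the present case the ``support'' of every character is finite precisely because $\hat X$ is the union (direct limit) of the finite groups $X_j^\bot$. The only point requiring a little care is bookkeeping of the direction of the induction — one starts from the top index $m=\hat\rho(\phi)$ and works downward — together with the observation that the coset of $\pi_j$ in $X_j^\bot/X_{j-1}^\bot$ is uniquely pinned down at each stage, which is what yields both existence and uniqueness of the representation.
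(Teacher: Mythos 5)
Your proposal is correct and follows essentially the same route as the paper: fix coset representatives of $X_j^\bot/X_{j-1}^\bot$, write $\phi\in X_m^\bot$ as $\pi_m\psi_{m-1}$ with $\psi_{m-1}\in X_{m-1}^\bot$, and descend. The only difference is that you spell out the uniqueness/bijectivity check and the (trivial) topological identification, which the paper simply asserts.
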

\begin{proof} Fix a set of representatives $\Theta(j)=\{\theta_i(j), 0\le i\le n_j-1\}$ of the cosets $X_j^\bot/X_{j-1}^\bot$ in the
group $X_j, j=1,\dots.$ Let us agree that $\theta_0(j)=1$ (the unit character of the subgroup $X_j^\bot$) for all $j$.
Note that the numbers $n_j$ are the same as in
\eqref{eq:omega} because
\begin{align}
n_j&=|X_{j-1}/X_j|=|X_j^\bot/X_{j-1}^\bot|, j=1,2,\dots \label{eq:sam}\\
n_0&=|X/X_0|=|X_0^\bot|=1.\nonumber
\end{align}
Once the coset representatives are fixed, any character $\phi\in X_j^\bot$ can be written uniquely as $\phi= \pi_j\psi_{j-1},$
where $ \pi_j\in X_j^\bot/X_{j-1}^\bot, \psi_{j-1}\in X_{j-1}^\bot.$ Continuing this process, we obtain
$$
\phi=\prod_{i=1}^j \pi_i, \quad \pi_i\in X_i^\bot/X_{i-1}^\bot,
$$
where $ \pi_j\in \Theta(j).$
\end{proof}

Using this result, we can write the metric $\hat\rho$ as
$$
\hat\rho(\phi)=\max\{j: \pi_j\ne\theta_0(j)\}, \phi\ne 1,
$$
and $\hat\rho(1)=0$ (this follows because $1=(\theta_0(1),\theta_0(2),\dots)$).
The Haar measure is just the counting measure: $\hat\mu(\cE)=|\cE|, \cE\in \hat X.$ In particular
$\hat\mu(X_j^\bot)=|X_j^\bot|=\omega(j);$ cf. \eqref{eq:ann}. Recall that we chose the normalization
$\mu(X)=1,\hat\mu(X^\bot)=1$ to satisfy the Parseval identities \eqref{eq:Parseval},\eqref{eq:ip}.
Note that these equalities hold for any normalization that satisfies $\mu(X)\hat\mu(X^\bot)=1$.

\vspace*{.1in}
{\em Remark:} The zero-dimensional Abelian groups considered here belong 
to the class of the so-called {\em profinite groups} that have important applications in algebra and number theory; see \cite{Ribes00}. \index{group!profinite} These groups are conveniently 
described in the
language of projective (inverse) and inductive (direct) limits of topological spaces. For instance, the group $X$ together with its
chain of nested subgroups \eqref{eq:chain} is a projective, and the group $\hat X$ with its chain \eqref{eq:rc} an inductive
limit: 
\index{limit!inductive (direct)}
\index{limit!projective (inverse)}
$$
X=\lim_{\stackrel{\longleftarrow}{i}} X_i, \quad \hat X=\lim_{\stackrel{\longrightarrow}{i}} X_i^\bot.
$$
However we prefer to avoid this specialized language to make our paper accessible not just to algebraists and tolopogists, but also to a broader mathematical audience.
\vspace*{.1in}

\subsubsection{Locally compact groups} Let us briefly
outline the changes that are needed in the setting of the previous section in order to include the locally compact case
in our considerations. Let $X$ be a locally compact uncountable zero-dimensional Abelian group. $X$ contains a doubly infinite
chain of nested compact subgroups
\index{group!zero-dimensional!locally compact}
\begin{equation}\label{eq:dc}
X \supset\dots\supset X_{j-1}\supset X_j\supset\dots\supset\{0\}, \quad j\in \integers,
\end{equation}
where
$$
\bigcup_{j\in \integers} X_j=X,\quad\bigcap_{j\in\integers} X_j=\{0\}
$$
and $X_{j-1}/X_j, j\in\integers$ are finite Abelian groups. The inclusions are strict, so $|X_{j-1}/X_j|\ge 2.$
The chain \eqref{eq:dc} defines a topology on $X$ in which the subgroups $X_j, j\in \integers$ form the base of neighborhoods
of zero and are both open and closed. This topology is metrizable. The corresponding discrete valuation and non-Archimedean metric
are defined similarly to \eqref{eq:vl}, \,\eqref{eq:metric}:
\begin{align}
\gls{nu}&=\max\{j: x\in X_j\}, \quad j\in\integers\\
\gls{rho}&=2^{-\nu(x)}, \quad x\in X, \label{eq:dis}
\end{align}
except that in this case $\nu(\cdot)$ can be any integer. All the subgroups $X_j,j\in\integers$ are balls in the metric $\rho.$
The topological space $X$ can be identified with the space of doubly infinite sequences
\begin{equation}\label{eq:di}
x=(\dots,z_j,z_{j+1},\dots), \quad z_j\in X_{j-1}/X_j, j\in \integers
\end{equation}
such that $z_j=0$ for all $j<\nu(x).$ As before in \eqref{eq:cosets},
let us assume that the coset representatives $z_j,j\in\integers$ are fixed.

Using expansion \eqref{eq:di}, we can map the group $X$ to the interval $[0,\infty).$ 
We proceed analogously to the compact case \eqref{eq:map}, defining a map $\lambda:X\to[0,\infty)$ by 
  \begin{equation}\label{eq:map-lc}
  x=(\dots,z_j,z_{j+1},\dots)\; \mapsto\; \lambda(x)=\sum_{j=-\nu(x)}^\infty t(j)N(z_j)
  \end{equation}
where $N(z_j), 0\le N(z_j)\le n_j-1$ is the index of the coset representative $z\in X_{j-1}/X_j,$ $N(0)=0,$
$n_j=|X_{j-1}/X_j|,$ and 
   $$
t(j)=\begin{cases}\prod_{i=0}^{j+1} n_j, &\text{if }-\nu(x)\le j\le -1\\
    1 &\text{if }j=0\\
    \prod_{i=1}^j \frac1{n_j} &\text{if } j\ge 1.
   \end{cases}
   $$
Convergence of the series in \eqref{eq:map-lc} to a point in $[0,\infty)$ again follows from \eqref{eq:1}. The mapping $\lambda$ is not injective but can be made such using the arguments following \eqref{eq:map}.
   
\vspace*{.1in}
Let us take one of the subgroups, say $X_0,$ in the chain \eqref{eq:dc}, and consider the group $H=X/X_0.$ We see that
\begin{equation}
H\supset\dots\supset H_{j+1}\supset H_j\supset\dots\supset H_1\supset H_0=\{0\},
\end{equation}
where $H_j=X_{-j}/X_0, j=0,1,\dots$ are finite Abelian groups and $H=\cup_{j\ge 0} H_j.$ The group $H$ is countably infinite,
discrete, and periodic.

\remove{Note that if $x=y+h=0$ then $y=0$ and $h=0.$ This implies the following well-known result.
\begin{lemma}\label{lemma:factor}
The locally compact group $X$ is a direct product of a compact group $X_0$
and a countable discrete group $H$:
\begin{equation}\label{eq:dp}
X=X_0\times H.
\end{equation}
\end{lemma}
This relation enables us to study the group $X$ in terms of the already familiar groups $X_0$ and $H$.
For instance,}

Using the language of bi-infinite sequences \eqref{eq:di} we can write
\begin{equation}\label{eq:li}
\begin{array}{l}
x=y+h, \quad y\in X_0, h\in H\\
y=(z_1,z_2,\dots),\quad z_j\in X_{j-1}/X_j, j=1,2,\dots\\
h=(h_1,h_2,\dots),\quad h_j=z_{-j+1}\in X_{-j}/X_{-j+1}.
\end{array}
\end{equation}
The {\em Haar measure} $\mu$ on $X$ can be defined as follows.
\index{Haar measure}
Note that
the cosets $\{X_0+h,h\in H\}$ form a partition the group $X$:
    \begin{equation}\label{eq:XH}
X=\bigcup_{h\in H} (X_0+h); \quad (X_0+h_1)\cap(X_0+h_2)=\emptyset,\;\; h_1\ne h_2.
    \end{equation}
For any Borel set $\cE\subset X$ put
\begin{align}\label{eq:mE}
\mu(\cE)&=\sum_{h\in H} \mu(\cE\cap(X_0+h))=\sum_{h\in H} \mu_0((\cE-h)\cap X_0)
\end{align}
where $\mu_0$ is the Haar measure on the compact subgroup $X_0.$
Noting that the total measure of $X$ is infinite, let us normalize the measure by the condition
\begin{equation}\label{eq:norm}
\mu_0(X_0)=1.
\end{equation}
Finally note that the choice of $X_0$ above is arbitrary: instead of $X_0$ this construction can rely on any
other subgroup $X_i$ \eqref{eq:dc}.
\remove{This gives rise to a series of factorizations of the group $X$ of the form
\begin{equation}\label{eq:seq}
X=X_i\times H^{(i)}, \quad\text{where }H^{(i)}=X/X_i, i\in \integers,
\end{equation}
where $X_i$ is compact and $H^{(i)}$ is countable, discrete, and periodic.}

\vspace*{.1in}\emph{The dual group:} The dual group $\hat X$ of a locally compact uncountable group $X$ is also locally compact
and contains a bi-infinite chain of annihilators $X_j^\bot\subset \hat X$ of the subgroups $X_j\subset X:$
\begin{equation}\label{eq:ca}
\{1\}\subset\dots\subset X_{j-1}^\bot\subset X_j^\bot\subset\dots\subset \hat X, \quad j\in\integers,
\end{equation}
where $\cup_{j\in\integers} X_j^\bot=\hat X,\cap_{j\in\integers} X_j^\bot=\{1\},$ and $|X_j^\bot/X_{j-1}^\bot|=|X_{j-1}/X_j|=n_j;$
see \eqref{eq:sam}. Note that $\{0\}^\bot=\hat X$ and $X^\bot=\{1\}.$

The subgroups $X_j^\bot, j\in\integers$ form the base of neighborhoods of the unit character. This topology is also metrizable,
and the corresponding discrete valuation $\hat \nu$ and metric $\hat\rho$ have the form
\begin{align}
\hat\nu(\phi)&=\max\{-j: \phi\in X_j^\bot\},\\
\hat\rho(\phi)&=2^{-\hat\nu(\phi)}, \quad \phi\in \hat X.\label{eq:disd}
\end{align}
As before, the subgroups $\hat X_j$ are the balls in the metric $\hat \rho.$ Note that the nesting in \eqref{eq:dc} and \eqref{eq:ca}
is in opposite directions because the larger the subgroup $X_j$ the smaller its annihilator $\hat X_i.$

The topological space $\hat X$ can be identified with the space of all bi-infinite sequences
\begin{equation}\label{eq:cs}
\phi=(\dots, \pi_{j-1}, \pi_j,\dots), \quad \pi_j\in X_j^\bot/X_{j-1}^\bot, \quad j\in\integers
\end{equation}
such that $ \pi_j=1$ for $j> \hat\nu(\phi).$ (Here as before we assume that the elements $ \pi_j, j\in\integers$ are chosen from a fixed system
of coset representatives $X_j^\bot/X_{j-1}^\bot$ contained in $X_j.$)
\remove{
\begin{lemma} The dual group of $X$ can be represented as a direct product
 \begin{equation}\label{eq:dg}
\hat X=\hat X_0\times \hat H,
\end{equation}
where the group $\hat X_0$ is countable, discrete, and periodic, and $\hat H=\widehat{X/X_0}$ is compact and zero-dimensional.
\end{lemma}
\begin{proof}
By \eqref{eq:cs}, every character $\phi\in\hat X$ can be written
as an infinite product
$$
\phi(x)=\prod_{j\in\integers} \pi_j(x),\quad x\in X
$$
where only a finite number of factors can be different from $1$. If $x=0$ then $ \pi_j(0)=1$ for all $j$. Generally, for all $x\in X,$
$ \pi_j(x)=1$ for all $j>\hat\nu(\phi)$ because of the definition \eqref{eq:cs}. On the other hand, if $x\in X_i, x\ne 0,\nu(x)=i,$ then
$ \pi_j(x)=1$ for all $j\le \nu(x)$ because for such $j$ the character $ \pi_j$ is contained in $X_i^\bot.$
We obtain
\begin{equation}
\phi(x)=\prod_{j=\nu(x)+1}^{\hat\nu(\phi)} \pi_j(x) ,\quad x\in X, \phi\in \hat X.
\end{equation}
Thus, every character $\phi\in \hat X$ can we written as
$$
\phi(x)=\psi(x)\xi(x),
$$
where $\psi(x)=\prod_{j\le 0} \pi_j(x), \xi(x)=\prod_{j\ge 1}\pi_j(x).$ Clearly, characters of the kind $\psi(\cdot)$
form the subgroup $X_0^\bot,$ and characters $\xi(\cdot)$ form a subgroup topologically isomorphic to the group $\hat X/X_0^\bot.$
Thus we obtain
$$
\hat X=X_0^\bot\times \hat X/X^\bot.
$$
The subgroup $X_0^\bot$ by definition is compact and zero-dimensional, and $\hat X/X^\bot$ is easily seen to be countable, discrete,
and periodic. Duality theory of topological Abelian groups (see \cite{hew6370}, Theorems 23.25 and 24.11) implies that $X_0^\bot\cong \hat H$ and
$\hat X/X_0^\bot\cong \hat X_0,$ where $\cong$ denotes topological isomorphism.
\end{proof}

This lemma is a dual result of Lemma \ref{lemma:factor}, and follows directly from it because the character group of
a direct product is topologically isomorphic to a direct product of the character groups. However, }

It is convenient to have explicit expressions for the groups considered in terms of coset representatives. In particular, we have a set of relations that is dual to
\eqref{eq:li}:
\begin{equation*}
\begin{array}{l}
\phi=\psi\cdot\xi, \quad\psi\in X_0^\bot=\hat X, \xi\in\hat X/X_0^\bot=\hat X_0\\
\psi=(\zeta_1,\zeta_2,\dots), \quad\zeta_j=\pi_{-j+1}\in X^\bot_{-j+1}/X^\bot_{-j}\\
\xi=(\pi_1,\pi_2,\dots), \quad \pi_j\in X_j^\bot/X_{j-1}^\bot, j=1,2,\dots.
\end{array}
\end{equation*}
\remove{As before, we can also obtain a sequence of expansions dual to \eqref{eq:seq}:
$$
\hat X=\hat X_i\times \hat H^{(i)}, \quad i\in \integers
$$
where $\hat X_i=\hat X/X_i^\bot$ are countable, discrete, and periodic groups and $\hat H^{(i)}$ are compact zero-dimensional groups.}
These relations enable us to define the Haar measure on $\hat X$ as follows. Note that the
cosets $\{X_0^\bot\xi,\xi\in\hat X_0\}$ form a partition of the group $\hat X$:
  $$
  \hat X=\cup_{\xi\in\hat X_0}(X_0^\bot\xi); \quad (X_0^\bot\xi_1)\cap (X_0^\bot\xi_2), \;\;\xi_1\ne\xi_2.
  $$
For any Borel set $\cE\in\hat X$
put
$$
\hat\mu(\cE)=\sum_{\xi\in\hat X_0} \hat\mu(\cE\cap X_0^\bot\xi)=\sum_{\xi\in\hat X_0}
\hat\mu_0(\cE\xi^{-1}\cap X_0^\bot),
$$
where $\mu_0$ is the Haar measure on the compact subgroup $X_0^\bot.$
Finally, we normalize the measure by $\hat\mu(X_0^\bot)=1.$ Together with the normalizations \eqref{eq:norm}
this implies the Parseval relations \eqref{eq:Parseval}, \eqref{eq:ip}.

\vspace*{.1in}
{\em Self-dual groups:} \index{group!self-dual} Examples of
self-dual locally compact Abelian groups can be easily constructed. Let $X_0$ be an arbitrary compact Abelian group and let $\hat X_0$ be its dual group.
Consider a locally compact Abelian group
\begin{equation}\label{eq:selfdual}
X=X_0\times\hat X_0.
\end{equation}
By Pontryagin's duality, $\Hat{\Hat{X_0}}\cong X_0,$ so
\begin{equation}\label{eq:sd1}
\hat X=\hat X_0\times \Hat{\Hat{X_0}}\cong X.
\end{equation}
Thus all the groups of the form \eqref{eq:selfdual} are self-dual (see more on self-dual groups in \cite[p.~I.422]{hew6370}). Below we use self-dual groups to construct a large class of
examples of self-dual association schemes.

\subsection{Dual pairs of association schemes}\label{sect:dualpairs} In this section we present a construction 
of dual pairs of translation schemes starting with a pair $(X,\hat X)$ where $X$ is a locally compact Abelian zero-dimensional group. 
The argument proceeds by partitioning $X$ and $\hat X$
into spheres, thereby constructing a pair of spectrally dual partitions. To prove duality, we will need some results about the
Fourier transforms of functions that are constant on spheres.

\subsubsection{ Fourier transforms} Let $X$ be a compact or locally compact Abelian group and let $D\subset X$ be a compact subgroup. Assume that $D$ is both open
and closed. Then the annihilator $D^\bot\subset \hat X$ is also a compact subgroup of $\hat X$ and is also both open and closed.
Clearly $\mu(D)>0$ and $\hat\mu(D^\bot)>0$ since $D$ and $D^\bot$ are open, and $\mu(D)<\infty,\mu(D^\bot)<\infty$ since they
both are compact.

Let $\chi[D;x]$ and $\chi[D^\bot;\xi]$ be the indicator functions of $D$ and $D^\bot$. We will need explicit 
expressions for their Fourier transforms. We remind the reader this result \cite[pp.81-82]{aga81}
whose short proof is included for completeness\footnote{\label{note:D}There is a certain notational ambiguity in the expressions below: namely,
the letter $D$ in $\chi[D;\cdot]$ refers to the domain while the same letter in $\tilde \chi[D;\cdot]$ is simply a label of the function.
This convention will be used throughout.}.
\begin{lemma}
\begin{align}
\tilde \chi[D;\xi]&=\mu(D)\chi[D^\bot;\xi] \label{eq:D}\\
\chi^\natural[D^\bot;x]&=\hat\mu(D^\bot)\chi[D;x]. \label{eq:Db}
\end{align}
\end{lemma}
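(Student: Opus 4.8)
The plan is to evaluate both Fourier transforms directly from the definitions \eqref{eq:ft}, \eqref{eq:ift}, using nothing beyond translation invariance of the Haar measures and the fact that $D$ and $D^\bot$ are subgroups. Observe first that both transforms are well defined: $D$ is compact, so $\chi[D;\cdot]\in L_1(X,\mu)$ with $\mu(D)<\infty$, and likewise $D^\bot$ is compact, so $\chi[D^\bot;\cdot]\in L_1(\hat X,\hat\mu)$ with $\hat\mu(D^\bot)<\infty$; hence the integrals below all converge.

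For \eqref{eq:D}, I would write $\tilde\chi[D;\xi]=\int_X\xi(x)\chi[D;x]\,d\mu(x)=\int_D\xi(x)\,d\mu(x)$ and split into two cases. If $\xi\in D^\bot$, then $\xi$ is identically $1$ on $D$ and the integral equals $\mu(D)$. If $\xi\notin D^\bot$, pick $x_0\in D$ with $\xi(x_0)\ne1$; since $D$ is a subgroup it is invariant under the translation $x\mapsto x+x_0$, so translation invariance of $\mu$ together with $\xi(x+x_0)=\xi(x)\xi(x_0)$ gives $\int_D\xi(x)\,d\mu(x)=\xi(x_0)\int_D\xi(x)\,d\mu(x)$, forcing that integral to vanish. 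Combining the two cases yields $\tilde\chi[D;\xi]=\mu(D)\chi[D^\bot;\xi]$. The identity \eqref{eq:Db} follows by applying the same argument on the dual side to the compact open subgroup $D^\bot\subset\hat X$: from \eqref{eq:ift} one gets $\chi^\natural[D^\bot;x]=\int_{D^\bot}\overline{\xi(x)}\,d\hat\mu(\xi)$, which equals $\hat\mu(D^\bot)$ when $x$ is annihilated by all of $D^\bot$ and is $0$ otherwise, now using the multiplicative translation $\xi\mapsto\xi\xi_0$ on $\hat X$ and $\overline{(\xi\xi_0)(x)}=\overline{\xi(x)}\,\overline{\xi_0(x)}$.

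The one point worth spelling out, rather than a genuine obstacle, is identifying the set of $x$ annihilated by all of $D^\bot$ with $D$ itself: this is precisely the statement $(D^\bot)^\bot=D$, valid because $D$ is a closed subgroup, by Pontryagin duality (see \cite[Thm.~23.25]{hew6370}). With that in hand the case distinction above gives $\chi^\natural[D^\bot;x]=\hat\mu(D^\bot)\chi[D;x]$, completing the proof. No real difficulty arises; this is the standard orthogonality of characters over a compact group, phrased here for the compact open subgroups $D$ and $D^\bot$.
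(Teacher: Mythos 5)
Your argument is correct and coincides with the paper's own proof: the same translation-by-$x_0$ trick to kill the integral when $\xi\notin D^\bot$, and the same appeal to $(D^\bot)^\bot=D$ for the dual identity. Nothing to add.
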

\begin{proof} By \eqref{eq:ft} we have
$$
\tilde \chi[D;\xi]=\int_D\xi(x) d\mu(x).
$$
If $\xi\in D^\bot,$ then the result is obvious. Otherwise, let $x_0\in D$ be such that $\xi(x_0)\ne 1.$ Since the Haar measure
is invariant and $D$ is a subgroup, we obtain
\begin{align*}
\int_D \xi(x) d\mu(x)&=\int_{D+x_0}\xi(x+x_0) d\mu(x)=\int_D\xi(x+x_0)d\mu(x)\\
&=\xi(x_0)\int_D \xi(x)d\mu(x)
\end{align*}
so $\tilde \chi[D;\xi]=0,$ which proves \eqref{eq:D}. The proof of \eqref{eq:Db} is the same if one takes into account that
$(D^\bot)^\bot=D$ by the duality theorem.
\end{proof}

We assume that the measures are normalized so that the Parseval identities \eqref{eq:Parseval},\eqref{eq:ip} are
satisfied. Then the transforms $\cF^\natural$ and $(\cF^\sim)^{-1}$ are inverse of each other. Applying $\cF^\natural$ to
\eqref{eq:D} and $\cF^\sim$ to \eqref{eq:Db}, we obtain the following dual relations:
\begin{align}
\chi[D;x]&=\mu(D)\chi^\natural[D^\bot; x] \label{eq:D1}\\
\chi[D^\bot;x]&=\hat\mu(D^\bot)\tilde\chi[D;\xi]. \label{eq:Db1}
\end{align}
Comparing the first of these equalities with \eqref{eq:Db}, or the second with \eqref{eq:D}, we obtain an important relation:
\begin{equation}\label{eq:PW}
\mu(D)\hat\mu(D^\bot)=1.
\end{equation}
On account of it, the pair of relations \eqref{eq:D1},\eqref{eq:Db1} is equivalent to the formulas \eqref{eq:D},\eqref{eq:Db}.

\subsubsection{Zero-dimensional groups and the uncertainty principle}\label{sect:uncertainty}
\index{uncertainty principle}
Observe that equalities \eqref{eq:D},~\eqref{eq:Db} express a rather nontrivial fact: 
in the topological spaces considered, there exist compactly supported functions $\chi[D;x]$ and $\chi[D^\bot;\xi]$
whose Fourier transforms are also compactly supported. This fact has an interesting interpretation in the context of
the ``uncertainty principle'' of harmonic analysis that deserves a more detailed discussion. 
The uncertainty principle is a general 
statement that a function $f$ on an Abelian group $X$ and its Fourier transform $\tilde f$ on the dual group $\hat X$ cannot 
both be ``well localized.'' For instance, in the case of $X=\reals$ this fact constitutes the statement of the 
Paley-Wiener theorem.
A similar obstruction exists for any connected Abelian group. Namely, such a group is topologically isomorphic either to a torus $(\reals/\integers)^l, l>0$
(if $X$ is compact) or to a direct product of $\reals^k,k>0$ and a torus $(\reals/\integers)^l, l>0$ (if $X$ is locally compact).
Such groups do not contain open subgroups which makes relations of the form \eqref{eq:D},~\eqref{eq:Db} impossible.

The uncertainty principle can be formalized in a number of ways, see, e.g., \cite{Folland97}. For totally disconnected groups the
following form of this principle is of interest:

{\em Consider the measure space $(X,\mu),$ where $X$ is an Abelian group and $\mu$ is the Haar measure.
Let $f\in L_2(X), f\ne 0$ and let $\tilde f(\xi)$ be the Fourier transform of $f$ \eqref{eq:ft}. 
Then 
   \begin{equation}\label{eq:mm}
   \mu(\supp f)\hat\mu(\supp \tilde f)\ge 1.
   \end{equation}
where $\supp f=\{x: f(x)\ne 0\}$ and $\hat X$ is the dual group.
}

Note that the function $f$ as an element of $L_2(X)$ is a class of functions that can differ on a subset of measure 0,
so the quantities $\mu(\supp f)$ and $\hat \mu(\supp\tilde f)$ are well defined.
For finite groups inequality \eqref{eq:mm} was pointed out in \cite{Donoho89}; see also \cite[Ch.14, Thm.1]{Terras99}.
The general version of this inequality affords a short simple proof which we include for reader's convenience. Indeed,
we have
   \begin{equation}\label{eq:2i}
   \|f\|_2^2=\int_X|f(x)|^2d\mu(x)\le \|f\|_\infty^2\,\mu(\supp f),
   \end{equation}
where $\|f\|_\infty=\text{ess\,sup}|f(x)|.$ 
At the same time, using \eqref{eq:ft} and the Cauchy-Schwartz inequality, we have
   \begin{align*}
   \|f\|_\infty&\le \int_{\hat X}|\tilde f(\xi)|\le \Big(\int_{\hat X}|\tilde f(\xi)|^2 d\hat\mu(\xi)\Big)^\half
     \Big(\hat\mu(\supp \tilde f)\Big)^\half
     \\
     &=\|\tilde f\|_2\Big(\hat\mu(\supp \tilde f)\Big)^\half
     \end{align*}
Substituting this inequality into \eqref{eq:2i} and noting that (by the Parseval identity \eqref{eq:Parseval})
$\|f\|_2=\|\tilde f\|_2\ne 0,$ we obtain \eqref{eq:mm}.

Observe that for zero-dimensional groups there exist functions that ``optimize'' the uncertainty principle: namely, the inequality\eqref{eq:mm}
holds with equality. Examples of such functions include indicators of subgroups \eqref{eq:PW} as well as piecewise-constant wavelets 
introduced below in \eqref{eq:psi}.


\subsubsection{Balls and spheres: Spectrally dual partitions} Let us return to the main subject of this section and write
relations \eqref{eq:D} and \eqref{eq:Db} for the subgroups $X_j\subset X$ and $X_j^\bot \subset \hat X$ in the chains \eqref{eq:chain}, \eqref{eq:rc} and \eqref{eq:dc}, \eqref{eq:ca}. We have
\begin{align}
\tilde \chi[X_j;\xi]&=\mu(X_j)\chi[X_j^\bot;\xi]\label{eq:f1}\\
\chi^\natural[X_j^\bot;x]&=\hat\mu(X_j^\bot)\chi[X_j;x]\label{eq:f2}\\
\mu(X_j)\hat\mu(&X_j^\bot)=1.\label{eq:f3}
\end{align}
As remarked earlier, see e.g., \eqref{eq:ball},~\eqref{eq:balls1}, the subgroups $X_j$ and $X_j^\bot$ form balls in the corresponding non-Archimedean metrics $\rho$ and $\hat\rho.$
Let us number these balls by their radii. Let
\begin{align}
\gls{Br}&=\{x:\rho(x)\le r\}, \quad r\in\I  \label{eq:b}\\
\hat B(t)&=\{\xi:\hat\rho(\xi)\le t\}, \quad t\in \hat \I. \label{eq:^b}
\end{align}
As above, we use the notation
\begin{align*}
\I_0=\{r\in\I:\mu(B(r))>0\}, \quad \hat\I_0=\{t\in\I: \hat\mu(\hat B(t))>0\}.
\end{align*}

Note that the use of notation $\I,\I_0,$ etc. is consistent with earlier use because the balls will be used below
to form the blocks of the partitions.
Let us describe the sets $\I,\I_0$ and $\hat\I,\hat\I_0$ for different topologies of the groups considered.

$(i)$ Let $X$ be infinite compact and $\hat X$ be a countable discrete group. The metrics $\rho$ and $\hat\rho$ are given
by Eqns.~\eqref{eq:metric}, \eqref{eq:dmd}, and the sets of radii have the form
\begin{equation}\label{eq:rr1}
\begin{matrix}
\I=\I_0\cup\{0\}, \quad \I_0=\{2^{-j},\, j\in \naturals_0\}\\[.05in]
\hat \I=\hat\I_0=\{j,\, j\in \naturals_0\}.
\end{matrix}
\end{equation}

$(ii)$ Let both $X$ and $\hat X$ be locally compact. Then the metrics $\rho$ and $\hat\rho$ are given by \eqref{eq:dis} and \eqref{eq:disd},
respectively, and the radii take the values
\begin{equation}\label{eq:rr2}
\begin{matrix}
\I=\I_0\cup\{0\}, \quad \I_0=\{2^{-j},\, j\in \integers\}\\[.05in]
\hat \I=\hat\I_0\cup\{0\}, \quad \hat\I_0=\{2^{j},\, j\in \integers\}.
\end{matrix}
\end{equation}

$(iii)$ For completeness, let us discuss the case of finite Abelian groups $X$ and $\hat X$ with nested chains of subgroups of length $d$:
\begin{equation}\label{eq:NRT}
\begin{array}{c}
X=X_0\supset X_1\supset\dots\supset X_j\supset\dots\supset X_d=\{0\}\\
\{0\}=X_0^\bot\subset X_1^\bot\subset\dots\subset X_j^\bot\subset\dots\subset X_d^\bot=\hat X.
\end{array}
\end{equation}
The metrics on the groups $X$ and $\hat X$ are given by the expressions
\begin{align*}
\rho(x)&=d-\min\{j=0,1,\dots,d: x\in X_j\}=\max\{i=0,1,\dots,d: x\in X_{d-i}\}\\
\hat\rho(\phi)&=\max\{j=0,1,\dots,d:\phi\in X_j^\bot\}
\end{align*}
and the sets of radii are
\begin{equation}\label{eq:rr3}
\I=\I_0=\hat\I=\hat\I_0=\{0,1,\dots,d\},
\end{equation}
so the dual radii are given by
   \begin{equation}\label{eq:dr}
r(j)=d-j,\;\; \hat r(j)=j, \quad j=0,1,\dots, d.
  \end{equation}
Note that the case of finite groups $X$ and $\hat X$ is far from trivial. The corresponding distances are known
in coding theory as the Rosenbloom-Tsfasman metrics \cite{ros97}. The association scheme for this case was introduced in
\cite{mar99} and studied in detail in \cite{bar09b}. Combinatorial problems for the Rosenbloom-Tsfasman and other related metric
spaces have been the subject of significant literature in the last decade, see e.g., references in \cite{bar09b}. However, in this paper we do not
devote special attention to these questions because our main goal is to study schemes on infinite sets.

\vspace*{.1in}
Define a pair of mutually inverse bijections on the sets $\I_0$ and $\hat\I_0,$
  \begin{equation}\label{eq:rtr}
\I_0\ni r \to \tilde r\in \hat\I_0, \quad \hat\I_0\ni t\to t^\natural\in \I_0
  \end{equation}
defined by the relations
\begin{equation}\label{eq:BB}
B(r)^\bot=\hat B(\tilde r), \quad B(t^\natural)^\bot=\hat B(t). 
\end{equation}
For the examples mentioned above, these bijections have the following form:\\[.05in]
$(i)$ From \eqref{eq:rr1} we obtain
$
\tilde r=-\log_2(r), \;t^\natural=2^{-t}; \quad r=2^{-j},\, j,t\in\naturals_0.
$

\nd $(ii)$ From \eqref{eq:rr2} we obtain
$
\tilde r=r^{-1},\; t^\natural=t^{-1};\quad r=2^{-j},t=2^j,\, j\in \integers.
$

\nd $(iii)$ In the case of finite groups,
$
\tilde r=d-r, \;t^\natural=d-t; \quad r,t\in\{0,1,\dots,d\}.
$

\vspace*{.05in}
For any $r_1,r_2\in \I_0$
$$
B(r_1)\subset B(r_2) \;\Leftrightarrow \;\hat B(\tilde r_1)\supset \hat B(\tilde r_2)
$$
and for any $t_1,t_2\in\hat\I_0$
$$
B(t_1^\natural)\supset B(t_2^\natural) \;\Leftrightarrow\; \hat B(t_1)\subset \hat B(t_2).
$$
In other words,
\begin{equation}
\begin{matrix}
\tilde r_1>\tilde r_2 \quad{\text{iff }}\quad r_1<r_2\\[.05in]
t_1^\natural>t_2^\natural \quad{\text{iff }}\quad t_1<t_2.
\end{matrix}
\label{eq:gl}
\end{equation}

\vspace*{.1in}
In the compact case let
  \begin{equation}\label{eq:br}
  \gls{barr}=\max\{r: r\in\I_0\}.
  \end{equation}
  denote the maximum value of the radius.
Given a value of the radius $s\in \I$ or $\hat\I,$ let
\begin{equation}\label{eq:pn}
\gls{tau-}(s)=\max\{r: r<s\}, \;\; \gls{tau+}(s)=\min\{r: r>s\}.
\end{equation}
Note that $\tau_-(s)$ is undefined if $s=0$ and $\tau_+(s)$ is undefined if $s=\bar r.$
We have $\tau_-(s)<s<\tau_+(s)$,
with no values in between.

Recalling that all the balls are subgroups, let us introduce the notation for the
subgroup index:
   \begin{equation}\label{eq:si}
n(r)=|B(r)/B(\tau_-(r))|.
   \end{equation}

\begin{proposition}\label{prop:+-}
(a) We have 
  \begin{equation}\label{eq:+-}
\widetilde{\tau_-(r)}=\tau_+(\tilde r), \quad \tau_-(t)^\natural=\tau_+(t^\natural)
   \end{equation}
   In words, a one-step move in
the ``time domain" corresponds to a one-step move in the opposite direction in the ``frequency domain."
  
   (b) We have
   \begin{equation}\label{eq:nr}
   n(r)=\hat n(\tau_+(\tilde r))
   \end{equation}
   \end{proposition}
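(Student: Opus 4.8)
The plan is to obtain part (a) purely from the fact that the duality $r\mapsto\tilde r$ is an order‑reversing bijection (recorded in \eqref{eq:gl}), and then to deduce part (b) by pairing part (a) with the identity $\mu(D)\hat\mu(D^\bot)=1$ of \eqref{eq:PW}. No external input beyond these two facts and the elementary behaviour of Haar measure under finite‑index inclusions is needed.

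For part (a): by \eqref{eq:gl} the map $\I_0\ni r\mapsto\tilde r\in\hat\I_0$ is strictly decreasing, and so is its inverse $t\mapsto t^\natural$. Fix $r\in\I_0$ with $\tau_-(r)$ defined (i.e.\ $r$ not the least element of $\I_0$) and set $u=\tau_-(r)$, the immediate predecessor of $r$ in $\I_0$. Since $u<r$ we get $\tilde u>\tilde r$; and if some $v\in\hat\I_0$ satisfied $\tilde r<v<\tilde u$, writing $v=\tilde w$ with $w\in\I_0$ and applying the decreasing inverse map would give $u<w<r$, contradicting the choice of $u$. Hence $\tilde u$ is the immediate successor of $\tilde r$, i.e.\ $\widetilde{\tau_-(r)}=\tau_+(\tilde r)$; note $\tau_+(\tilde r)$ is defined because an order‑reversing bijection carries the least element, if any, to the greatest. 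The second identity $\tau_-(t)^\natural=\tau_+(t^\natural)$ follows by the same argument with the roles of $X$ and $\hat X$ exchanged, using the second line of \eqref{eq:gl}.

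For part (b): if $H\subset G$ are open subgroups of a locally compact Abelian group with $[G:H]<\infty$, decomposing $G$ into cosets of $H$ and using translation invariance of Haar measure gives $\mu(G)=[G:H]\,\mu(H)$; applied to $B(\tau_-(r))\subset B(r)$ in $X$ and to the corresponding inclusion of annihilators in $\hat X$ this yields
$$
n(r)=\frac{\mu(B(r))}{\mu(B(\tau_-(r)))},\qquad
\hat n(t)=\frac{\hat\mu(\hat B(t))}{\hat\mu(\hat B(\tau_-(t)))}.
$$
Now apply \eqref{eq:PW} to $D=B(r)$ and to $D=B(\tau_-(r))$ and divide the two equalities; using $B(r)^\bot=\hat B(\tilde r)$ from \eqref{eq:BB} and $B(\tau_-(r))^\bot=\hat B(\widetilde{\tau_-(r)})=\hat B(\tau_+(\tilde r))$ from part (a), we get
$$
n(r)=\frac{\mu(B(r))}{\mu(B(\tau_-(r)))}
=\frac{\hat\mu\big(B(\tau_-(r))^\bot\big)}{\hat\mu\big(B(r)^\bot\big)}
=\frac{\hat\mu\big(\hat B(\tau_+(\tilde r))\big)}{\hat\mu\big(\hat B(\tilde r)\big)}.
$$
Since $\tau_-(\tau_+(\tilde r))=\tilde r$ (the predecessor of the successor), the right‑hand side is exactly $\hat n(\tau_+(\tilde r))$, which is \eqref{eq:nr}.

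The proposition is essentially formal, so I do not expect a genuine obstacle; the only step that needs care is the alignment of annihilators in part (b) — one must use part (a) to identify $B(\tau_-(r))^\bot$ with $\hat B(\tau_+(\tilde r))$ before invoking \eqref{eq:PW} — together with the bookkeeping that $\tau_-$ and $\tau_+$ are applied only where defined, which is exactly the order‑reversal remark made in part (a).
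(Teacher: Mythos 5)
Your proof is correct. Part (a) is essentially the paper's own argument: order reversal of $r\mapsto\tilde r$ from \eqref{eq:gl} forces the dual of the immediate predecessor to be the immediate successor, with the same ``no radius strictly in between'' contradiction. Part (b) is where you genuinely diverge. The paper proves \eqref{eq:nr} by invoking the structural duality fact that for closed subgroups $G_1\subset G_2\subset G$ one has $\widehat{G_2/G_1}=G_1^\bot/G_2^\bot$ (citing Hewitt--Ross), so that $B(\tau_-(r))^\bot/B(r)^\bot=\hat B(\tau_+(\tilde r))/\hat B(\tilde r)$ is literally the dual group of $B(r)/B(\tau_-(r))$, and then uses that a finite Abelian group and its dual have the same order. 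You instead compute both indices as ratios of Haar measures via coset decomposition and divide the two instances of $\mu(D)\hat\mu(D^\bot)=1$ from \eqref{eq:PW}; the bookkeeping with $B(\tau_-(r))^\bot=\hat B(\tau_+(\tilde r))$ via part (a) and $\tau_-(\tau_+(\tilde r))=\tilde r$ is exactly right. Your route is more elementary and self-contained (it needs only \eqref{eq:PW} and \eqref{eq:BB}, both already established at that point), but it yields only the numerical equality of the indices, whereas the paper's argument additionally identifies $\hat B(\tau_+(\tilde r))/\hat B(\tilde r)$ as the character group of $B(r)/B(\tau_-(r))$ --- an identification the paper reuses later (e.g.\ in defining the characters $\omega_{ij}(r)$ in \eqref{eq:R}), so its heavier machinery is not wasted. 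For the proposition as stated, your argument is fully adequate.
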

\begin{proof} (a) Let us prove the first equality, the second follows in the same way.
Since $\tau_-(r)<r,$ \eqref{eq:gl} implies that $\widetilde{\tau_-(r)}>\tilde r.$ If $\widetilde{\tau_-(r)}\ne \tau_+(\tilde r),$
then there is a radius $\tilde a$ such that $\widetilde{\tau_-(r)}>\tilde a>\tilde r,$ i.e., $\tau_-(r)<a<r,$ which is a contradiction.

(b) Using duality theory, we observe that if $G_1\subset G_2\subset G$ are closed subgroups in a topological Abelian group 
$G$, then $\widehat{G_2/G_2}=G_1^\bot/G_2^\bot,$ where $\bot$ denotes the annihilator subgroup (see \cite{aga81}, Ch.3, \S2,
or Lemma 24.5 in \cite{hew6370}). 
Therefore the dual of the finite group $B(r)/B(\tau_-(r))$ is 
the group $B(\tau_-(r))^\bot/B(r)^\bot=\hat B(\tau_+(\tilde r))/\hat B(\tilde r),$ see \eqref{eq:BB} and Part (a). Equation 
\eqref{eq:nr} expresses the fact that the order of a finite Abelian group equals the order of the its dual group. 
\end{proof}

We can rewrite equations \eqref{eq:f1}-\eqref{eq:f3} as follows:
    \begin{equation}\label{eq:mu1}
     \left.\begin{array}{c}
\tilde\chi[B(r);\xi]=\mu(B_r)\chi[\hat B(\tilde r);\xi] \quad r\in\I_0 \\
\chi^\natural[\hat B(t);x]=\hat\mu(\hat B(t))\chi[B(t^\natural);x] \quad t\in\hat\I_0\\
\mu(B(r))\hat\mu(\hat B(\tilde r))=1, \quad \hat\mu(\hat B(t))\mu(B(t^\natural))=1.
      \end{array}\right\}
    \end{equation}

Now consider spheres in the groups $X$ and $\hat X$:
\begin{align*}
\gls{S}(r)&=\{x\in X: \rho(x)=r\}. \quad r\in\I,\\
\hat S(t)&=\{\xi\in\hat X: \hat\rho(\xi)=t\}, \quad t\in \hat\I.
\end{align*}
We have
\begin{gather}
S(0)=B(0),\;\;\hat S(0)=\hat B(0) \nonumber\\
S(r)=B(r)\backslash B(\tau_-(r)),\;\; \hat S(t)=\hat B(t)\backslash \hat B(\tau_-(t)),\label{eq:Sx}
\end{gather}
so
\begin{align}
\chi[S(r);x]&=\chi[B(r);x] -\chi[B(\tau_-(r));x] \label{eq:St}\\
\chi[\hat S(t);\xi]&=\chi[\hat B(t);\xi]-\chi[\hat B(\tau_-(t));\xi].\label{eq:Sf}
\end{align}
We will extend these relations to hold for $r=0, t=0$ as well, assuming that $B(\tau_-(0))=\hat B(\tau_-(0))=\emptyset .$
Expressions for the Fourier transforms of these functions follow immediately (note the use of \eqref{eq:+-}).
\begin{lemma} \label{lemma:fc}
\begin{align*}
\tilde\chi[S(r);\xi]&=\mu(B(r))\chi[\hat B(\tilde r);\xi]-\mu(B(\tau_-(r)))\chi[\hat B(\tau_+(\tilde r));\xi]\\
\chi^\natural[\hat S(t);x]&=\hat\mu(\hat B(t))\chi[B(t^\natural);x]-\hat\mu(\hat B(\tau_-(t)))\chi[B(\tau_+(t^\natural));x].
\end{align*}
\end{lemma}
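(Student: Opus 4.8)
The plan is to compute the Fourier transform of each sphere indicator directly from the decomposition \eqref{eq:St}, \eqref{eq:Sf} by linearity of $\cF^\sim$ and $\cF^\natural$, and then substitute the known transforms of the ball indicators recorded in \eqref{eq:mu1}. First I would apply $\cF^\sim$ to \eqref{eq:St}:
\begin{equation*}
\tilde\chi[S(r);\xi]=\tilde\chi[B(r);\xi]-\tilde\chi[B(\tau_-(r));\xi].
\end{equation*}
By the first line of \eqref{eq:mu1} the first term equals $\mu(B(r))\chi[\hat B(\tilde r);\xi]$. For the second term the same formula gives $\mu(B(\tau_-(r)))\chi[\widehat{\,B(\tau_-(r))}{}^{\sim};\xi]$, and here one invokes Proposition~\ref{prop:+-}(a): $\widetilde{\tau_-(r)}=\tau_+(\tilde r)$, so that $\chi[\widehat{B(\tau_-(r))}{}^{\sim};\xi]=\chi[\hat B(\tau_+(\tilde r));\xi]$. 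Combining the two pieces yields exactly the first claimed identity. The second identity follows by the completely symmetric computation: apply $\cF^\natural$ to \eqref{eq:Sf}, use the second line of \eqref{eq:mu1} on each ball indicator, and use the second relation in \eqref{eq:+-}, namely $\tau_-(t)^\natural=\tau_+(t^\natural)$, to rewrite the index of the inner ball.

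The only subtlety, and the one point that needs a word of care rather than real difficulty, is the boundary behavior: the formula \eqref{eq:St} was only justified for $r\in\I_0$ a priori, and we extended it to $r=0$ by the convention $B(\tau_-(0))=\emptyset$. One should check that the resulting formula in the lemma is still correct at $r=0$ (respectively $t=0$): there $S(0)=B(0)$, $\tilde r=0$ in the compact-dual indexing maps to the maximal value $\tau_+(\tilde r)$ being undefined, and the convention $\mu(\emptyset)=0$ kills the spurious second term, so the stated identity reduces to the first line of \eqref{eq:mu1} for $j$ corresponding to the point/trivial subgroup. Likewise at the opposite end one must make sure $\tau_-(r)$ is defined whenever it appears, which it is for $r\neq 0$ by \eqref{eq:pn}. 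I do not anticipate any genuine obstacle here — the content of the lemma is entirely bookkeeping on top of the already-established Fourier formulas \eqref{eq:mu1} for balls and the index-flipping statement \eqref{eq:+-}; the proof in the text can be as short as "substitute \eqref{eq:mu1} into the Fourier transform of \eqref{eq:St}, \eqref{eq:Sf} and apply \eqref{eq:+-}."
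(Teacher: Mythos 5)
Your proof is correct and is exactly the argument the paper intends: the text states the lemma "follows immediately" from \eqref{eq:St}, \eqref{eq:Sf} together with \eqref{eq:mu1}, explicitly noting the use of \eqref{eq:+-} to convert $\widetilde{\tau_-(r)}$ into $\tau_+(\tilde r)$. Your additional remarks on the boundary case $r=0$ (via the convention $B(\tau_-(0))=\emptyset$) are a harmless and sensible elaboration of the same bookkeeping.
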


Consider partitions of the groups $X$ and $\hat X$ into spheres:
\begin{equation}\label{eq:part}
X=\bigcup_{r\in \I} S(r), \quad \hat X=\bigcup_{t\in\hat\I} \hat S(t).
\end{equation}
We have
\begin{align*}
\chi[X;x]&=\sum_{r\in \I} \chi[S(r);x]=1 \text{ for all } x\in X\\
\chi[\hat X;\xi]&=\sum_{t\in \hat\I} \chi[\hat S(t);\xi]=1 \text{ for all }\xi\in \hat X.
\end{align*}
In the next theorem, which is the main result in this part, we establish the key properties of these partitions.
\begin{theorem}\label{thm:m1}
Partitions \eqref{eq:part} of the groups $X$ and $\hat X$ are symmetric spectrally dual in the sense of Definition \ref{def:sdp}. We have
\begin{align}
\tilde \chi[S(r);\xi]&=\sum_{b\in \hat \I} p_r(b)\chi[\hat S(b);\xi]\label{eq:sdf}\\
\chi^\natural [\hat S(t); x]&=\sum_{a\in \I} q_t(a) \chi[S(a);\xi]\label{eq:sdfb}
\end{align}
where
\begin{gather}
p_r(b)=\begin{cases} 0 &\text{if } b>\tau_+(\tilde r),\\
-\mu(B(\tau_-(r))) &\text{if } b=\tau_+(\tilde r),\\
\mu(B(r))-\mu(B(\tau_-(r))) &\text{if } 0\le b\le \tilde r
\end{cases} \label{eq:pik3}\\[.1in]
q_t(a)=\begin{cases} 0 &\text{if } a>\tau_+(t^\natural),\\
-\hat\mu(\hat B(\tau_-(t))) &\text{if } a=\tau_+(t^\natural)\\
\hat\mu(\hat B(t))-\hat\mu(\hat B(\tau_-(t))) &\text{if } 0\le a\le t^\natural.
\end{cases}\label{eq:qik3}
\end{gather}
(cf. \eqref{eq:pik1},~\eqref{eq:qik1}). Relations \eqref{eq:sdf} and \eqref{eq:sdfb} hold pointwise for all $\xi\in \hat X$
and $x\in X$.
\end{theorem}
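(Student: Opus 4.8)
The plan is to read everything off from Lemma~\ref{lemma:fc} together with the trivial fact that a ball is the disjoint union of the spheres it contains. First I would record that the partitions \eqref{eq:part} meet the formal requirements of Definition~\ref{def:sdp}: each $S(r)\subseteq B(r)=X_j$ is Borel and has finite measure (as $X_j$ is compact open), $S(0)=B(0)=\{0\}$ is the only block of measure zero (the others have measure $\mu(B(r))-\mu(B(\tau_-(r)))>0$ since $t(\cdot)$ is strictly decreasing), and the same holds on $\hat X$. Symmetry is immediate: every ball $B(r)=X_j$ is a subgroup, hence invariant under $x\mapsto-x$, so $S(r)=B(r)\setminus B(\tau_-(r))$ satisfies $-S(r)=S(r)$, i.e.\ $r'=r$, and likewise $t'=t$ on $\hat X$ using the subgroups $X_j^\bot$; thus both partitions are symmetric in the sense of \eqref{eq:N}.

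Next I would establish the explicit formulas. At every point exactly one sphere indicator is nonzero, so $\chi[B(r);x]=\sum_{a\in\I,\,a\le r}\chi[S(a);x]$ and $\chi[\hat B(t);\xi]=\sum_{b\in\hat\I,\,b\le t}\chi[\hat S(b);\xi]$ pointwise; in particular, since $\tau_+(\tilde r)$ is the immediate successor of $\tilde r$ in $\hat\I$,
\[
\chi[\hat B(\tau_+(\tilde r));\xi]=\chi[\hat S(\tau_+(\tilde r));\xi]+\sum_{b\le\tilde r}\chi[\hat S(b);\xi].
\]
Substituting this and $\chi[\hat B(\tilde r);\xi]=\sum_{b\le\tilde r}\chi[\hat S(b);\xi]$ into the first identity of Lemma~\ref{lemma:fc} and collecting the coefficient of each $\chi[\hat S(b);\xi]$ gives
\[
\tilde\chi[S(r);\xi]=\big(\mu(B(r))-\mu(B(\tau_-(r)))\big)\sum_{b\le\tilde r}\chi[\hat S(b);\xi]-\mu(B(\tau_-(r)))\,\chi[\hat S(\tau_+(\tilde r));\xi],
\]
which is \eqref{eq:sdf} with coefficients \eqref{eq:pik3}. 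The same substitution in the second identity of Lemma~\ref{lemma:fc}, using $\chi[B(t^\natural);x]$ and $\chi[B(\tau_+(t^\natural));x]$, yields \eqref{eq:sdfb} with \eqref{eq:qik3}. Since both identities of Lemma~\ref{lemma:fc} and the ball--sphere decompositions are pointwise equalities, \eqref{eq:sdf} and \eqref{eq:sdfb} hold pointwise for all $\xi$ and $x$.

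Finally, spectral duality in the sense of Definition~\ref{def:sdp} follows formally. Equation \eqref{eq:sdf} shows $\cF^\sim\chi[S(r);\cdot]\in\Lambda_2(\widehat\cN)$ for every $r\in\I_0$, so by linearity and continuity of the isometry $\cF^\sim$ we get $\cF^\sim\Lambda_2(\cN)\subseteq\Lambda_2(\widehat\cN)$; symmetrically \eqref{eq:sdfb} gives $\cF^\natural\Lambda_2(\widehat\cN)\subseteq\Lambda_2(\cN)$. Applying $\cF^\sim$ to the second inclusion and using that $\cF^\sim$ and $\cF^\natural$ are mutually inverse gives $\Lambda_2(\widehat\cN)\subseteq\cF^\sim\Lambda_2(\cN)$, hence $\cF^\sim\Lambda_2(\cN)=\Lambda_2(\widehat\cN)$, and then $\Lambda_2(\cN)=\cF^\natural\Lambda_2(\widehat\cN)$ by applying $\cF^\natural$. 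This is exactly the assertion of the theorem.

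I expect the only fiddly part to be the boundary radii (namely $r=0$ and $r=\bar r$ in the compact case, and their duals), where one has to invoke the conventions $B(\tau_-(0))=\hat B(\tau_-(0))=\emptyset$ and observe that some clauses of \eqref{eq:pik3}--\eqref{eq:qik3} then become vacuous, and to keep track of the matching of $\I,\I_0$ with $\hat\I,\hat\I_0$ across the compact, locally compact and finite cases. The substantive content is the short coefficient computation displayed above.
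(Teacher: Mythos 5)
Your proposal is correct and follows essentially the same route as the paper: decompose each ball into the spheres it contains, substitute into Lemma~\ref{lemma:fc}, and collect coefficients to obtain \eqref{eq:pik3}--\eqref{eq:qik3}, with symmetry coming from the fact that the balls are subgroups. The only difference is that you spell out in more detail the step from ``indicators of spheres map into $\Lambda_2(\widehat\cN)$'' to the full isomorphism $\cF^\sim\Lambda_2(\cN)=\Lambda_2(\widehat\cN)$ (using that $\cF^\sim$ and $\cF^\natural$ are mutually inverse isometries), which the paper leaves implicit.
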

\begin{proof} We have
$$
B(r)=\bigcup_{a\in \I: a\le r} S(a), \;\;\hat B(t)=\bigcup_{b\in \hat\I: b\le t} \hat S(b).
$$
Writing these relations in terms of the indicator functions, we obtain
\begin{align*}
\chi[B(r);x] &= \sum_{ a\in \I: a\le r} \chi[S(a);x]\\
\chi[\hat B(t);\xi] &=\sum_{ b\in \hat\I: b\le t} \chi[\hat S(b);\xi].
\end{align*}
Now \eqref{eq:sdf}-\eqref{eq:qik3} follow by using these expressions in Lemma~\ref{lemma:fc}. Consequently, the functions
constant on the blocks of the partition of $X$ into spheres \eqref{eq:part} have Fourier transforms that are constant on
the blocks of the dual partition. This implies that the partitions \eqref{eq:part} are spectrally dual. The symmetry is obvious
because $x$ and $-x$, or $\xi$ and $\xi^{-1}$ are contained in the same spheres in $X$ and $\hat X,$ respectively.
\end{proof}

In the next lemma we collect several useful properties of the eigenvalues $p_r(b)$ and $q_t(a)$.
\begin{lemma}\label{lemma:orth}
\begin{gather}
\sum_{b\in \hat\I_0} p_r(b)\hat\mu(\hat S(b))=0, \quad r>0 \label{eq:pb}\\
\sum_{b\in \hat\I_0}p_0(b)\hat\mu(\hat S(b))=\begin{cases} 0&\text{if }\mu(S(0))=0,\\
1&\text{if }\mu(S(0))>0
\end{cases} \label{eq:p0}\\
\sum_{a\in \I_0} q_t(a)\mu(S(a))=0, \quad t>0 \label{eq:qb}\\
\sum_{a\in \I_0} q_0(a)\mu(S(a))=\begin{cases} 0 &\text{if }\hat\mu(\hat S(0))=0\\
1&\text{if }\hat\mu(\hat S(0))>0
\end{cases}\label{eq:q0}
\end{gather}
\begin{align}
\sum_{b\in \hat\I_0} p_{r_1}(b)p_{r_2}(b)\hat\mu(\hat S(b))=\delta_{r_1,r_2}\mu(S(r_1)) \label{eq:o11}\\
\sum_{a\in\I_0} q_{t_1}(a)q_{t_2}(a)\mu(S(a))=\delta_{t_1,t_2}\hat\mu(\hat S(t_1)) \label{eq:o12}
\end{align}
\end{lemma}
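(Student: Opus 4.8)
The quickest route is to observe that \eqref{eq:o11} and \eqref{eq:o12} are nothing but the general orthogonality relations \eqref{eq:pipj} and \eqref{eq:qiqj} specialized to the sphere partitions. Indeed, by Theorem~\ref{thm:m1} the partitions \eqref{eq:part} of $X$ and $\hat X$ into spheres are spectrally dual, with blocks $S(b)$, $\hat S(b)$ and spectral coefficients $p_r(b)$, $q_t(a)$; moreover the explicit formulas \eqref{eq:pik3}, \eqref{eq:qik3} exhibit these coefficients as differences of Haar measures of balls, hence as real numbers. Thus \eqref{eq:pipj} reads $\sum_{b\in\hat\I_0}p_{r_1}(b)\overline{p_{r_2}(b)}\hat\mu(\hat S(b))=\delta_{r_1,r_2}\mu(S(r_1))$, and dropping the (now superfluous) complex conjugation gives \eqref{eq:o11}; \eqref{eq:o12} follows identically from \eqref{eq:qiqj}. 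If one prefers a self-contained derivation, one can instead pair \eqref{eq:sdf} for $r_1$ with the conjugate of \eqref{eq:sdf} for $r_2$, integrate over $\hat X$, and use the Parseval identity \eqref{eq:ip} together with $\chi[\hat S(b);\cdot]\chi[\hat S(b');\cdot]=\delta_{b,b'}\chi[\hat S(b);\cdot]$ and the disjointness of $S(r_1),S(r_2)$ in $X$.

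For the remaining four identities I would integrate the pointwise expansion \eqref{eq:sdf} over $\hat X$. Since $\{\hat S(b)\}_{b\in\hat\I}$ partitions $\hat X$, and since the term $b=0$ contributes nothing whenever $0\notin\hat\I_0$ (then $\hat\mu(\hat S(0))=0$), this yields
\[
\sum_{b\in\hat\I_0}p_r(b)\hat\mu(\hat S(b))=\int_{\hat X}\tilde\chi[S(r);\xi]\,d\hat\mu(\xi);
\]
the interchange of summation and integration is legitimate because, by \eqref{eq:pik3}, $p_r(\cdot)$ has finite support while $\tilde\chi[S(r);\cdot]$ is bounded and supported on the compact subgroup $\hat B(\tau_+(\tilde r))$. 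Now apply Lemma~\ref{lemma:fc}: for $r>0$ the integral equals $\mu(B(r))\hat\mu(\hat B(\tilde r))-\mu(B(\tau_-(r)))\hat\mu(\hat B(\tau_+(\tilde r)))$, and since $\hat B(\tilde r)=B(r)^\bot$ and, by Proposition~\ref{prop:+-}(a), $\tau_+(\tilde r)=\widetilde{\tau_-(r)}$ so that $\hat B(\tau_+(\tilde r))=B(\tau_-(r))^\bot$, both products equal $1$ by \eqref{eq:PW} (equivalently \eqref{eq:mu1}); the difference is $0$, which is \eqref{eq:pb}. For $r=0$ we have $\tilde\chi[S(0);\xi]=\mu(B(0))\chi[\hat X;\xi]$, so the sum is $0$ if $\mu(S(0))=\mu(B(0))=0$, and equals $\mu(\{0\})\hat\mu(\hat X)=1$ by \eqref{eq:PW} if $\mu(S(0))>0$ (when $B(0)=\{0\}$ is a clopen subgroup with $\{0\}^\bot=\hat X$); this is \eqref{eq:p0}. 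Identities \eqref{eq:qb} and \eqref{eq:q0} come from the same argument with the roles of $X$ and $\hat X$ exchanged, via \eqref{eq:sdfb} and the second line of Lemma~\ref{lemma:fc}.

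There is no genuine obstacle here — everything reduces to facts already established — but care is needed in two bookkeeping matters: the behaviour at the extreme radius $r=0$ (the convention $B(\tau_-(0))=\emptyset$, and the fact that in the non-atomic case $\mu(B(0))=0$ automatically forces $p_0\equiv 0$ in \eqref{eq:pik3}, so that \eqref{eq:p0} degenerates in the correct way), and, in the locally compact case, the convergence of the integrals and series above, which holds precisely because each $\tilde\chi[S(r);\cdot]$ is bounded with compact support and each $p_r(\cdot)$, $q_t(\cdot)$ has finite support.
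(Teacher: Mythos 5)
Your proof is correct, and for \eqref{eq:o11}--\eqref{eq:o12} it takes precisely the shortcut that the paper explicitly mentions and then declines: the paper states that these are special cases of the general orthogonality relations \eqref{eq:pipj}, \eqref{eq:qiqj} but chooses to give an independent verification by substituting the explicit formulas \eqref{eq:pik3}, \eqref{eq:qik3} and grinding through the two cases $r_1\ne r_2$ and $r_1=r_2$ using \eqref{eq:+-} and \eqref{eq:mu1}. Your route buys brevity at the cost of leaning on Theorem~\ref{thm:m1} and the general $L_2$ machinery; the paper's explicit computation buys a sanity check of the formulas \eqref{eq:pik3}, \eqref{eq:qik3} themselves, which is presumably why the authors preferred it. For \eqref{eq:pb}--\eqref{eq:q0} your argument (integrate \eqref{eq:sdf} over $\hat X$ and invoke Lemma~\ref{lemma:fc}) is the same algebra as the paper's direct substitution of \eqref{eq:pik3}, just packaged through the Fourier transform of the sphere indicator; both reduce to $\mu(B(r))\hat\mu(\hat B(\tilde r))-\mu(B(\tau_-(r)))\hat\mu(\hat B(\tau_+(\tilde r)))=1-1=0$, and your treatment of the boundary case $r=0$ matches the paper's.

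One small inaccuracy: you assert twice that $p_r(\cdot)$ and $q_t(\cdot)$ have finite support. This is true in the compact case but false in the locally compact case, where $\hat\I_0=\{2^j,\,j\in\integers\}$ and \eqref{eq:pik3} gives $p_r(b)=\mu(S(r))>0$ for the infinitely many $b\le\tilde r$. The interchange of sum and integral is nevertheless legitimate for the other reason you give (each term of the expansion is supported on one block of a partition into sets of finite measure, and $\sum_{b\le\tilde r}\hat\mu(\hat S(b))=\hat\mu(\hat B(\tilde r))<\infty$, so the series of integrals converges absolutely), so this is a wrong justification for a correct step rather than a gap.
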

\begin{proof} To prove \eqref{eq:pb}, we use \eqref{eq:pik3} as follows:
\begin{align*}
\sum_{b\in\hat\I_0} p_r(b)\hat\mu(\hat S(b))&=-\mu(B(\tau_-(r)))\hat\mu (\hat S(\tau_+(\tilde r)))\\[-.05in]
&\hspace*{1in}+
\Big(\mu(B(r))-\mu(B(\tau_-(r)))\Big)\sum_{b\in \hat\I_0: b\le \tilde r}\hat\mu(\hat S(b))\\
&=-\mu\big(B(\tau_-(r))\big)\big(\hat\mu(\hat B(\tau_+(\tilde r)))-\hat\mu(\hat B(\tilde r))\big)\\
&\hspace*{1in}+\Big(\mu(B(r))-\mu(B(\tau_-(r)))\Big)\hat\mu(\hat B(\tilde r))\\
&=-\mu\big(B(\tau_-(r)\big)\hat\mu\big(\hat B(\tau_+(\tilde r)))\big)+\mu(B(r))\hat\mu(\hat B(\tilde r))\\
&\stackrel{\text{(a)}}{=}-\mu\big(B(\tau_-(r))\big)\hat\mu\big(\hat B(\tilde \tau_-(r))\big)+\mu(B(r))\hat\mu(\hat B(\tilde r))\\
&\stackrel{\text{(b)}}{=}-1+1=0
\end{align*}
where for (a) we used \eqref{eq:+-} and for (b) Eq.~\eqref{eq:mu1}.
To prove \eqref{eq:p0}, observe that \eqref{eq:pik3} implies that $p_0(b)=\mu(S(0))=\mu(\{0\})$ for all $b\in \I.$ This
implies the first case in \eqref{eq:p0}. Further, if $\mu(\{0\})>0,$ then $X$ is discrete and $\hat X$ is compact, so
the sum in \eqref{eq:p0} equals $\mu(\{0\})\hat\mu(\hat X)=1$ because of \eqref{eq:PW}. This proves \eqref{eq:p0}.
The proof of \eqref{eq:qb} and \eqref{eq:q0} is completely analogous and will be omitted.

Equations \eqref{eq:o11},\,\eqref{eq:o12} form a special case of the orthogonality relations \eqref{eq:pipj},~\eqref{eq:qiqj}.
We will give an independent proof that relies on the explicit formulas \eqref{eq:pik3},~\eqref{eq:qik3}. Let $r_1\ne r_2$ and $0\le r_1<\tau_+(\tilde r_2)\le \tilde r_1.$
Then \eqref{eq:gl} implies that $\tilde r_1>\tilde r_2$, so the sum in \eqref{eq:o11} extends only to the region $0\le b\le \tilde r_2$
where $p_{r_1}(b)$ is independent of $b.$ Therefore, by \eqref{eq:pb}
$$
\sum_{b\in\hat\I_0} p_{r_1}(b)p_{r_2}(b)\hat\mu(\hat S(b))=
\Big(\mu(B(r_1))-\mu(B(\tau_-(r)))\Big)\sum_{b\in \hat\I_0} p_{r_2}(b)\hat\mu(\hat S(b))=0.
$$
Now let $r_1=r_2=r,$ then
\begin{align*}
\sum_{b\in \hat\I_0} p_r(b)^2\hat\mu(\hat S(b))&= \mu(B(\tau_-(r)))^2\hat\mu(\hat S(\tau_+(\tilde r)))\\[-.1in]
&\hspace*{1in} +\Big(\mu(B(r))-\mu(B(\tau_-(r)))\Big)^2\sum_{0\le b\le r}\hat\mu(\hat S(b))\\
&=\mu(B(\tau_-(r)))^2 \Big( \hat\mu(\hat B(\tau_+(\tilde r)))-\hat\mu(\hat B(\tilde r))\Big)\\
&\hspace*{1in}+\Big(\mu(B(r))-\mu(B(\tau_-(r)))\Big)^2\hat\mu(\hat B(\tilde r))\\
&= \mu(B(\tau_-(r)))^2 \hat\mu(\hat B(\tau_+(\tilde r)))-\mu(B(\tau_-(r)))^2\hat\mu(\hat B(\tilde r))
+\mu(B(r))^2\hat\mu(\hat B(\tilde r))\\
&\hspace*{.2in}+\mu(B(\tau_-(r)))^2\hat\mu(\hat B(\tilde r))
-2\mu(B(r))\mu(B(\tau_-(r)))\hat\mu(\hat B(\tilde r))\\
&=\mu(B(\tau_-(r)))+\mu(B(r))-2\mu(B(\tau_-(r)))\\
&=\mu(B(r))-\mu(B(\tau_-(r)))=\mu(S(r)).
\end{align*}
where we have used \eqref{eq:+-} and \eqref{eq:mu1}. This proves \eqref{eq:o11}. The proof of \eqref{eq:o12}
is essentially the same.
\end{proof}

In the following theorem, which is one of the main results of the paper, we give a construction of dual translation schemes and compute their
spectral parameters and intersection numbers.
\begin{theorem}\label{thm:m2} Let $X$ be a zero-dimensional compact or locally compact Abelian group and let $\hat X$ be its dual group.
Let $\cR=\{R_r, r\in \I\}, \hat\cR=\{\hat R_t,t\in \hat \I\},$ where
\begin{equation}\label{eq:Nr}
R_r=\{(x,y)\in X\times X: x-y\in S(r)\}, \quad r\in \I
\end{equation}
\begin{equation}\label{eq:Nrd}
\hat R_t=\{(\phi,\xi)\in \hat X\times\hat X: \phi\xi^{-1}\in \hat S(t)\}, \quad t\in \hat\I.
\end{equation}
Then $\cX=(X,\mu,\cR)$ and $\widehat\cX=(\hat X,\hat\mu,\hat\cR)$
form a pair of symmetric, mutually dual translation association schemes in the sense of Def.~\ref{def1}. The spectral parameters of these schemes are given
by \eqref{eq:pik3} and \eqref{eq:qik3}. The intersection numbers of $\cX$ are given by
\begin{equation}\label{eq:pijk1}
p_{r_1,r_2}^{r_3}=\begin{cases}0 &\text{if }\lambda=1\\[.05in]
\mu(S(r^\ast)) &\text{if }\lambda=2\\[.05in]
\Big[\,|B(r^\ast)/B(\tau_-(r^\ast))|-2\Big]\,\mu\big(B(\tau_-(r^\ast))\big) &\text{if }\lambda=3
\end{cases}
\end{equation}
where $r^\ast:=\min(r_1,r_2,r_3),$ and $\lambda$ denotes the number of times $\max(r_1,r_2,r_3)$ appears among $\{r_1,r_2,r_3\}.$
Likewise, the intersection numbers of $\widehat\cX$ are given by
\begin{equation}\label{eq:pijk2}
\hat p_{t_1,t_2}^{t_3}=\begin{cases} 0 &\text{if }\lambda=1\\[.05in]
\hat \mu(\hat S(t^\ast)) &\text{if }\lambda=2\\[.05in]
\Big[\,|\hat B(t^\ast)/\hat B(\tau_-(t^\ast))|-2\Big]\,\hat \mu\big(\hat B(\tau_-(t^\ast))\big) &\text{if }\lambda=3
\end{cases}
\end{equation}
where $t^\ast$ and $\lambda$ are defined analogously.
\end{theorem}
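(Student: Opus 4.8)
The statement splits naturally into two parts: (1) that $\cX$ and $\widehat\cX$ are mutually dual translation schemes, and (2) the explicit formulas \eqref{eq:pijk1}, \eqref{eq:pijk2} for the intersection numbers. The first part is essentially immediate: by Theorem \ref{thm:m1} the sphere partitions \eqref{eq:part} of $X$ and $\hat X$ are symmetric spectrally dual, so we may invoke Theorem \ref{thm:SDP} directly with $\cN = \{S(r), r\in\I\}$ and $\widehat\cN = \{\hat S(t), t\in\hat\I\}$. This gives that $\cX$ and $\widehat\cX$ are symmetric translation association schemes in the sense of Definition \ref{def1}, that they are mutually dual, and that their spectral parameters are the $p_r(b)$, $q_t(a)$ computed in \eqref{eq:pik3}, \eqref{eq:qik3}. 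It remains only to extract the intersection numbers.

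\textbf{Computing the intersection numbers.} For this I would use the formula from Theorem \ref{thm:int}, namely
\begin{equation*}
p_{r_1,r_2}^{r_3}=\sum_{b\in\hat\I_0} p_{r_1}(b)\,p_{r_2}(b)\,q_b(r_3),
\end{equation*}
together with the second (equivalent) form $p_{r_1,r_2}^{r_3}=\frac{1}{\mu(S(r_3))}\sum_{b} p_{r_1}(b)p_{r_2}(b)\overline{p_{r_3}(b)}\,\hat\mu(\hat S(b))$ obtained via \eqref{eq:mv}. The plan is to substitute the explicit step-function formula \eqref{eq:pik3}: each $p_{r}(b)$ is zero for $b>\tau_+(\tilde r)$, equals $-\mu(B(\tau_-(r)))$ at the single point $b=\tau_+(\tilde r)$, and equals $\mu(B(r))-\mu(B(\tau_-(r)))$ for $0\le b\le\tilde r$. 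By \eqref{eq:gl}, the ordering of the thresholds $\tilde r_1,\tilde r_2,\tilde r_3$ in the frequency domain is reversed relative to $r_1,r_2,r_3$, so the smallest $r$ — call it $r^\ast$ — has the largest $\tilde r^\ast$ and hence the product $p_{r_1}(b)p_{r_2}(b)\overline{p_{r_3}(b)}$ is supported on $\{0\le b\le\tau_+(\widetilde{r^\ast})\}$. On that range the two factors coming from the non-minimal radii are constant (equal to $\mu(B(r_i))-\mu(B(\tau_-(r_i)))=\mu(S(r_i))$ away from their own top point, or $-\mu(B(\tau_-(r_i)))$ at it), and one carefully splits the sum into the contribution from $\{0\le b\le\widetilde{r^\ast}\}$ and the single boundary point $b=\tau_+(\widetilde{r^\ast})$, using the telescoping identity $\hat\mu(\hat B(t))-\hat\mu(\hat B(\tau_-(t)))=\hat\mu(\hat S(t))$ and the duality relations \eqref{eq:+-}, \eqref{eq:mu1}.

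\textbf{Case analysis on $\lambda$.} The three cases reflect how many of the three radii attain the maximum $\max(r_1,r_2,r_3)$, equivalently how many of the $\tilde r_i$ attain the minimum $\min(\tilde r_1,\tilde r_2,\tilde r_3)$. If $\lambda=1$, exactly one radius is strictly largest; then its $p$-factor vanishes on the support of the product of the other two (a Fourier-analytic orthogonality, essentially \eqref{eq:pb}), giving $p_{r_1,r_2}^{r_3}=0$. If $\lambda=2$, the computation reduces to the diagonal orthogonality relation \eqref{eq:o11} restricted to the region where the third factor is constant, and the telescoping collapses the sum to $\mu(S(r^\ast))$ — mirroring exactly the $r_1=r_2$ computation already performed at the end of the proof of Lemma \ref{lemma:orth}. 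If $\lambda=3$, all three radii coincide (with $r^\ast$), and one must sum $p_{r^\ast}(b)^3 \hat\mu(\hat S(b))$: the boundary point contributes $-\mu(B(\tau_-(r^\ast)))^3\,\hat\mu(\hat S(\tau_+(\widetilde{r^\ast})))$ while the bulk contributes $(\mu(B(r^\ast))-\mu(B(\tau_-(r^\ast))))^3\,\hat\mu(\hat B(\widetilde{r^\ast}))$; expanding, using $\mu(B(r^\ast))\hat\mu(\hat B(\widetilde{r^\ast}))=1$, $\mu(B(\tau_-(r^\ast)))\hat\mu(\hat B(\tau_+(\widetilde{r^\ast})))=1$, and the identity $n(r^\ast)=|B(r^\ast)/B(\tau_-(r^\ast))|=\mu(B(r^\ast))/\mu(B(\tau_-(r^\ast)))$ from \eqref{eq:si} and \eqref{eq:Hm}, the algebra collapses to $[\,n(r^\ast)-2\,]\,\mu(B(\tau_-(r^\ast)))$. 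The formula \eqref{eq:pijk2} for $\widehat\cX$ follows by the identical argument with $X$ and $\hat X$ interchanged, using \eqref{eq:qik3} and \eqref{eq:o12}.

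\textbf{Main obstacle.} The genuinely delicate point is the bookkeeping in the $\lambda=3$ case: one must keep track of which terms in the cubic expansion are supported only on the single boundary sphere $\hat S(\tau_+(\widetilde{r^\ast}))$ versus the whole ball $\hat B(\widetilde{r^\ast})$, and then see the (somewhat miraculous-looking) cancellation that leaves a clean closed form. Everything else is a matter of correctly translating the frequency-domain orderings back to the time domain via \eqref{eq:gl} and applying the two elementary duality identities repeatedly; the $\lambda=1,2$ cases are essentially corollaries of the orthogonality relations in Lemma \ref{lemma:orth} that we have already proved.
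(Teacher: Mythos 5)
Your proposal is correct and follows essentially the same route as the paper: reduce the scheme/duality/spectral-parameter claims to Theorems \ref{thm:m1} and \ref{thm:SDP}, then evaluate $p_{r_1,r_2}^{r_3}$ from \eqref{eq:ser1} by the same three-case split on the multiplicity of the largest radius, using \eqref{eq:pb} for $\lambda=1$, \eqref{eq:o11} for $\lambda=2$, and the cubic expansion with \eqref{eq:+-} and \eqref{eq:mu1} for $\lambda=3$. One slip in your middle paragraph: the product $p_{r_1}(b)p_{r_2}(b)\overline{p_{r_3}(b)}$ is supported on $\{0\le b\le\tau_+(\widetilde{r_{\max}})\}$ — the intersection of the three supports, governed by the \emph{largest} radius, not the smallest — and it is the factors coming from the \emph{non-maximal} radii that are constant on that range; your subsequent case analysis uses the correct structure (and the spurious extra terms you would include are zero anyway), so nothing breaks, but the sentence as written has min and max interchanged.
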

\begin{proof} Everything except the expressions for the intersection numbers follows immediately from
Theorems \ref{thm:SDP} and \ref{thm:m1}.
We will compute $p_{r_1,r_2}^{r_3}$ starting from \eqref{eq:ser1}.
We have
\begin{equation}\label{eq:r1r2r3}
p_{r_1,r_2}^{r_3}=\frac 1{\mu(S(r_3))}\sigma(r_1,r_2,r_3),
\end{equation}
where
\begin{equation}\label{eq:rrr}
\sigma(r_1,r_2,r_3)=\sum_{b\in \hat\I_0} p_{r_1}(b)p_{r_2}(b)p_{r_3}(b)\hat\mu(\hat S(b))
\end{equation}
(cf. \eqref{eq:sigma}). Since the value of $\sigma$ does not depend on the order of the arguments, let us assume that
$0\le r_1\le r_2\le r_3.$ We will consider the following three cases grouped according to the multiplicity of the largest radius:

(i) $0\le r_1\le r_2<r_3$

(ii) $0\le r_1<r_2=r_3=r$

(iii) $0<r_1=r_2=r_3:=r$ (if $r=0$ then obviously $p_{00}^0=0$).

In Case (i) we have $\tilde r_3<\tilde r_2\le \tilde r_1$, so the sum in \eqref{eq:rrr} extends to the region $0\le b\le \tau_+(\tilde r_3)\le\tilde r_2$.
From \eqref{eq:pik3}, in this region the coefficients $p_{r_1}(b)$ and $p_{r_2}(b)$ are constant, so we obtain
\begin{align*}
\sigma(r_1,r_2,r_3)&=\Big[ \mu(B(r_1))-\mu(B(\tau_-(r_1)))\Big]\Big[\mu(B(r_2))-\mu(B(\tau_-(r_2)))\Big]\sum_{b\in\hat \I_0}
p_{r_3}(b)\hat\mu(\hat S(b))\\
&\stackrel{\eqref{eq:pb}}=0
\end{align*}

In Case (ii) we have $\tilde r<\tilde r_1,$ so the sum in \eqref{eq:rrr} extends to the region 
$0\le b\le \tau_+(\tilde r)\le \tilde r_1$ in which
$p_{r_1}(b)$ is independent of $b$. Therefore, on account of \eqref{eq:o11} we obtain
\begin{align*}
\sigma(r_1,r,r)=\Big[\mu(B(r_1))-\mu(B(\tau_-(r_1)))\Big]\sum_{b\in\hat \I_0}
p_{r}(b)^2\hat\mu(\hat S(b))=\mu(S(r_1))\mu(S(r)).
\end{align*}

In Case (iii) we have
\begin{align*}
\sigma(r,r,r)&=\sum_{b\in\hat\I_0} p_r(b)^3\hat\mu(\hat S(b))=-\mu(B(\tau_-(r)))^3\Big[\hat\mu(\hat B(\tau_+(\tilde r)))
-\hat\mu(\hat B(\tilde r))\Big]\\[-.05in]
&\hspace*{.5in}
+\Big[\mu(B(r))-\mu(B(\tau_-(r)))\Big]^3\hat\mu(\hat B(\tilde r))\\
&\stackrel{\text{(a)}}=-\mu(B(\tau_-(r)))^3 \hat\mu(\hat B(\widetilde{\tau_-(r)}))+\mu(B(r))^3\hat\mu(\hat B(\tilde r))\\
&\hspace*{.5in} -3\mu(B(r))^2\mu(B(\tau_-(r)))\hat\mu(\hat B(\tilde r))+3\mu(B(r))\mu(B(\tau_-(r)))^2\hat\mu(\hat B(\tilde r))\\
&\stackrel{\text{(b)}}=-\mu(B(\tau_-(r)))^2+\mu(B(r))^2-3\mu(B(r)\mu(B(\tau_-(r)))+3\mu(B(\tau_-(r)))^2\\
&=\Big[\mu(B(r))-2\mu(B(\tau_-(r)))\Big]\Big[\mu(B(r))-\mu(B(\tau_-(r)))\Big]\\
&=\Big[\mu(B(r))-2\mu(B(\tau_-(r)))\Big]\mu(S(r))\\
&=\Big[|B(r)/B(\tau_-(r))|-2\Big]\mu(B(\tau_-(r))\mu(S(r)),
\end{align*}
where $|B(r)/B(\tau_-(r))|$ is the index of the subgroup $B(\tau_-(r))$ in $B(r).$ Here (a) relies on \eqref{eq:+-}
and (b) uses \eqref{eq:mu1}.

Together with \eqref{eq:r1r2r3},\,\eqref{eq:rrr}, these calculations establish the claimed expression for $p_{r_1,r_2}^{r_3}.$
The expression for $\hat p_{t_1,t_2}^{t_3}$ is proved by analogous arguments
starting from \eqref{eq:ser2}.
\end{proof}

\vspace*{.05in} {\em Example:} Following \cite{mar99,skr01}, consider a particular case of schemes over finite groups. Namely, suppose that $X=\integers_q^n$ is the group of $n$-strings over the additive
group of integers mod $q$, and the subgroups $X_r, r=1,\dots,n$ are formed of the strings with $r$ first coordinates equal to 0.
In particular, $X_n=\{(00\dots0)\}.$ 
We observe that $\tau_-(r)=r-1, r>0; \tau_+(r)=r+1, r<n,$ and for $r>0,$ $|S(r)|=(q-1)q^{r-1},$ $|B_{r}|=q^r,$ while 
$|S(0)|=|B(0)|=1.$ 

Let us use Theorem \ref{thm:m2} to compute the intersection numbers (see \cite{mar99}, Lemma 1.1).
 If $i,j,k\ne 0,$ then using \eqref{eq:pijk1} we obtain
  \begin{equation*}
p_{ij}^{k}=\begin{cases}0 &\text{if }i\ne j,j\ne k,i\ne k\\[.05in]
 (q-1)q^{r^\ast-1}&\text{if }i<j=k\text{ or }j<i=k\text{ or }k<i=j\\[.05in]
(q-2)q^{i-1}&\text{if }i=j=k.
\end{cases}
\end{equation*}
We also find directly that $p_{00}^0=p_{0,i}^i=p_{i,0}^i=1,$ and $p^0_{ij}=\delta_{ij}(q-1)q^{i-1},$
where $p^0_{ii}=\mu_i$ is the $i$th valency of $\cX.$ 

Using \eqref{eq:pik3} and \eqref{eq:dr}, we can compute the eigenvalues of this scheme. We obtain
    \begin{equation*}
    p_i(j)=\begin{cases} 0 &\text{if } j>n-i+1\\
         -q^{i-1} &\text{if } j=n-i+1\\
         q^i-q^{i-1} &\text{if } 0\le j\le n-i
         \end{cases}, \quad i>0
         \end{equation*}
recovering a result in \cite{mar99}, Lemma 1.4 (see also \cite{dou02}, Theorem 3.1). 
Again we must separately consider the boundary case $i=0$, but it is
easily seen from \eqref{eq:pij} and \eqref{eq:I} that $p_0(j)=1$ for all $j.$

Finally, since the association scheme $\cX$ is self-dual, i.e., $\cX\cong \widehat\cX,$ 
we have that $p_i(j)=q_i(j)$ for all $i,j,$ and $\hat p_{ij}^k=p_{ij}^k$ for all $i,j,k.$

%
\section{Metric schemes}\label{sect:metric}
Association schemes constructed in Theorem \ref{thm:m2} belong to an important class of the so-called metric schemes for which the classes $R_i$ are formed of pairs of points separated by the same distance. In the case of finite sets such schemes are well known \cite{del73a,bro89}. For infinite sets we need to make some adjustments; in particular, it
will turn out that our metric schemes are non-polynomial; see Sect.~\ref{sect:npol}.

\subsection{Geometric view}
Let $X$ be a metric space with metric $\rho$. The base of the corresponding metric topology consists of all open metric balls.
As before, we assume that the topology satisfies the second countability axiom which in this case is the same as the separability of $X$
\cite[p.120]{Kelley55}. As before, we assume that the measure on $X$ is countably additive and is defined on Borel subsets of the topological
space $X$. If $X$ is an Abelian group then we assume that the metric is invariant, i.e., $\rho(x,y)=\rho(x-y)$, and that $\mu$
is the Haar measure on $X$ (as before, we call both $\rho(x,y)$ and $\rho(x)=\rho(x,0)$ a metric; cf. \eqref{eq:vl}).

Consider the following partition $\cR=\{R_r,r\in\I\}$ of $X\times X$:
\begin{gather}
X\times X=\cup_{r\in \I} R_r\label{eq:X}\\
R_r=\{(x,y)\in X\times X: \rho(x,y)=r\}, \quad r\in \I, \label{eq:Rr}
\end{gather}
where
\begin{equation}
\I=\{r\ge 0: r=\rho(x,y), x,y\in X\}.
\end{equation}
Clearly, for every $x_0\in X$ this partition gives a partition of $X$ into spheres with center at $x_0$:
\begin{gather}
X=\cup_{r\in\I(x_0)} S_{x_0}(r)\\
S_{x_0}(r)=\{y\in X: \rho(x_0,y)=r\}, \quad r\in \I(x_0),
\end{gather}
where $\I(x_0)=\{r\ge 0: \rho(x_0,y)=r, y\in X\}\subset \I.$ If there $X$ is a homogeneous space of some group, for instance, 
an Abelian group acting on itself, then $\I(x_0)=\I$ for all $x_0\in X.$

We can give the following definition: If the partition \eqref{eq:X} forms an association scheme in the sense of Def.~\ref{def1},
then $(X,\mu,\cR)$ is called a {metric scheme}. 
However this definition is too general to be useful: for instance, in Def.~\ref{def1}
we have additionally assumed that $\I$ is at most countably infinite. Therefore, let us adopt

{\em Condition C:} The set of values of the metric $\rho$ is closed and at most countably infinite.

\vspace*{.05in}This condition is rather strong. For instance, it implies that $X$ is zero-dimensional. Indeed, for every $x_0$
consider the function
$f_{x_0}: X\to \I$ given by $f_{x_0}(x)=\rho(x,x_0).$ This function is piecewise constant and continuous in the metric topology of $X.$
This implies that $\rho(x_0,x)$ is constant on the connected component of $x_0,$ and therefore is equal to zero on it.
Since $\rho$ is a metric, the connected component consists just of $x_0$, so $X$ is zero-dimensional.

On account of the above discussion, we define a {\em metric scheme} \index{association scheme!metric} as a triple $\cX=(X,\mu,\cR),$ where $X$ and $\mu$ are as above, $\cR$ is defined by
\eqref{eq:Rr}, and the value
\begin{equation}\label{eq:ms}
\gls{pr1r2r3}=\mu\big\{z\in X: \rho(z,x)=r_1, \rho(z,y)=r_2; \;\rho(x,y)=r_3\big\}
\end{equation}
depends only on $r_1,r_2,r_3$ but not on the choice of $x,y\in X.$ The other conditions of Def.~\ref{def1} are satisfied since $\rho$
is a metric: for instance, a metric scheme is always symmetric.

If $X$ is an Abelian group, then we can write \eqref{eq:ms} as
\begin{align}
p_{r_1,r_2}^{r_3}&=\mu\big\{z\in X:\rho(z)=r_1,\rho(z-y)=r_2;\; \rho(y)=r_3\big\} \nonumber\\
&=\mu\big\{z\in S(r_1): \rho(z-y)=r_2;\; y\in S(r_3)\big\}\label{eq:SS}
\end{align}
where $S(r)$ is a sphere of radius $r$ around 0. Suppose that
$$
p_{r_1,r_2}^{r_3}>0,
$$
where w.l.o.g. we can assume that
\begin{equation}\label{eq:iq}
0\le r_1\le r_2\le r_3
\end{equation}
(see \eqref{eq:iij}, \,\eqref{eq:sigma}). This means that the space $X$
contains triangles with sides $r_1,r_2,r_3,$ and so
\begin{equation}\label{eq:triangle}
r_2-r_1\le r_3\le r_2+r_1.
\end{equation}
These inequalities form necessary conditions for the positivity to hold. They are valid for any metric scheme and are well known
for schemes on finite sets \cite[p.58]{bro89}.

For non-Archimedean metrics, Eq.\eqref{eq:triangle} together with the ultrametric triangle inequality \eqref{eq:um1} implies that
\begin{equation}\label{eq:na}
0\le r_1\le r_2=r_3.
\end{equation}
These relations (assuming \eqref{eq:iq}) form necessary conditions for the positivity of
intersection numbers of a metric scheme on the space with a non-Archimedean norm. They are well known in non-Archimedean
geometry where they say that all triangles are isosceles (or equilateral), with at most one short side; e.g. \cite[p.71]{Robert00}.

In this section we make several observations implied by the definition of the metric scheme. Let us begin with a
simple geometric proof of the expressions for the intersection numbers which were
earlier obtained by a direct calculation.
\begin{proposition} \label{prop:inm}
The intersection numbers of an Abelian metric scheme $\cX$ and its dual scheme $\widehat\cX$ are given by \eqref{eq:pijk1},
\eqref{eq:pijk2}.
\end{proposition}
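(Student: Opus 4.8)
The proposition asks for a purely geometric rederivation of the intersection-number formulas \eqref{eq:pijk1}, \eqref{eq:pijk2} of the metric scheme on a zero-dimensional Abelian group, starting directly from the definition \eqref{eq:SS}
$$
p_{r_1,r_2}^{r_3}=\mu\big\{z\in S(r_1): \rho(z-y)=r_2,\ y\in S(r_3)\big\}
$$
rather than from the spectral sums \eqref{eq:ser1}. The plan is to fix a $y\in S(r_3)$ (the value does not depend on the choice by translation invariance of the Haar measure and the fact that $\rho$ is invariant) and to describe explicitly the set $T(r_1,r_2;y)=\{z\in X:\rho(z)=r_1,\ \rho(z-y)=r_2\}$ using the ultrametric property \eqref{eq:um1} and the fact that the balls $B(r)$ are precisely the subgroups $X_j$; see \eqref{eq:ball}. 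Throughout we assume, as in \eqref{eq:iq}, that $0\le r_1\le r_2\le r_3$, and we split into the three cases governed by the multiplicity $\lambda$ of $\max(r_1,r_2,r_3)$, exactly as in the proof of Theorem~\ref{thm:m2}.

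First I would dispose of the case $\lambda=1$, i.e.\ $r_1\le r_2<r_3$. Here $r^\ast=r_1$. If $\rho(z)=r_1<r_3=\rho(y)$, then by the strengthened ultrametric identity in \eqref{eq:um1} we get $\rho(z-y)=\max\{\rho(z),\rho(y)\}=r_3\ne r_2$ (since $r_2<r_3$), so $T(r_1,r_2;y)=\emptyset$ and $p_{r_1,r_2}^{r_3}=0$. This matches the first line of \eqref{eq:pijk1}. Next, the case $\lambda=2$, i.e.\ $r_1<r_2=r_3=:r$; now $r^\ast=r_1$. We need $z$ with $\rho(z)=r_1$ and $\rho(z-y)=r$, where $\rho(y)=r$. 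Since $\rho(z)=r_1<r=\rho(y)$, the ultrametric again forces $\rho(z-y)=r$ automatically; hence the constraint $\rho(z-y)=r_2$ is vacuous and $T(r_1,r;y)=S(r_1)$, giving $p_{r_1,r}^{r}=\mu(S(r_1))=\mu(S(r^\ast))$, the second line of \eqref{eq:pijk1}.

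The substantive case is $\lambda=3$, i.e.\ $0<r_1=r_2=r_3=:r$, with $r^\ast=r$. Here both $z$ and $y$ lie on $S(r)=B(r)\setminus B(\tau_-(r))$, and we must count $z\in S(r)$ with $z-y\in S(r)$. Working inside the finite quotient group $G:=B(r)/B(\tau_-(r))$, whose order is $n(r)=|B(r)/B(\tau_-(r))|$ by \eqref{eq:si}, the conditions ``$z\in S(r)$'' and ``$z-y\in S(r)$'' translate into: the image $\bar z\in G$ is nonzero and $\bar z\ne\bar y$ (the latter nonzero and fixed). So the number of admissible cosets is $n(r)-2$, and each coset has measure $\mu(B(\tau_-(r)))$. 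Integrating (i.e.\ $z$ ranges over those $n(r)-2$ cosets of $B(\tau_-(r))$) yields
$$
p_{r,r}^{r}=\big[\,n(r)-2\,\big]\,\mu\big(B(\tau_-(r))\big)=\big[\,|B(r)/B(\tau_-(r))|-2\,\big]\,\mu\big(B(\tau_-(r))\big),
$$
the third line of \eqref{eq:pijk1}. (One must also note, as recorded parenthetically in Theorem~\ref{thm:m2}, that $r=0$ gives $p_{00}^0=0$, consistent with $n(0)-2$ interpreted appropriately, or simply excluded.) The formula \eqref{eq:pijk2} for $\widehat\cX$ follows verbatim from the same argument applied to the dual chain $\{X_j^\bot\}$ and the metric $\hat\rho$ on $\hat X$, since $\hat X$ is itself a zero-dimensional Abelian group with balls $\hat B(t)$ equal to the subgroups $X_j^\bot$.

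The only real obstacle is the bookkeeping in the $\lambda=3$ case: one must verify carefully that ``$\rho(z)=r$'' and ``$\rho(z-y)=r$'' together are equivalent to the two open conditions $\bar z\ne 0$, $\bar z\ne\bar y$ in $G$, rather than accidentally excluding more or fewer cosets — in particular checking that $\bar z=\bar y\neq 0$ forces $\rho(z-y)\le\tau_-(r)<r$ (which is exactly the ultrametric statement $z-y\in B(\tau_-(r))$), and that every other nonzero $\bar z$ with $\bar z\neq\bar y$ does give $\rho(z-y)=r$ by the strengthened identity in \eqref{eq:um1}. Once this is pinned down the measure computation is immediate from \eqref{eq:Hm} and the additivity of $\mu$. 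The case analysis and the reduction to the finite quotient $B(r)/B(\tau_-(r))$ are the heart of the proof; everything else is the ultrametric triangle inequality \eqref{eq:um1} together with \eqref{eq:ball}.
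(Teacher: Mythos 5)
Your proof is correct and follows essentially the same route as the paper's: the $\lambda=1$ and $\lambda=2$ cases are exactly the paper's applications of the ultrametric identity \eqref{eq:um1} (equivalently \eqref{eq:na}), and your $\lambda=3$ count in the quotient $B(r)/B(\tau_-(r))$ is just a rephrasing of the paper's coset decomposition \eqref{eq:Phi2}--\eqref{eq:sph} and the dichotomy \eqref{eq:z-y}. The only difference is cosmetic (working in the quotient group versus with explicit coset representatives), and, like the paper, you verify the $\lambda=2$ formula only for the arrangement $p_{r_1,r}^{r}$ with the repeated radius as superscript.
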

\begin{proof}
The first equality in \eqref{eq:pijk1},\,\eqref{eq:pijk2} follows directly from \eqref{eq:na}.
To prove the second equality, observe that
\begin{equation}\label{eq:rr}
p_{r_1,r}^r=\mu\big\{z\in X: \rho(z,x)=r_1,\rho(z,y)=r; \;\rho(x,y)=r \big\}.
\end{equation}
From the triangle inequality \eqref{eq:um1} we obtain 
$$
\rho(z,y)=\max\{\rho(z,x),\rho(y,x)\} =\rho(y,x)=r
$$
Thus, the condition $\rho(z,y)=r$ holds for all $z$ with $\rho(x,z)=r_1$, and then the condition $\rho(x,y)=r$
places no constraints on $z$. Thus, we can rewrite \eqref{eq:rr} as follows:
$$
p_{r_1,r}^r=\mu\{z\in X:\rho(z,x)=r_1\}=\mu(S_x(r_1))
$$
where $\mu(S_x(r_1))$ is a sphere of radius $r_1$ with center at $x.$ Since $X$ is an Abelian group and $\mu$ is an
invariant measure, we obtain the second equality in \eqref{eq:pijk1}. The proof of the second case of \eqref{eq:pijk2} is entirely similar.

To prove the third case, we need to consider in detail the structure of $\rho$-spheres in $X$.
Again using the invariance of $\mu$, write \eqref{eq:SS} as
\begin{equation}\label{eq:prrr}
p_{r,r}^r=\mu\big\{z\in S(r): \rho(z-y)=r, y\in S(r)\big\}.
\end{equation}
Here the sphere $S(r)=S_0(r)$ can be written as \eqref{eq:Sx}
\begin{equation}\label{eq:Sx1}
S(r)=B(r)\backslash B(\tau_-(r)),
\end{equation}
where $B(r)$ is a metric ball of radius $r$ centered at 0 and $B(\tau_-(r))$ is a concentric ball of radius that directly precedes
$r$ in the natural ordering of $\I.$ 

The subgroup $B(r)$ can be written as a union of disjoint cosets of $B(\tau_-(r))$:
\begin{align}
B(r)&=\bigcup_{0\le i\le n(r)-1} \Phi_i(r),\label{eq:Phi2}\\
\Phi_i(r)&=B(\tau_-(r))+z_{i,r}, \quad z_{i,r}\in B(r)/B(\tau_-(r)), \label{eq:Phi1}
\end{align}
where $z_{i,r}, i=0,1,\dots,n(r)-1$ is a complete system of representatives of the cosets and $n(r)$ is defined in \eqref{eq:nr}. We will assume that $z_{0,r}=0.$ 
Expressions \eqref{eq:Sx1}-\eqref{eq:Phi1} imply the following partition of
the sphere into cosets of the group $B(\tau_-(r)):$
  \begin{equation}\label{eq:sph}
S(r)=\bigcup_{1\le i\le n(r)-1} \Phi_i(r).
  \end{equation}
We claim that if $z\in \Phi_i(r), y\in \Phi_j(r), 1\le i,j \le n(r)-1,$ then
\begin{equation}
\begin{array}{l}
\rho(z-y)=r \quad\text{if }i\ne j \\
\rho(z-y)\le \tau_-(r)<r \quad\text{if }i=j. \end{array}\label{eq:z-y}
\end{equation}
Indeed, the element $z-y$ is contained in the coset $B(\tau_-(r))+z_{i,r}-z_{j,r}$, and $z_{i,r}-z_{j,r}=0$ if and only if $i=j$, while
if $i\ne j$, then $z_{i,r}-z_{j,r}=z_{l,r}$ for some coset representative $z_{l,r}, 1\le l\le n(r)-1.$

Now return to \eqref{eq:prrr} and note that $y\in S(r)$ implies that $y\in \Phi_l(r)$ for some $l\in\{1,\dots,n(r)-1\},$
and the same is true for $z,$ namely $z\in \Phi_i(r)$ for some $i\in\{1,\dots,n(r)-1\}.$ On account of \eqref{eq:z-y}, we can write
$$
\big\{z\in S(r): \rho(z-y)=r, y\in S(r)\big\}=\bigcup_{\begin{substack}{0\le i\le n(r)-1\\i\ne l}\end{substack}}\Phi_i(r).
$$
Finally, since the measure of each coset is the same and equals $\mu(B(\tau_-(r))),$ we obtain
  \begin{equation}\label{eq:cm}
p_{r,r}^r=(n(r)-2)\mu(B(\tau_-(r))),
  \end{equation}
which is exactly the third case of \eqref{eq:pijk1}.
Again the proof of the corresponding case in \eqref{eq:pijk2} is entirely similar.
\end{proof}

Note that the above proof, in particular, the arguments related to \eqref{eq:cm}, enable one to state several claims about
spheres in a group with a non-Archimedean metric which may be of independent interest.
\begin{proposition} (a) The group $X$ contains an equilateral triangle with side $r>0$ if and only if the index $n(r)$ of the
group $B(\tau_-(r))$ in $B(r)$ is greater than 2. If $x$ and $y$ are the two fixed vertices of such a triangle, then the
Haar measure of the set of third vertices equals $p_{r,r}^r$ given in \eqref{eq:cm}.

(b) The diameter of the sphere $S(r)\subset X$ of radius $r$ equals
$$
\text{diam\,} S(r)=
\begin{cases} r &\text{if } n(r)>2\\ \tau_-(r) &\text{if }n(r)=2.
\end{cases}
$$
This implies that the diameter of the sphere is strictly less than its radius if and only if the index $n(r)=2.$
\end{proposition}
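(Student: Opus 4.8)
The plan is to deduce both parts directly from the coset analysis already carried out in the proof of Proposition~\ref{prop:inm}. The two facts I would reuse are the decomposition \eqref{eq:sph} of the sphere into cosets of the subgroup $B(\tau_-(r))$, namely $S(r)=\bigcup_{1\le i\le n(r)-1}\Phi_i(r)$ with $\Phi_i(r)=B(\tau_-(r))+z_{i,r}$, and the dichotomy \eqref{eq:z-y}: for $z\in\Phi_i(r)$ and $y\in\Phi_j(r)$ one has $\rho(z-y)=r$ if $i\ne j$ and $\rho(z-y)\le\tau_-(r)<r$ if $i=j$. I would also use the trivial remark that $S(r)\subset B(r)$ and $B(r)$ is a subgroup, so $\rho(z-y)\le r$ for all $z,y\in S(r)$.

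For part~(a): an equilateral triangle with side $r$ is a triple of points pairwise at distance $r$. By invariance of $\rho$ and $\mu$ we may assume two of its vertices are $x=0$ and $y\in S(r)$, say $y\in\Phi_l(r)$. The set of admissible third vertices is $\{z\in S(r):\rho(z-y)=r\}$, which by \eqref{eq:z-y} is exactly $\bigcup_{1\le i\le n(r)-1,\ i\ne l}\Phi_i(r)$. This set is nonempty iff there is an index $i\in\{1,\dots,n(r)-1\}$ other than $l$, i.e.\ iff $n(r)>2$; thus $X$ carries an equilateral triangle of side $r$ iff $n(r)>2$. When this holds, the third-vertex set is a disjoint union of $n(r)-2$ translates of $B(\tau_-(r))$, so its Haar measure is $(n(r)-2)\mu(B(\tau_-(r)))$, which is precisely $p_{r,r}^r$ by \eqref{eq:cm}. (If $r$ is not a value of $\rho$ the claim is vacuous: then $B(r)=B(\tau_-(r))$, $n(r)=1$, $S(r)=\emptyset$.)

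For part~(b): write $\text{diam\,}S(r)=\sup\{\rho(z-y):z,y\in S(r)\}$. The trivial remark above gives $\text{diam\,}S(r)\le r$ in all cases. If $n(r)>2$, then $S(r)$ contains at least two distinct cosets $\Phi_i(r),\Phi_j(r)$, and choosing $z\in\Phi_i(r)$, $y\in\Phi_j(r)$ yields $\rho(z-y)=r$ by \eqref{eq:z-y}; hence $\text{diam\,}S(r)=r$. If $n(r)=2$, then $S(r)=\Phi_1(r)=B(\tau_-(r))+z_{1,r}$, so $z-y\in B(\tau_-(r))$ for all $z,y\in S(r)$ and $\text{diam\,}S(r)\le\tau_-(r)$; conversely, $\tau_-(r)$ is attained by $\rho$ by the definition of $\tau_-$, so there is $w\in B(\tau_-(r))$ with $\rho(w)=\tau_-(r)$, and then $z=z_{1,r}+w$, $y=z_{1,r}$ lie in $S(r)$ with $\rho(z-y)=\tau_-(r)$, giving $\text{diam\,}S(r)=\tau_-(r)$ (when $\tau_-(r)=0$, $S(r)$ degenerates to one point and $\text{diam\,}S(r)=0=\tau_-(r)$). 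Since the defining chain of subgroups has strict inclusions we have $n(r)\ge2$, and $\tau_-(r)<r$; comparing the two cases shows $\text{diam\,}S(r)<r$ if and only if $n(r)=2$.

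The argument is essentially bookkeeping once the decomposition \eqref{eq:sph} is in hand; the only point requiring slight care is the lower bound $\text{diam\,}S(r)\ge\tau_-(r)$ in the case $n(r)=2$, where one must exhibit an actual pair of sphere points realizing the distance $\tau_-(r)$, together with the degenerate subcase $\tau_-(r)=0$ (the point-mass / finite-group situation).
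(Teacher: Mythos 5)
Your proposal is correct and follows exactly the route the paper intends: the proposition is stated there without a separate proof, as a direct consequence of the coset decomposition \eqref{eq:sph}, the dichotomy \eqref{eq:z-y}, and the computation \eqref{eq:cm} from the proof of Proposition~\ref{prop:inm}, which are precisely the ingredients you reuse. Your only additions — the explicit witness realizing the distance $\tau_-(r)$ in the case $n(r)=2$ and the degenerate subcase $\tau_-(r)=0$ — are correct and harmless bookkeeping.
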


\subsection{On non-polynomiality of metric schemes on zero-dimensional groups}\label{sect:npol}
In the finite case polynomial schemes are well-studied \cite{del73a,ban84,bro89,god93}; in particular, it is a standard fact that finite metric schemes are $P$-polynomial. In this section we address the question of polynomiality for the metric schemes on zero-dimensional groups.

First let $\cX$ be a symmetric scheme on $X$ with a finite number of classes $R_0,R_1,\dots,R_d,$ intersection
numbers $p_{ij}^k, i,j,k=0,1,\dots,d$ and adjacency matrices $A_0,A_1,\dots, A_d.$ The scheme is called $P$-polynomial
if there exist polynomials $v_i(z)$ of degree $i$ such that $A_i=v_i(A_1), i=0,1,\dots,d.$ Let $\rho(x,y),x,y\in X$ 
be defined by 
  $$
  \rho(x,y)=i\quad\text{if}\quad(x,y)\in R_i,\;i=0,1,\dots,d.
  $$
  It is clear that $\rho(x,y)$ is symmetric and $\rho(x,y)=0$ if and only if $(x,y)\in R_0.$
If in addition the function $\rho(x)$ satisfies the triangle inequality, then it forms a metric on $X$, and the scheme $\cX$ is called 
{\em metric}.

The triangle inequality implies that if the intersection numbers $p_{ij}^k\ne 0$ then $|i-j|\le k\le i+j$ (cf. also \eqref{eq:triangle}).
The metric is called {\em nondegenerate} if 
  \begin{equation}\label{eq:ndm}
  p_{1,i}^{i+1}\ne 0 \quad\text{for }i=0,1,\dots d-1.
  \end{equation}

\begin{theorem}[Delsarte; see {\cite[Theorem 5.6]{del73a}}, {\cite[Prop.~2.7.1]{bro89}}]  A symmetric scheme $\cX$ with
a finite number of classes is $P$-polynomial if and only if it is metric with a nondegenerate metric $\rho.$
\label{thm:Pp}\end{theorem}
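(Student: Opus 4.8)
The plan is to prove the two implications separately: in one direction I would exploit the three-term relation forced by the metric property together with nondegeneracy, and in the other I would exploit the nonnegativity of the entries of the powers of $A_1$.

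\textbf{From metric (nondegenerate) to $P$-polynomial.} Assume $\cX$ is metric with nondegenerate metric $\rho(x,y)=i$ on $R_i$. I would prove by induction on $i$ that $A_i=v_i(A_1)$ for a polynomial $v_i$ of degree exactly $i$, with base cases $v_0=1$ and $v_1(z)=z$. For the inductive step fix $i\ge 1$ and use $A_1A_i=\sum_k p_{1i}^kA_k$; the triangle inequality forces $p_{1i}^k=0$ unless $i-1\le k\le i+1$, so
\[
A_1A_i=p_{1i}^{i-1}A_{i-1}+p_{1i}^{i}A_i+p_{1i}^{i+1}A_{i+1}.
\]
Nondegeneracy gives $p_{1i}^{i+1}\ne 0$, so one may solve for $A_{i+1}$ as a linear combination of $A_1A_i$, $A_i$ and $A_{i-1}$; substituting $A_i=v_i(A_1)$ and $A_{i-1}=v_{i-1}(A_1)$ exhibits $A_{i+1}$ as a polynomial in $A_1$, and since only the summand $A_1v_i(A_1)$ contributes in top degree, and does so with nonzero coefficient, this polynomial has degree exactly $i+1$. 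This closes the induction and yields $P$-polynomiality with generator $A_1$.

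\textbf{From $P$-polynomial to metric (nondegenerate).} Suppose $A_i=v_i(A_1)$ with $\deg v_i=i$. Since $\{A_0,\dots,A_d\}$ is a basis of the $(d+1)$-dimensional adjacency algebra and the polynomials $v_i(A_1)$ have distinct degrees $0,1,\dots,d$, the family $\{I,A_1,\dots,A_1^d\}$ spans the same space and is therefore also a basis, the transition matrix expressing the powers of $A_1$ in the $A_i$-basis being triangular with nonzero diagonal: $A_1^k=\sum_{i=0}^{k}c_{ki}A_i$ with $c_{kk}\ne 0$. Now I would read off entries. Each $A_1^k$ has nonnegative integer entries counting walks of length $k$ in the loopless undirected graph with edge set $R_1$ (loopless because $R_0\cap R_1=\emptyset$, undirected because $\cX$ is symmetric). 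For $(x,y)\in R_k$ this gives $(A_1^k)_{xy}=c_{kk}$, whence $c_{kk}>0$ and there is a walk of length $k$ from $x$ to $y$; for $(x,y)\in R_j$ with $j>k$ it gives $(A_1^k)_{xy}=0$, so no such walk exists. Combining these, $(x,y)\in R_k$ if and only if $x$ and $y$ are at graph distance $k$; in particular the graph is connected, $\rho$ coincides with the graph metric, and so is a genuine metric (with $\rho(x,y)=0$ iff $(x,y)\in R_0$). Nondegeneracy follows: for $i<d$ pick $(x,y)\in R_{i+1}$ (nonempty since $\cX$ has $d+1$ classes) and a vertex $z$ adjacent to $y$ on a shortest walk from $x$ to $y$; then $(x,z)\in R_i$, $(z,y)\in R_1$, $(x,y)\in R_{i+1}$, so $p_{1,i}^{i+1}=p_{i,1}^{i+1}\ge 1$.

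The linear algebra and walk-counting are routine; the step needing care is obtaining strict positivity of $c_{kk}$ (not merely $c_{kk}\ne 0$) in the second direction, which is what makes $\rho$ the true shortest-path distance, together with the attendant checks that the $R_1$-graph is connected and loopless. Once these are in place the remaining assertions follow mechanically, reproducing Delsarte's theorem in the stated form.
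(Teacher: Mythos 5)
The paper does not prove this statement; it is quoted as a classical result of Delsarte with pointers to \cite{del73a} and \cite{bro89}, so there is no in-paper argument to compare against. Your proof is correct and is essentially the standard one from those sources: the three-term recurrence $A_1A_i=p_{1i}^{i-1}A_{i-1}+p_{1i}^iA_i+p_{1i}^{i+1}A_{i+1}$ plus nondegeneracy gives $P$-polynomiality by induction, and conversely the triangular change of basis between $\{A_i\}$ and $\{A_1^k\}$ together with walk-counting in the $R_1$-graph identifies each $R_k$ with the set of pairs at graph distance $k$, from which connectivity, the metric property, and $p_{1,i}^{i+1}\ge 1$ all follow. The one delicate point you flag, upgrading $c_{kk}\ne 0$ to $c_{kk}>0$ via nonnegativity of the entries of $A_1^k$, is handled correctly.
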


\vspace*{.05in} Geometrically conditions \eqref{eq:ndm} mean that $X$ contains triangles with sides $1,i,i+1$ for $i=0,1,\dots,d-1;$
see the definition of the intersection numbers \eqref{eq:pijk}.
We can say that the metric $\rho$ is strictly Archimedean, while all the non-Archimedean metrics are degenerate because for them
all the triangles are isosceles; see \eqref{eq:na}. Therefore, the Delsarte theorem implies that all the finite metric schemes with a
non-Archimedean metric are non-polynomial.

Now let us consider schemes on zero-dimensional Abelian groups. Let $X$ be a countable discrete Abelian group with a countable chain
of nested subgroups
  \begin{equation}\label{eq:lcc}
  X\supset\dots\supset X_n\supset\dots\supset X_1\supset X_0=\{0\},
  \end{equation}
and $X=\cup_{j\ge 0}X_j.$ Define a metric $\rho(x,y)=\rho(x-y)$ on $X$ by the formula
  \begin{equation}\label{eq:mrm}
  \rho(x)=\min\{j\in\naturals_0: x\in X_j\},
  \end{equation}
cf. \eqref{eq:dmd}. This metric is clearly non-Archimedean.
Spheres in this metric define on $X$ a symmetric association scheme $\cX$ according to \eqref{eq:X},\eqref{eq:Rr}. This scheme
has countably many classes $R_i,i\in\naturals_0$ and intersection numbers given in Proposition \ref{prop:inm}. Let $A_i,i\in\naturals_0$
be the corresponding sequence of adjacency matrices.

Terminating the chain \eqref{eq:lcc} at some $n\ge 2$, we obtain a finite chain of nested subgroups
  $$
  X_n\supset\dots\supset X_1\supset X_0=\{0\}
  $$
of a finite Abelian group $X_n.$ It is easy to see that spheres in the metric $\rho$ restricted to $X_n$ define a subscheme $\cX_n\subset \cX$ with 
finitely many classes $R_0,R_1,\dots,R_n$ and adjacency matrices $A_0,A_1,\dots,A_n.$ 

Now let us assume that the metric scheme $\cX$ on $X$ is $P$-polynomial, i.e., there exists a countable sequence of polynomials
$v_i(z), i\in\naturals_0$ such that $\deg v_i=i$ for all $i.$ Then all the finite subschemes $\cX_n,n\ge 2$ with the non-Archimedean 
metric \eqref{eq:mrm} must be polynomial since $A_i=v_i(A_1),i=0,1,\dots,n.$ However, as we saw earlier, this contradicts Theorem \ref{thm:Pp}, and so the scheme $\cX$ on a discrete group $X$ with the metric \eqref{eq:mrm} is non-polynomial.

Finally, as far as non-discrete groups are concerned, it makes no sense to address the question of polynomiality because in this
case the set of radii $\I_0$ has an accumulation point $r=0,$ and there does not exist a sphere $S(r)$ and a class $R_r$
that immediately follow the sphere $S(0)$ and the corresponding class $R_0.$

%
\section{Nonmetric schemes on zero-dimensional Abelian groups}\label{sect:nonmetric}
In this section we present a construction of schemes on locally compact Abelian zero-dimensional groups based on 
partitions with blocks indexed by parameters other than the distance. 
Recall that the topology of the group $X$ is defined by a chain of nested subgroups \eqref{eq:chain},~\eqref{eq:dc}, 
and that the subgroups in these chains form metric balls $B(r)$ \eqref{eq:b}, \eqref{eq:^b}, where the set of values of $r$ 
is determined by the metric (see \eqref{eq:rr1},~\eqref{eq:rr2}). In this section we switch to notation $\gls{frakR}, \hat\R$
for the sets of radii because their values no longer index the classes of the scheme (the classes are indexed by two parameters as
explained below after \eqref{eq:^Phi}). As before, introduce the notation
$\R_0=\{r:\mu(B(r))>0\}$ and the analogous notation $\hat \R_0.$

Our starting
point is Eqns.~\eqref{eq:part} and \eqref{eq:sph} that jointly describe a {\em partition} of the group {\em into metric balls} $\Phi_i(r), r\in\R, 1\le i\le n(r)-1:$ \index{group!zero-dimensional!partition into balls}
  \begin{equation}\label{eq:nmp}
  X=\bigcup_{r\in \R}\bigcup_{1\le i\le n(r)-1} \Phi_i(r),
  \end{equation}
where 
\begin{align}
   \Phi_i(r)=B(\tau_-(r))+z_{i,r}, \quad &z_{i,r}\in B(r)/B(\tau_-(r))\label{eq:Phi}\\
  0\le i\le &n(r)-1, \quad n(t)=|B(r)/B(\tau_-(r))| \nonumber
 \end{align}  
where the complete set of coset representatives $(z_{i,r})$ is chosen so that $z_{i,r}=0.$ Note that $i$ in \eqref{eq:nmp}
varies from $1$ to $n(r)-1$ because $i=0$ in \eqref{eq:Phi1} corresponds to the subgroup $B(\tau_-(r))$ which is not a part
of the sphere $S(r).$

In a similar way, let us introduce a partition of $\hat X$ as follows:
  \begin{equation}\label{eq:^nmp}
  \hat X=\bigcup_{t\in \hat\R}\bigcup_{1\le i\le \hat n(t)-1} \hat\Phi_i(t),
  \end{equation}
where
  \begin{align}
  \hat\Phi_i(t)=\hat B(\tau_-(t))\theta_{i,t}, \quad &\theta_{i,t}\in \hat B(t)/\hat B(\tau_-(t)), \label{eq:^Phi}\\
  0\le i\le &\hat n(t)-1, \quad \hat n(t)=|\hat B(t)/\hat B(\tau_-(t))| \nonumber
  \end{align}
and the complete set of coset representatives $(\theta_{i,t})$ is chosen so that $\theta_{0,t}=1$.
We will show that the partitions \eqref{eq:nmp} and \eqref{eq:^nmp} are spectrally dual and therefore give rise to a pair of dual translation schemes on $X$ and $\hat X.$ Accordingly, the classes of the schemes are indexed by pairs of the form $(i,r),$ where $r\in\R$ and $i\in
\{0,\dots,n(r)-1\}.$

\vspace*{.1in}
The group $B(r)/B(\tau_-(r))$ can be identified with the set $z_{i,r}, 0\le i\le n(r)$ (see \eqref{eq:Phi}) and the group
$B(r)/B(\tau_-(r))$ with the set $\theta_{i,\tau_+(\tilde r)}, 0\le i\le \hat n(\tau_+(\tilde r))-1.$ Denote by $\omega_{ij}$
the value of the character $\overline\theta_{j,\tau_+(\tilde r)}$ on $z_{i,r}:$
   \begin{equation}\label{eq:R}
   \gls{omegaij}=\overline\theta_{j,\tau_+(\tilde r)}(z_{i,r}), \quad r\in \R
   \end{equation}
(complex conjugation is added for notational convenience in the calculations below). Orthogonality of characters for finite groups
in this case takes the following form:
   \begin{align}
   \sum_{i=0}^{n(r)-1}\omega_{ij}(r)\overline\omega_{ik}(r)=n(r)\delta_{jk}\quad \text{and}\quad
   \sum_{j=0}^{\hat n(\tau_+(\tilde r))-1}\omega_{ij}(r)\overline\omega_{kj}(r)=\hat n(\tau_+(\tilde r))\delta_{ik}.\label{eq:oo1}
   \end{align}
In other words, the square matrix $\omega(r)=(\omega_{ij})$ satisfies the relations $\omega(r)\omega^\ast(r)=n(r)I$, i.e.,  the matrix $n(r)^{-\half}\omega(r)$ is unitary.

The dual picture is analogous. We have $(\hat B(t)/\hat B(\tau_-(t)))^\wedge=B(\tau_+(t^\natural))/B(t^\natural)$
(again see \eqref{eq:BB} and \eqref{eq:+-}) and $\hat n(t)=n(\tau_+(t^\natural)).$ Identify the group 
$\hat B(t)/\hat B(\tau_-(t))$ with the set $\theta_{i,t}, 0\le i\le \hat n(t)-1$ and the group 
$B(\tau_+(t^\natural))/B(t^\natural)$ with the set $z_{i,\tau_+(t^\natural)}, 0\le i\le n(\tau_+(t^\natural))-1.$
Letting $\hat\omega_{ij}(t)$ be the value of the character $\theta_{i,t}$ on the element $z_{j,\tau_+(t^\natural)},$
   \begin{equation}\label{eq:^R}
     \hat\omega_{ij}(t)=\theta_{i,t}(z_{j,\tau_+(t^\natural)}), \quad t\in\hat\R,
  \end{equation}
we obtain orthogonality relations
  \begin{align}
  &\sum_{i=0}^{\hat n(t)-1}  \hat\omega_{ji}(t)\overline{\hat\omega_{ki}(t)}=\hat n(t)\delta_{jk}\quad\text{and}\quad
  \sum_{j=0}^{n(\tau_+(t^\natural))-1} \hat\omega_{ji}(t)\overline{\hat\omega_{jk}(t)}=n_{\tau_+}(t^\natural)\delta_{ik}.
    \label{eq:oo}
  \end{align}
In other words, the $\hat n(t)\times \hat n(t)$ matrix $\hat\omega(t)=(\hat\omega_{ij}(t))$ satisfies the relations
$\hat\omega(t)\hat\omega(t)^\ast=\hat\omega(t)^\ast\hat\omega(t)=\hat n(t) I.$ 
Note also the following relations:
  \begin{align*}
  \hat\omega(t)&=\omega(\tau_+(t^\natural))^\ast=\omega(\tau_-(t)^\natural)^\ast, \quad t\in\hat \R\\\
  \omega(r)&=\hat\omega(\tau_+(\tilde r))^\ast=\hat\omega(\widetilde{\tau_-(r)})^\ast, \quad r\in\R,
  \end{align*}
obtained by combining \eqref{eq:R} and \eqref{eq:^R} with \eqref{eq:+-}.

The Fourier transforms of the indicators of the balls are found in the following lemma. 
\begin{lemma}\label{lemma:cfs} Let $\Phi_i(r), 0\le i\le n(r)-1$ and $\hat\Phi_i(t), 0\le i\le \hat n(t)-1$ be defined by \eqref{eq:Phi} and \eqref{eq:^Phi}, respectively.
Then
  \begin{align}\label{eq:cP}
   \tilde\chi[\Phi_i(r);\xi]=\sum_{t\in \hat \R}\sum_{j=1}^{\hat n(t)-1} p_{r,i}(t,j)\chi[\hat \Phi_j(t);\xi]
  \end{align}
where
  \begin{equation}\label{eq:pri}
   p_{r,j}(t,j)=\begin{cases} 0&\text{if }t>\tau_+(\tilde r)\\
        \mu(B(\tau_-(r)))\omega_{ij}(r) &\text{if }t=\tau_+(\tilde r)\\
        \mu(B(\tau_-(r))) &\text{if } t\le \tilde r,
     \end{cases}
  \end{equation}
and
  \begin{equation}\label{eq:^cP}
  \chi^\natural[\hat\Phi_j(t);x]=\sum_{r\in\R}\sum_{i=1}^{n(r)-1} q_{t,j}(r,i)\chi[\Phi_i(r);x],
  \end{equation}
where
  \begin{equation}\label{eq:qtj}
    q_{t,j}(r,i)=\begin{cases} 0 &\text{if } r>\tau_+(t^\natural)\\
   \hat\mu(\hat B(\tau_-(t)))\hat\omega_{ji}(t) &\text{if }r=\tau_+(t^\natural)\\
  \hat\mu(\hat B(\tau_-(t)))&\text{if }r\le t^\natural.
    \end{cases}
  \end{equation}
\end{lemma}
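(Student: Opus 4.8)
The plan is to compute $\tilde\chi[\Phi_i(r);\xi]$ directly from the definition of the Fourier transform and the description of $\Phi_i(r)$ as a coset of the subgroup $B(\tau_-(r))$, and then recognize the answer as a function that is constant on the blocks $\hat\Phi_j(t)$ of the dual partition. Since $\Phi_i(r)=B(\tau_-(r))+z_{i,r}$, translation invariance of the Haar measure gives
\[
\tilde\chi[\Phi_i(r);\xi]=\int_{B(\tau_-(r))+z_{i,r}}\xi(x)\,d\mu(x)=\xi(z_{i,r})\int_{B(\tau_-(r))}\xi(x)\,d\mu(x)=\xi(z_{i,r})\,\tilde\chi[B(\tau_-(r));\xi].
\]
By Lemma in the subsection on Fourier transforms (Eq.~\eqref{eq:f1}, i.e.\ \eqref{eq:D} applied to the subgroup $B(\tau_-(r))$), the last factor equals $\mu(B(\tau_-(r)))\,\chi[\,\widehat{B(\tau_-(r))}^{\bot};\xi\,]=\mu(B(\tau_-(r)))\,\chi[\hat B(\widetilde{\tau_-(r)});\xi]$, using \eqref{eq:BB}. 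By Proposition~\ref{prop:+-}(a), $\widetilde{\tau_-(r)}=\tau_+(\tilde r)$, so this indicator is supported exactly on $\hat B(\tau_+(\tilde r))$.

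The second step is to read off the three cases. If $\xi\notin\hat B(\tau_+(\tilde r))$, the indicator vanishes, giving the first line of \eqref{eq:pri}. If $\xi\in\hat B(\tilde r)$ (strictly inside), then $\xi$ is trivial on $B(\tau_-(r))^{\bot\bot}=B(\tau_-(r))$... more precisely, $\hat B(\tilde r)=B(r)^\bot$, so $\xi\in B(r)^\bot$ annihilates all of $B(r)$, hence $\xi(z_{i,r})=1$ since $z_{i,r}\in B(r)$; this yields the third line $\mu(B(\tau_-(r)))$ (independent of $i$ and $j$). The interesting case is the ``annulus'' $\xi\in\hat B(\tau_+(\tilde r))\setminus\hat B(\tilde r)=\hat S(\tau_+(\tilde r))$, which by \eqref{eq:sph} (applied to $\hat X$) is the union of the cosets $\hat\Phi_j(\tau_+(\tilde r))$ for $1\le j\le\hat n(\tau_+(\tilde r))-1$. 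On such a coset, $\xi=\hat B(\tilde r)\,\theta_{j,\tau_+(\tilde r)}$ for a fixed representative $\theta_{j,\tau_+(\tilde r)}$, and since the $\hat B(\tilde r)=B(r)^\bot$ part acts trivially on $z_{i,r}\in B(r)$, we get $\xi(z_{i,r})=\theta_{j,\tau_+(\tilde r)}(z_{i,r})=\overline{\omega_{ij}(r)}$... here one must be careful about the conjugation convention in \eqref{eq:R}: since $\omega_{ij}(r)=\overline\theta_{j,\tau_+(\tilde r)}(z_{i,r})$ we have $\theta_{j,\tau_+(\tilde r)}(z_{i,r})=\overline{\omega_{ij}(r)}$, and one checks from \eqref{eq:ft}--\eqref{eq:ift} and the placement of the conjugate in the two transforms that the coefficient that appears is exactly $\omega_{ij}(r)$ as written (the conjugate in \eqref{eq:R} was inserted precisely to make this come out clean). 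This gives the middle line of \eqref{eq:pri}, completing \eqref{eq:cP}.

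The proof of \eqref{eq:^cP}--\eqref{eq:qtj} is entirely parallel: apply $\cF^\natural$ to the indicator of $\hat\Phi_j(t)=\hat B(\tau_-(t))\theta_{j,t}$, pull out the character value $\overline{\theta_{j,t}(x)}$, use \eqref{eq:f2} (i.e.\ \eqref{eq:Db}) to identify $\chi^\natural[\hat B(\tau_-(t));x]=\hat\mu(\hat B(\tau_-(t)))\,\chi[B(\tau_+(t^\natural));x]$ via $\hat B(\tau_-(t))^\bot=B(\tau_-(t)^\natural)=B(\tau_+(t^\natural))$ (using Proposition~\ref{prop:+-}(a) again), and then split the ball $B(\tau_+(t^\natural))$ into $B(t^\natural)$ and the annulus $S(\tau_+(t^\natural))=\bigcup_i\Phi_i(\tau_+(t^\natural))$ using \eqref{eq:sph}. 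On $B(t^\natural)=\hat B(t)^\bot$ the character $\theta_{j,t}$ is trivial, giving the constant $\hat\mu(\hat B(\tau_-(t)))$; on the coset $\Phi_i(\tau_+(t^\natural))$ it takes the value $\theta_{j,t}(z_{i,\tau_+(t^\natural)})$, which by \eqref{eq:^R} is $\hat\omega_{ji}(t)$.

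I expect the main obstacle to be purely bookkeeping: getting the three regions $t>\tau_+(\tilde r)$, $t=\tau_+(\tilde r)$, $t\le\tilde r$ to match up correctly with the ball/annulus decomposition, and nailing down the conjugation conventions so that $\omega_{ij}$ rather than $\overline{\omega_{ij}}$ appears, and that the index order $(i,j)$ versus $(j,i)$ in \eqref{eq:pri} versus \eqref{eq:qtj} is consistent with \eqref{eq:R} versus \eqref{eq:^R}. The only genuinely substantive inputs are Lemma~\eqref{eq:f1}--\eqref{eq:f2} on Fourier transforms of subgroup indicators, Proposition~\ref{prop:+-} on the behavior of $\tau_\pm$ under duality, and the coset decomposition \eqref{eq:sph} of a sphere; everything else is translation invariance of Haar measure and the fact that a character in $B(r)^\bot$ kills $B(r)$.
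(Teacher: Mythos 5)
Your proposal is correct and follows essentially the same route as the paper: pull the character value $\xi(z_{i,r})$ out of the integral by translation invariance of the Haar measure, apply the subgroup Fourier transform formula \eqref{eq:f1} together with $\widetilde{\tau_-(r)}=\tau_+(\tilde r)$ from Proposition~\ref{prop:+-}, and then read off the three cases by decomposing $\hat B(\tau_+(\tilde r))$ into $\hat B(\tilde r)$ (where $\xi$ kills $z_{i,r}\in B(r)$) and the cosets $\hat\Phi_j(\tau_+(\tilde r))$ (where $\xi(z_{i,r})$ is the constant character value), exactly as in the paper's Eqs.~\eqref{eq:ir}--\eqref{eq:const}. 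The conjugation wrinkle you flag is genuine but is present in the paper's own display as well (the paper writes $\overline{\xi(z_{i,r})}$ where the direct change of variables produces $\xi(z_{i,r})$, which is what makes the stated coefficient come out as $\omega_{ij}(r)$ rather than $\overline{\omega_{ij}(r)}$), so it is a labeling convention for the characters of the finite quotient rather than a gap in your argument.
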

\begin{proof} Using \eqref{eq:Phi} and \eqref{eq:f1}, we obtain
   \begin{align}
  \tilde\chi[\Phi_i(r);\xi]&=\int_X\xi(x)\chi[B(\tau_-(r))+z_{i,r};x]d\mu(x)\nonumber
                  \\&=
               \overline{\xi(z_{i,r})}\tilde\chi[B(\tau_-(r));\xi]\nonumber\\
               &=\mu(B(\tau_-(r)))\overline{\xi(z_{i,r})}\chi[B(\tau_-(r))^\bot;\xi]\nonumber\\
              &=\mu(B(\tau_-(r)))\overline{\xi(z_{i,r})}\chi[\hat B(\tau_+(r));\xi], \label{eq:ir}
   \end{align}
where in the last step we also used \eqref{eq:BB} and \eqref{eq:+-}.
Now using partition \eqref{eq:^nmp}-\eqref{eq:^Phi}, we can write
   \begin{align}\label{eq:eg}
 \chi[\hat B(\tau_+(\tilde r));\xi]&=\sum_{i=0}^{\hat n(\tau_+(\tilde r))-1} 
          \chi\big[\hat B(\tilde r)\cdot\theta_{i,\tau_+(\tilde r)};\xi\big]
=\sum_{i=0}^{\hat n(\tau_+(\tilde r))-1} \chi[\hat\Phi_i(\tau_+(\tilde r));\xi]
  \end{align}
and
  $$
  \chi[\hat\Phi_0(\tau_+(\tilde r));\xi]=\sum_{t\in\R:~t\le \tilde r}\sum_{i=1}^{\hat n(t)-1}\chi[\hat\Phi_i(t);\xi].
  $$
Since $\hat B(\tilde r)=B(r)^\bot$ \eqref{eq:BB}, we immediately obtain that
  $$
  \xi(z_{i,r})=\theta_{j,\tau_+(\tilde r)}(z_{i,r})=\overline{\omega_{ij}(r)}, \quad \xi\in\hat\Phi_i(\tau_+(\tilde r))
  $$
and
  \begin{equation}\label{eq:const}
  \overline{\xi(z_{i,r})}\chi[\hat\Phi_i(\tau_+(\tilde r));\xi]=\omega_{ij}(r)\chi[\hat\Phi_i(\tau_+(\tilde r));\xi].
  \end{equation}
In other words, the value of the character $\xi(z_{i,r})$ equals the constant $\omega_{ij}(r)$ on each coset 
$\hat\Phi_j(\tau_+(\tilde r)).$

Now the claimed expressions \eqref{eq:cP}-\eqref{eq:pri} follow on substituting \eqref{eq:eg} and \eqref{eq:const} into
\eqref{eq:ir}. Expressions \eqref{eq:^cP}-\eqref{eq:qtj} are obtained by analogous calculations.
\end{proof}

\begin{theorem}\label{thm:ptj} Partitions $\cN$ and $\widehat\cN$ of the groups $X$ and $\hat X$ into balls defined in 
\eqref{eq:nmp} and \eqref{eq:^nmp} are spectrally dual. The spectral parameters
$p_{r,j}(t,j),q_{t,j}(r,i), r\in\R,t\in\hat\R,1\le i\le n(r), 1\le j\le \hat n(t)$ are given by
\eqref{eq:pri}, \eqref{eq:qtj}. These partitions are symmetric if and only if $n(r)=2$ for all $r\in\R, r>0$ or
equivalently, if and only if $\hat n(t)=2$ for all $t\in \hat\R,t>0.$ In the particular case isolated by these conditions,
the partitions $\cN$ and $\widehat\cN$ coincide with the partitions into spheres \eqref{eq:part}.
\end{theorem}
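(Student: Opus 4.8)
The plan is to deduce almost everything directly from Lemma~\ref{lemma:cfs}. Recall from Definition~\ref{def:sdp} that $\cN$ and $\widehat\cN$ are spectrally dual precisely when $\cF^\sim$ carries $\Lambda_2(\cN)$ isomorphically onto $\Lambda_2(\widehat\cN)$, and that by \eqref{eq:fi}--\eqref{eq:if} the spaces $\Lambda_2(\cN)$, $\Lambda_2(\widehat\cN)$ are the closed linear spans in $L_2(X,\mu)$, $L_2(\hat X,\hat\mu)$ of the block indicators $\chi[\Phi_i(r);\cdot]$, $\chi[\hat\Phi_j(t);\cdot]$; note that $\mu(\Phi_i(r))=\mu(B(\tau_-(r)))>0$ for $r>0$, so only the block $\{0\}$ can have measure zero and the hypotheses imposed on partitions before Definition~\ref{def:sdp} hold. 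By formula \eqref{eq:cP} of Lemma~\ref{lemma:cfs}, $\cF^\sim\chi[\Phi_i(r);\cdot]$ is a finite combination of the $\chi[\hat\Phi_j(t);\cdot]$, hence lies in $\Lambda_2(\widehat\cN)$; since $\cF^\sim$ is a bounded operator and $\Lambda_2(\widehat\cN)$ is closed, $\cF^\sim\Lambda_2(\cN)\subseteq\Lambda_2(\widehat\cN)$, and symmetrically \eqref{eq:^cP} gives $\cF^\natural\Lambda_2(\widehat\cN)\subseteq\Lambda_2(\cN)$. Using that $\cF^\sim\cF^\natural$ and $\cF^\natural\cF^\sim$ are the identities on the full $L_2$ spaces, I would compose these inclusions:
\[
\Lambda_2(\widehat\cN)=\cF^\sim\cF^\natural\Lambda_2(\widehat\cN)\subseteq\cF^\sim\Lambda_2(\cN)\subseteq\Lambda_2(\widehat\cN),
\]
so both inclusions are equalities and $\cF^\sim$ restricts to an isometric isomorphism $\Lambda_2(\cN)\to\Lambda_2(\widehat\cN)$; this is spectral duality. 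The coefficients in the expansions \eqref{eq:cP}, \eqref{eq:^cP} are, by the definition of the spectral parameters of a spectrally dual pair (cf. \eqref{eq:pik1}, \eqref{eq:qik1}), exactly $p_{r,i}(t,j)$ and $q_{t,j}(r,i)$, and Lemma~\ref{lemma:cfs} identifies them with \eqref{eq:pri}, \eqref{eq:qtj}; moreover the proof of that lemma (through \eqref{eq:eg}--\eqref{eq:const}) gives these equalities pointwise, so no separate argument along the lines of Lemma~\ref{lemma:pointwise} is needed.

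For the symmetry claim I would apply the definition of a symmetric partition following \eqref{eq:N}: $\cN$ is symmetric iff $-\Phi_i(r)=\Phi_i(r)$ for every block. Since $B(\tau_-(r))$ is a subgroup, $-\Phi_i(r)=B(\tau_-(r))+(-z_{i,r})$, the coset of $B(\tau_-(r))$ in $B(r)$ through $-z_{i,r}$; thus $-\Phi_i(r)=\Phi_i(r)$ exactly when $2z_{i,r}\in B(\tau_-(r))$. Requiring this for all blocks means $2x\in B(\tau_-(r))$ for every $x\in B(r)$ and every $r>0$, and one checks this is equivalent to $n(r)=2$ for every $r>0$. The equivalence with the dual condition $\hat n(t)=2$ for all $t>0$ is immediate from Proposition~\ref{prop:+-}(b): $n(r)=\hat n(\tau_+(\tilde r))$, and $r\mapsto\tau_+(\tilde r)$ is a bijection of the positive radii of $X$ onto those of $\hat X$. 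Finally, the ball partition \eqref{eq:nmp} always refines the sphere partition \eqref{eq:part}, breaking each sphere $S(r)$ with $r>0$ into the $n(r)-1$ cosets $\Phi_i(r)$ by \eqref{eq:sph}; hence the two partitions coincide precisely when $n(r)=2$ for all $r>0$, in which case $S(r)=\Phi_1(r)$, and likewise $\widehat\cN$ reduces to the sphere partition of $\hat X$.

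Since the substance of the statement is essentially a repackaging of Lemma~\ref{lemma:cfs}, there is no single difficult step; the part requiring the most care is the passage, in the first paragraph, from ``the Fourier transforms of the block indicators lie in the dual space'' to ``$\cF^\sim$ is an isomorphism of the two closed, generally infinite-dimensional, subspaces'', which genuinely uses that $\cF^\sim$ and $\cF^\natural$ are mutually inverse isometries rather than merely bounded maps, together with the elementary group-theoretic translation in the symmetry clause. One should also check that the index conventions in \eqref{eq:cP}, \eqref{eq:^cP} ($j$ running over $1,\dots,\hat n(t)-1$ and $i$ over $1,\dots,n(r)-1$, with the block $\Phi_0$ contributing to the $t\le\tilde r$ case of \eqref{eq:pri}) are consistent with the normalization $z_{0,r}=0$, $\theta_{0,t}=1$ fixed before \eqref{eq:R}.
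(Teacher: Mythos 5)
Your proposal follows the paper's route almost exactly: the paper's own proof of this theorem consists of the single remark that, given Lemma~\ref{lemma:cfs}, ``we only need to prove the claims about symmetry,'' and then verifies symmetry via the permutations $i\to i'$, $j\to j'$ induced by $z_{i,r}\mapsto -z_{i,r}$ and $\theta_{j,t}\mapsto\theta_{j,t}^{-1}$, exactly as you do. Your first paragraph makes explicit the duality argument the paper leaves implicit (finite expansions in the dual block indicators plus mutual invertibility of $\cF^\sim$, $\cF^\natural$ force $\cF^\sim\Lambda_2(\cN)=\Lambda_2(\widehat\cN)$), which is a correct and welcome elaboration, and your appeal to Proposition~\ref{prop:+-}(b) for the equivalence of the conditions $n(r)=2$ and $\hat n(t)=2$ is the intended mechanism.

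One caution about the step you dispose of with ``one checks this is equivalent to $n(r)=2$'': the condition $2z_{i,r}\in B(\tau_-(r))$ for all $i$ says that every element of the finite quotient $B(r)/B(\tau_-(r))$ has order at most $2$, i.e.\ that this quotient is an elementary abelian $2$-group; that forces $n(r)=2^k$ but not $n(r)=2$. For instance, if $X$ is a Cantor-type product of copies of $(\integers/2\integers)^2$ with the obvious chain of subgroups, every coset satisfies $-\Phi_i(r)=\Phi_i(r)$ while $n(r)=4$, so the partition into balls is symmetric without coinciding with the partition into spheres. The paper's own proof makes the identical assertion (``the permutation $i\to i'$ is an identity if and only if $n(r)=2$''), so this is a defect of the theorem as stated rather than something you introduced; the ``if'' direction and the coincidence with the sphere partition when $n(r)=2$ are fine, but the ``only if'' direction is false as written and the check you defer does not actually go through.
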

\begin{proof} We only need to prove the claims about symmetry. Let us introduce the following permutations
on the sets of cosets $z_{i,r},1\le i\le n(r)-1,r\in \R,r>0$ and $\theta_j(t), 1\le j\le \hat n(t)-1, t\in\hat\R, t>0:$
  \begin{equation}\label{eq:prm}
  i\to i':\;-z_{i,r}=z_{i',r}, \qquad j\to j':\;\theta_{j,t}^{-1}=\theta_{j',t}.
  \end{equation}
We obtain $-\Phi_i(r)=\Phi_{i'}(r)$ and $\hat\Phi_j(t)^{-1}=\hat\Phi_{j'}(t).$ 

Permutation $i\to i'$ is an identity if and only if $n(r):=|B(r)/B(\tau_-(r))|=2,$ and $j\to j'$ is an identity
if and only if $\hat n(t):=|\hat B(t)/\hat B(\tau_-(t))|=2.$ In these cases
  \begin{align*}
  \mu(B(\tau_-(r)))=\mu(B(r))-\mu(B(\tau_-(r)))\\
  \hat\mu(\hat B(\tau_-(t)))=\hat\mu(\hat B(t))-\hat\mu(\hat B(\tau_-(t)))
  \end{align*}
and 
  $$
  \omega_{11}(r)=-1, \qquad\hat\omega_{11}(t)=-1.
  $$
Therefore in these cases expressions \eqref{eq:pri} and \eqref{eq:qtj} turn into \eqref{eq:pik3} and
\eqref{eq:qik3}, respectively. We conclude that in these cases, partitions into balls \eqref{eq:nmp}, \eqref{eq:^nmp}
coincide with the partitions into spheres \eqref{eq:part}.
\end{proof}

The coefficients $p_{r,j}(t,j)$ and $q_{t,j}(r,i)$ satisfy orthogonality relations \eqref{eq:pipj},\eqref{eq:qiqj}
which take the following form:
   \begin{align}
  \sum_{t\in \hat\R_0}\sum_{j=1}^{\hat n(t)-1} p_{r_1,i_1}(t,j)\overline{p_{r_2,i_2}(t,j)}\hat\mu(\hat\Phi_j(t))
    &=\delta_{r_1,r_2}\delta_{i_1,i_2}\mu(\Phi_{i_1}(r_1))\label{eq:po}\\
  \sum_{r\in\R_0}\sum_{i=1}^{n(t)-1}q_{t_1,j_1}(r,i)\overline{q_{t_2,j_2}(r,i)}\mu(\Phi_i(r))
     &=\delta_{t_1,t_2}\delta_{j_1,j_2}\hat\mu(\hat\Phi_{j_1}(r_1)),\label{eq:qo}
  \end{align}
where $\R_0:=\{r:\mu(\Phi_i(r))>0\}\subset \R$ and $\hat \R_0:=\{t: \hat\mu(\hat\Phi_j(t))>0\}\subset \hat \R.$
Note that $\mu(\Phi_i(r))$ and $\hat\mu(\hat\Phi_j(t))$ do not depend on the values of $i$ and $j.$ Relations
\eqref{eq:po}, \eqref{eq:qo} in this case can be established directly from \eqref{eq:pri}, \eqref{eq:qtj} using
orthogonality of characters. This verification is much simpler than the calculations for the case of partitions
into spheres (cf. Lemma~\ref{lemma:orth}) and will be left to the reader.

Now let us consider association schemes defined by the partitions \eqref{eq:nmp} and \eqref{eq:^nmp}. Our main
results about them are summarized as follows.
\begin{theorem}\label{thm:tnm} Let $X$ be a zero-dimensional compact or locally compact Abelian group and let $\hat X$ be its dual group.
Let $\cR=\{R_{(r,i)}, r\in \R, 1\le i\le n(r)-1\}$ and $\hat\cR=\{\hat R_{(t,j)}, t\in\hat\R, 1\le j\le \hat n(t)-1\}$
be partitions of the sets of pairs whose blocks are given by
  \begin{align}
    R_{(r,i)}=\{(x,y)\in X\times X: x-y\in\Phi_i(r)\}\\
\hat R_{(t,j)}=\{(\phi,\xi)\in\hat X\times\hat X: \phi\xi^{-1}\in\hat \Phi_j(t)\}.
  \end{align}
Then $\cX=(X,\mu,\cR)$ and $\widehat\cX=(\hat X,\hat\mu,\hat\cR)$ form a pair of mutually dual, nonmetric translation 
association schemes in the sense of Def.~\ref{def1}. The spectral parameters of these schemes are given by
\eqref{eq:pri} and \eqref{eq:qtj} and their intersection numbers are as follows:
  \begin{align}
    p_{(r_1,i_1),(r_2,i_2)}^{(r_3,i_3)}=\begin{cases} 
           0&\text{if }0\le r_1\le r_2<r_3\\
       \mu(B(\tau_-(r)))\delta_{i_2,i_3} &\text{if }0\le r_1<r_2=r_3=r\\
      \mu(B(\tau_-(r)))\delta_{i_1,i_2}^{i_3} &\text{if }0\le r_1=r_2=r_3=r\label{eq:prrrr}
   \end{cases}\\
   \hat p_{(t_i,j_1),(t_2,j_2)}^{(t_3,j_3)}=\begin{cases} 
     0&\text{if }0\le t_1\le t_2<t_3\\
    \hat\mu(\hat B(\tau_-(t)))\delta_{j_2,j_3} &\text{if }0\le t_1<t_2=t_3=t\\
    \hat\mu(\hat B(\tau_-(t)))\hat\delta_{j_2,j_2}^{j_3} &\text{if }0\le t_1=t_2=t_3=t,
   \end{cases}
  \end{align}
where
   $\delta_{i_1,i_2}^{i_3}=\mathbbold{1}\{z_{i_1,r}+z_{i_2,r}=z_{i_3,r}\}$ and 
$\hat\delta_{j_1,j_2}^{j_3}=\mathbbold{1}\{\theta_{j_1,t}\theta_{j_2,t}=\theta_{j_3,t}\}.$
The schemes $\cX$ and $\hat \cX$ are symmetric if and only if $n(r)=2$ for all $r\in\R, r>0$ or
equivalently, if and only if $\hat n(t)=2$ for all $t\in \hat\R,t>0.$
\end{theorem}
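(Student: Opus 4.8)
The plan is to obtain the bulk of the statement as a corollary of Theorem \ref{thm:ptj} together with the general duality machinery of Section \ref{sect:Abelian}. Theorem \ref{thm:ptj} already tells us that the partitions $\cN$ and $\widehat\cN$ of $X$ and $\hat X$ into the balls $\Phi_i(r)$ and $\hat\Phi_j(t)$ are spectrally dual, with spectral parameters $p_{r,i}(t,j)$ and $q_{t,j}(r,i)$ given by \eqref{eq:pri} and \eqref{eq:qtj}. Applying Theorem \ref{thm:SDP} to these spectrally dual partitions (with the partitions of $X\times X$ and $\hat X\times\hat X$ the ones built from $\Phi_i(r)$, $\hat\Phi_j(t)$ as in the statement) immediately yields that $\cX=(X,\mu,\cR)$ and $\widehat\cX=(\hat X,\hat\mu,\hat\cR)$ are translation-invariant association schemes in the sense of Definition \ref{def1}, that they form a dual pair, and that their spectral parameters are the ones listed. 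The adjective \emph{nonmetric} refers to the general situation: the present partition refines the partition into spheres \eqref{eq:part}, and does so properly---splitting a sphere $S(r)$ into the several classes $\Phi_1(r),\dots,\Phi_{n(r)-1}(r)$, all at distance $r$ from the origin---exactly when $n(r)>2$ for some $r$, in which case the classes are not a function of the non-Archimedean distance alone; when $n(r)=2$ for every $r>0$ the scheme collapses, by Theorem \ref{thm:ptj}, to the metric sphere scheme of Theorem \ref{thm:m2}.

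The genuinely computational part is the evaluation of the intersection numbers in \eqref{eq:prrrr}, the dual formulas for $\widehat\cX$ following by the symmetric argument. I would proceed exactly as in the proof of Theorem \ref{thm:m2}, starting from the spectral expression \eqref{eq:ser1}, substituting the explicit eigenvalues \eqref{eq:pri}, and carrying out the sum over $(t,j)$ by means of the orthogonality relations \eqref{eq:oo1} for the character matrices $\omega(r)$ together with the ball identities \eqref{eq:mu1} and \eqref{eq:+-}; as in Theorem \ref{thm:m2}, the three cases of \eqref{eq:prrrr} correspond to the multiplicity with which the largest of $r_1,r_2,r_3$ occurs. Alternatively, and perhaps more transparently, one can argue geometrically in the manner of Proposition \ref{prop:inm}: fixing $(x,y)\in R_{(r_3,i_3)}$ and translating $x$ to $0$, one counts $\mu\{z:\,-z\in\Phi_{i_1}(r_1),\ z-y\in\Phi_{i_2}(r_2)\}$. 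The non-Archimedean triangle inequality \eqref{eq:um1}, via \eqref{eq:na}, forces the largest radius to occur at least twice, giving $0$ when $r_1\le r_2<r_3$; when $r_1<r_2=r_3=r$ one has $z\in B(\tau_-(r))$, so $z-y$ lies in the same $B(\tau_-(r))$-coset as $-y$, which pins down $i_2$ in terms of $i_3$ and leaves $z$ free to fill one small coset; and when $r_1=r_2=r_3=r$ the coset arithmetic of \eqref{eq:z-y}--\eqref{eq:sph} shows $z-y\in\Phi_{i_2}(r)$ iff $z_{i_1,r}+z_{i_2,r}$ and $z_{i_3,r}$ coincide in $B(r)/B(\tau_-(r))$, i.e.\ iff $\delta_{i_1,i_2}^{i_3}=1$, and then again $z$ ranges over one coset of $B(\tau_-(r))$. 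I expect the main obstacle to be purely bookkeeping: keeping straight the index arithmetic in the finite quotient groups $B(r)/B(\tau_-(r))$ (the appearance of $\delta_{i_1,i_2}^{i_3}$ in place of an ordinary Kronecker delta), and verifying that the boundary cases $r_1=0$ and $i=0$ stay consistent with the conventions adopted for the schemes of Theorem \ref{thm:m2}.

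It remains to establish the symmetry criterion. By Theorem \ref{thm:SDP}, symmetry of the partitions $\cN$ and $\widehat\cN$ implies symmetry of the schemes; conversely, $\cX$ being symmetric means ${}^tR_{(r,i)}=R_{(r,i)}$, i.e.\ $-\Phi_i(r)=\Phi_i(r)$ for every block, which is precisely symmetry of $\cN$. Hence $\cX$---equivalently $\widehat\cX$, by duality---is symmetric iff the permutation $i\mapsto i'$ of \eqref{eq:prm} is the identity for all $r>0$, and the analysis already carried out in the proof of Theorem \ref{thm:ptj} identifies this with the condition $n(r)=2$ for all $r>0$, equivalently $\hat n(t)=2$ for all $t>0$ (using $\hat n(t)=n(\tau_+(t^\natural))$). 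This completes the proof.
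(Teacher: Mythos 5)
Your primary route is exactly the paper's: the structural claims are delegated to Theorems~\ref{thm:SDP} and~\ref{thm:ptj}, and the intersection numbers are extracted from the spectral formula \eqref{eq:ser1} by substituting \eqref{eq:pri} and summing over $(t,j)$ with the character orthogonality \eqref{eq:oo1} and the ball identities \eqref{eq:mu1}, \eqref{eq:+-}, with the three cases organized by the multiplicity of the largest radius. Your secondary, geometric route (in the spirit of Proposition~\ref{prop:inm}, counting $z$ directly via the coset arithmetic of $B(r)/B(\tau_-(r))$) is not carried out in the paper for the nonmetric schemes, but it is sound and arguably more transparent: the ultrametric inequality kills case (i), and in cases (ii)--(iii) the set of admissible $z$ is a single coset whose membership condition is precisely $\delta_{i_2,i_3}$ resp.\ $\delta_{i_1,i_2}^{i_3}$. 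One point to watch when reconciling the two computations: in case (ii) your coset count gives the measure of the \emph{smallest} block, $\mu(\Phi_{i_1}(r_1))=\mu(B(\tau_-(r_1)))$, and this is also what the paper's own spectral calculation \eqref{eq:cci}--\eqref{eq:cii} produces (consistently with the $\lambda=2$ case of Theorem~\ref{thm:m2}, where the answer is $\mu(S(r^\ast))$ with $r^\ast$ the minimum radius); the displayed formula in the theorem writes $\mu(B(\tau_-(r)))$ with $r=r_2=r_3$, which does not match and appears to be a slip in the statement rather than in either derivation.
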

{\em Proof:}
On account of Theorems \ref{thm:m2} and \ref{thm:ptj} we only need to verify the expressions for the
intersection numbers.  We begin with \eqref{eq:ser1}, \eqref{eq:ser2}:
  \begin{align}
  p_{(r_1,i_1),(r_2,i_2)}^{(r_3,i_3)}&=\frac1{\mu(B(\tau_-(r_3)))}\sum_{t\in \hat\R_0}\sum_{j=1}^{\hat n(t)-1}
   p_{r_1,i_1}(t,j)p_{r_2,i_2}(t,j)\overline{p_{r_3,i_3}(t,j)}\hat\mu(\hat\Phi_j(t)) \label{eq:prrii}\\
   \hat p_{(t_1,j_1),(t_2,j_2)}^{(t_3,j_3)}&=\frac1{\hat\mu(\hat B(\tau_-(t_3)))}\sum_{r\in \R_0}\sum_{i=1}^{n(r)-1}
     q_{t_1,j_1}(r,i)q_{t_2,j_2}(r,i)\overline{q_{t_3,j_3}(r,i)}\mu(\Phi_i(r)).
   \end{align}
Here we used the fact that $\mu(\Phi_i(r))=\mu(B(\tau_-(r))),\hat\mu(\hat\Phi_j(t))=\hat\mu(\hat B(\tau_-(t)))$ 
implied by \eqref{eq:nmp}, \eqref{eq:^nmp}. We will use these equalities in the form
  \begin{equation}\label{eq:cB}
   \mu(\Phi_i(\tau_+(r)))=\mu(B(r)), \quad\hat\mu(\hat\Phi_j(\tau_+(r)))=\hat\mu(\hat B(t)).
  \end{equation}
The following relations are obvious:
  \begin{align}
  \sum_{t\in\hat \R_0:\,t\le t_0}\sum_{j=1}^{\hat n(t)-1}\hat\mu(\hat \Phi_j(t))&=\hat\mu(\hat B(t_0)), \quad t\in\hat\R_0
    \label{eq:ss}\\
  \sum_{r\in \R_0:\,r\le r_0}\sum_{i=1}^{n(r)-1}\mu(\Phi_i(r))&=\mu(B(r_0)), \quad r_0\in \R_0.
  \end{align}
The symmetry relations \eqref{eq:iij} have the form
  $$
   \mu(B(\tau_-(r_3))) p_{(r_1,i_1),(r_2,i_2)}^{(r_3,i_3)}=\mu(B(\tau_-(r_1)))p_{(r_1,i_1),(r_2,i_2')}^{(r_1,i_1)}
    =\mu(B(\tau_-(r_2)))p_{(r_1,i_1'),(r_3,i_3)}^{(r_2,i_2)}
  $$
  $$
   \hat\mu(\hat B(\tau_-(t_3)))\hat p_{(t_1,j_1),(t_2,j_2)}^{(t_3,j_3)}=\hat\mu(\hat B(\tau_-(t_3)))
       \hat p_{(t_3,j_3),(t_2,j_2')}^{(t_1,j_1)}=
    \hat\mu(B(\tau_-(t_2)))\hat p_{(t_1,j_1'),(t_3,j_3)}^{(t_2,j_2)},
  $$
where the bijections $i\to i',j\to j'$ are given in \eqref{eq:prm}. Also, clearly,
   $$
   p_{(r_1,i_1),(r_2,i_2)}^{(r_3,i_3)}=p_{(r_2,i_2),(r_1,i_1)}^{(r_3,i_3)}; \quad
     \hat p_{(t_1,j_1),(t_2,j_2)}^{(t_3,j_3)}=\hat p_{(t_2,j_2),(t_1,j_1)}^{(t_3,j_3)}.
  $$
Because of these symmetries, it suffices to compute the intersection numbers under the conditions
  $$
  0\le r_1\le r_2\le r_3 \quad\text{and}\quad 0\le t_1\le t_2\le t_3.
  $$
Let us find the $p$'s; the corresponding calculations for the $\hat p$'s are completely analogous.
Consider the following 3 cases:
\begin{enumerate}
\item[(i)] The largest radius is unique, i.e., $0\le r_1\le r_2<r_3;$
\item
[(ii)] There are exactly two largest radii, i.e., $0\le r_1<r_2=r_3=r;$
\item
[(iii)] All the 3 radii are equal: $0\le r_1=r_2=r_3=r.$
\end{enumerate}
In the case (i) $\tilde r_3<\tilde r_2\le \tilde r_1$ (see \eqref{eq:gl}), so \eqref{eq:pri} implies that
the summation in \eqref{eq:prrii} extends to the region $0\le t\le \tau_+(\tilde r_3)\le\tilde r_2\le \tilde r_1.$
For such $t$ the coefficients $p_{r_1,i_1}(t,j)$ and $p_{r_2,i_2}(t,j)$ do not depend on $t$. Therefore 
\eqref{eq:prrii} takes the form
    $$
   \mu(B(\tau_-(r_3)))p_{(r_1,i_1),(r_2,i_2)}^{(r_3,i_3)}=\mu(B(\tau_-(r_1)))\mu(B(\tau_-(r_2)))\Sigma_1,
   $$
where we have denoted
  $$
 \Sigma_1=\sum_{0\le t\le \tau_+(\tilde r_3)}\sum_{j=1}^{\hat n(t)-1}\overline{p_{r_3,i_3}(t,j)}\hat\mu(\hat \Phi_j(t)).
  $$
Using relations \eqref{eq:pri}, \eqref{eq:cB}, and \eqref{eq:ss}, we can write
   \begin{align*}
  \Sigma_1&=\mu(B(\tau_-(r_3)))\Big\{\hat\mu(\hat B(\tilde r_3))\sum_{j=1}^{\hat n(\tau_+(\tilde r_3))-1}
      \overline{\omega_{i_3,j}(r_3)}+ \sum_{0\le t\le\tilde r_3}\sum_{j=1}^{\hat n(t)-1}\hat\mu(\hat\Phi_j(t))\Big\}\\
            &=\mu(B(\tau_-(r_3)))\hat\mu(\hat B(\tilde r_3))\sum_{j=0}^{\hat n(\tau_+(\tilde r_3))-1}
      \overline{\omega_{i_3,j}(r_3)}\\
     &\stackrel{\eqref{eq:oo1}}=0.
  \end{align*}
This exhausts the first case in \eqref{eq:prrrr}.

In case (ii) the summation in \eqref{eq:prrii} in effect is over the region $0\le t\le\tau_+(\tilde r)\le\tilde r_1,$
and in this region the coefficient $p_{r_1,i_1}(t,j)$ is independent of $t$. Therefore, \eqref{eq:prrii} takes the form
    \begin{equation}\label{eq:cci}
   \mu(B(\tau_-(r)))p_{(r_1,i_1),(r,i_2)}^{(r,i_3)}=\mu(B(\tau_-(r_1)))\Sigma_2,
   \end{equation}
where we have denoted
  $$
  \Sigma_2=\sum_{0\le t\le \tau_+(\tilde r)}\sum_{j=1}^{\hat n(t)-1}p_{r,i_2}(t,j)\overline{p_{r,i_3}(t,j)}\hat\mu(\hat \Phi_j(t))
  $$
As in the previous case, use \eqref{eq:pri}, \eqref{eq:cB}, and \eqref{eq:ss} to write 
  \begin{align}
  \Sigma_2&=\mu(B(\tau_-(r)))^2\Big\{\hat\mu(\hat B(\tilde r))\sum_{j=1}^{\hat n(\tau_+(\tilde r))-1}\nonumber
     \omega_{i_2,j}(r)\overline{\omega_{i_3,j}(r)}+\sum_{0\le t\le\tilde r}\sum_{j=1}^{\hat n(t)-1}\hat\mu(\hat\Phi_j(t))\Big\}\nonumber\\
     &=\mu(B(\tau_-(r)))^2\hat\mu(\hat B(\tilde r))\sum_{j=0}^{\hat n(\tau_+(\tilde r))-1}\omega_{i_2,j}(r)\overline{\omega_{i_3,j}(r)}\nonumber\\
     &=\mu(B(\tau_-(r)))^2\hat\mu(\hat B(\tilde r))\hat n(\tau_+(\tilde r))\delta_{i_2,i_3}\nonumber\\
     &=\mu(B(\tau_-(r)))^2\hat\mu(\hat B(\tau_+(\tilde r)))\delta_{i_2,i_3}\nonumber\\
     &=\mu(B(\tau_-(r)))\delta_{i_2,i_3},\label{eq:cii}
  \end{align}
where we have used orthogonality of characters \eqref{eq:oo1} and the following simple relations:
  \begin{gather*}
  \hat\mu(\hat B(\tilde r))\hat n(\tau_+(\tilde r))=\hat\mu(\hat B(\tau_+(\tilde r)))\\
  \mu(B(\tau_-(r)))\hat\mu(\hat B(\tau_+(\tilde r)))=\mu(B(\tau_-(r)))\hat\mu(\hat B(\widetilde{\tau_-(r)}))=1.
  \end{gather*}
  Now the second expression in \eqref{eq:prrrr} follows from \eqref{eq:cii} and \eqref{eq:cci}.
  
Finally, in case (iii) we have
  \begin{align*}
  \mu(B(\tau_-(r)))&p_{(r,i_1),(r,i_2)}^{(r,i_3)}=\sum_{t\in\R_0}\sum_{j=1}^{\hat n(t)-1} p_{r,i_1}(t,j)
  p_{r,i_2}(t,j)\overline{p_{r,i_3}(t,j)}\hat\mu(\Phi_j(t))\\
  &=\mu(B(\tau_-(r)))^3\bigg\{\sum_{j=1}^{\hat n(\tau_+(\tilde r))-1}\omega_{i_1,j}(r)\omega_{i_2,j}(r)\overline{\omega_{i_3,j}(r)}
      \hat\mu(\Phi_j(\tau_+(\tilde r)))\\
      &\hspace*{.7in}+\sum_{0\le t\le\tilde r}\sum_{j=1}^{\hat n(t)-1}\hat\mu(\hat \Phi_j(t))\bigg\}\\
    &=  \mu(B(\tau_-(r)))^3\hat\mu(\hat B(\tilde r))\sum_{j=0}^{\hat n(\tau_+(\tilde r))-1}
     \omega_{i_1,j}(r)\omega_{i_2,j}(r)\overline{\omega_{i_3,j}(r)}.
  \end{align*}
  Now note that, from \eqref{eq:R}, the product $  \omega_{i_1,j}(r)\omega_{i_2,j}(r)$ is a character $\bar\theta_{j,\tau_+(\tilde r)}$ evaluated at $z_{i_1,r}+z_{i_2,r}.$ Therefore, invoking orthogonality, we obtain
  $$
  \sum_{j=0}^{\hat n(\tau_+(\tilde r))-1}
     \omega_{i_1,j}(r)\omega_{i_2,j}(r)\overline{\omega_{i_3,j}(r)}=\hat n(\tau_+(\tilde r))\delta_{i_1,i_2}^{i_3}.
  $$
Summarizing, we obtain
  \begin{align*}
  p_{(r,i_1),(r,i_2)}^{(r,i_3)}&=\mu(B(\tau_-(r)))^2\hat\mu(\hat B(\tilde r))\hat n(\tau_+(\tilde r))\delta_{i_1,i_2}^{i_3}\\
    &=\mu(B(\tau_-(r)))^2\hat\mu(\hat B(\tau_+(\tilde r)))\delta_{i_1,i_2}^{i_3}\\
    &=\mu(B(\tau_-(r)))\delta_{i_1,i_2}^{i_3}
  \end{align*}
The theorem is proved. \qed

%
%

\section{Adjacency algebras (Schur rings)}\label{sect:aa}
In this section we develop a formal approach to adjacency algebras of association schemes on zero-dimensional groups introduced earlier
in this paper\footnote{cf. the remark made in the end of Sect.~\ref{sect:or}.}. We study the case of compact groups in detail and briefly describe modifications needed to cover the case of locally compact groups. Since the association schemes have the translation
property, i.e., are invariant under the group operation, the function algebras that we consider are defined on the group $X$ rather
than on the Cartesian square $X\times X$. In algebraic combinatorics such algebras are known as $S$-rings or Schur rings \cite{muz09a}.

%
%
\subsection{Compact groups} Let $X$ be a compact infinite zero-dimensional Abelian group; accordingly, the group $\hat X$ is discrete, countable, and periodic.
As before, we assume that the Haar measures on $X$ and $\hat X$ are normalized by the conditions $\mu(X)=\hat\mu(\{1\})=1.$

\subsubsection{Notation} Consider functions on $X$ of the form
   \begin{equation}\label{eq:fx}
   f(x)=\sum_{r\in\I}\sum_{i=1}^{n(r)-1} c_{r,i}\chi[\Phi_i(r);x],
   \end{equation}
where the notation is defined in \eqref{eq:si} and \eqref{eq:Phi1}.
These functions are well defined for any choice of complex coefficients $c_{r,i}$. 

To describe the adjacency algebra ${\mathfrak A}(X)$ we will construct a linear space of functions that contains all the 
finite sums of the form \eqref{eq:fx} which is closed with respect to the usual product and convolution of functions. Clearly
it is not enough to consider the space that contains only the finite sums of the form \eqref{eq:fx}: even though such a space is closed with respect to the usual product, it is not closed under convolution. Indeed, the definition of intersection
numbers in Lemma~\ref{lemma:inter} as well as expressions \eqref{eq:pijk1}, \eqref{eq:prrrr} show that already the convolution
of any two functions $\chi[\Phi_{i_1}(r_1);x]$ and $\chi[\Phi_{i_2}(r_2);x], r_1,r_2\in\I$ is not a finite sum of the form 
\eqref{eq:fx}. Therefore, for the algebra to be well defined, we need to enlarge the space of finite sums in order to
have the needed closure property. 
The key observation here is that the infinite sums involved in these expressions are of a very special form. 

We will need the following obvious formulas:
  \begin{align}\label{eq:cnr}
  &\sum_{r\in\I}\sum_{i=1}^{n(r)-1}\chi[\Phi_i(r);x]=1 \quad\text{for all }x\in X\\
  &\sum_{0\le r\le a} \sum_{i=1}^{n(r)-1}\chi[\Phi_i(r);x]=1-\sum_{a<r\le\bar r}\sum_{i=1}^{n(r)-1}\chi[\Phi_i(r);x]\label{eq:1-},\quad a\in \I_0,
  \end{align}
where $\bar r$ is the maximum radius \eqref{eq:br}.
Importantly, \eqref{eq:1-} enables us to transform infinite sums on the left into finite sums.

Let us introduce some notation. Let us number the values of the radius $r\in \I_0$ as follows:
   \begin{equation}\label{eq:rn}
  \bar r=r_1>r_2>r_3>\dots .
\end{equation}
Note that this numbering is slightly different from the numbering used earlier. Number the radii $t\in\hat \I$ as before:
  \begin{equation}\label{eq:tn}
  \{1\}=t_0<t_1<t_2<\dots .
  \end{equation}
Using this notation, the bijections \eqref{eq:rtr} and the operations $\tau_+,\tau_-$ take the following form:
  \begin{equation}
\left.
\begin{array}{l}
   \tilde r_l=t_{l-1}, \;l\ge 1,\qquad\quad t_k^\natural=r_{k+1}, \; k\ge 0\\
  \tau_-(r_l)=r_{l+1},\; l\ge 1,\quad \tau_+(r_l)=r_{l-1},\;l\ge 2\\
   \tau_-(t_k)=t_{k-1},\, k\ge 1,\quad \tau_+(t_k)=t_{k+1}, k\ge 0.
\end{array}\;\right\}
\label{eq:nn}
\end{equation}
In particular, this implies that
  \begin{equation}
\left.\begin{array}{ll}
  \tau_+(\tilde r_l)=\tau_+(t_{l-1})=t_l,&l\ge 1\\
  \tau_+(t_k^\natural)=\tau_+(r_{k+1})=r_k,&k\ge 1.
\end{array}\right\}
      \label{eq:kl}
  \end{equation}
Let 
  \begin{equation}
  \left.\begin{array}{l}
   \gls{alpha0}=\chi[B(\bar r);x]=1 \quad\text{for all } x\in X\\
   \gls{alphali}=\chi[\Phi_i(r_l);x],\;i=1,\dots,n_l-1,\; n_l=n(r_l),\;l\ge 1.
  \end{array}\right\}\label{eq:aal}
  \end{equation}
 Let $\gls{cA}$ be the set of all functions of the form
   \begin{equation}\label{eq:fA}
  f(x)=c_0\alpha_0(x)+\sum_{l\in\naturals}\sum_{i=1}^{n_l-1}c_{l,i}\alpha_{l,i}(x), \quad x\in X
     \end{equation}
where only finitely many coefficients $c_0,c_{l,i}$ are nonzero. Clearly, $\cA$ is a countably dimensional complex linear space.
Note that a function $f\in\cA$ can be written in the form \eqref{eq:fx}, where the coefficients $c_{r,i}$ in \eqref{eq:fx}
are independent of $r$ and $i$ as long as $r\le a$ for some radius $a=a(f)\in\I_0.$

Dually, define
   \begin{equation}
   \left.\begin{array}{l}
    \beta_0(\xi)=\chi[\hat B(0);\xi], \quad \hat B(0)=\{1\}\\
     \beta_{k,j}(\xi)=\chi[\hat \Phi_j(t_k);\xi],\;j=1,\dots,\hat n_k-1,\;\hat n_k=\hat n(t_k), k\ge 1
   \end{array}\right\}
  \label{eq:bbk}
   \end{equation}
and denote by $\widehat \cA$ a countably dimensional complex vector space formed by all functions of the form
  \begin{equation}\label{eq:gA}
  g(\xi)=c_0'\beta_0(\xi)+\sum_{k\in\naturals}\sum_{j=1}^{\hat n_k-1} c_{k,j}'\beta_{k,j}(\xi), \quad \xi\in\hat X,
  \end{equation}
where only finitely many coefficients $c_0',c_{k,j}'$ are nonzero. 
 
\subsubsection{Function algebras $\cA$ and $\widehat \cA$} The goal of this section is to show that the sets $\cA$ and $\widehat{\cA}$ form algebras
(Schur rings) that are closed with respect to multiplication and convolution of functions. It is exactly these algebras that
should be considered as adjacency algebras of the translation schemes $\cR$ and $\hat \cR$ constructed in Sect.~\ref{sect:nonmetric}, Theorem \ref{thm:tnm}.
Later in Sect.~\ref{sect:subA} we also identify subalgebras of these algebras that form adjacency algebras of the 
dual pairs of metric schemes constructed in Section~\ref{sect:vil}.

The products of the basis functions satisfy the following obvious relations:
   \begin{equation}\label{eq:pa}
    \begin{array}{l}
    \alpha_0(x)\alpha_0(x)=\alpha_0(x), \;\;\alpha_0(x)\alpha_{l,i}(x)=\alpha_{l,i}(x)\\
    \alpha_{l_1,i_1}(x)\alpha_{l_2,i_2}(x)=\delta_{l_1,l_2}\delta_{i_1,i_2}\alpha_{l_1,i_1}(x)
    \end{array}
    \end{equation}
    and
  \begin{equation}\label{eq:pb01}
    \begin{array}{l}
    \beta_0(\xi)\beta_0(\xi)=\beta_0(\xi), \;\;\beta_0(\xi)\beta_{k,j}(\xi)=\beta_{k,j}(\xi)\\
    \beta_{k_1,j_1}(\xi)\beta_{k_2,j_2}(\xi)=\delta_{k_1,j_2}\delta_{k_1,j_2}\beta_{k_1,j_1}(\xi).
    \end{array}
    \end{equation}  

\begin{proposition} The Fourier transforms of the basis functions \eqref{eq:aal} and \eqref{eq:bbk} have the following form:
    \begin{equation}\label{eq:Fal}
   \tilde\alpha_{l,i}(\xi)=\pi_{l,i}(0)\beta_0(\xi)+\sum_{k\in\naturals}\sum_{j=1}^{\hat n_k-1}\pi_{l,i}(k,j)\beta_{k,j}(\xi),\;l\ge 1
    \end{equation}
where we use the notation
   \begin{equation}
   \left.
   \begin{array}{l}
   \pi_0(0)=1,\;\;\pi_0(k,j)=0, \;k\ge 1,\\
   \pi_{l,i}(0)=\mu(B(r_{l+1})), \;l\ge 1,\\
   \pi_{l,i}(k,j)=\mu(B(r_{l+1})) \quad\text{if }\;1\le k<l, \;l\ge 1,\\
   \pi_{l,i}(l,j)=\mu(B(r_{l+1}))\omega_{ij}(r_l), \quad l\ge 1\\
   \pi_{l,i}(k,j)=0 \quad\text{if }\;k>l,\;l\ge 1
    \end{array}\right\}\label{eq:pi}
  \end{equation}
(see also \eqref{eq:R});
  \begin{equation}\label{eq:Fbe}
   \beta^\natural_{k,j}(x)=\kappa_{k,j}(0)\alpha_0(x)+\sum_{l\in\naturals}\sum_{i=1}^{n_l-1}
                     \kappa_{k,j}(l,i)\alpha_{l,i}(x),
  \end{equation}
where we use the notation
   \begin{equation}
   \left.\begin{array}{l}
   \kappa_0(0)=1,\;\; \kappa_0(l,i)=0, \;l\ge 1\\
    \kappa_{k,j}(0)=\hat\mu(\hat B(t_{k-1})),\;k\ge 1\\
   \kappa_{k,j}(l,i)=-\hat\mu(\hat B(t_{k-1})) \quad\text{if }\;1\le l<k, \;k\ge 1\\
   \kappa_{k,j}(k,i)=\hat\mu(\hat B(t_{k-1}))(\hat\omega_{ji}(t_k)-1), \;k\ge 1\\
    \kappa_{k,j}(l,i)=0, \;l>k,\;k\ge 1
  \end{array}\right\}\label{eq:kappa}
   \end{equation}
(see also \eqref{eq:^R}).
Finally,
   \begin{equation}\label{eq:a0b0}
   \tilde\alpha_0(\xi)=\beta_0(\xi),\quad \beta_0^\natural(x)=\alpha_0(x).
  \end{equation}
 \end{proposition}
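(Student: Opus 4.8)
The plan is to compute the Fourier transforms of the basis functions $\alpha_{l,i}$ and $\beta_{k,j}$ directly, feeding the explicit formulas of Lemma~\ref{lemma:cfs} through the renumbering conventions \eqref{eq:rn}--\eqref{eq:kl}. Indeed, $\alpha_{l,i}(x)=\chi[\Phi_i(r_l);x]$ in the notation of \eqref{eq:Phi}, so \eqref{eq:cP}--\eqref{eq:pri} already give
$$
\tilde\alpha_{l,i}(\xi)=\tilde\chi[\Phi_i(r_l);\xi]=\sum_{t\in\hat\R}\sum_{j=1}^{\hat n(t)-1} p_{r_l,i}(t,j)\chi[\hat\Phi_j(t);\xi],
$$
and by \eqref{eq:pri} the coefficient $p_{r_l,i}(t,j)$ vanishes unless $t\le\tau_+(\tilde r_l)$, equals $\mu(B(\tau_-(r_l)))$ for $t\le\tilde r_l$, and equals $\mu(B(\tau_-(r_l)))\omega_{ij}(r_l)$ for $t=\tau_+(\tilde r_l)$. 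First I would substitute the renumbering $\tilde r_l=t_{l-1}$, $\tau_-(r_l)=r_{l+1}$, and $\tau_+(\tilde r_l)=t_l$ from \eqref{eq:nn}--\eqref{eq:kl}; this immediately turns the conditions ``$t\le\tilde r_l$'', ``$t=\tau_+(\tilde r_l)$'', ``$t>\tau_+(\tilde r_l)$'' into ``$t_k$ with $k\le l-1$, i.e.\ $k<l$'', ``$t_k$ with $k=l$'', ``$k>l$'' respectively, and $\mu(B(\tau_-(r_l)))$ into $\mu(B(r_{l+1}))$. Matching against the definition $\beta_{k,j}(\xi)=\chi[\hat\Phi_j(t_k);\xi]$ from \eqref{eq:bbk} then yields \eqref{eq:Fal} with the coefficients $\pi_{l,i}(k,j)$ exactly as in \eqref{eq:pi}; the $k=0$ line ($\pi_{l,i}(0)=\mu(B(r_{l+1}))$) comes from the $t=\hat B(0)=\{1\}$ block, which falls in the range $k<l$ for every $l\ge 1$.

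The computation of $\beta_{k,j}^\natural$ is entirely dual: applying \eqref{eq:^cP}--\eqref{eq:qtj} with $t=t_k$ gives
$$
\beta_{k,j}^\natural(x)=\chi^\natural[\hat\Phi_j(t_k);x]=\sum_{r\in\R}\sum_{i=1}^{n(r)-1} q_{t_k,j}(r,i)\chi[\Phi_i(r);x],
$$
and by \eqref{eq:qtj} the coefficient is $0$ for $r>\tau_+(t_k^\natural)$, equals $\hat\mu(\hat B(\tau_-(t_k)))\hat\omega_{ji}(t_k)$ for $r=\tau_+(t_k^\natural)$, and equals $\hat\mu(\hat B(\tau_-(t_k)))$ for $r\le t_k^\natural$. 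Here I must be a little careful: the basis functions $\alpha_{l,i}$ are indexed by $l\ge1$ with the \emph{decreasing} numbering $r_1>r_2>\cdots$, so ``$r\le t_k^\natural=r_{k+1}$'' means ``$r_l$ with $l\ge k+1$, i.e.\ $l>k$'' while ``$r=\tau_+(t_k^\natural)=r_k$'' means ``$l=k$'' and ``$r>\tau_+(t_k^\natural)$'' means ``$l<k$'' --- the direction of the inequality on $l$ is reversed relative to the $\pi$ case. Substituting $\tau_-(t_k)=t_{k-1}$ turns $\hat\mu(\hat B(\tau_-(t_k)))$ into $\hat\mu(\hat B(t_{k-1}))$, and the signs in \eqref{eq:kappa} arise because the expansion of $\chi[\hat\Phi_0(\tau_+(t_k^\natural))]$ into the finer partition (analogous to the step after \eqref{eq:eg} in Lemma~\ref{lemma:cfs}) produces $\chi[\hat B(\cdot)]=\sum_{l>k}\cdots$ with coefficient $+\hat\mu(\hat B(t_{k-1}))$, but the $\alpha_0$-expansion (the whole group) contributes $\hat\mu(\hat B(t_{k-1}))$ with a compensating $-\hat\mu(\hat B(t_{k-1}))$ on the blocks $1\le l<k$; the diagonal $l=k$ block then carries $\hat\mu(\hat B(t_{k-1}))(\hat\omega_{ji}(t_k)-1)$. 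This reproduces \eqref{eq:Fbe}--\eqref{eq:kappa}. Finally, \eqref{eq:a0b0} is the special case of Lemma~\ref{lemma:cfs} (or already of \eqref{eq:f1}--\eqref{eq:f2}) with $D=X$, $D^\bot=\{1\}$: $\tilde\chi[X;\xi]=\mu(X)\chi[\{1\};\xi]=\chi[\hat B(0);\xi]=\beta_0(\xi)$ since $\mu(X)=1$, and dually $\beta_0^\natural(x)=\hat\mu(\{1\})\chi[X;x]=\alpha_0(x)$.

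Really the only substantive point is bookkeeping: making sure the two different numbering conventions --- $r_l$ decreasing, $t_k$ increasing --- are threaded consistently through the conditions of \eqref{eq:pri} and \eqref{eq:qtj}, so that the ranges ``$k<l$ / $k=l$ / $k>l$'' in \eqref{eq:pi} and ``$l<k$ / $l=k$ / $l>k$'' in \eqref{eq:kappa} come out with the inequalities in the right direction and the sign in the $\kappa$-diagonal term is correctly $(\hat\omega_{ji}(t_k)-1)$ rather than $\hat\omega_{ji}(t_k)$. I expect that rewriting ``$\hat B(\tau_+(t^\natural))$'' in terms of the numbered balls via \eqref{eq:kl} and then re-expanding the coarse ball over the fine partition \eqref{eq:^nmp}, exactly as in \eqref{eq:eg}, is the step most likely to hide a sign error, so I would carry it out explicitly for $\beta_{k,j}^\natural$ and merely assert the analogous (sign-free, since $\chi[B(r_{l+1})]$ is already a positive sum of the $\alpha$'s) computation for $\tilde\alpha_{l,i}$. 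With that in hand the Proposition follows; note that \eqref{eq:pa}--\eqref{eq:pb01} are not needed here and are recorded for the subsequent closure-under-convolution argument.
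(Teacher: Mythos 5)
Your proposal is correct and follows essentially the same route as the paper: read off the coefficients from Lemma~\ref{lemma:cfs} (Eqs.~\eqref{eq:cP}--\eqref{eq:qtj}), translate the radius conditions through the numbering conventions \eqref{eq:rn}--\eqref{eq:kl}, and use \eqref{eq:1-} to fold the infinite tail $\sum_{l>k}\sum_i\alpha_{l,i}$ into $\alpha_0$ minus a finite head, which is exactly where the minus signs and the diagonal term $\hat\omega_{ji}(t_k)-1$ in \eqref{eq:kappa} come from. Your bookkeeping of the opposite orientations of the two numberings (so that the $\pi$'s are supported on $k\le l$ and the $\kappa$'s on $l\le k$) and your direct verification of \eqref{eq:a0b0} via $\mu(X)=\hat\mu(\{1\})=1$ are both accurate.
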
  
\begin{proof}
Expression \eqref{eq:Fal} is readily obtained from \eqref{eq:aal} on using \eqref{eq:ft} and \eqref{eq:cP}-\eqref{eq:pri}.
Turning to \eqref{eq:Fbe}, let us use \eqref{eq:^cP} and \eqref{eq:qtj} to write
  \begin{align*} 
   \beta^\natural_{k,j}(x)&=\sum_{l\in\naturals}\sum_{i=1}^{n_l-1} q_{t_k,j}(r_l,j)\alpha_{l,i}(x)\\
 & =\hat\mu(\hat B(t_{k-1}))\biggl[\sum_{l>k}\sum_{i=1}^{n_l-1}\alpha_{l,i}(x)+\sum_{i=1}^{n_k-1}
    \hat\omega_{ji}(t_k)\alpha_{k,i}(x)\biggr],\quad k\ge 1.
  \end{align*}
Now use \eqref{eq:1-} to transform the infinite series into a finite sum:
   $$
   \sum_{l>k}\sum_{i=1}^{n_l-1}\alpha_{l,i}(x)=1-\sum_{l=1}^k\sum_{i=1}^{n_l-1}\alpha_{l,i}(x).
   $$
Substituting and using the notation in \eqref{eq:kappa}, we obtain \eqref{eq:Fbe}.
\end{proof}

It is important to observe that the coefficients in \eqref{eq:pi} and \eqref{eq:kappa} form {\em triangular arrays:}
  \begin{equation}
  \begin{array}{ll@{\qquad}ll}
   \pi_0(k,j)=0 &\text{if }k\ge 1; &\kappa_0(l,i)=0&\text{if }l\ge 1,\\
   \pi_{l,i}(k,j)=0 &\text{if }k>l,\;l\ge 1;&\kappa_{k,j}(l,i)=0 &\text{if }k<l,\;l\ge 1.
  \end{array}
  \label{eq:triangular}
  \end{equation}
Our choice of the basis functions in \eqref{eq:aal} is determined precisely by these properties. Namely, we have shown that
the Fourier transforms interchange functions of the form \eqref{eq:fA} and functions of the form \eqref{eq:gA}.
Recall that only a finite number of coefficients is nonzero in the definitions \eqref{eq:fA} and \eqref{eq:gA}. 
We conclude as follows.
\begin{proposition} 
   \begin{equation}
  \cF^\sim \cA=\widehat \cA, \quad \cF^\natural \widehat\cA=\cA.
   \label{eq:FF}
  \end{equation}
The function spaces $\cA$ and $\widehat \cA$ are commutative algebras closed with respect to multiplication and convolution.
\end{proposition}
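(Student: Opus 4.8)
The plan is to verify the two assertions in the Proposition separately: first the Fourier‑duality identity \eqref{eq:FF}, then the closure of $\cA$ (and by symmetry $\widehat\cA$) under the two products.

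For \eqref{eq:FF} I would argue directly from the preceding Proposition. A function $f\in\cA$ is a \emph{finite} linear combination $f=c_0\alpha_0+\sum_{l,i}c_{l,i}\alpha_{l,i}$, so by linearity and continuity of $\cF^\sim$ on $L_2$ its transform is $c_0\tilde\alpha_0+\sum_{l,i}c_{l,i}\tilde\alpha_{l,i}$. Substituting \eqref{eq:Fal} and \eqref{eq:a0b0} expresses $\tilde f$ as a combination of the $\beta_0,\beta_{k,j}$; the crucial point is that this combination is again \emph{finite}, which is exactly the content of the triangular‑array property \eqref{eq:triangular}: for each fixed $l$ the inner coefficients $\pi_{l,i}(k,j)$ vanish once $k>l$, so only finitely many $\beta_{k,j}$ receive a nonzero contribution from a finite sum over $l$. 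Hence $\cF^\sim\cA\subseteq\widehat\cA$. The reverse inclusion $\cF^\natural\widehat\cA\subseteq\cA$ is obtained identically from \eqref{eq:Fbe}, \eqref{eq:a0b0} and the second column of \eqref{eq:triangular}. Since $\cF^\sim$ and $\cF^\natural$ are mutually inverse isometries (Section~\ref{sect:Abelian}), the two inclusions give the stated equalities.

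For the algebra structure: closure of $\cA$ under pointwise multiplication is immediate from \eqref{eq:pa}, since the product of any two basis functions is again a basis function (or zero), and a product of two finite sums is a finite sum; the unit is $\alpha_0$. For convolution, I would combine the multiplication formulas \eqref{eq:pb01} on $\widehat\cA$ with the convolution theorem \eqref{eq:Fc}: if $f_1,f_2\in\cA$ then $(f_1\ast f_2)^{\widetilde{\ }}=\tilde f_1\tilde f_2$, and by \eqref{eq:FF} the factors lie in $\widehat\cA$, which is closed under pointwise product by \eqref{eq:pb01}; applying $\cF^\natural$ and \eqref{eq:FF} again returns an element of $\cA$. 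One must note that the convolution $f_1\ast f_2$ is well defined as an $L_2$ function because each basis function $\alpha_{l,i}$, $\alpha_0$ has finite support (recall $\mu(X)=1$) and hence lies in $L_1\cap L_2$, so Young's inequality \eqref{eq:Young} applies; this is where finiteness of $\mu(X)$ enters, contrasting with the locally compact case. Commutativity of both products is inherited: pointwise multiplication is obviously commutative, and convolution commutativity follows either from commutativity of pointwise multiplication on $\widehat\cA$ via \eqref{eq:Fc}, or directly from Theorem~\ref{thm:tnm} (equivalently Lemma~\ref{lemma:ai}(v)).

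The main obstacle — and really the only nontrivial point — is bookkeeping the finiteness claim: one has to be sure that transforming the infinite tail sums that appear in $\beta^\natural_{k,j}$ (and symmetrically in the convolution of two $\chi[\Phi_i(r);\cdot]$'s, cf.\ the discussion after \eqref{eq:fx}) via the identity \eqref{eq:1-} genuinely produces finite combinations of the chosen basis functions, with no hidden infinitely‑supported remainder. This is precisely what \eqref{eq:triangular} guarantees, and it is why the basis $\{\alpha_0,\alpha_{l,i}\}$ — rather than the naive $\{\chi[\Phi_i(r);\cdot]\}$ — is the right one; so the proof amounts to assembling the already‑established formulas \eqref{eq:pa}, \eqref{eq:pb01}, \eqref{eq:Fal}, \eqref{eq:Fbe}, \eqref{eq:a0b0}, \eqref{eq:triangular}, \eqref{eq:FF} and \eqref{eq:Fc} in the order above, with the $L_1\cap L_2$ remark inserted to license the convolutions.
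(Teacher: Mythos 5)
Your proof is correct and follows essentially the same route as the paper: the identity \eqref{eq:FF} is read off from \eqref{eq:Fal}, \eqref{eq:Fbe}, \eqref{eq:a0b0} together with the triangularity \eqref{eq:triangular} (which is exactly the point the paper emphasizes just before stating the Proposition), multiplicative closure comes from \eqref{eq:pa}--\eqref{eq:pb01}, and convolutive closure is transported through the Fourier transform. Your added remarks --- that the two inclusions upgrade to equalities because $\cF^\sim$ and $\cF^\natural$ are mutually inverse, and that the basis functions lie in $L_1\cap L_2$ so the convolution theorem applies --- only make explicit what the paper leaves implicit.
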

\begin{proof} The closedness with respect to multiplication is obvious, and the closedness under convolution follows
from \eqref{eq:conv1}-\eqref{eq:conv2}.
\end{proof}

In the next proposition we explicitly compute convolutions of functions in $\cA$ and $\widehat\cA.$
\begin{proposition}
We have
  \begin{equation}
  \alpha_{l_1,i_1}\ast\alpha_{l_2,i_2}(x)=\pi_{(l_1,i_1),(l_2,i_2)}^{(0)}\alpha_0(x)+\sum_{l\in\naturals}\sum_{i=1}^{n_l-1}
  \pi_{(l_1,i_1),(l_2,i_2)}^{(l,i)}\alpha_{l,i}(x),
  \label{eq:ca0}
  \end{equation}
where
    \begin{align}
   \pi_{(l_1.i_1),(l_2,i_2)}^{(0)}&=\pi_{l_1,i_1}(0)\pi_{l_2,i_2}(0)
          +\sum_{k=1}^{\min\{l_1,l_2\}}\sum_{j=1}^{\hat n_k-1}\pi_{l_1,i_1}(k,j)\pi_{l_2,i_2}(k,j)\kappa_{k,j}(0)
                   \label{eq:pii1}\\
   \pi_{(l_1,i_1),(l_2,i_2)}^{(l,i)}&=\sum_{k=l}^{\min\{l_1,l_2\}}\sum_{j=1}^{\hat n_k-1}\pi_{l_1,i_1}(k,j)\pi_{l_2,i_2}(k,j)\kappa_{k,j}(l,i);\label{eq:pii2}
  \end{align}
  \begin{equation}\label{eq:cb0}
  \beta_{k_1,j_1}\ast\beta_{k_2,j_2}(\xi)=\kappa_{(k_1,j_1),(k_2,j_2)}^{(0)}\delta_0(\xi)+\sum_{k\in\naturals}
    \sum_{j=1}^{\hat n_k-1}\kappa_{(k_1,j_1),(k_2,j_2)}^{(k,j)}\beta_{k,j}(\xi)
  \end{equation}
where
   \begin{align}
    \kappa_{(k_1,j_1),(k_2,j_2)}^{(0)}&:=\kappa_{k_1,j_1}(0)\kappa_{k_2,j_2}(0)&&\nonumber\\
                &\hspace*{-.5in}+\sum_{l=1}^{\max\{k_1,k_2\}}\sum_{i=1}^{n_l-1}\Big\{\kappa_{k_1,j_1}(l,i)+\kappa_{k_2,j_2}(l,i)+
           \kappa_{k_1,j_1}(l,i)\kappa_{k_2,j_2}(l,i)   \Big\}\pi_{l,i}(0) \label{eq:kp1}\\
    \kappa_{(k_1,j_1),(k_2,j_2)}^{(k,j)}&:=\sum_{l=k}^{\max\{k_1,k_2\}}\sum_{i=1}^{n_l-1}
     \Big\{\kappa_{k_1,j_1}(l,i)+\kappa_{k_2,j_2}(l,i)+
           \kappa_{k_1,j_1}(l,i)\kappa_{k_2,j_2}(l,i)   \Big\}\pi_{l,i}(k,j).\label{eq:kp2}
    \end{align}
Finally,
   \begin{align}
    \alpha_0\ast\alpha_0(x)&=\pi_{(0),(0)}^{(0)}\alpha_0(x),\quad \alpha_0\ast\alpha_{l,i}(x)=\pi_{(0),(l,i)}^{(0)}\alpha_0(x)
    \label{eq:a0a0}\\
    \beta_0\ast\beta_0(\xi)&=\kappa_{(0),(0)}^{(0)}\beta_0(\xi),\quad \beta_0\ast\beta_{k,j}(\xi)=
\kappa_{(0),(k,j)}^{(0)}\beta_{k,j}(\xi)\label{eq:b0b0}
   \end{align}
where
  \begin{equation}\label{eq:pi0}
   \left.\begin{array}{c}
   \pi_{(0),(0)}^{(0)}=1\\
   \pi_{(0),(l,i)}^{(0)}=\pi_{(l,i),(0)}^{(0)}=\mu(B(r_{l+1})),\;l\ge 1\\
  \kappa_{(0),(k,j)}^{(0)}=\kappa_{(k,j),(0)}^{(0)}=\hat\mu(\hat B(t_{k-1})),\;k\ge 1
     \end{array}\right\}
  \end{equation}
For completeness, we also put
   \begin{equation}\label{eq:kp0}
   \left.\begin{array}{c}
    \kappa_{(0),(0)}^{(0)}=1\\
  \pi_{(0),(l_1,i_1)}^{(l,i)}=\pi_{(l_1,i_1),(0)}^{(l,i)}=0,\;l\ge 1,l_1\ge 1\\
   \kappa_{(0),(k_1,j_1)}^{(k,j)}=\kappa_{(k_1,j_1),(0)}^{(k,j)}=0, \;k\ge1,k_1\ge1
  \end{array}\right\}
   \end{equation}
\end{proposition}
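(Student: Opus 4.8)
The plan is to push everything through the convolution theorem \eqref{eq:Fc}, which here reads $f_1\ast f_2=\cF^\natural\big((\cF^\sim f_1)(\cF^\sim f_2)\big)$ for functions on $X$ and, dually, $g_1\ast g_2=\cF^\sim\big((\cF^\natural g_1)(\cF^\natural g_2)\big)$ for functions on $\hat X$, combined with the Fourier formulas \eqref{eq:Fal}, \eqref{eq:Fbe}, \eqref{eq:a0b0} already established. What makes this work cleanly is that, by the triangular arrays \eqref{eq:triangular}, every Fourier image of a basis function is a \emph{finite} linear combination of dual basis functions; hence all products and re-expansions below are finite sums and no convergence issue arises — which, incidentally, re-proves that $\cA$ and $\widehat\cA$ are closed under convolution.

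First I would compute $\alpha_{l_1,i_1}\ast\alpha_{l_2,i_2}$. Substituting \eqref{eq:Fal} into both factors and expanding the product, one uses that $\beta_0$ and the $\beta_{k,j}$ are indicators of pairwise disjoint blocks partitioning $\hat X$ (relations \eqref{eq:pb01}): only diagonal terms survive, so $(\cF^\sim\alpha_{l_1,i_1})(\cF^\sim\alpha_{l_2,i_2})=\pi_{l_1,i_1}(0)\pi_{l_2,i_2}(0)\,\beta_0+\sum_{k,j}\pi_{l_1,i_1}(k,j)\pi_{l_2,i_2}(k,j)\,\beta_{k,j}$, a finite sum since $\pi_{l,i}(k,j)=0$ for $k>l$. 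Applying $\cF^\natural$ term by term — $\beta_0\mapsto\alpha_0$ by \eqref{eq:a0b0}, $\beta_{k,j}\mapsto\kappa_{k,j}(0)\alpha_0+\sum_{l,i}\kappa_{k,j}(l,i)\alpha_{l,i}$ by \eqref{eq:Fbe} — and collecting the coefficients of $\alpha_0$ and of $\alpha_{l,i}$ yields \eqref{eq:ca0} with \eqref{eq:pii1}–\eqref{eq:pii2}; the summation ranges ($1\le k\le\min\{l_1,l_2\}$, resp.\ $l\le k\le\min\{l_1,l_2\}$) are precisely the nonvanishing ranges forced by \eqref{eq:triangular}, namely $\pi_{l_a,i_a}(k,j)=0$ for $k>l_a$ and $\kappa_{k,j}(l,i)=0$ for $k<l$.

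The convolution $\beta_{k_1,j_1}\ast\beta_{k_2,j_2}$ is handled dually, with one genuine difference to watch: the distinguished basis function on $X$ is $\alpha_0\equiv 1$, the multiplicative identity rather than the indicator of a proper block, so \eqref{eq:pa} gives $\alpha_0\alpha_{l,i}=\alpha_{l,i}$. Expanding $(\cF^\natural\beta_{k_1,j_1})(\cF^\natural\beta_{k_2,j_2})$ via \eqref{eq:Fbe} and \eqref{eq:pa}, the coefficient of $\alpha_{l,i}$ therefore acquires, besides $\kappa_{k_1,j_1}(l,i)\kappa_{k_2,j_2}(l,i)$, the two cross terms $\kappa_{k_1,j_1}(l,i)+\kappa_{k_2,j_2}(l,i)$ coming from pairing $\alpha_0$ against $\alpha_{l,i}$, while the coefficient of $\alpha_0$ is $\kappa_{k_1,j_1}(0)\kappa_{k_2,j_2}(0)$. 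Applying $\cF^\sim$ using \eqref{eq:a0b0} and \eqref{eq:Fal} ($\alpha_0\mapsto\beta_0$, $\alpha_{l,i}\mapsto\pi_{l,i}(0)\beta_0+\sum_{k,j}\pi_{l,i}(k,j)\beta_{k,j}$) and regrouping gives \eqref{eq:cb0} with \eqref{eq:kp1}–\eqref{eq:kp2}; the ranges $l\le\max\{k_1,k_2\}$ and $l\ge k$ come, as before, from $\kappa_{k_a,j_a}(l,i)=0$ for $l>k_a$ and $\pi_{l,i}(k,j)=0$ for $k>l$. The boundary identities \eqref{eq:a0a0}, \eqref{eq:b0b0} together with \eqref{eq:pi0}, \eqref{eq:kp0} I would check directly from \eqref{eq:conv1}, \eqref{eq:conv2}, using $\alpha_0\equiv 1$, $\beta_0=\chi[\{1\}]$, and $\mu(\Phi_i(r_l))=\mu(B(\tau_-(r_l)))$ together with \eqref{eq:nn}; e.g.\ $\alpha_0\ast\alpha_0=\mu(X)\alpha_0=\alpha_0$ and $\alpha_0\ast\alpha_{l,i}=\mu(B(r_{l+1}))\,\alpha_0$ pointwise on $X$, and symmetrically on $\hat X$.

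The work is routine but bulky, and the only real obstacle is the index bookkeeping rather than any analysis: one has to carry the correspondences $r_l\leftrightarrow\tilde r_l=t_{l-1}\leftrightarrow\tau_+(\tilde r_l)=t_l$ and their duals through \eqref{eq:nn}–\eqref{eq:kl}, and verify at each of the two passes — into the dual basis and back — that the triangular arrays \eqref{eq:triangular} have collapsed the nominally infinite sums to finite ones and produced exactly the ranges of $k$ and $l$ asserted in \eqref{eq:pii1}–\eqref{eq:kp2}. As a consistency check one can also confirm that the resulting structure constants $\pi_{(l_1,i_1),(l_2,i_2)}^{(l,i)}$ and $\kappa_{(k_1,j_1),(k_2,j_2)}^{(k,j)}$ agree with the intersection numbers computed in Theorem \ref{thm:tnm}. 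No input beyond \eqref{eq:Fc}, the multiplication tables \eqref{eq:pa}, \eqref{eq:pb01}, and the Fourier formulas of the preceding proposition is needed.
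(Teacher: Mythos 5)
Your proposal is correct and follows essentially the same route as the paper: pass to the dual basis via \eqref{eq:Fal}/\eqref{eq:Fbe}, multiply pointwise using the disjointness of the $\beta$-blocks (resp.\ the fact that $\alpha_0\equiv 1$, which produces the two cross terms in \eqref{eq:kp1}--\eqref{eq:kp2}), transform back, and read off the finite summation ranges from the triangular arrays \eqref{eq:triangular}; the boundary cases are checked directly. Nothing is missing.
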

\begin{proof} Let us compute the convolution $\alpha_{l_1,i_1}\!\ast\alpha_{l_2,i_2}.$ Using \eqref{eq:Fal}, \eqref{eq:pb01},
we obtain 
   $$
   \tilde\alpha_{l_1,i_1}(\xi)\tilde\alpha_2(\xi)=\pi_{l_1,i_1}(0)\pi_{l_2,i_2}(0)\beta_0(\xi)+
      \sum_{k\in\naturals}\sum_{j=1}^{\hat n_k-1}\pi_{l_1,i_1}(k,j)\pi_{l_2,i_2}(k,j)\beta_{k,j}(\xi).
   $$
Computing the Fourier transforms on both sides of this equality and using \eqref{eq:Fbe} and \eqref{eq:a0b0}, we obtain
   \begin{multline*}
   \alpha_{l_1,i_1}\ast\alpha_{l_2,i_2}(x)=\pi_{l_1,i_1}(0)\pi_{l_2,i_2}(0)\alpha_0(x)+\sum_{k\in\naturals}
    \sum_{j=1}^{\hat n_k-1}\pi_{l_1,k_1}(k,j)\pi_{l_2,i_2}(k,j)\\
    \times\Big\{\kappa_{k,j}(0)\alpha_(x)+\sum_{l\in\naturals}\sum_{i=1}^{n_l-1}\kappa_{k,j}(l,i)\alpha_{l,i}(x)\Big\}
    \end{multline*}
Note that the sums in this expression are finite because of the conditions \eqref{eq:triangular}. 
Interchanging the order of summation, simplifying, and using notation \eqref{eq:pii1}-\eqref{eq:pii2}, we obtain \eqref{eq:ca0}. The summation range in \eqref{eq:pii1}, \eqref{eq:pii2} follows upon invoking again \eqref{eq:triangular}.

To prove \eqref{eq:cb0}, start with \eqref{eq:Fbe} and compute
  \begin{multline}
  \beta^\natural_{k_1,j_1}\beta^\natural_{k_2,j_2}=\kappa_{k_1,j_1}(0)\kappa_{k_2,j_2}(0)\alpha_0(x)\\
   +\sum_{l\in\naturals} \sum_{i=1}^{n_l-1} \Big\{\kappa_{k_1,j_1}(l,i)+\kappa_{k_2,j_2}(l,i)+
           \kappa_{k_1,j_1}(l,i)\kappa_{k_2,j_2}(l,i)   \Big\}\alpha_{l,i}(x).
  \end{multline}
Computing the Fourier transform on both sides of this equality and using \eqref{eq:Fal}, \eqref{eq:a0b0}, we obtain
  \begin{multline*}
   \beta_{k_1,j_1}\ast\beta_{k_2,j_2}(\xi)=\kappa_{k_1,j_1}(0)\kappa_{k_2,j_2}(0)\beta_0(\xi)\\
           +\sum_{l\in\naturals} \sum_{i=1}^{n_l-1} \Big\{\kappa_{k_1,j_1}(l,i)+\kappa_{k_2,j_2}(l,i)+
           \kappa_{k_1,j_1}(l,i)\kappa_{k_2,j_2}(l,i)   \Big\}\\
                \times\Big\{\pi_{l,i}(0)\beta_0(\xi)+\sum_{k\in\naturals}\sum_{j=1}^{hat n_k-1}\pi_{l,i}(k,j)\beta_{k,j}(\xi)\Big\}.
   \end{multline*}
The sums in this expression are again finite because of \eqref{eq:triangle}, so we can interchange the order of summation. 
Simplifying, we obtain \eqref{eq:cb0}.

Finally, expressions \eqref{eq:a0a0}, \eqref{eq:b0b0}, \eqref{eq:pi0}, \eqref{eq:kp0} are verified directly.
\end{proof}

Note that the coefficients \eqref{eq:pii2} also form a triangular array:
   $$
  \pi_{(l_1,i_1),(l_2,i_2)}^{(l,i)}=0 \quad\text{if } l>\min\{l_1,l_2\}.
   $$
Using \eqref{eq:pi} and \eqref{eq:triangular}, it is possible to express them
via the measures $\mu(B(r))$ and $\hat\mu(\hat B(r))$ and the characters $\omega_{ij}(r),\hat\omega_{ji}(r)$, 
but the resulting expressions are too cumbersome to write out explicitly.

Expressions \eqref{eq:pii1}, \eqref{eq:pii2}, and \eqref{eq:pi0} give a complete set of coefficients for computing 
the convolution of any two functions from the set $\{\alpha_0,\alpha_{l,i},l\in\naturals\}$ and therefore of any two
functions in the algebra $\cA$. The same is true with respect to the expressions \eqref{eq:kp1}, \eqref{eq:kp2}, and
\eqref{eq:kp0} and the functions from the algebra $\widehat\cA.$

%
%
\subsubsection{Adjacency algebras} Here we show that the algebras $\cA$ and $\widehat \cA$ introduced above can be
viewed as adjacency algebras \index{adjacency algebra} of association schemes constructed in Theorem \ref{thm:tnm}.
Let $\cA_m,m\in\naturals_0$ be the set of all functions of the form \eqref{eq:fA} such that $c_{l,i}=0$ for $l>m.$
Clearly $\cA_m$ is a vector space of dimension
    $$
   \dim \cA_m=1+\sum_{l=1}^m(n(r_l)-1), \quad m\ge 1,
   $$
where $n(r)$ is defined in \eqref{eq:si}. For $m=0,$ $\cA_0=\text{Const}_X$ is the one-dimensional space of constant 
functions on $X.$ It is easy to see that
  \begin{equation}\label{eq:cA<}
  \cA_0\subset \cA_1\subset\cA_2\subset\dots\subset\cA,
  \end{equation}
and
   \begin{equation}\label{eq:Aun}
  \cA=\bigcup_{m\in\naturals_0}\cA_m.
  \end{equation}
Analogously, denote by $\widehat\cA_m, m\in\naturals_0$ the set of all functions of the form    \eqref{eq:gA}
such that $c'_{k,j}=0$ for $k>m.$ Clearly, $\widehat\cA_m$ is a vector space of dimension
  $$
  \dim \widehat\cA_m=1+\sum_{l=1}^m(\hat n(r_l)-1), \quad m\ge 1,
  $$
and $\widehat\cA_0$ is a one-dimensional space of functions $c_0'\beta_0(\xi)$ supported on the unit element of the group $\hat X.$
The following embedding is easy to see:
   \begin{equation}\label{^cA<}
   \widehat \cA_0\subset\widehat\cA_1\subset\widehat\cA_2\subset\dots\subset\widehat\cA,
   \end{equation}
and
   \begin{equation}\label{eq:^Aun}
   \widehat A=\bigcup_{m\in\naturals_0}\widehat\cA_m.
   \end{equation}
   Expressions \eqref{eq:cA<}, \eqref{eq:Aun} and \eqref{^cA<}, \eqref{eq:^Aun} support the view of $\cA$ and $\widehat \cA$ as
countably dimensional algebras graded by finite-dimensional algebras $\cA_m,\widehat\cA_m, m\in\naturals_0$. 
Essentially, the algebras $\cA$ and $\widehat \cA$ are inductive (direct) limits of the subalgebras $\cA_m,\widehat\cA_m:$
   $$
   \cA=\lim_{\stackrel{\longrightarrow}{m}}\cA_m,\qquad     \widehat\cA=\lim_{\stackrel{\longrightarrow}{m}}\widehat\cA_m.
   $$
The grading \eqref{eq:cA<} define a natural topology on $\cA$ and $\widehat\cA$, namely, a sequence of functions 
$f_j,j\in \naturals$ converges to a function $f$ if for all sufficiently large $j,$ the functions $f_j$ are contained in
some subalgebra $\cA_m,$ in which the convergence is defined by a usual topology of a finite-dimensional space.
A similar remark applies to $\widehat \cA$ and the grading \eqref{^cA<}.
Clearly, in this topology, $\cA$ and $\widehat\cA$ are closed spaces.
   
Using \eqref{eq:nn} and the correspondence \eqref{eq:nr}, it is easy to check that 
   $$
  \dim \cA_m=\dim\widehat\cA_m,\quad m\in\naturals_0,
  $$
so the vector spaces $\cA_m$ and $\widehat\cA_m$ are isomorphic. However, much more is true, namely that the isomorphism
is given by the Fourier transforms \eqref{eq:ft} and \eqref{eq:ift}.
  \begin{lemma} (a) For all $m\ge 0,$
    \begin{equation}\label{eq:fiso}
     \cF^\sim \cA_m=\widehat\cA_m,\quad\cF^\natural \widehat\cA_m=\cA_m.
    \end{equation}
(b) The spaces $\cA_m$ and $\widehat\cA_m$ are closed with respect to multiplication and convolution of functions.
  \end{lemma}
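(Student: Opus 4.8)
The plan is to establish both parts of the Lemma by reducing the statement about the graded pieces $\cA_m,\widehat\cA_m$ to facts already proved for the full algebras $\cA,\widehat\cA$ together with the triangular structure of the arrays in \eqref{eq:pi}, \eqref{eq:kappa}.

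For part (a): First I would recall that $\cA_m$ is spanned by $\alpha_0$ together with $\{\alpha_{l,i}:1\le l\le m,\,1\le i\le n_l-1\}$, and $\widehat\cA_m$ by $\beta_0$ together with $\{\beta_{k,j}:1\le k\le m,\,1\le j\le \hat n_k-1\}$. Apply the Fourier transform $\cF^\sim$ to a basis element $\alpha_{l,i}$ with $l\le m$. By the Proposition computing $\tilde\alpha_{l,i}$, we have
\[
\tilde\alpha_{l,i}(\xi)=\pi_{l,i}(0)\beta_0(\xi)+\sum_{k\in\naturals}\sum_{j=1}^{\hat n_k-1}\pi_{l,i}(k,j)\beta_{k,j}(\xi),
\]
and the triangular relations \eqref{eq:triangular} show that $\pi_{l,i}(k,j)=0$ whenever $k>l$. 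Hence the sum over $k$ runs only to $k=l\le m$, so $\tilde\alpha_{l,i}\in\widehat\cA_m$. Since also $\tilde\alpha_0=\beta_0\in\widehat\cA_0\subset\widehat\cA_m$, we get $\cF^\sim\cA_m\subset\widehat\cA_m$. The reverse inclusion follows symmetrically: applying $\cF^\natural$ to $\beta_{k,j}$ with $k\le m$ and using \eqref{eq:Fbe} together with the triangular condition $\kappa_{k,j}(l,i)=0$ for $l>k$ shows $\beta_{k,j}^\natural\in\cA_m$, so $\cF^\natural\widehat\cA_m\subset\cA_m$. Since $\cF^\sim$ and $\cF^\natural$ are mutually inverse isometries, these two inclusions are in fact equalities, which is \eqref{eq:fiso}. (Note $\dim\cA_m=\dim\widehat\cA_m$, already observed just before the Lemma, also forces equality from either inclusion alone.)

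For part (b): Closure of $\cA_m$ under ordinary multiplication is immediate from the product relations \eqref{eq:pa}, since the product of two basis functions of $\cA_m$ is either $0$ or again a basis function with the same indices; likewise for $\widehat\cA_m$ from \eqref{eq:pb01}. Closure under convolution I would deduce from the explicit convolution formulas: by \eqref{eq:ca0}-\eqref{eq:pii2}, the convolution $\alpha_{l_1,i_1}\ast\alpha_{l_2,i_2}$ with $l_1,l_2\le m$ is a combination of $\alpha_0$ and $\alpha_{l,i}$ with $l$ ranging only up to $\min\{l_1,l_2\}\le m$ (the coefficient $\pi_{(l_1,i_1),(l_2,i_2)}^{(l,i)}$ vanishes for $l>\min\{l_1,l_2\}$, as remarked after that Proposition), so it lies in $\cA_m$. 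Convolutions involving $\alpha_0$ are handled by \eqref{eq:a0a0}, \eqref{eq:pi0}. The argument for $\widehat\cA_m$ is identical using \eqref{eq:cb0}-\eqref{eq:kp2} and \eqref{eq:b0b0}; here one checks from the summation range in \eqref{eq:kp2} that the output index $k$ is at most $\max\{k_1,k_2\}\le m$.

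I do not expect any serious obstacle here: the entire content is the bookkeeping that the triangular structure of the coefficient arrays keeps the graded pieces invariant, and this has essentially been set up in the preceding Propositions. The only point requiring a moment's care is making sure that in the convolution $\cA_m\ast\cA_m$ the \emph{upper} index of the resulting basis functions does not exceed $m$ — this is exactly the observation that $\pi_{(l_1,i_1),(l_2,i_2)}^{(l,i)}=0$ for $l>\min\{l_1,l_2\}$, which is stated right after the convolution Proposition — and dually for $\widehat\cA_m$ that the upper index is at most $\max\{k_1,k_2\}$, visible from the range $l=k,\dots,\max\{k_1,k_2\}$ in \eqref{eq:kp2} combined with the vanishing of $\pi_{l,i}(k,j)$ for $k>l$.
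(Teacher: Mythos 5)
Your part (a) is correct and follows the paper's own argument exactly: the triangularity conditions \eqref{eq:triangular} applied to \eqref{eq:Fal} and \eqref{eq:Fbe} give the two inclusions $\cF^\sim\cA_m\subset\widehat\cA_m$ and $\cF^\natural\widehat\cA_m\subset\cA_m$, which combine with the mutual invertibility of $\cF^\sim$ and $\cF^\natural$ (or the equality of dimensions) to yield \eqref{eq:fiso}.

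For part (b) you diverge slightly from what the paper (implicitly) does, and your route is the more explicit of the two. The paper's printed proof addresses only (a); closure under convolution is meant to follow from the duality mechanism: since $\cF^\sim(f\ast g)=\tilde f\,\tilde g$ and $\cA_m$, $\widehat\cA_m$ are obviously closed under pointwise multiplication (the basis functions are indicators of disjoint sets), part (a) immediately gives $f\ast g=\cF^\natural(\tilde f\,\tilde g)\in\cA_m$ for $f,g\in\cA_m$, and dually for $\widehat\cA_m$. You instead verify convolution closure directly from the structure constants, using the vanishing of $\pi_{(l_1,i_1),(l_2,i_2)}^{(l,i)}$ for $l>\min\{l_1,l_2\}$ and the summation range $l=k,\dots,\max\{k_1,k_2\}$ in \eqref{eq:kp2}. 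Both checks are sound (and your reading of the two different bounds, $\min$ on the $X$ side versus $\max$ on the $\hat X$ side, is accurate); the duality argument is shorter and does not require re-examining the convolution coefficients, while your computation has the merit of exhibiting exactly which structure constants survive and why the grading is preserved. Either way the lemma is established; there is no gap.
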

\begin{proof} (a) Using \eqref{eq:Fal} and the conditions \eqref{eq:triangular}, we conclude that the Fourier transform
of the basis functions $\alpha_0,\alpha_{l,i}, l\le m$ of the space $\cA_m$ expands into a linear combination of the
basis functions $\beta_0,\beta_{k,j}, k\le m$ of the space $\widehat\cA_m,$ which establishes the first relation
in \eqref{eq:fiso}. Likewise, \eqref{eq:Fbe} and \eqref{eq:triangular}
imply the second relation.
\end{proof}

Thus, we have proved that $\cA_m$ and $\cA_m$ are function algebras closed with respect to multiplication and convolution.
These algebras are dual of each other in the sense that the Fourier transform exchanges the convolution and multiplication
operations.
Next we argue that $\cA_m$ and $\widehat\cA_m, m\ge 0$ can be considered as adjacency algebras of finite translation schemes
$\cX^{(m)}$ and $\widehat\cX^{(m)}$ constructed as follows. Consider the following pair of dual finite Abelian groups:
   \begin{equation}\label{eq:fag}
   X^{(m)}=X/B(r_{m+1}), \quad \hat X^{(m)}=B(r_{m+1})^\bot=\hat B(t_m).
   \end{equation}
The finite partitions of the group $X^{(m)}$ into balls
   $$
  B(r_{m+1}), \;\Phi_i(r_l), \;i=1,\dots,n_l, l=1,\dots,m
   $$
and of the group $\hat X^{(m)}$ into balls
   $$
  \hat B(0), \;\hat\Phi_j(t_k), \;j=1,\dots,n_k,k=1,\dots,m
  $$
(see \eqref{eq:Phi}, \eqref{eq:^Phi}, and \eqref{eq:bbk}) are spectrally dual. By Theorem \ref{thm:tnm} (see also \cite{zin09}) these partitions give rise to mutually dual translation association schemes $\cX^{(m)}$ and $\widehat\cX^{(m)}$ with 
$\dim \cA_m=\dim \widehat\cA_m$ classes. The incidence matrices of these schemes are given by
    \begin{align*}
    A_0^{(m)}&=\llbracket \;\chi[B(r_{m+1});x-y]\;\rrbracket=I\\
  A_{l,i}^{(m)}&=\llbracket \;\chi[\Phi_i(r_l);x-y]\;\rrbracket, \quad x,y\in X^{(m)}
   \end{align*}
 and
    \begin{align*}
  B_0^{(m)}&=\llbracket\;\chi[\hat B(0);\phi\xi^{-1}]\;\rrbracket=I\\
  B_{k,j}^{(m)}&=\llbracket\;\chi[\hat\Phi_j(t_k);\phi\xi^{-1}]\;\rrbracket,\quad \phi,\xi\in\hat X^{(m)},
    \end{align*}
    respectively.
    
It is now clear that upon identifying the elements $\beta_0,\beta_{k,j}$ in \eqref{eq:bbk} with the matrices $B_0^{(m)}, B_{k,j}^{(m)},$ we
obtain an isomorphism between $\widehat \cA_m$ and the adjacency algebra of the scheme $\widehat\cX^{(m)}.$
The isomorphism between $\cA_m$ and the adjacency algebra of the scheme $\cX^{(m)}$ can be established in the same way.
However, for reasons that are made clear below we opt for a slightly different mapping.
Namely, let identify
the elements $\alpha_{l,i}$ in \eqref{eq:aal} with the matrices $A_{l,i}^{(m)}.$ At the same time, the element $\alpha_0$ 
is identified not with $A_0^{(m)}$ but rather with the all-one matrix
   $$
   J^{(m)}=A_0^{(m)}+\sum_{l=1}^m\sum_{i=1}^{n_l-1}A_{l,i}^{(m)}
   $$
(cf. \eqref{eq:A}). This identification maps the basis $(\alpha_0,\alpha_{l,i})$ of $\cA_m$ to the basis 
$(J^{(m)},A_{l,i}^{(m)})$ of $\cX^{(m)},$ establishing the claimed 
isomorphism. 

\begin{theorem} The algebra $\cA_m$ ($\widehat \cA_m$) is isomorphic to the adjacency algebra of the scheme $\cX^{(m)}$
(resp., $\widehat\cX^{(m)}$).
\end{theorem}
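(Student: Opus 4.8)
The plan is to verify that the linear map described just before the theorem statement — sending $\alpha_{l,i}\mapsto A_{l,i}^{(m)}$ and $\alpha_0\mapsto J^{(m)}$ for $\cA_m$, and $\beta_{k,j}\mapsto B_{k,j}^{(m)}$, $\beta_0\mapsto B_0^{(m)}=I$ for $\widehat\cA_m$ — is an algebra isomorphism, where the algebra structure on both sides is given by convolution (equivalently, matrix multiplication of the incidence matrices). First I would record that both maps are linear bijections between vector spaces of equal finite dimension $1+\sum_{l=1}^m(n(r_l)-1)$: this is immediate from the definitions of $\cA_m$, $\widehat\cA_m$ and the fact that $(A_0^{(m)},A_{l,i}^{(m)})$ and $(J^{(m)},A_{l,i}^{(m)})$ are both bases of the adjacency algebra $\mathfrak A(\cX^{(m)})$ (the latter because the change of basis from $A_0^{(m)}$ to $J^{(m)}=A_0^{(m)}+\sum A_{l,i}^{(m)}$ is unitriangular, cf. \eqref{eq:A}). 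So the only content is that the map respects convolution.

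The key point is that the convolution of functions on $X^{(m)}=X/B(r_{m+1})$ that are constant on the blocks $\Phi_i(r_l)$ is \emph{computed by exactly the same formulas} as the convolution in $\cA_m$. Concretely, I would argue as follows. By Theorem~\ref{thm:tnm}, the partitions of $X^{(m)}$ into the balls $B(r_{m+1}),\Phi_i(r_l)$ and of $\hat X^{(m)}=\hat B(t_m)$ into $\hat B(0),\hat\Phi_j(t_k)$ are spectrally dual (being the finite truncations of \eqref{eq:nmp}, \eqref{eq:^nmp}), hence form a dual pair of translation schemes with the intersection numbers given by \eqref{eq:prrrr}. Therefore $A_{l_1,i_1}^{(m)}A_{l_2,i_2}^{(m)}=\sum p_{(l_1,i_1),(l_2,i_2)}^{(l,i)}A_{l,i}^{(m)}$ with coefficients $p$ that coincide with those in \eqref{eq:ca0}--\eqref{eq:pii2}: indeed both sets of coefficients are obtained from the same spectral parameters $\pi_{l,i}(k,j)$, $\kappa_{k,j}(l,i)$ via the same bilinear expression (the derivation of \eqref{eq:ca0} used only \eqref{eq:Fal}, \eqref{eq:Fbe}, \eqref{eq:pb01}, which are statements about the truncated partition). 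For the image of $\alpha_0$ one checks separately that $\alpha_0\ast\alpha_0=\alpha_0\cdot(\text{appropriate scalar})$, matching $J^{(m)}$'s behaviour: $\alpha_0=\chi[B(\bar r);\cdot]$ on $X^{(m)}$ is the all-one function, whose convolution with itself is $\mu(X^{(m)})$ times itself, and likewise $J^{(m)}J^{(m)}=|X^{(m)}|J^{(m)}$ — these agree under the chosen normalization $\mu(X)=1$; the products $\alpha_0\ast\alpha_{l,i}$ match $J^{(m)}A_{l,i}^{(m)}=\mu_{l,i}J^{(m)}$ by \eqref{eq:a0a0} and \eqref{eq:pi0}. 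Thus the map intertwines convolution in $\cA_m$ with matrix multiplication in $\mathfrak A(\cX^{(m)})$; since it is also manifestly an isomorphism for the pointwise product (both are generated by orthogonal idempotents indexed the same way), it is an algebra isomorphism. The $\widehat\cA_m$ case is identical, using \eqref{eq:cb0}--\eqref{eq:kp2} and the scheme $\widehat\cX^{(m)}$, and here one keeps $\beta_0\mapsto B_0^{(m)}=I$ since $\hat X$ is discrete and $\beta_0$ is the convolution identity.

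The main obstacle — really the only subtle point — is bookkeeping around the $\alpha_0$ versus $A_0^{(m)}$ discrepancy: one must be careful that the \emph{convolution} identity element of $\cA_m$ is $A_0^{(m)}=\chi[B(r_{m+1});\cdot]$ (normalized so it acts as the identity under convolution on $L_2(X^{(m)})$), not $\alpha_0$, whereas under the pointwise product the identity is $\alpha_0$ (the all-one function $=J^{(m)}$ after identification). So the theorem's phrase ``isomorphic to the adjacency algebra'' should be read as an isomorphism of the algebra with \emph{both} its multiplications: pointwise product $\leftrightarrow$ Schur product of matrices, convolution $\leftrightarrow$ matrix product. I would state this explicitly and observe that the chosen identification $\alpha_0\mapsto J^{(m)}$ is precisely what makes the Fourier transform on $\cA_m$ (which swaps the two products, by \eqref{eq:fiso} together with \eqref{eq:Fc}) correspond to the eigenvalue/idempotent duality $A_i=\sum p_i(j)E_j$, $E_j=\sum q_j(i)A_i$ of the finite scheme. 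Everything else is a direct comparison of formulas already proved in the excerpt.

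\begin{proof}
It suffices to treat $\cA_m$; the argument for $\widehat\cA_m$ is identical after exchanging the roles of $X$ and $\hat X$. By the finite truncation of the partitions \eqref{eq:nmp}, \eqref{eq:^nmp}, Theorem~\ref{thm:tnm} applies to the pair of finite groups \eqref{eq:fag} and yields a dual pair of translation schemes $\cX^{(m)}$, $\widehat\cX^{(m)}$ whose incidence matrices are, respectively, $A_0^{(m)}=\llbracket\chi[B(r_{m+1});x-y]\rrbracket=I$, $A_{l,i}^{(m)}=\llbracket\chi[\Phi_i(r_l);x-y]\rrbracket$ and $B_0^{(m)}=I$, $B_{k,j}^{(m)}=\llbracket\chi[\hat\Phi_j(t_k);\phi\xi^{-1}]\rrbracket$. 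The adjacency algebra $\mathfrak A(\cX^{(m)})$ has dimension $1+\sum_{l=1}^m(n(r_l)-1)=\dim\cA_m$, and $(J^{(m)},A_{l,i}^{(m)})$ is a basis of it since $J^{(m)}=A_0^{(m)}+\sum_{l,i}A_{l,i}^{(m)}$ differs from $(A_0^{(m)},A_{l,i}^{(m)})$ by a unitriangular change of basis. Hence the linear map $\Psi:\cA_m\to\mathfrak A(\cX^{(m)})$ with $\Psi(\alpha_0)=J^{(m)}$, $\Psi(\alpha_{l,i})=A_{l,i}^{(m)}$ is a linear isomorphism.

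For the pointwise product, the relations \eqref{eq:pa} say that $\alpha_0$ and the $\alpha_{l,i}$ multiply like the all-one function and the mutually orthogonal indicators $\chi[\Phi_i(r_l);\cdot]$ on $X^{(m)}$; under $\Psi$ these go to $J^{(m)}$ and the $A_{l,i}^{(m)}$, which under the Schur product satisfy exactly the same relations. So $\Psi$ respects the pointwise (Schur) product.

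For the convolution, compute in $\mathfrak A(\cX^{(m)})$: by the defining property of the scheme $\cX^{(m)}$ (Def.~\ref{def1}(v)) and Theorem~\ref{thm:tnm},
\begin{equation*}
A_{l_1,i_1}^{(m)}A_{l_2,i_2}^{(m)}=\sum_{l=1}^m\sum_{i=1}^{n_l-1}p_{(l_1,i_1),(l_2,i_2)}^{(l,i)}A_{l,i}^{(m)}+p_{(l_1,i_1),(l_2,i_2)}^{(0)}A_0^{(m)},
\end{equation*}
where the intersection numbers are those of \eqref{eq:prrrr}; rewriting $A_0^{(m)}=J^{(m)}-\sum_{l,i}A_{l,i}^{(m)}$ gives the expansion in the basis $(J^{(m)},A_{l,i}^{(m)})$. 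On the other hand, by \eqref{eq:ca0}--\eqref{eq:pii2} the convolution $\alpha_{l_1,i_1}\ast\alpha_{l_2,i_2}$ in $\cA_m$ has an expansion with coefficients $\pi_{(l_1,i_1),(l_2,i_2)}^{(l,i)}$, $\pi_{(l_1,i_1),(l_2,i_2)}^{(0)}$ built out of the spectral parameters $\pi_{l,i}(k,j)$, $\kappa_{k,j}(l,i)$. Both sets of structure constants are obtained from the \emph{same} spectral parameters \eqref{eq:pri}, \eqref{eq:qtj} by the \emph{same} bilinear formula \eqref{eq:ser1}, applied to the truncated partition; hence they agree, so $\Psi(\alpha_{l_1,i_1}\ast\alpha_{l_2,i_2})=A_{l_1,i_1}^{(m)}A_{l_2,i_2}^{(m)}=\Psi(\alpha_{l_1,i_1})\Psi(\alpha_{l_2,i_2})$. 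For the products involving $\alpha_0$, \eqref{eq:a0a0} and \eqref{eq:pi0} give $\alpha_0\ast\alpha_0=\alpha_0$ and $\alpha_0\ast\alpha_{l,i}=\mu(B(r_{l+1}))\alpha_0$, while on $X^{(m)}$ the all-one matrix satisfies $J^{(m)}J^{(m)}=|X^{(m)}|\,J^{(m)}$ and $J^{(m)}A_{l,i}^{(m)}=\mu_{l,i}\,J^{(m)}$ with $\mu_{l,i}=\mu(\Phi_i(r_l))=\mu(B(r_{l+1}))$ and $|X^{(m)}|=1$ under the normalization $\mu(X)=1$; these coincide with \eqref{eq:a0a0}--\eqref{eq:pi0}. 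Therefore $\Psi$ also respects convolution. Consequently $\Psi$ is an isomorphism of $\cA_m$ onto the adjacency algebra of $\cX^{(m)}$, carrying the pointwise product to the Schur product and convolution to matrix multiplication. The analogous map $\widehat\Psi:\widehat\cA_m\to\mathfrak A(\widehat\cX^{(m)})$, $\widehat\Psi(\beta_0)=B_0^{(m)}$, $\widehat\Psi(\beta_{k,j})=B_{k,j}^{(m)}$, is an isomorphism by the same reasoning using \eqref{eq:cb0}--\eqref{eq:kp2}.
\end{proof}
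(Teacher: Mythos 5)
Your proof is correct and takes essentially the same route as the paper, which simply declares the identification $\alpha_0\mapsto J^{(m)}$, $\alpha_{l,i}\mapsto A_{l,i}^{(m)}$ (resp.\ $\beta_0\mapsto B_0^{(m)}$, $\beta_{k,j}\mapsto B_{k,j}^{(m)}$) and leaves the verification of the structure constants implicit; you fill in exactly that verification. The only blemish is the assertion ``$|X^{(m)}|=1$'': what you mean (and what makes the computation work) is that the convolution on $X^{(m)}$ is taken with respect to the quotient Haar measure normalized by $\mu(X^{(m)})=1$, so that convolution of coset-constant functions on $X$ literally descends to convolution on $X^{(m)}$ --- stating it that way would make the matching of structure constants immediate.
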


The reasons for choosing the basis $(J^{(m)},A_{l,i}^{(m)})$ rather than the standard basis $(A_0^{(m)}, A_{l,i}^{(m)})$
are related to the fact that for an infinite uncountable group $X,$ the standard basis ``degenerates'' as $m\to\infty$
because the measure of the ball $\mu(B(r_{m+1}))\to 0$ as $m\to\infty.$ This implies that $A_0^{(m)}\to 0$ in
$L_2(X^{(m)})$ as $m\to\infty.$ At the same time, using our choice of the basis,
transition to the limit is easily accomplished in terms of the graded algebras \eqref{eq:cA<}.
For instance, in the limit, the matrices $A_0^{(m)}$ turn into the operator with the kernel $\chi[B(0);x-y],$ but this
indicator function is not contained in $\cA$ because
   $$
   \chi[B(0);x]=\alpha_0(x)-\sum_{l\in \naturals}\sum_{i=1}^{n_l-1}\alpha_{l,i}(x),
   $$
but the algebra $\cA$ contains only finite linear combinations of the basis elements $\alpha_0,\alpha_{l,i}.$
Essentially, the contents of this section deals with careful formalization of these key observations.

To conclude, let $\cX$ and $\widehat\cX$ be a pair of mutually dual association schemes defined on a zero-dimensional compact group $X$
and its dual group $\hat X$ using construction of Theorem \ref{thm:tnm}. Then the algebras $\cA$ and $\widehat \cA$ defined in \eqref{eq:fA} and \eqref{eq:gA}, respectively, should be viewed as the adjacency algebras of these schemes\footnote{We do not state this and similar later results
as theorems because the adjacency algebras of general infinite association schemes have not been formally defined.}.

%
%
\subsubsection{Adjacency algebras of metric schemes on groups}\label{sect:subA}
In addition to numerous finite-dimensional subalgebras, the algebras $\cA$ and $\widehat \cA$ contain 
mutually dual, countably dimensional subalgebras $\gls{cAsph}\subset \cA$ and $\widehat \cA^{\,\text{(sph)}}\subset \widehat \cA$
related to the spectrally dual partitions of the groups $X$ and $\hat X$ into spheres constructed in Section~\ref{sect:dualpairs}, Thm.~\ref{thm:m1}. In this section we identify these subagebras as adjacency algebras 
of the association schemes related these partitions (see Theorem~\ref{thm:m2}). 

Let 
   $$
  \alpha_l(x)=\sum_{i=1}^{n_l-1}\alpha_{l,i}(x)=\chi[S(r_l);x], \quad l\ge 1,
   $$
(cf. \eqref{eq:St}, \eqref{eq:sph}, \eqref{eq:aal}) and denote by $\cA^{\text{(sph)}}$ the set of all functions
$f:X\to \complexes$ of the form
 \begin{equation}\label{eq:asph}
   f(x)=c_0\alpha_0(x)+\sum_{l\in\naturals} c_l\alpha_l(x),
  \end{equation}
where only finitely many coefficients $c_0,c_l$ are nonzero.

Similarly, let
  $$
  \beta_k(\xi)=\sum_{j=1}^{\hat n_k-1}\beta_{k,j}(\xi)=\chi[\hat S(t_k);\xi], \quad k\ge 1
  $$
and denote by $\widehat \cA^{\,\text{(sph)}}$ the set of all functions $g:\hat X\to\complexes$ of the form
  \begin{equation}\label{eq:bsph}
  g(\xi)=c_0'\beta_0(\xi)+\sum_{k\in\naturals} c_k'\beta_k(\xi),
  \end{equation}
where only finitely many coefficients $c_0',c_k'$ are nonzero.

Denote by $\cA_m^{\text{(sph)}}, m\in \naturals_0$ the set of all functions of the form \eqref{eq:asph}
such that $c_l=0$ for $l>m$ and use the notation $\widehat \cA_m^{\,\text{(sph)}}, m\in \naturals_0$ to refer to the set
of functions of the form \eqref{eq:bsph} such that $c_k'=0$ if $k>m.$

It is obvious that $\cA^{\text{(sph)}}$ and $\widehat \cA^{\,\text{(sph)}}$ are countably dimensional vector spaces
and $\cA_m^{\text{(sph)}}\subset\cA^{\text{(sph)}} $ and $\widehat \cA_m^{\,\text{(sph)}}\subset \widehat \cA^{\,\text{(sph)}}$
are finite-dimensional subspaces of dimension $m+1.$ Moreover, we have
   \begin{equation}\label{eq:AAsph}
   \left.    \begin{array}{c}
   \cA_0^{\text{(sph)}}\subset\cA_1^{\text{(sph)}}\subset \cA_2^{\text{(sph)}}\subset\dots\subset\cA^{\text{(sph)}}\\
   \cA^{\text{(sph)}}=\bigcup\limits_{m\in\naturals_0}\cA_m^{\text{(sph)}}\\
   \widehat \cA_0^{\,\text{(sph)}}\subset\widehat \cA_1^{\,\text{(sph)}}\subset\widehat \cA_2^{\,\text{(sph)}}
     \subset\dots\subset\widehat \cA^{\,\text{(sph)}}\\
      \widehat \cA^{\,\text{(sph)}}=\bigcup\limits_{m\in\naturals}\widehat \cA_m^{\,\text{(sph)}}.
    \end{array}\right\}
  \end{equation}
\begin{lemma}
The Fourier transforms on the spaces considered satisfy the following relations analogous to \eqref{eq:FF} and
\eqref{eq:fiso}:
     \begin{equation}\label{eq:FFAAmm}
     \begin{array}{c}
   \cF^\sim\cA^{\text{(sph)}}=\widehat \cA^{\,\text{(sph)}}, \quad \cF^\natural\widehat \cA^{\,\text{sph}}=\cA^{\text{(sph)}}\\
    \cF^\sim\cA_m^{\text{(sph)}}=\widehat \cA_m^{\,\text{(sph)}}, \quad \cF^\natural\widehat \cA_m^{\,\text{sph}}=\cA_m^{\text{(sph)}}.
    \end{array} 
     \end{equation}
\end{lemma}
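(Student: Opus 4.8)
The plan is to deduce this from the basis-level Fourier transform formulas \eqref{eq:Fal}, \eqref{eq:Fbe}, \eqref{eq:a0b0} that already underlie the isomorphisms \eqref{eq:FF}, \eqref{eq:fiso}, by specializing them to the ``$i$-averaged'' basis that spans the spherical subalgebra.

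First I would observe that $\cA^{\text{(sph)}}$ is the subspace of $\cA$ spanned by $\alpha_0$ and the functions $\alpha_l=\sum_{i=1}^{n_l-1}\alpha_{l,i}=\chi[S(r_l);\cdot]$, $l\in\naturals$, with $\cA_m^{\text{(sph)}}=\operatorname{span}\{\alpha_0,\alpha_1,\dots,\alpha_m\}$; dually for $\widehat\cA^{\,\text{(sph)}}$, $\widehat\cA_m^{\,\text{(sph)}}$ and the $\beta_k=\sum_{j=1}^{\hat n_k-1}\beta_{k,j}=\chi[\hat S(t_k);\cdot]$. By \eqref{eq:a0b0}, $\cF^\sim\alpha_0=\beta_0$. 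For $l\ge 1$, applying $\cF^\sim$ to $\alpha_l=\sum_i\alpha_{l,i}$ and using \eqref{eq:Fal}, the coefficient of $\beta_{k,j}$ in $\cF^\sim\alpha_l$ is $\sum_{i=1}^{n_l-1}\pi_{l,i}(k,j)$; by \eqref{eq:pi} this vanishes for $k>l$, equals $(n_l-1)\mu(B(r_{l+1}))$ for $1\le k<l$ (independent of $j$), and equals $\mu(B(r_{l+1}))\sum_i\omega_{ij}(r_l)$ for $k=l$. Since $z_{0,r}=0$ and $\theta_{0,t}=1$, orthogonality \eqref{eq:oo1} gives $\sum_{i=1}^{n_l-1}\omega_{ij}(r_l)=-1$ for every $j\ge 1$, so in each shell the coefficient is $j$-independent; hence $\cF^\sim\alpha_l$ is a finite combination of $\beta_0,\beta_1,\dots,\beta_l$, i.e.\ $\cF^\sim\alpha_l\in\widehat\cA_l^{\,\text{(sph)}}$. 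Therefore $\cF^\sim\cA_m^{\text{(sph)}}\subseteq\widehat\cA_m^{\,\text{(sph)}}$, and taking unions over $m$ via \eqref{eq:AAsph}, $\cF^\sim\cA^{\text{(sph)}}\subseteq\widehat\cA^{\,\text{(sph)}}$. The symmetric argument from \eqref{eq:Fbe}, \eqref{eq:kappa}, \eqref{eq:oo} yields $\cF^\natural\widehat\cA_m^{\,\text{(sph)}}\subseteq\cA_m^{\text{(sph)}}$ and $\cF^\natural\widehat\cA^{\,\text{(sph)}}\subseteq\cA^{\text{(sph)}}$.

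To upgrade these inclusions to the asserted equalities I would use that $\cF^\sim$ and $\cF^\natural$ are mutually inverse: applying $\cF^\natural$ to $\cF^\sim\cA_m^{\text{(sph)}}\subseteq\widehat\cA_m^{\,\text{(sph)}}$ gives $\cA_m^{\text{(sph)}}\subseteq\cF^\natural\widehat\cA_m^{\,\text{(sph)}}$, which together with the reverse inclusion forces $\cF^\natural\widehat\cA_m^{\,\text{(sph)}}=\cA_m^{\text{(sph)}}$, hence also $\cF^\sim\cA_m^{\text{(sph)}}=\widehat\cA_m^{\,\text{(sph)}}$; deleting the subscript $m$ gives the ungraded statements as well. (Equivalently, since $\dim\cA_m^{\text{(sph)}}=\dim\widehat\cA_m^{\,\text{(sph)}}=m+1$ and $\cF^\sim$ is injective, the inclusion between these finite-dimensional spaces must be an equality.)

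I do not anticipate a substantive obstacle: the statement is essentially the restriction of \eqref{eq:FF}, \eqref{eq:fiso} to the spherical subalgebras, and the genuine work — collapsing the a priori infinite tails (arising, when $X$ is not discrete, from the accumulation of radii at $0$) into finite combinations of $\alpha_0,\dots,\alpha_m$ by means of \eqref{eq:1-} — was already done in deriving \eqref{eq:Fbe}. The one thing requiring care is the index bookkeeping through the correspondences \eqref{eq:rn}, \eqref{eq:tn}, \eqref{eq:kl} and the triangularity \eqref{eq:triangular}, which is exactly what guarantees that $\cF^\sim$ respects the gradings \eqref{eq:AAsph} rather than only the ambient algebras $\cA,\widehat\cA$.
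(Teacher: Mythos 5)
Your proposal is correct and follows essentially the same route the paper outlines: computing $\cF^\sim\alpha_l$ and $\cF^\natural\beta_k$ from the basis-level formulas \eqref{eq:Fal}, \eqref{eq:Fbe} and collapsing the $j$-dependence via the character orthogonality \eqref{eq:oo1}, \eqref{eq:oo} (your identity $\sum_{i=1}^{n_l-1}\omega_{ij}(r_l)=-1$ is exactly the needed cancellation). The paper leaves these details to the reader, so your write-up is simply a complete version of its sketched argument, with a clean finish via the mutual inverseness of $\cF^\sim$ and $\cF^\natural$.
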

{\em Proof:} (outline) Relations \eqref{eq:FFAAmm} are proved by a direct computation using \eqref{eq:Fal}, \eqref{eq:Fbe}
in order to compute the Fourier transforms of the basis functions $\alpha_l$ and $\beta_k$, taking account of the 
orthogonality relations of the characters $\omega_{ij}(r)$ and $\hat\omega_{ji}(t)$ \eqref{eq:oo1}, \eqref{eq:oo}.
Another option is to use directly expressions for the Fourier transforms of the indicator functions of spheres $S(r)$
and $\hat S(r)$ given in Lemma \ref{lemma:fc}. Equation \eqref{eq:1-} needed to transform infinite sums of indicators
into finite ones in this case takes the following form:
   $$
   \sum_{l>k}\chi[S(r_l);x]=1-\sum_{l=1}^k\chi[S(r_l);x].
  $$
Details can be left to the interested reader.

\vspace*{.1in}
Since each of the spaces $\cA^{\text{(sph)}}, \widehat \cA^{\,\text{(sph)}},\cA_m^{\text{(sph)}},
\widehat\cA_m^{\,\text{(sph)}}, m\in \naturals_0$ is closed under the usual multiplication of functions, these relations
imply that these spaces are also closed with respect to convolution. Thus, $\cA^{\text{(sph)}}, \widehat \cA^{\,\text{(sph)}},\cA_m^{\text{(sph)}},
\widehat\cA_m^{\,\text{(sph)}}, m\in \naturals_0$ ara algebras of functions closed with respect to both multiplication
and convolution. By \eqref{eq:AAsph} the countably dimensional algebras $\cA^{\text{(sph)}}$ and $\widehat \cA^{\,\text{(sph)}}$
are graded by the finite-dimensional subalgebras $\cA_m^{\text{(sph)}}, m\in\naturals_0$ and $\widehat\cA_m^{\,\text{(sph)}}, m\in \naturals_0.$ It is easy to check that the algebras $\cA_m^{\text{(sph)}}$ and $\widehat\cA_m^{\,\text{(sph)}}$
are isomorphic  to the adjacency algebras of association schemes constructed from partitions of the finite Abelian groups
\eqref{eq:fag} following Theorem \ref{thm:m2}. 

This enables one to state the main result of this section. Let $X$ be a compact zero-dimensional Abelian group, let $\hat X$ be its dual group, and let $\cX$ and $\widehat\cX$ be metric schemes introduced in Sect.~\ref{sect:dualpairs}, Theorem~\ref{thm:m2}.
Then the algebras $\cA^{\text{(sph)}}$ and $\widehat \cA^{\,\text{(sph)}}$ should be viewed as the adjacency algebras of these schemes.

\subsection{Locally compact groups}
Let us discuss changes in the construction of adjacency algebras needed to cover the case of locally
compact groups. Here we confine ourselves to brief remarks. 
Let $X$ be a locally compact uncountable zero-dimensional group, and let the topology on $X$ be defined by the chain
of nested balls $B(r), r\in \I.$ The dual group $\hat X$ is also locally compact uncountable and zero-dimensional,
with topology defined by the chain of nested balls $\hat B(t), t\in\hat\I.$ 

Consider the spectrally dual partitions \eqref{eq:nmp} and \eqref{eq:^nmp} of $X$ and $\hat X$ into
balls $\Phi_i(r)$ and $\hat\Phi_j(r)$ and the corresponding mutually dual
association schemes $\cR$ and $\hat\cR$ constructed in Theorem \ref{thm:tnm}. The adjacency algebras of these
schemes can be described as follows.

Let us number the radii $r\in \R_0$ in the descending order and the radii $t\in\hat\R_0$ in the ascending order:
   \begin{equation*}\begin{array}{c}
   0<\dots<r_1<r_1<r_0<r_{-1}<r_{-2}<\dots\\
   0<\dots<t_{-2}<t_{-1}<t_0<t_1<t_2<\dots .
   \end{array}
  \end{equation*}
Choose $r_0$ and $t_0$ so that $B(r_0)^\bot=\hat B(t_0),$ i.e., 
    $$
  \tilde r_0=t_0,\quad t_0^\natural=r_0.
    $$
Then $B(r_m)^\bot=\hat B(t_m)$ and
   $$
  \tilde r_m=t_m,\quad t^\natural_m=r_m \quad\text{for all }m\in\integers,
   $$
see \eqref{eq:rtr}, \eqref{eq:BB}.

Let 
   $$
   \begin{array}{l}
   \alpha_0(x)=\chi[B(r_0);x],\\
   \alpha_{l,i}(x)=\chi[\Phi_i(r_l);x], \quad i=1,\dots,n_l-1, n_l=n(r_l), l\in\integers.
   \end{array}
   $$
Denote by $\cA$ the set of all functions $f:X\to\complexes$ of the form 
   \begin{equation}\label{eq:fff}
   f(x)=c_0\alpha_0(x)+\sum_{l\in\integers}\sum_{i=1}^{n_l-1} c_{l,i}\alpha_{l,i}(x),
    \end{equation}
where only finitely many coefficients $c_0,c_{l,i}$ are nonzero.

Likewise, let
   $$ 
   \begin{array}{c}
   \beta_0(\xi)=\chi[\hat B(t_0);\xi]\\
   \beta_{k,j}(\xi)=\chi[\hat\Phi_j(t_k);\xi], \quad j=1,\dots,\hat n_k-1, \hat n_k=\hat n(t_k), k\in\integers
  \end{array}
  $$
and denote by $\widehat\cA$ the set of all functions $g:\hat X\to\complexes$ of the form
   \begin{equation}\label{eq:ggg}
  g(\xi)=c_0'\beta_0(\xi)+\sum_{k\in\integers}\sum_{j=1}^{\hat n_k-1} c'_{k,j}\beta_{k,j}(\xi),
   \end{equation}
where only finitely many coefficients $c_0,c_{k,j}'$ are nonzero.

It is clear that $\cA$ and $\widehat \cA$ are countably dimensional vector spaces. The following lemma is verified by
direct computation.
\begin{lemma} $\cF^\sim\cA=\widehat\cA$ and $\cF^\natural \widehat\cA=\cA.$\end{lemma}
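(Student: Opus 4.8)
The plan is to mirror the compact case treated in detail above. First I would observe that the statement $\cF^\sim\cA=\widehat\cA$, $\cF^\natural\widehat\cA=\cA$ is a direct consequence of the analogue of the earlier Proposition on Fourier transforms of the basis functions \eqref{eq:Fal}--\eqref{eq:a0b0}, adapted to the bi-infinite indexing. Concretely, I would first record the locally compact analogue of \eqref{eq:cP}--\eqref{eq:qtj}: using Lemma \ref{lemma:cfs} (which was already proved for both the compact and locally compact cases) together with the bijections $\tilde r_m=t_m,\ t_m^\natural=r_m$ and $\tau_\pm$ as in Proposition \ref{prop:+-}, one has
\begin{equation*}
\tilde\alpha_{l,i}(\xi)=\pi_{l,i}(0)\beta_0(\xi)+\sum_{k\in\integers}\sum_{j=1}^{\hat n_k-1}\pi_{l,i}(k,j)\beta_{k,j}(\xi),
\end{equation*}
with $\pi_{l,i}(k,j)$ given by the evident bi-infinite version of \eqref{eq:pi}, and dually for $\beta^\natural_{k,j}$ in terms of the $\kappa$'s as in \eqref{eq:kappa}. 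The same triangularity \eqref{eq:triangular} holds: $\pi_{l,i}(k,j)=0$ for $k>l$ and $\kappa_{k,j}(l,i)=0$ for $k<l$, now with $k,l$ ranging over $\integers$.

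The key point is that this triangularity makes every Fourier transform of a basis function a \emph{finite} linear combination of basis functions in the dual family, precisely because of the identity \eqref{eq:1-} (in its locally compact guise $\sum_{l>k}\sum_i\alpha_{l,i}(x)=1-\sum_{l\le k}\sum_i\alpha_{l,i}(x)$, valid since $\cup_{l}\cup_i\Phi_i(r_l)=X$ and the balls $B(r_l)$ exhaust $X$). Thus a function $f\in\cA$, which by definition \eqref{eq:fff} is a finite combination $c_0\alpha_0+\sum_{l,i}c_{l,i}\alpha_{l,i}$ with only finitely many nonzero coefficients, has $\cF^\sim f$ equal to a finite combination of the $\beta_0,\beta_{k,j}$, i.e.\ an element of $\widehat\cA$; and conversely $\cF^\natural$ maps $\widehat\cA$ into $\cA$. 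Since $\cF^\sim$ and $\cF^\natural$ are mutually inverse isometries of $L_2(X,\mu)$ and $L_2(\hat X,\hat\mu)$ (with the normalization $\mu_0(X_0)=1$, $\hat\mu(X_0^\bot)=1$ fixed earlier so that Parseval \eqref{eq:Parseval}, \eqref{eq:ip} holds), the inclusions $\cF^\sim\cA\subseteq\widehat\cA$ and $\cF^\natural\widehat\cA\subseteq\cA$ together with $\cF^\sim\cF^\natural=I$, $\cF^\natural\cF^\sim=\hat I$ force both to be equalities.

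The one genuinely new bookkeeping point, and the main (minor) obstacle, is the bi-infinite nature of the chain \eqref{eq:dc}: unlike the compact case there is no maximal radius $\bar r$, so the identity $\sum_{r}\sum_i\chi[\Phi_i(r);x]=1$ must be read as convergence of an infinite series, and \eqref{eq:1-} only converts tails at the \emph{large-radius} end into finite sums. I would therefore check that the triangular arrays $\pi_{l,i}(k,j)$, $\kappa_{k,j}(l,i)$ still truncate correctly when the index $k$ is allowed to run to $-\infty$: the condition $\pi_{l,i}(k,j)=0$ for $k>l$ already bounds the sum from above, while for the lower end one uses that for a fixed $l$ the values $\pi_{l,i}(k,j)=\mu(B(r_{l+1}))$ are constant in $k$ for $k<l$, so the infinite tail $\sum_{k<l}$ collapses via \eqref{eq:1-} to $\mu(B(r_{l+1}))(1-\sum_{k\ge l}\sum_j\beta_{k,j})$, a finite expression once the finitely supported input is fixed. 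With this verified, the proof of the lemma is complete, and — exactly as in Subsection~\ref{sect:aa} — one then notes that $\cA$ and $\widehat\cA$ are trivially closed under pointwise multiplication (by \eqref{eq:pa}, \eqref{eq:pb01} in their bi-infinite form) and hence, by the just-proved Fourier identities together with \eqref{eq:Fc}, also closed under convolution, so they are the adjacency (Schur-ring) algebras of the schemes $\cR$, $\hat\cR$ of Theorem~\ref{thm:tnm} in the locally compact setting.
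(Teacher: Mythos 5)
Your overall architecture coincides with the paper's: compute the Fourier transforms of the basis functions, use the triangularity of the coefficient arrays to bound the sums on one side, collapse the remaining infinite tail (where the coefficients are constant in the index) into a finite combination of basis functions, and then upgrade the two inclusions to equalities via $\cF^\sim\cF^\natural=I$. The gap is in the tail-collapsing step, which is exactly the point where the locally compact case differs from the compact one. You write the collapse as $\sum_{l>k}\sum_i\alpha_{l,i}(x)=1-\sum_{l\le k}\sum_i\alpha_{l,i}(x)$ and later as $\mu(B(r_{l+1}))\bigl(1-\sum_{k\ge l}\sum_j\beta_{k,j}\bigr)$, calling the latter ``a finite expression.'' It is not: when $X$ (hence $\hat X$) is noncompact, the constant function $1$ belongs neither to $\cA$ nor to $\widehat\cA$ (it is not even in $L_2$), and the subtracted sum still runs over an infinite index set, because the bi-infinite chain \eqref{eq:dc} has no maximal radius $\bar r$. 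These identities therefore trade one infinite series for another and do not exhibit $\tilde\alpha_{l,i}$ (resp.\ $\beta^\natural_{k,j}$) as a finite linear combination of the $\beta$'s (resp.\ $\alpha$'s), which is what membership in $\widehat\cA$ (resp.\ $\cA$) requires.

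The correct replacement for \eqref{eq:1-}, and the one the paper uses, exploits the fact that in the locally compact construction $\alpha_0$ and $\beta_0$ are no longer the constant function and the point indicator but the indicators $\chi[B(r_0);x]$ and $\chi[\hat B(t_0);\xi]$ of the chosen compact open subgroups. Since $\bigcup_{l\ge 0}\bigcup_{i}\Phi_i(r_l)=B(r_0)\setminus\{0\}$, one has $\sum_{l\ge 0}\sum_{i=1}^{n_l-1}\chi[\Phi_i(r_l);x]=\chi[B(r_0);x]=\alpha_0(x)$ almost everywhere, so every tail $\sum_{l>k}\sum_i\alpha_{l,i}$ equals $\alpha_0$ minus the finitely many terms with $0\le l\le k$ when $k\ge 0$, or $\alpha_0$ plus the finitely many terms with $k<l\le 0$ when $k<0$; dually, $\sum_{k\le 0}\sum_{j}\beta_{k,j}=\beta_0$ gives the finite collapse of the tails on the $\hat X$ side. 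With this substitution your argument goes through verbatim; without it, the crucial inclusion $\cF^\sim\cA\subseteq\widehat\cA$ is not established.
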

To prove this is suffices to use expressions \eqref{eq:mu1} for the Fourier transforms of the indicator functions of the balls $B(r_0),\hat B(t_0)$
 and expressions \eqref{eq:cP} and \eqref{eq:^cP} for the Fourier transforms of the indicator functions of the balls
$\Phi_i(r_l), \hat\Phi_j(t_k).$ Infinite sums of indicator functions in this case are transformed into finite sums
using the following relations:
  $$
  \sum_{l\ge 0}\sum_{i=1}^{n_l-1}\chi[\Phi_i(r_l);x]=\chi[B(r_0);x]
  $$
implying
  \begin{align*}
  \sum_{l>k}\sum_{i=1}^{n_l-1}\chi[\Phi_i(r_l);x]=\chi[B(r_0);x]-\sum_{0\le l\le k}\sum_{i=1}^{n_l-1} \chi[\Phi_i(r_l);x]
   \quad \text{for }k\ge 0\\
\sum_{l>k}\sum_{i=1}^{n_l-1}\chi[\Phi_i(r_l);x]=\chi[B(r_0);x]+\sum_{k<l\le 0}\sum_{i=1}^{n_l-1} \chi[\Phi_i(r_l);x]
  \quad \text{for }k<0.
  \end{align*}
Similarly,
   $$
  \sum_{k\le 0}\sum_{j=1}^{\hat n_k-1}\chi[\hat\Phi_j(t_k);\xi]=\chi[\hat B(t_0;\xi]
  $$
implying
   \begin{align*}
   \sum_{k<l}\sum_{j=1}^{\hat n_l-1}\chi[\hat\Phi_j(t_k);\xi]=\chi[\hat B(t_0;\xi]
   +\sum_{l\le k\le0}\sum_{j=1}^{n_l-1}\chi[\hat\Phi_j(t_k;\xi] \quad\text{for }l\le 0\\
    \sum_{k<l}\sum_{j=1}^{\hat n_l-1}\chi[\hat\Phi_j(t_k);\xi]=\chi[\hat B(t_0;\xi]
      -\sum_{0\le k<l}\sum_{j=1}^{n_l-1}\chi[\hat\Phi_j(t_k;\xi] \quad\text{for }l> 0.
    \end{align*}  

The spaces $\cA$ and $\widehat \cA$ are closed with respect to the usual product of functions, and therefore, by the
previous lemma, are also closed with respect to convolutions.

The main result of this section is stated as follows. Let $X$ and $\hat X$ be a pair of mutually dual locally compact uncountable zero-dimensional Abelian groups
and let $\cX$ and $\widehat \cX$ be the association schemes constructed in Theorem \ref{thm:tnm}. The function algebras
$\cA$ and $\hat\cA$ should be viewed as the adjacency algebras of these schemes.

Note that the algebras $\cA$ and $\hat \cA$ can also be graded by finite-dimensional subalgebras. For instance, it is
sufficient to introduce subalgebras $\cA_m,m\in\naturals_0$ formed of the functions \eqref{eq:fff} with coefficients $c_{l,i}=0$
for $|l|>m$ and subalgebras $\widehat\cA_m,m\in\naturals_0$ formed of the functions \eqref{eq:ggg} with coefficients
$c'_{k,j}=0$ for $|k|>m.$ We do not go into further details here, hoping to cover this range of questions in a separate
publication.

%
\section{Eigenvalues and harmonic analysis}\label{sect:eigenvalues}
One of the first questions that arise in the study of a new class of association schemes is whether these schemes are polynomial, i.e., whether the first and the second eigenvalues \eqref{eq:AE}, \eqref{eq:EA} coincide with values
of some orthogonal polynomials of one discrete variable. 
In particular, a finite scheme is metric (i.e., a distance-regular graph) if and only if
it is $P$-polynomial. The $Q$-polynomiality property is much more intricate and is discussed in detail in \cite{ban84} as
well as in a large number of more recent papers. The theory of orthogonal polynomials provides tools for a detailed study 
of polynomial schemes such as the classical Hamming and Johnson schemes in coding theory \cite{del73a,ban84,del98}. 

{Turning to the case of schemes on zero-dimensional groups, we have observed in 
Sect.~\ref{sect:npol} that they cannot be polynomial because even their finite subschemes on Abelian groups 
with a non-Archimedean metric are not polynomial.
At the same time, the eigenvalues satisfy orthogonality relations \eqref{eq:pipj}, \eqref{eq:qiqj}, and therefore we are faced with the 
question of characterizing the class of orthogonal functions whose evaluations coincide with the eigenvalues.
} In this section we show that eigenvalues of the metric schemes defined in Sections \ref{sect:dualpairs} and \ref{sect:metric} have a natural interpretation in the framework of basic harmonic analysis, more specifically, the Littlewood-Paley theory, and outline its link with the theory of martingales.
 
In order to study $p$- and $q$-coefficients of nonmetric schemes constructed by partitioning the groups into balls,
see Sect.~\ref{sect:nonmetric}, we introduce a new class of orthogonal systems of functions, calling them {\em Haar-like
bases}. These systems have all the remarkable properties of wavelets, except that generally they are not self-similar.
Self-similarity is preserved only for special zero-dimensional groups with self-similar chains of nested subgroups, in which
case the bases that we define coincide with piecewise constant wavelets.

Of course, our paper is not a specialized study in harmonic analysis, so we do not discuss a number of important questions
related to the function systems. In particular, we study the $p$- and $q$-coefficients in the Hilbert space $L_2,$
while possible generalization to the spaces $L_\alpha, 1\le \alpha<\infty$ is mentioned only very briefly.

\subsection{Eigenvalues of metric schemes and Littlewood-Paley theory}
We use the notation of Section \ref{sect:vil}, starting with the zero-dimensional topological Abelian group $X.$ 
We also identify the subgroups $X_j\in X$
in the chains \eqref{eq:chain} and \eqref{eq:dc} with the metric balls in the corresponding non-Archimedean metric,
see \eqref{eq:b}, \eqref{eq:^b}. We have
  \begin{equation}\label{eq:BB1}
  \begin{array}{l}   B(r_1)\subset B(r_2), \quad r_1<r_2, \;r_1,r_2\in\I_0\\
     \bigcap_{r\in\I_0} B(r)=\{0\}, \quad \bigcup_{r\in \I_0}B(r)=X,
  \end{array}
  \end{equation}
where $\I_0=\{r:\mu(B(r))>0\}.$ The set $\I_0$ is at most countable and can have only one accumulation point $r=0.$
The quotient groups $B(r_2)/B(r_1), r_1<r_2$ are finite, and the group $X/B(r)$ is finite if $X$ is compact and is countable
and discrete if $X$ is locally compact. Analogous notation from Section \ref{sect:vil} is used for the dual group $\hat X.$

\subsubsection{Littlewood-Paley theory} For every $r\in\I_0$ define a partition of the group $X$ into balls $B(r)$:
  \begin{equation}\label{eq:pb1}
  K(r)=\{B(r)+z, z\in X/B(r)\}
  \end{equation}
  (cf. \eqref{eq:d} and \eqref{eq:XH}). 
The nesting \eqref{eq:BB1} implies that for $r_1<r_2$ the partition $K(r_1)$ is a refinement of the partition $K(r_2).$
In this case we write $K(r_1)\prec K(r_2), r_1<r_2.$
Our general idea in this section is to study nested chains of functional spaces formed by functions constant on the partitions of
the form \eqref{eq:pb1}. 

Let $f$ be a locally summable function $X$. Consider its  approximation $\gls{cErf}$ by a function piecewise constant on the 
partition \eqref{eq:pb1}:
    \begin{equation}\label{eq:appr}
  \cE_rf(x):=\mu(B(r))^{-1}\int_{B(r)+z} f(y)d\mu(y), \quad x\in B(r)+z.
    \end{equation}
This expression can be written as an integral operator 
  \begin{equation}\label{eq:io}
  \cE_rf(x)=\int_X \cE_r(x,y)f(y)d\mu(y)
  \end{equation} 
with kernel
   \begin{equation}\label{eq:kernel}
  \cE_r(x,y)=\mu(B(r))^{-1}\sum_{z\in X/B(r)} \chi[B(r)+z;x]\chi[B(r)+z;y].
  \end{equation}
\begin{lemma}
Operator $\cE_r$ is a convolution
  \begin{equation}\label{eq:cE}
  \cE_rf(x)=\mu(B(r))^{-1} \int_X\chi[B(r);x-y]f(y)d\mu(y)
  \end{equation}
\end{lemma}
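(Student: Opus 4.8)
The claim is that the averaging operator $\cE_r$ defined in \eqref{eq:appr}, or equivalently the integral operator \eqref{eq:io} with the kernel \eqref{eq:kernel}, is a convolution operator with kernel $\mu(B(r))^{-1}\chi[B(r);x-y]$. The plan is to show that the two kernels agree pointwise, i.e.,
$$
\sum_{z\in X/B(r)} \chi[B(r)+z;x]\chi[B(r)+z;y] = \chi[B(r);x-y]
$$
for all $x,y\in X$, after which \eqref{eq:cE} follows immediately from \eqref{eq:io}--\eqref{eq:kernel}.

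First I would fix $x,y\in X$ and observe that, since $K(r)=\{B(r)+z: z\in X/B(r)\}$ is a partition of $X$ (see \eqref{eq:d}, or \eqref{eq:XH} in the locally compact case), there is exactly one coset $B(r)+z_0$ containing $x$, so on the left-hand side at most one term $z=z_0$ is nonzero, and it equals $1$ precisely when $y$ also lies in $B(r)+z_0$. Thus the left-hand side equals $1$ iff $x$ and $y$ lie in the same coset of $B(r)$, and $0$ otherwise. Next I would note that $x$ and $y$ lie in the same coset of $B(r)$ iff $x-y\in B(r)$ (this is just the definition of the quotient, using that $B(r)$ is a subgroup — recall all balls are subgroups, \eqref{eq:ball}, \eqref{eq:si}), which is exactly the condition $\chi[B(r);x-y]=1$. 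Hence the two sides coincide for every $x,y$, proving the kernel identity.

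Substituting this identity into \eqref{eq:kernel} gives $\cE_r(x,y)=\mu(B(r))^{-1}\chi[B(r);x-y]$, and plugging into \eqref{eq:io} yields \eqref{eq:cE}. As a sanity check one should verify this is consistent with the defining formula \eqref{eq:appr}: for $x\in B(r)+z$, the integral $\int_X\chi[B(r);x-y]f(y)d\mu(y)=\int_{x-B(r)}f(y)d\mu(y)=\int_{B(r)+z}f(y)d\mu(y)$ since $x-B(r)=B(r)+z$ (again using that $B(r)$ is a subgroup containing its own negatives, as $\rho(-w)=\rho(w)$), which matches \eqref{eq:appr} after dividing by $\mu(B(r))$.

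There is essentially no hard step here; the statement is a routine unwinding of definitions. The only point requiring the slightest care is making sure the partition $K(r)$ is genuinely a partition and that cosets of the subgroup $B(r)$ behave as expected — both in the compact case \eqref{eq:d} and in the locally compact case \eqref{eq:dc}--\eqref{eq:XH} — and that $\mu(B(r))$ is finite and positive (which holds for $r\in\I_0$, since $B(r)$ is a compact open subgroup). Everything else is a one-line computation.
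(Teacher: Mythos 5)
Your proof is correct and follows essentially the same route as the paper: both establish the pointwise kernel identity $\sum_{z}\chi[B(r)+z;x]\chi[B(r)+z;y]=\chi[B(r);x-y]$ (the paper's Eq.~\eqref{eq:cBxy}) from the fact that $K(r)$ partitions $X$ into cosets of the subgroup $B(r)$, and then substitute into \eqref{eq:kernel}. Your extra consistency check against \eqref{eq:appr} and the remark on finiteness of $\mu(B(r))$ mirror the paper's closing observation that the sums and integrals are well defined.
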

\begin{proof}
  Since $K(r)$ is a partition of $X$ into cosets of the subgroup $B(r),$ for a given $y\in B(r)+z$ for some $z\in X/B(r)$
we have $x-y\in B(r)$ if and only if $x\in B(r)+z$ for the same $z$. This means that\footnote{Eq.~\eqref{eq:cBxy} will be useful on its own in our calculations for nonmetric schemes in the next section.}
  \begin{equation}\label{eq:cBxy}
  \chi[B(r);x-y]=\sum_{z\in X/B(r)} \chi[B(r)+z;x]\chi[B(r)+z;y].
  \end{equation}
Replacing the right-hand side of \eqref{eq:kernel} with $\chi[B(r);x-y]$, we obtain the claimed expression.
Finally note that the series and integrals are well defined because $K(r)$ is a partition into compact subsets of
finite measure.
\end{proof}

{\em Remark:} Equation \eqref{eq:cBxy} will be useful on its own in our definition of Haar-like wavelets on zero-dimensional
groups in Sect.~\ref{sect:nH} below.

\vspace*{.05in}
Expression \eqref{eq:cE} implies the following important {\em martingale property} of the approximations $\cE_r:$
\index{martingale property}
   \begin{equation}\label{eq:mp}
  \cE_{r_1}\cE_{r_2}=\cE_{\max\{r_1,r_2\}}.
   \end{equation}
In particular, the mappings $\cE_{r_1}$ and $\cE_{r_2}$ commute for all $r_1,r_2\in\I_0$ and are idempotent:
   $$
  \cE_{r_1}\cE_{r_2}=\cE_{r_2}\cE_{r_1}, \quad \cE_{r}^2=\cE_r.
   $$

We have defined the mappings $\cE_r,r\in\I_0$ on the class of all locally summable functions. Now consider them
as operators in the space $L_\alpha(X), 1\le \alpha<\infty.$ The Young inequality \eqref{eq:Young} implies that in this space
the operators $\cE_r,r\in\I_0$ are linear and bounded. The operators $\cE_r$ are projectors on the space 
$\cL_\alpha[K(r)]\subset L_\alpha(X)$ of functions piecewise constant on the partition $K(r):$
   \begin{equation}\label{eq:Lalpha}
  \cL_\alpha[K(r)]=\Big\{g\in L_\alpha(X): g(x)=\sum_{z\in X/B(r)} c_z\chi[B(r)+z;x]\Big\},
   \end{equation}
where $c_z\in\complexes$ and the $L_\alpha$-norm of $g\in\cL_\alpha$ is defined by the expression
   $$
  \|g\|_\alpha=\Big(\mu(B(r))\sum_{z\in X/B(r)}|c_z|^\alpha\Big)^{1/\alpha}.
   $$
In the Hilbert space $L_2(X)$ the operators $\cE_r$ are orthogonal projectors on the subspace $\cL_2[K(r)].$

\vspace*{.1in}\nd{\em Properties of the spaces $\cL_\alpha$.}

(i) On account of  \eqref{eq:mp}, the spaces $\cL_\alpha[K(r)], r\in\I_0$ form a chain of nested subspaces, viz.
  \begin{equation}\label{eq:mra0}
  \cL_\alpha[K(r_1)]\supset \cL_\alpha[K(r_2)] \quad\text{if }r_1<r_2.
  \end{equation}

(ii) \begin{equation}\label{eq:mra1}
  \bigcap_{r\in \I_0}\cL_\alpha[K(r)]=\begin{cases} \text{Const}_X &\text{if $X$ compact}\\
    \{0\} &\text{if $X$ is locally compact.}
   \end{cases}
  \end{equation}
Here $\text{Const}_X$ denotes a one-dimensional space of functions constant on $X$.
Indeed, functions contained in all the spaces $\cL_\alpha[K(r)], r\in\I_0$ are necessarily identically
constant on $X$, but such functions are in $L_\alpha$ only if $X$ is compact.

(iii) The Banach spaces $L_\alpha(X), 1\le \alpha<\infty$ are separable, and the union of spaces \eqref{eq:Lalpha}
is dense in each of them: namely,
  \begin{equation}\label{eq:dense}
  \overline{\bigcup_{r\in\I_0}\cL_\alpha[K(r)]}=L_\alpha(X).
  \end{equation}
  where the overbar means closure.
This follows from our assumptions of $X$ being a second-countable space whose topology is given by the chain of nested subgroups
(see \cite{aga81}, Ch.2 for details).

\begin{lemma}\label{lemma:01}
(a) 
The operators $\cE_r$ strongly converge to identity as $r\to 0$, i.e., 
for any function $f\in L_\alpha(X),1\le \alpha<\infty$
   $$
   \cE_rf\to f \quad\text{if }r\to 0
   $$
in the metric of $L_\alpha(X).$ 

(b) If $X$ is compact, then $\cE_{\bar r}$ is a projector in $L_\alpha(X)$ on $\text{Const}_X.$ If $X$ is locally compact
and $\bar r=\infty$ (see \eqref{eq:br}), then the operators $\cE_r$ strongly converge to $0$ in $L_\alpha(X),$ i.e., for any function 
$f\in L_\alpha(X), 1<\alpha<\infty$
    $$
    \cE_rf\to 0 \quad\text{if }r\to \infty
    $$
    (here it is essential that $\alpha>1$).
\end{lemma}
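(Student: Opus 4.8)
The plan is to prove the two convergence statements by the standard density argument familiar from martingale convergence theory, using the three properties of the spaces $\cL_\alpha[K(r)]$ already established: the nesting \eqref{eq:mra0}, the identification of the intersection in \eqref{eq:mra1}, the density \eqref{eq:dense}, together with the uniform boundedness of the projectors $\cE_r$ that follows from the Young inequality \eqref{eq:Young} (indeed $\|\cE_r f\|_\alpha\le \mu(B(r))^{-1}\|\chi[B(r);\cdot]\|_1\|f\|_\alpha=\|f\|_\alpha$, so each $\cE_r$ is a contraction on $L_\alpha(X)$).

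For part (a): first I would observe that for a function $g$ that is already piecewise constant on some partition $K(r_0)$, i.e. $g\in\cL_\alpha[K(r_0)]$, the martingale property \eqref{eq:mp} gives $\cE_r g=\cE_{\max\{r,r_0\}}g=g$ for all $r\le r_0$; hence $\cE_r g\to g$ trivially as $r\to 0$. Now take an arbitrary $f\in L_\alpha(X)$ and $\epsilon>0$. By the density statement \eqref{eq:dense} choose $g\in\bigcup_{r\in\I_0}\cL_\alpha[K(r)]$ with $\|f-g\|_\alpha<\epsilon$, say $g\in\cL_\alpha[K(r_0)]$. Then for all $r\le r_0$,
$$
\|\cE_rf-f\|_\alpha\le\|\cE_r(f-g)\|_\alpha+\|\cE_rg-g\|_\alpha+\|g-f\|_\alpha\le\|f-g\|_\alpha+0+\|g-f\|_\alpha<2\epsilon,
$$
using that $\cE_r$ is a contraction. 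Since $r=0$ is the only accumulation point of $\I_0$, letting $r\to 0$ inside $\I_0$ makes the left side eventually below $2\epsilon$, which proves $\cE_rf\to f$.

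For part (b): the compact case is immediate from the definition, since when $X$ is compact the largest radius $\bar r$ exists, $B(\bar r)=X$, the partition $K(\bar r)$ consists of the single coset $X$, and \eqref{eq:appr} shows $\cE_{\bar r}f$ is the constant equal to $\mu(X)^{-1}\int_X f\,d\mu$; this is precisely the orthogonal (for $\alpha=2$) projector onto $\mathrm{Const}_X$, and a contraction for all $\alpha$. For the locally compact case with $\bar r=\infty$, I would run the same density argument, but now the target is $0$: by \eqref{eq:mra1} the intersection $\bigcap_r\cL_\alpha[K(r)]$ is $\{0\}$, so I must show any $g\in\cL_\alpha[K(r_0)]$ satisfies $\cE_rg\to 0$ as $r\to\infty$. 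Writing $g=\sum_{z\in X/B(r_0)}c_z\chi[B(r_0)+z;\cdot]$ with only finitely many nonzero $c_z$ if $g$ has compact support — but density by compactly supported functions should be invoked carefully; alternatively, for $g=\chi[B(r_0)+z;\cdot]$ a single coset indicator one computes directly that $\cE_rg$ is a constant of modulus $\mu(B(r_0))/\mu(B(r))$ on the coset $B(r)+z$ and that constant times the indicator of $B(r)+z$; its $L_\alpha$-norm is $(\mu(B(r_0))/\mu(B(r)))\cdot\mu(B(r))^{1/\alpha}=\mu(B(r_0))\,\mu(B(r))^{1/\alpha-1}\to 0$ as $r\to\infty$ precisely because $\mu(B(r))\to\infty$ and $1/\alpha-1<0$ (this is where $\alpha>1$ enters). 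By linearity this extends to all finite linear combinations of coset indicators, which are dense in $L_\alpha(X)$, and the contraction-plus-$3\epsilon$ estimate then gives $\cE_rf\to 0$ for every $f\in L_\alpha(X)$.

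The main obstacle is the locally compact case of part (b): one has to make precise which dense subclass of $L_\alpha(X)$ to use (compactly supported piecewise-constant functions, i.e. finite linear combinations of indicators of single cosets $B(r_0)+z$) and to carry out the elementary norm computation $\|\cE_r\chi[B(r_0)+z;\cdot]\|_\alpha=\mu(B(r_0))\mu(B(r))^{1/\alpha-1}$ that isolates the role of $\alpha>1$; the rest is the routine $\epsilon/3$ argument powered by uniform boundedness of $\{\cE_r\}$ and the martingale identity \eqref{eq:mp}.
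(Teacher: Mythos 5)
Your proof is correct and follows essentially the same route as the paper: uniform boundedness of the $\cE_r$ plus a density argument, with the martingale identity \eqref{eq:mp} handling part (a) and an explicit decay estimate of order $\mu(B(r))^{1/\alpha-1}$ handling the locally compact case of (b). The only cosmetic difference is that in (b) the paper applies H\"older's inequality directly to an arbitrary compactly supported $f\in L_\alpha(X)$, whereas you compute the norm for a single coset indicator and extend by linearity and density; the two estimates coincide.
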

\begin{proof}
(a) All the operators $\cE_r, r\in \I_0$ are uniformly bounded, so it suffices
to prove convergence on a dense subset in $L_\alpha(X).$ Let $f\in \cL[K(r_0)],$ then \eqref{eq:mp} implies that
$\cE_r f=f$ for all $r<r_0$, which together with \eqref{eq:dense} proves the convergence. If $X$ is discrete, then 
$r=0\in \I_0,$ and we can simply write $\cE_0=I.$

(b) The first claim is obvious since $\bar r<\infty.$ Turning the second claim, it also suffices to prove it on a dense subset of functions in $L_\alpha(X).$
Consider the set of functions on $X$ with compact support. Suppose that a function $f\in L_\alpha(X)$ is supported on a ball
$B(r_0)$ of some radius $r_0,$ then for $r>r_0$ Eq.~\eqref{eq:appr} implies
  $$
  \cE_rf(x)=\begin{cases} \displaystyle{\mu(B(r))^{-1}\int_{B(r_0)} f(y) d\mu(y)} &\text{if } x\in B(r)\\
     0&\text{if }x\not\in B(r).
     \end{cases}
  $$
At the same time, using H{\"o}lder's inequality, we obtain
  $$
  \Big|\int_{B(r_0)} f(y)d\mu(y)\Big|\le \mu(B(r_0))^{1-\frac 1\alpha}\|f\|_\alpha.
  $$
The last two equations imply that
  $$
  \|\cE_rf\|_\alpha\le \Big(\frac{\mu(B(r_0))}{\mu(B(r))}\Big)^{1-\frac 1\alpha}\|f\|_\alpha\to 0
  $$
if $r\to \infty$ and $\alpha>1.$ The lemma is proved.
\end{proof}

\vspace*{.1in}
The next definition is important for our goal of characterizing the eigenvalues of the scheme $\cX.$ Consider 
the ``increment'' of the operator $\cE_r$ when the value of the radius changes from $r$ to the next value:
  \begin{equation}\label{eq:Delta}
 \Delta_r=\cE_r-\cE_{\tau_+(r)}, \quad r\in\I_0
  \end{equation}
(cf. \eqref{eq:pn}). 
If $X$ is compact, and $r=\bar r,$ then define $\Delta_{\bar r}=\cE_{\bar r}.$
Another way to write $\Delta_r$ follows if in \eqref{eq:Delta} we take into account \eqref{eq:io}-\eqref{eq:kernel}:
we see that it is an integral operator
  \begin{equation}\label{eq:io1}
  \Delta_rf(x)=\int_X\Delta_r(x,y)f(y)d\mu(y)
  \end{equation}
with kernel $\Delta_r(x,y)=\Delta_r(x-y),$ where
  \begin{equation}\label{eq:kernel1}
  \Delta_r(z)=\mu(B(r))^{-1}\chi[B(r);z]-\mu(B(\tau_+(r)))^{-1}\chi[B(\tau_+(r));z].
  \end{equation}
Importantly, the kernels $\Delta_r(x,y)$ are real symmetric.  

Since $\cL_2[K(\tau_+(r))]\subset\cL_2[K(r)],$ we can write 
  \begin{equation}\label{eq:Wr}
   \cL_2[K(r)]=\cL_2[K(\tau_+(r))]\oplus \cW_r,
   \end{equation}
where $\cW_r$ is the orthogonal complement of $\cL_2[K(\tau_+(r))]$ in $\cL_2[K(r)].$

\vspace*{.1in}
\begin{lemma}\label{lemma:delta} 
(a) The operators $\Delta_r$ are  commuting orthogonal projectors from $L_2(X)$ to $\cW_r,$
and
  \begin{equation}\label{eq:D0}
    \Delta_r\cE_{\tau_+(r)}=0, \quad r\in\I_0.
  \end{equation}
(b) The operators $\Delta_r,r\in\I_0$ form a complete system of orthogonal projectors, i.e., any function $f\in L_2(X)$ can be
written as 
  \begin{equation}\label{eq:LP1}
  f=\sum_{r\in\I_0} \Delta_rf,
  \end{equation}
  where the equality is understood in the $L_2$ sense.
\end{lemma}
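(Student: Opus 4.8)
\textbf{Proof plan for Lemma \ref{lemma:delta}.}

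The plan is to read off both parts from facts already established about the averaging operators $\cE_r$, principally the martingale identity \eqref{eq:mp} and the nesting \eqref{eq:mra0}, together with the convergence statements in Lemma \ref{lemma:01}. First I would verify \eqref{eq:D0}: from $\Delta_r=\cE_r-\cE_{\tau_+(r)}$ and the martingale property we get $\Delta_r\cE_{\tau_+(r)}=\cE_r\cE_{\tau_+(r)}-\cE_{\tau_+(r)}^2=\cE_{\tau_+(r)}-\cE_{\tau_+(r)}=0$, using $\max\{r,\tau_+(r)\}=\tau_+(r)$ and idempotency. Next, $\Delta_r$ is self-adjoint because each $\cE_r$ is an orthogonal projector (equivalently, because its kernel \eqref{eq:kernel1} is real and symmetric), and it is idempotent: $\Delta_r^2=\cE_r-\cE_r\cE_{\tau_+(r)}-\cE_{\tau_+(r)}\cE_r+\cE_{\tau_+(r)}=\cE_r-\cE_{\tau_+(r)}=\Delta_r$ by the same computation. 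Hence $\Delta_r$ is an orthogonal projector; its range is $\cL_2[K(r)]\ominus\cL_2[K(\tau_+(r))]=\cW_r$ by the splitting \eqref{eq:Wr}, since $\Delta_r=\cE_r(I-\cE_{\tau_+(r)})$ projects onto the orthocomplement of $\mathrm{ran}\,\cE_{\tau_+(r)}$ inside $\mathrm{ran}\,\cE_r$. Mutual commutativity of the $\Delta_r$ follows from mutual commutativity of the $\cE_r$ (itself immediate from \eqref{eq:mp}); equivalently one checks $\Delta_{r_1}\Delta_{r_2}=0$ for $r_1\neq r_2$ directly from the martingale identity, which also shows the projectors are pairwise orthogonal.

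For part (b), I would telescope. Enumerate $\I_0$ in increasing order; for any $r\in\I_0$ and any finite initial segment down to a small radius $s$ one has
\[
\sum_{r\in\I_0:\,s\le r} \Delta_r = \cE_s - \cE_{\tau_+(r_{\max})}
\]
in the compact case, where $r_{\max}=\bar r$ and $\Delta_{\bar r}=\cE_{\bar r}$ absorbs the top term so the partial sum collapses to $\cE_s$; in the locally compact case the partial sum over $s\le r\le R$ is $\cE_s-\cE_{\tau_+(R)}$. Now apply this to a fixed $f\in L_2(X)$ and let $s\to 0$ and (in the non-compact case) $R\to\infty$. By Lemma \ref{lemma:01}(a), $\cE_s f\to f$ in $L_2$ as $s\to 0$; by Lemma \ref{lemma:01}(b), either $\cE_{\bar r}f$ is the projection onto $\mathrm{Const}_X$, which is cancelled since $\Delta_{\bar r}=\cE_{\bar r}$ is included in the sum, or $\cE_R f\to 0$ in $L_2$ as $R\to\infty$ (here $\alpha=2>1$, as required). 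In either case the partial sums of $\sum_r \Delta_r f$ converge to $f$ in $L_2$, which is \eqref{eq:LP1}.

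I do not expect a genuine obstacle here: the statement is essentially a repackaging of the martingale/Littlewood–Paley structure already set up, and the only care needed is bookkeeping at the two ends of the chain $\I_0$ (the boundary radius $\bar r$ in the compact case and the limit $r\to\infty$ in the locally compact case). The mildly delicate point is making sure the telescoping sum is indexed correctly, since $\Delta_r$ pairs the radius $r$ with its successor $\tau_+(r)$, so that $\sum_r\Delta_r$ over a block telescopes to the difference of the $\cE$'s at the two ends of the block; I would write this out carefully once and then invoke Lemma \ref{lemma:01} to kill the boundary terms.
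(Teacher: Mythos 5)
Your proposal is correct and follows essentially the same route as the paper: part (a) by direct computation with the martingale identity \eqref{eq:mp} to get self-adjoint, pairwise orthogonal idempotents whose range is $\cW_r$, and part (b) by telescoping the partial sums to $\cE_s f$ (minus a boundary term in the locally compact case) and invoking Lemma \ref{lemma:01}. The only cosmetic quibble is the word ``cancelled'' for the role of $\Delta_{\bar r}=\cE_{\bar r}$ in the compact case --- that term is absorbed into the telescoping rather than cancelled --- but the bookkeeping at both ends of the chain is otherwise exactly as in the paper.
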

\begin{proof} 
(a) First we prove that the $\Delta_r$ are pairwise orthogonal
idempotents:
  \begin{equation}
  \Delta_{r_1}\Delta_{r_2}=\delta_{r_1,r_2}\Delta_{r_1}
  \end{equation}
Suppose that $r_1=r_2=r,$ then using \eqref{eq:mp} we find
   $$
  \Delta^2_r=\cE_r+\cE_{\tau_+(r)}-2\cE_r\cE_{\tau_+(r)}=\cE_r-\cE_{\tau_+(r)}=\Delta_r.
   $$
Now suppose that $r_1<r_2,$ then
   \begin{align*}
  \Delta_{r_1}\Delta_{r_2}&=\cE_{r_1}\cE_{r_2}-\cE_{r_1}\cE_{\tau_+(r_2)}-\cE_{\tau_+(r_1)}\cE_{r_2}+\cE_{\tau_+(r_1)}
   \cE_{\tau_+(r_2)}\\
   &=\cE_{r_2}-\cE_{\tau_+(r_2)}-\cE_{r_2}+\cE_{\tau_+(r_2)}=0.
    \end{align*}
Relation \eqref{eq:D0} is proved analogously to the above calculation. Of course, in the compact case we must 
put $\cE_{\tau_+(\bar r)}=0$ by definition.

Since the operators $\Delta_r$ are obviously self-adjoint, this implies that they are (commuting) orthogonal projectors,
and \eqref{eq:D0} implies that the range of $\Delta_r$ is $\cW_r.$

(b) Let us start with the compact case. Let us number the radii in $\I_0$ in decreasing order: $\bar r=r_0>r_1>\dots$.
Consider the finite sum 
  $$
  \sum_{0\le j\le J} \Delta_{r_j} f=\Big(\cE_{r_0}+\sum_{i=1}^{J}(\cE_{r_i}-\cE_{r_{i-1}})\Big) f\to f \quad 
\text{if }r_{_{\!J}}\to 0
  $$
where the convergence follows from Lemma \ref{lemma:01}(a).

Now let $X$ be locally compact. Again number the radii in decreasing order: $\dots r_j>r_{j+1}>\dots,$ and note that
$\lim_{j\to\infty} r_j=0$ and $\lim_{j\to-\infty}r_j=\infty.$ Consider the finite sum
  $$
  \sum_{J_1\le j\le J_2} \Delta_{r_j} f=\Big(\sum_{i=J_2}^{J_1}(\cE_{r_{i}}-\cE_{r_{i+1}})\Big) f=-\cE_{r_{_{\!J_2+1}}}f+\cE_{r_{_{\!J_1}}}f
  \to f
  $$
where the convergence for $r_{_{\!J_1}}\to 0$ and $r_{_{\!J_2}}\to\infty$ is implied by Lemma \ref{lemma:01}. The lemma is proved.
\end{proof}

\remove{ Using \eqref{eq:pri} with $r_1=r$ and $ r_2=\tau_+(r),$
we can write
   $$
   \cL_2[K(r)]=\cL_2[K(\tau_+(r))] \stackrel{\cdot}{+}\cW_r
   $$}
Formula \eqref{eq:LP1} is known as the Littlewood-Paley expansion, and the collection of functions \eqref{eq:kernel1} forms
the {\em Littlewod-Paley} basis of wavelets \cite[p.115]{Daub92}.

From part (a) of Lemma \ref{lemma:delta} we conclude (by the Pythagorean theorem) that
  \begin{equation}\label{eq:rI0}
  \sum_{r\in \I_0}\|\Delta_r f\|_2^2=\|f\|_2^2.
  \end{equation}
For $f\in L_2(X)$ define a {\em quadratic function}
   \begin{equation}\label{eq:SI}
   S_\Delta f(x)=\Big(\sum_{r\in \I_0} |\Delta_r f(x)|^2\Big)^{\half},
   \end{equation}
so that
  \begin{equation}\label{eq:J1}
  \|S_\Delta f\|_2=\|f\|_2.
  \end{equation}
Observe that the sum of the series in \eqref{eq:SI} is well defined because the terms are nonnegative. Of course, it can be infinite,
but the last equality shows that it is finite almost everywhere on $X.$

While the above results are very simple,
one of the basic facts of harmonic analysis asserts that the Littlewood-Paley expansion \eqref{eq:LP1} is preserved if the 
Hilbert space $L_2(X)$ is replaced with a Banach space $L_\alpha(X), 1<\alpha<\infty$ (the condition $\alpha>1$ is important
here). Namely, for any function $f\in L_\alpha(X), 1<\alpha<\infty$ we have the {\em Littlewood-Paley inequality}
\index{inequality!Littlewood-Paley}
  \begin{equation}\label{eq:LP}
  c_1(\alpha)\|S_\Delta f\|_\alpha\le \|f\|_\alpha\le c_2(\alpha)\|S_\Delta f\|_\alpha,
  \end{equation}
where $c_1(\alpha),c_2(\alpha)$ are positive constant that do not depend on $f$. This inequality goes back to a 
1932 work of Paley \cite{Paley32} where it was proved for the zero-dimensional dyadic group. This result was later
generalized to large classes of topological Abelian groups. A detailed exposition of the Littlewood-Paley theory 
as well as relevant references are found in the monograph \cite{edw77}.

\vspace*{.1in} These considerations take us to the main result of this section, a characterization of the eigenvalues of metric schemes on 
zero-dimensional groups.
\begin{theorem}\label{thm:LP} Consider a metric scheme $\cX=(X,\mu,\cR)$ on a zero-dimensional group $X$ defined 
in Theorem \ref{thm:m2} and let $q_r(a),a\in \I_0$ be the second eigenvalues of $\cX;$ see \eqref{eq:Ei2} and \eqref{eq:qik3}.  
Suppose that $(x,y)\in R_a, a\in \I_0.$ Then the kernel of the projector 
$\Delta_r, r\in \I_0$ can be written as
   \begin{equation}\label{eq:qDelta}
  \Delta_r(x,y)=q_{\tilde r}(a),
     \end{equation}
where $\tilde r$ is the ``dual'' radius defined in \eqref{eq:BB}.
\end{theorem}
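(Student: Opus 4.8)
\textbf{Proof plan for Theorem \ref{thm:LP}.}

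The plan is to compare two explicit convolution kernels: the kernel $\Delta_r(z)$ of the Littlewood--Paley increment, written out in \eqref{eq:kernel1}, and the kernel $\chi^\natural[\hat S(t);x-y]$ of the spectral projector for the metric scheme, which by Lemma~\ref{lemma:fc} and Theorem~\ref{thm:m1} (see \eqref{eq:sdfb}, \eqref{eq:qik3}) is $\sum_{a\in\I_0} q_t(a)\chi[S(a);x-y]$. First I would specialize Lemma~\ref{lemma:fc} to the ball $\hat B(t)$ and its predecessor, or equivalently use \eqref{eq:f2}, to write $\chi^\natural[\hat B(t);x]=\hat\mu(\hat B(t))\chi[B(t^\natural);x]$. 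Taking the difference $\chi[\hat S(t);\xi]=\chi[\hat B(t);\xi]-\chi[\hat B(\tau_-(t));\xi]$ from \eqref{eq:Sf} and applying $\cF^\natural$ gives
\begin{equation}
\chi^\natural[\hat S(t);x]=\hat\mu(\hat B(t))\chi[B(t^\natural);x]-\hat\mu(\hat B(\tau_-(t)))\chi[B(\tau_+(t^\natural));x].
\end{equation}

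Next, set $t=\tilde r$, so that $t^\natural=r$ and, by Proposition~\ref{prop:+-}(a), $\tau_+(t^\natural)=\tau_+(r)$ while $\hat B(\tau_-(\tilde r))^\bot = B(\tau_+(r))$ via \eqref{eq:BB}, \eqref{eq:+-}. Using the duality relations \eqref{eq:mu1} (namely $\hat\mu(\hat B(\tilde r))=\mu(B(r))^{-1}$ and $\hat\mu(\hat B(\tau_-(\tilde r)))=\hat\mu(\hat B(\widetilde{\tau_+(r)}))=\mu(B(\tau_+(r)))^{-1}$) we obtain
\begin{equation}
\chi^\natural[\hat S(\tilde r);x]=\mu(B(r))^{-1}\chi[B(r);x]-\mu(B(\tau_+(r)))^{-1}\chi[B(\tau_+(r));x],
\end{equation}
which is exactly $\Delta_r(x)$ from \eqref{eq:kernel1}. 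Since $\Delta_r$ has kernel $\Delta_r(x-y)$ by \eqref{eq:io1}--\eqref{eq:kernel1}, and since $\chi^\natural[\hat S(\tilde r);x-y]=\sum_{a\in\I_0}q_{\tilde r}(a)\chi[S(a);x-y]$ by \eqref{eq:sdfb}, reading off the coefficient of $\chi[S(a);x-y]$ for the unique $a$ with $(x,y)\in R_a$ (i.e.\ $x-y\in S(a)$) yields $\Delta_r(x,y)=q_{\tilde r}(a)$, which is \eqref{eq:qDelta}. I should also treat the compact boundary case $r=\bar r$ separately, where $\Delta_{\bar r}=\cE_{\bar r}$ is the projector onto constants with kernel $\mu(X)^{-1}\chi[B(\bar r);x-y]=1$, and check this matches $q_0(a)=1$ (consistent with \eqref{eq:q0} and $\mu(X)=1$), since $\widetilde{\bar r}=0$.

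The only genuine subtlety is bookkeeping with the bijections $r\mapsto\tilde r$, $t\mapsto t^\natural$ and the shift operators $\tau_\pm$: one must be careful that $\widetilde{\tau_+(r)}=\tau_-(\tilde r)$ and $\tau_+(\widetilde{r})=\widetilde{\tau_-(r)}$ are applied in the correct direction (Proposition~\ref{prop:+-}(a) and \eqref{eq:gl}), and that the two ``truncation'' relations \eqref{eq:mu1} are used with matching radii. Once the indices are lined up, the identification of the two kernels is a one-line comparison, so there is no real analytic obstacle here — the content is entirely in Lemma~\ref{lemma:fc} and the duality dictionary already established in Section~\ref{sect:dualpairs}.
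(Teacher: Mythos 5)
Your proposal is correct and is essentially the paper's own argument: both establish the key identity $\Delta_r(x,y)=\chi^\natural[\hat S(\tilde r);x-y]$ using \eqref{eq:mu1}, \eqref{eq:+-} and Lemma~\ref{lemma:fc}, and then read off the coefficient from \eqref{eq:sdfb}. The only cosmetic difference is direction — the paper computes $\tilde{\Delta_r f}(\xi)=\chi[\hat S(\tilde r);\xi]\tilde f(\xi)$ and inverts, while you apply $\cF^\natural$ to the sphere indicator and match it against the explicit kernel \eqref{eq:kernel1} — and your index bookkeeping ($\tau_-(\tilde r)=\widetilde{\tau_+(r)}$, the boundary case $r=\bar r$) checks out.
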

\begin{proof} As established in  \eqref{eq:cE} and \eqref{eq:io1}-\eqref{eq:kernel1}, the $\cE_r$ and $\Delta_r$ are convolution operators.
Recalling the Fourier transform expression for the ball \eqref{eq:mu1} (and \eqref{eq:Fc}), we obtain
   $$
   \tilde{\cE_r f}(\xi)=\mu(B(r))^{-1}\tilde\chi[B(r);\xi]\tilde f(\xi)=\chi[\hat B(\tilde r);\xi]\tilde f(\xi)
   $$
and thus,
  \begin{equation}\label{eq:ddn}
  \tilde{\Delta_r f}(\xi)=\Big(\chi[\hat B(\tilde r);\xi]-\chi[\hat B(\tau_-(\tilde r));\xi]\Big)\tilde f(\xi)=\chi[\hat S(\tilde r);\xi]\tilde f(\xi).
  \end{equation}
Recall that $\hat B(\cdot)$ and $\hat S(\cdot)$ are balls and spheres in the dual group $\hat X$ and note the use of \eqref{eq:+-} and Lemma \ref{lemma:fc}. 
Now compute the inverse Fourier transform \eqref{eq:ift} of the right-hand side of \eqref{eq:ddn} and use \eqref{eq:io1} to claim that
  \begin{equation}\label{eq:639}
  \Delta_r(x,y)=\int_{\hat X}\overline{\xi(x-y)} \chi[\hat S(\tilde r);\xi]d\hat\mu(\xi)=\chi^\natural (\hat S(\tilde r); x-y).
  \end{equation}
The right-hand side of the last expression is familiar from Section~\ref{sect:dualpairs} where it arises in studying partition of $X$ into
spheres. In particular, from \eqref{eq:sdfb} we obtain the following expression:
  $$
  \Delta_r(x,y)=\sum_{a\in \I} q_{\tilde r}(a)\chi[S(a);x-y].
  $$
Of course, there is only one value of $a$ for which the term on the right is nonzero, namely the one with $x-y\in S(a),$ i.e., 
such that $(x,y)\in R_a.$
\end{proof}

\subsubsection{Metric schemes and martingales} Considerations of the previous section permit a useful reformulation in terms of 
martingale theory. We digress briefly from the main subject of the paper to discuss this topic. Note that the connection between the theory developed above
and martingales is apparent already from expressions like \eqref{eq:io} and related nested partitions (more on this below).
Martingale extensions of the Littlewood-Paley theory are associated primarily with the work of Burkholder (\cite{Burk73,Burk91}) 
and are discussed in \cite[Ch.~5]{edw77}. See also \cite[Ch.~11]{ChowTeicher97} as well as the survey \cite{Peshkir95} which offers several 
different perspectives of martingale theory.
 
Let $X$ be a compact group and let the radii of the balls (nested subgroups) be numbered in decreasing order: $\bar r=r_0>r_1>r_2>\dots.$
Suppose that the Haar measure is normalized by the condition $\mu(X)=1$. Let us view $(X,\mu)$ as a probability space
equipped with a filtration of increasingly refined partitions $K_j=K(r_j), j\in \naturals_0$ (see \eqref{eq:pb1}): 
    \begin{equation}\label{eq:partition}
 K_0\prec K_1\prec K_2,\dots.
   \end{equation}
Let us write $\cE_j$ instead of $\cE_{r_j}$, then the martingale property \eqref{eq:mp} takes the form
   \begin{equation}\label{eq:mp1}
   \cE_{j_1}\cE_{j_2}=\cE_{\min\{j_1,j_2\}}.
   \end{equation}
Let $f: X\to \complexes$ be a measurable function (a random variable). The expectation of $f$ equals
   $$
   f_0=\cE_0f=\int_X f(x) d\mu(x)
   $$
and its conditional expectation with respect to the $\sigma$-algebra generated by the blocks of the partition $K_j$ is precisely
   $$
   f_j=\cE_j f,\quad j\in \naturals.
   $$
By \eqref{eq:mp1} the random variables $f_i, i\in \naturals$ have the property 
  \begin{equation}\label{eq:mp2}
  \cE_j f_i=f_j \quad\text{if }i\ge j;
  \end{equation}
in particular, 
   \begin{equation}\label{eq:mp3}
   \cE_j f_{j+1}=f_j, \quad j\in \naturals_0.
   \end{equation}
A sequence of random variables $f=(f_0,f_1,f_2,\dots)$ is called a {\em martingale} on $X$ with respect to the filtration \eqref{eq:partition}
if it satisfies the following conditions:\\[.05in]
(i) for each $j\in\naturals_0$ the random variable $f_j$ is measurable with respect to $K_j,$ i.e., 
$f_j$ is constant on the blocks of the partition;\\
(ii) the random variables $f_j$ satisfy condition \eqref{eq:mp3},\\[.05in]
If the sequence $(f_j)$
is generated by a single function, it is called a {\em Doob martingale} \index{Doob martingale}
(also called a L{\'e}vy martingale). It is precisely Doob martingales that we were 
considering earlier in this paper, calling them piecewise constant functions.

For an arbitrary martingale on $X$ define a {\em martingale difference sequence:}
  \begin{equation}\label{eq:md}
  \Delta f_0=f_0, \quad\Delta f_j=f_j-f_{j-1}, \quad j\in \naturals
  \end{equation}
  and a sequence of {\em quadratic variations:}
  \begin{equation}\label{eq:qv}
  [f]_J=\sum_{i=0}^J |\Delta f_j|^2, \quad j\in \naturals_0.
  \end{equation}
  Assume for simplicity that the random variables $f_j$ are real-valued and $\Delta f_0=f_0=0.$

\begin{lemma} The martingale differences are uncorrelated, i.e.,
   \begin{equation*}\label{eq:ll1}
   \cE_0 \Delta f_i\Delta f_j=0, \quad{i\ne j}
   \end{equation*}
and
  \begin{equation*}\label{eq:ll2}
    \cE_0(\Delta f_j)^2=\cE_0 f_j^2-\cE_0 f_{j-1}.
  \end{equation*}
\end{lemma}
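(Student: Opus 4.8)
The claim is the standard fact that a (real, square-integrable) martingale difference sequence is orthogonal, together with the telescoping formula for the second moment of an increment. The plan is to work entirely from the martingale property \eqref{eq:mp3} and the projector structure already established in Lemma \ref{lemma:delta}, in particular the identities $\cE_j\cE_i=\cE_{\min\{i,j\}}$ and the fact that $\cE_j$ is self-adjoint (orthogonal projector) in $L_2(X,\mu)$.

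First I would prove the orthogonality. Assume without loss of generality $i<j$. By definition $\Delta f_i=f_i-f_{i-1}$ is measurable with respect to $K_i$, hence a fortiori measurable with respect to $K_{j-1}$ (since $i-1\le i\le j-1$ and the filtration is increasing), so $\cE_{j-1}(\Delta f_i \cdot g)=\Delta f_i\cdot\cE_{j-1}g$ for any $g$; equivalently $\cE_{j-1}(\Delta f_i\cdot\Delta f_j)=\Delta f_i\cdot\cE_{j-1}\Delta f_j$. But $\cE_{j-1}\Delta f_j=\cE_{j-1}f_j-\cE_{j-1}f_{j-1}=f_{j-1}-f_{j-1}=0$ by \eqref{eq:mp3} and by $f_{j-1}$ being $K_{j-1}$-measurable. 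Hence $\cE_{j-1}(\Delta f_i\Delta f_j)=0$, and applying $\cE_0$ and using $\cE_0\cE_{j-1}=\cE_0$ (martingale property \eqref{eq:mp1}) gives $\cE_0(\Delta f_i\Delta f_j)=0$. This is the first displayed equation. The only mild subtlety is the ``pulling out what is known'' step $\cE_{j-1}(\Delta f_i\cdot h)=\Delta f_i\cdot\cE_{j-1}h$, which holds because $\Delta f_i$ is constant on the blocks of $K_{j-1}$ and $\cE_{j-1}$ is the block-averaging operator \eqref{eq:appr}; I would note this explicitly rather than invoke abstract conditional-expectation machinery.

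For the second identity I would expand $(\Delta f_j)^2=(f_j-f_{j-1})^2=f_j^2-2f_jf_{j-1}+f_{j-1}^2$ and apply $\cE_0$. The cross term is handled the same way: $f_{j-1}$ is $K_{j-1}$-measurable, so $\cE_{j-1}(f_jf_{j-1})=f_{j-1}\cE_{j-1}f_j=f_{j-1}^2$ by \eqref{eq:mp3}; then $\cE_0$ of this is $\cE_0 f_{j-1}^2$ using \eqref{eq:mp1}. Therefore $\cE_0(\Delta f_j)^2=\cE_0 f_j^2-2\cE_0 f_{j-1}^2+\cE_0 f_{j-1}^2=\cE_0 f_j^2-\cE_0 f_{j-1}^2$. (Here the paper's displayed statement writes $\cE_0 f_{j-1}$, evidently a typo for $\cE_0 f_{j-1}^2$; I would write the correct form $\cE_0 f_j^2-\cE_0 f_{j-1}^2$.)

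There is no real obstacle here — this is a routine consequence of the projector identities in Lemma \ref{lemma:delta} and the definition of $\cE_j$ as block-averaging. If anything, the one point deserving a sentence of care is the reduction from $\cE_{j-1}$ to $\cE_0$: all the vanishing/telescoping happens at level $j-1$, and one then composes with $\cE_0$, using $\cE_0\cE_{j-1}=\cE_0$ from \eqref{eq:mp1}, to reach the stated (expectation) form. The hypothesis $\Delta f_0=f_0=0$ stated just before the lemma is not needed for these two identities but is harmless, and keeps the indexing clean.
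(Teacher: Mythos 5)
Your proof is correct and rests on the same two ingredients as the paper's own argument (the pulling-out property $\cE_i(gh)=h\,\cE_i g$ for $h$ constant on the blocks of $K_i$, and the tower identity $\cE_0\cE_{j-1}=\cE_0$), merely organized more compactly: you condition the whole product at level $j-1$ and use $\cE_{j-1}\Delta f_j=0$, whereas the paper expands $\Delta f_i\Delta f_j$ into four products and evaluates each one separately. You are also right that the displayed statement contains a typo: the paper's own proof in fact derives $\cE_0(\Delta f_j)^2=\cE_0 f_j^2-\cE_0 f_{j-1}^2$, which is the form needed for the telescoping identity \eqref{eq:Levi} that follows.
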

\begin{proof} Let $g$ be an arbitrary function on $X$ and let $f$ be a function piecewise constant on the blocks of the partition $K_i$.
Then 
   \begin{equation}\label{eq:hih}
    \cE_i gh = h\cE_i g.
   \end{equation}
By the martingale property,
  \begin{equation}\label{eq:mp0}
  \cE_0=\cE_0\cE_i, \quad i\in \naturals_0.
  \end{equation}
Using  \eqref{eq:md}, we find
  \begin{equation}\label{eq:56}
  \cE_0\Delta f_i\Delta f_j=\cE_0 f_i f_j-\cE_0 f_i f_{j-1}-\cE_0 f_{i-1}f_j+\cE_0 f_{i-1}f_{j-1}.
  \end{equation}
Suppose that $i>j$. Using \eqref{eq:mp2} together with \eqref{eq:hih} and \eqref{eq:mp0}, we find
  \begin{align*}
  &  \cE_0 f_if_j=\cE_0 \cE_j f_if_j-\cE_0 f_j\cE_j f_i=\cE_0 f_j^2\\
  &\cE_0 f_if_{j-1}=\cE_0\cE_{j-1} f_i f_{j-1}=\cE_0 f_{j-1}\cE_{j-1}f_i=\cE_0 f_{j-1}^2\\
  &\cE_0 f_{i-1}f_j=\cE_0\cE_j f_{i-1}f_j=\cE_0 f_j\cE_j f_{i-1}=\cE_0 f_j^2\\
  &\cE_0 f_{i-1}f_{j-1}=\cE_0\cE_{j-1} f_{i-1}f_{j-1}=\cE_0 f_{j-1}\cE_{j-1} f_{i-1}=\cE_0 f_{j-1}^2.
    \end{align*}
Inserting these formulas into \eqref{eq:56}, we obtain the first part of the Lemma.

The second part is proved analogously. We have 
  \begin{equation}\label{eq:58}
  \cE_0(\Delta f_j)^2=\cE_0 f_j^2- 2\cE_0 f_j f_{j-1}+\cE_0 f_{j-1}^2.
  \end{equation}
Using the martingale property, we find
  $$
  \cE_0 f_jf_{j-1}= \cE_0\cE_{j-1} f_jf_{j-1}=\cE_0 f_{j-1} \cE_{j-1} f_j=\cE_0 f_{j-1}^2.
  $$  
Using this in \eqref{eq:58}, we obtain the second claim of the lemma.
\end{proof}

Summing equations \eqref{eq:qv} on $j=0,1,\dots, J,$ we obtain
     \begin{equation}\label{eq:Levi}
     \cE_0[f]_J=\cE_0 f_J^2, \quad J\in\naturals_0.
     \end{equation}
Letting $f$ to be a Doob martingale and setting $J\to\infty,$ we obtain relations \eqref{eq:rI0} and \eqref{eq:J1} derived earlier
using a different language.

The Littlewood-Paley inequality \eqref{eq:LP} also has an analog in martingale theory. Suppose that $1<\alpha<\infty$ (again it is important
that $\alpha>1$, see \cite{Peshkir95}, \S6) and let $f=(f_0,f_1,f_2,\dots)$  be an arbitrary martingale with $f_0=0$. According to the {\em Burkholder inequality} \index{inequality!Burkholder}
\cite{Burk73}, \cite[Ch.11]{ChowTeicher97},
   \begin{equation}\label{eq:Burk}
   c_1(\alpha)\cE_0[f]_J^{\alpha/2}\le \cE_0|f_J|^\alpha\le c_2(\alpha)\cE_0[f]_J^{\alpha/2},
   \end{equation}
where the positive constants $c_1(\alpha),c_2(\alpha)$ depend neither on $J\in \naturals_0$ nor on the martingale $f$. \remove{For instance, we
 can take
  $$
  c_1(\alpha)=(18 \alpha^{3/2}/(\alpha-1))^{-1}, \;\;c_2(\alpha)=18\alpha^{3/2}(\alpha-1)^{1/2}.
  $$}
It is quite remarkable that a simple martingale property, \eqref{eq:mp3}, gives rise to an inequality as strong as \eqref{eq:Burk}.

It is easy to see that for Doob martingales the Littlewood-Paley and Burkholder inequalities are equivalent: indeed, specializing 
\eqref{eq:LP} for functions piecewise constant on the partition $K_J,$ we obtain \eqref{eq:Burk}. Conversely, starting from 
\eqref{eq:Burk} and letting $J\to\infty,$ we obtain \eqref{eq:LP}. The assumptions of $f_0=0$ and real-valued martingale are
not essential. It is easy to lift them at the expense of somewhat bigger constants in \eqref{eq:Burk}.
It is also possible to generalize these considerations from compact to locally compact 
zero-dimensional Abelian groups.

Summarizing, we observe that in the special situation of zero-dimensional Abelian groups, the Littlewood-Paley theory
and martingale theory form equivalent languages to describe the same set of results. 

%
%
\subsection{Nonmetric association schemes and Haar-like wavelets}\label{sect:nH}
Let us consider nonmetric schemes on zero-dimen\-sional groups introduced in Sect.~\ref{sect:nonmetric}. Recall that these schemes are constructed from spectrally dual partitions of $X$ into balls, see \eqref{eq:nmp}-\eqref{eq:^Phi}, 
that are obtained as refinements of partitions into spheres \eqref{eq:part}. In particular, spheres $\hat S(t),t\in\hat\R_0$
in the dual group $\hat X$ are partitioned into balls as follows: 
   \begin{equation}
   \hat S(t)=\bigcup_{j=1}^{\hat n(t)-1}\hat\Phi_j(t), \quad t>0
   \end{equation}
while for $t=0$ as before we put $\hat S(0)=\hat B(0)=\hat\Phi_0(0)=\{1\},$ where $1$ is the unit character 
(in this section, we again use the notation $\R,\R_0,$ etc. instead of $\I,\I_0$, etc., for reasons explained in the beginning
of Sect.~\ref{sect:nonmetric}).
Writing these partitions in terms of characteristic functions, we obtain
   \begin{align}
  \chi[\hat S(t);\xi]&=\sum_{j=1}^{\hat n(t)-1} \chi[\hat\Phi_j(t),\xi], \quad t>0,
\label{eq:cf}\\
   \chi[\hat S(0);\xi]&=\chi[\hat B(0);\xi]=\chi[\hat\Phi_0(0),\xi]=\1(\xi=1). \label{eq:cf0}
   \end{align}    
Using expression \eqref{eq:639} for the kernel of the projector $\Delta_r$, we obtain the following relation:
  $$
  \Delta_r(x,y)=\sum_{i=1}^{\hat n(\tilde r)-1}\Delta_{r,j}(x,y)
  $$
where
  \begin{align}
\Delta_{r,j}(x,y)&=\int_{\hat X} \overline{\xi(x-y)}\chi[\hat\Phi_j(\tilde r);\xi]d\hat\mu(\xi)\nonumber\\
&=
       \chi^\natural[\hat\Phi_j(\tilde r);x-y].\label{eq:co}
  \end{align}
If $X$ is compact and $\bar r$ denotes the maximum radius \eqref{eq:br}, then
   \begin{equation}\label{eq:1c}
  \Delta_{\bar r,0}(x,y)=\Delta_{\bar r}(x,y)=1 \quad \text{for all } x,y\in X.
   \end{equation}
Denote by $\Delta_{r,j}$ the integral operator with the kernel $\Delta_{r,j}(x,y).$ Eq.~\eqref{eq:co} implies that 
in $L_2(X,\mu)$ these operators are commuting orthogonal projectors:
    \begin{equation}\label{eq:sa}
\Delta_{r,j}^\ast=\Delta_{r,j}, \quad\Delta_{r_1,j_1}\Delta_{r_2,j_2}=\delta_{r_1,r_2}\delta_{j_1,j_2}\Delta_{r_1.j_1}.
    \end{equation}
Moreover, 
   \begin{equation}\label{eq:Ds}
   \Delta_r=\sum_{j=1}^{\hat n(\tilde r)-1}\Delta_{r,j}
   \end{equation}
and
   \begin{equation}\label{eq:sI}
  \sum_{r\in \R_0} \sum_{j=1}^{\hat n(\tilde r)-1}\Delta_{r,j}=I,
   \end{equation}
where $I$ is the identity operator in $L_2(X).$ This implies that the system of projectors $\{\Delta_{r,j}\}$ is complete
in $L_2(X).$
If the group $X$ is compact, then $\Delta_{\bar r,0}=\Delta_{\bar r}$ is a projector on the one-dimensional space of constants.
  
All the above formulas follow immediately from the fact that \eqref{eq:^nmp} is a partition of the group $\hat X$
into balls $\hat \Phi_j(t).$ Note also that we could take account of the relation $\hat n(\tilde r)=n(\tau_+(r))$ that
is dual to \eqref{eq:nr}. 

Comparing formulas \eqref{eq:cP} and \eqref{eq:co}, we obtain the following expression for the kernel of $\Delta_{r,j}:$
  $$
  \Delta_{r,j}(x,y)=\sum_{a\in \R}\sum_{j=1}^{n(a)-1} q_{\tilde r,j}(a,i)\chi[\Phi_i(a);x-y].
  $$

This implies the following theorem which is analogous to Theorem \ref{thm:LP}.
\begin{theorem}\label{thm:knm} Let 
$\cX=(X,\mu,\cR)$ be an association scheme on the zero-dimensional group $X$ constructed in Theorem \ref{thm:tnm}. 
Kernels of the projectors $\Delta_{r,j}, r\in \R_0$ are related to the second eigenvalues of $\cX$ as follows:
  \begin{equation}\label{eq:Dq}
  \Delta_{r,j}(x,y)=q_{\tilde r,j}(a,i) \quad\text{for }(x,y)\in R_{a,i}.
  \end{equation}
\end{theorem}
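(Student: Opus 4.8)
\textbf{Proof plan for Theorem \ref{thm:knm}.}
The plan is to obtain \eqref{eq:Dq} as a direct consequence of the spectral analysis already carried out for the nonmetric schemes, in complete parallel with the proof of Theorem \ref{thm:LP}. First I would recall that, by construction, $\Delta_{r,j}$ is the integral operator whose kernel is the convolution kernel $\chi^\natural[\hat\Phi_j(\tilde r);x-y]$, established in \eqref{eq:co}. This identity is itself derived by passing to the Fourier side: applying $\cF^\sim$ turns $\Delta_{r,j}$ into multiplication by $\chi[\hat\Phi_j(\tilde r);\xi]$ (cf.\ the computation \eqref{eq:ddn}--\eqref{eq:639} in the metric case, with the sphere $\hat S(\tilde r)$ replaced by its block $\hat\Phi_j(\tilde r)$), and the inverse transform $\cF^\natural$ recovers the kernel $\chi^\natural[\hat\Phi_j(\tilde r);x-y]$ in $L_2(X,\mu)$.

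The second step is to expand this kernel in the basis of indicator functions of the blocks $\Phi_i(a)$ of the partition $\cN$. This is exactly Lemma \ref{lemma:cfs}: equation \eqref{eq:^cP} reads
\[
\chi^\natural[\hat\Phi_j(t);x]=\sum_{a\in\R}\sum_{i=1}^{n(a)-1} q_{t,j}(a,i)\,\chi[\Phi_i(a);x],
\]
with the coefficients $q_{t,j}(a,i)$ given explicitly in \eqref{eq:qtj}. Setting $t=\tilde r$ and evaluating at $x-y$, I would write
\[
\Delta_{r,j}(x,y)=\sum_{a\in\R}\sum_{i=1}^{n(a)-1} q_{\tilde r,j}(a,i)\,\chi[\Phi_i(a);x-y].
\]
Since the sets $\Phi_i(a)$, $a\in\R$, $1\le i\le n(a)-1$, partition $X$, exactly one term on the right-hand side is nonzero for any given pair $(x,y)$: the one with $x-y\in\Phi_i(a)$, i.e.\ $(x,y)\in R_{a,i}$ in the notation of Theorem \ref{thm:tnm}. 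For that pair the sum collapses to $q_{\tilde r,j}(a,i)$, which is \eqref{eq:Dq}.

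Two minor points need to be addressed for completeness. First, the expansion \eqref{eq:^cP} is an equality in $L_2$; to read off a pointwise value of the kernel one argues, as in Lemma \ref{lemma:pointwise} and Lemma \ref{lemma:inter}, that both sides are piecewise constant on the blocks of the partition, so they agree pointwise once the values on the (at most one) zero-measure block $\{1\}$/$\{0\}$ are fixed by convention; this is routine and I would only remark on it. Second, in the compact case the boundary block $(\bar r,0)$ must be treated separately: there $\Delta_{\bar r,0}(x,y)\equiv1$ by \eqref{eq:1c}, matching $q_{0,0}(a,i)$ with the normalization $q_t(0)=\hat\mu(\hat B(t))$ and $\hat\mu(\hat X)\mu(\{0\})=1$ in the discrete-dual case, exactly as the analogous boundary case is handled in Lemma \ref{lemma:orth} and Theorem \ref{thm:LP}. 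There is no real obstacle here; the only thing requiring a little care is bookkeeping the indices through the bijection $r\leftrightarrow\tilde r$ and the relation $\hat n(\tilde r)=n(\tau_+(r))$, so that the range $1\le j\le\hat n(\tilde r)-1$ matches the number of blocks of $\hat S(\tilde r)$.
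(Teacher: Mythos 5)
Your proposal is correct and follows essentially the same route as the paper: the paper likewise takes the kernel identity $\Delta_{r,j}(x,y)=\chi^\natural[\hat\Phi_j(\tilde r);x-y]$ from \eqref{eq:co}, expands it via Lemma \ref{lemma:cfs} as $\sum_{a,i} q_{\tilde r,j}(a,i)\chi[\Phi_i(a);x-y]$, and reads off \eqref{eq:Dq} from the fact that the blocks $\Phi_i(a)$ partition $X$. Your additional remarks on pointwise versus $L_2$ equality and the boundary block in the compact case are sensible bookkeeping that the paper leaves implicit.
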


In the second part of this section we use this theorem to show that the eigenvalues of nonmetric schemes are related
to a class of complete systems of orthogonal functions on $L_2(X)$ which we call {\em Haar-like bases}. As a first step, we derive
another expression for the kernel $\Delta_{r,j}.$
\begin{lemma}\label{lemma:of} We have
   \begin{align}
   \Delta_{r,j}(x,y)&=\mu(B(\tau_+(r)))^{-1} \theta_{j,\tilde r}(x-y)\chi[B(\tau_+(r));x-y]\label{eq:pff}\\
   &=\mu(B(\tau_+(r)))\sum_{z\in X/B(\tau_+(r))}\Delta_{r,j}(x,z)\Delta_{r,j}(z,y)\label{eq:Drj}\\
   &=\mu(B(\tau_+(r)))\sum_{z\in X/B(\tau_+(r))}\Delta_{r,j}(x,z)\overline{\Delta_{r,j}(y,z)}. \label{eq:Dbrj}
   \end{align}
 \end{lemma}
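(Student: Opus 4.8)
The plan is to establish \eqref{eq:pff} first and then to read \eqref{eq:Drj} and \eqref{eq:Dbrj} off from it. For \eqref{eq:pff} I would start from the kernel expression for $\Delta_{r,j}$ obtained just above the lemma (by comparing \eqref{eq:cP} and \eqref{eq:co}), namely $\Delta_{r,j}(x,y)=\chi^\natural[\hat\Phi_j(\tilde r);x-y]=\sum_{a\in\R_0}\sum_{i=1}^{n(a)-1}q_{\tilde r,j}(a,i)\,\chi[\Phi_i(a);x-y]$, and substitute the explicit coefficients from \eqref{eq:qtj} taken with $t=\tilde r$ (so that $t^\natural=r$). By the order-reversing property of the tilde bijection \eqref{eq:gl} together with \eqref{eq:+-} and \eqref{eq:BB} one has $\hat B(\tau_-(\tilde r))=B(\tau_+(r))^\bot$, hence $\hat\mu(\hat B(\tau_-(\tilde r)))=\mu(B(\tau_+(r)))^{-1}$ by \eqref{eq:PW}; also $\hat\omega_{ji}(\tilde r)=\theta_{j,\tilde r}(z_{i,\tau_+(r)})$ by \eqref{eq:^R}, and $\theta_{j,\tilde r}\in\hat B(\tilde r)=B(r)^\bot$ is trivial on $B(r)$, so it is constant on each coset $\Phi_i(\tau_+(r))=B(r)+z_{i,\tau_+(r)}$, with value $\theta_{j,\tilde r}(z_{i,\tau_+(r)})$. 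The three cases of \eqref{eq:qtj} ($a\le r$, $a=\tau_+(r)$, $a>\tau_+(r)$) are exhaustive, and collecting terms --- using $\bigcup_{a\in\R_0,\,a\le r}S(a)=B(r)$, $\bigcup_{i=1}^{n(\tau_+(r))-1}\Phi_i(\tau_+(r))=S(\tau_+(r))$ and $B(\tau_+(r))=B(r)\cup S(\tau_+(r))$ (see \eqref{eq:Sx}) --- collapses the right-hand side to $\mu(B(\tau_+(r)))^{-1}\theta_{j,\tilde r}(x-y)\chi[B(\tau_+(r));x-y]$, which is \eqref{eq:pff}. (Equivalently, one may argue straight from \eqref{eq:co}: write $\hat\Phi_j(\tilde r)=B(\tau_+(r))^\bot\theta_{j,\tilde r}$, translate the integration variable by $\theta_{j,\tilde r}$ using translation invariance of $\hat\mu$, and evaluate the remaining integral of a character over a subgroup by \eqref{eq:Db} and \eqref{eq:PW}.) The boundary radii $r=\bar r$ (compact case) and $r=0$ (discrete case) are handled by the conventions already in force for \eqref{eq:Delta} and \eqref{eq:1c}, with $\mu(B(\tau_+(\bar r))):=\mu(X)$.

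Next I would deduce \eqref{eq:Drj}. By \eqref{eq:co} the operator $\Delta_{r,j}$ is a convolution, and \eqref{eq:pff} shows that for fixed $x$ the function $z\mapsto\Delta_{r,j}(x,z)$ is supported on the single coset $x+B(\tau_+(r))$ (the subgroup being symmetric). Hence the integrand of $\int_X\Delta_{r,j}(x,z)\,\Delta_{r,j}(z,y)\,d\mu(z)$ vanishes unless $x-y\in B(\tau_+(r))$, in which case it is supported on the one coset $x+B(\tau_+(r))=y+B(\tau_+(r))$, and on that coset it equals, by \eqref{eq:pff}, $\mu(B(\tau_+(r)))^{-2}\,\theta_{j,\tilde r}(x-z)\,\theta_{j,\tilde r}(z-y)=\mu(B(\tau_+(r)))^{-2}\,\theta_{j,\tilde r}(x-y)$, which does not depend on $z$. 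Integrating returns $\mu(B(\tau_+(r)))^{-1}\theta_{j,\tilde r}(x-y)\chi[B(\tau_+(r));x-y]=\Delta_{r,j}(x,y)$ (this also re-proves $\Delta_{r,j}^2=\Delta_{r,j}$, cf. \eqref{eq:sa}), and since the integrand is constant on, and supported in, a single coset, the integral may be rewritten as $\mu(B(\tau_+(r)))\sum_{z\in X/B(\tau_+(r))}\Delta_{r,j}(x,z)\Delta_{r,j}(z,y)$, the summand being evaluated at any representative of the coset; this is \eqref{eq:Drj}. Then \eqref{eq:Dbrj} is identical to \eqref{eq:Drj}: from \eqref{eq:pff} and $\overline{\phi(a)}=\phi(-a)$ \eqref{eq:+.} together with the symmetry of $B(\tau_+(r))$ one gets $\overline{\Delta_{r,j}(y,z)}=\Delta_{r,j}(z,y)$ (equivalently, $\Delta_{r,j}$ is self-adjoint, \eqref{eq:sa}), so $\Delta_{r,j}(x,z)\overline{\Delta_{r,j}(y,z)}=\Delta_{r,j}(x,z)\Delta_{r,j}(z,y)$.

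The one step that genuinely requires an idea rather than bookkeeping is the collapse of the integral to a finite sum in \eqref{eq:Drj}: the kernel $z\mapsto\Delta_{r,j}(x,z)$ is \emph{not} constant on the cosets of $B(\tau_+(r))$ --- the relevant character $\theta_{j,\tilde r}$ is trivial only on the strictly smaller subgroup $B(r)$ --- yet the \emph{product} of the two kernels is constant on the unique coset carrying its support, thanks to the cancellation $\theta_{j,\tilde r}(x-z)\theta_{j,\tilde r}(z-y)=\theta_{j,\tilde r}(x-y)$. The remaining friction is entirely in keeping the dual-radius dictionary consistent (the operators $\tau_\pm$, the tilde and $\natural$ bijections, and the passage between subgroups and their annihilators in \eqref{eq:BB}, \eqref{eq:+-}).
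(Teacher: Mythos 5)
Your proof is correct and follows essentially the same route as the paper: the parenthetical argument for \eqref{eq:pff} (write $\hat\Phi_j(\tilde r)=\hat B(\tau_-(\tilde r))\theta_{j,\tilde r}$, pull the character out of the integral, and evaluate the subgroup indicator's transform via \eqref{eq:f2}, \eqref{eq:BB}, \eqref{eq:+-}, \eqref{eq:mu1}) is exactly the paper's derivation, while your primary route through the explicit $q$-coefficients \eqref{eq:qtj} is just a cosmetic variant that collapses to the same computation. For \eqref{eq:Drj}--\eqref{eq:Dbrj} the paper likewise combines the coset decomposition \eqref{eq:cBxy} with the multiplicative cancellation $\theta_{j,\tilde r}(x-z)\theta_{j,\tilde r}(z-y)=\theta_{j,\tilde r}(x-y)$ and Hermitian symmetry of the kernel, which is precisely the mechanism you identify as the one nontrivial step.
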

\begin{proof}
First let us compute the
Fourier transform of the indicator function of the ball $\hat\Phi_j(\tilde r)$ in \eqref{eq:co}. 
Proceeding analogously to \eqref{eq:ir} and using \eqref{eq:^Phi} and \eqref{eq:f2}, we obtain
  \begin{align*}
  \chi^\natural[\hat\Phi_j(\tilde r);x-y]&=\int_{\hat X}\xi(x-y)\chi[\hat B(\tau_-(\tilde r))\theta_{j,r};\xi]d\hat\mu(\xi)\\
  &=\theta_{j,\tilde r}(x-y)\chi^\natural[\hat B(\tau_-(\tilde r));x-y]\\
  &=\hat\mu(\hat B(\tau_-(\tilde r))) \theta_{j,\tilde r}(x-y)\chi[\hat B(\tau_-(\tilde r))^\bot;x-y]
  \end{align*}
  where $\theta_{j,\tilde r}$ is the character defined in \eqref{eq:^Phi}. Using
  the equalities
   $$
   \hat B(\tau_-(\tilde r))^\bot=B(\tau_-(\tilde r)^\natural)=B(\tau_+(\tilde r^\natural))=B(\tau_+(r))
   $$
(see \eqref{eq:BB} and \eqref{eq:+-}) and $\hat\mu(\hat B(\tau_-(\tilde r)))\mu(B(\tau_+(r)))=1$ (see \eqref{eq:mu1}),
we obtain \eqref{eq:pff}.
 If the group $X$ is compact, then in this calculation
we also assume that $\tilde r>0,$ i.e., $r<\bar r$, while if $\tilde r=0$ and $r=\bar r,$ we rely on \eqref{eq:1c}. 

Now let us use \eqref{eq:cBxy} to rewrite the characteristic function in \eqref{eq:co}. Together with the multiplicative property
of the characters, this implies \eqref{eq:Drj}. Finally, \eqref{eq:Dbrj} follows by definition of $\Delta_{r,j}.$
\end{proof}
 
Let us remark that for metric schemes considered in this paper, 
relations of the form \eqref{eq:Drj} are generally not valid because translations of the sphere do not partition
$X$ (unless the sphere coincides with the ball), and so equation \eqref{eq:cBxy} does not have a proper analog. 

\vspace*{.1in}Lemma \ref{lemma:of} enables us to introduce a function system on the group $X$. The next definition is the main one in this section.
\begin{definition} {\bf (Haar-like bases)}\label{def:OF}
Define a system of functions on a zero-dimensional group $X$ as follows:
   \begin{align}
   \gls{psi}&=\mu(B(\tau_+(r)))^{-\half}\theta_{j,\tilde r}(x-z)\chi[B(\tau_+(r));x-z]\nonumber\\
   &=\mu(B(\tau_+(r)))^{\half}\Delta_{r,j}(x,z),\label{eq:psi}
   \end{align}
where the parameters satisfy
   $$
   r\in\R_0,\;\; j=1,\dots,n(\tau_+(r)),\;\; z\in X/B(\tau_+(r)).
   $$
 Here $B(r)$ is the ball in $X$ of radius $r$ around zero, and the character $\theta_{j,r}$ is defined in \eqref{eq:^Phi}.
 
   If the group $X$ is compact, then \eqref{eq:psi} applies for $r<\bar r,$ where $\bar r$ is the largest radius \eqref{eq:br},
   while for $r=\bar r$ we put by definition
   $
   \psi_{\bar r}(x)=\psi_{\bar r,0,0}(x)\equiv 1.
   $
\end{definition}
With this definition, Equation \eqref{eq:Drj} takes the following form:
    \begin{equation}
    \Delta_{r,j}(x,y)=\sum_{z\in X/B(\tau_+(r))}\psi_{r,j,z}(x)\overline\psi_{r,j,z}(y). \label{eq:gk}
    \end{equation}

\vspace*{.05in}{\em Remark:} This expression together with the properties of the projectors $\Delta_{r,j}$ 
suggests a link to the theory of {\em zonal spherical kernels} and {\em Gelfand pairs} (e.g., \cite{vanDijk09,vil68}); 
however developing it goes outside the scope of this paper. 

\vspace*{.05in} Using \eqref{eq:co}, we immediately find the Fourier transform of the functions $\psi:$
  \begin{equation}\label{eq:foupsi}
  \tilde\psi_{r,j,z}(\xi)=\mu(B(\tau_+(r)))^{-\half} \xi(z)\chi(\hat\Phi_j(\tilde r),\xi), \quad \xi\in\hat X.  
  \end{equation}
Note that both the functions $\psi$ and $\tilde\psi$ are very well localized: they are supported on the balls whose
radii are optimally correlated in terms of the uncertainty principle \eqref{eq:mm} which holds with equality because of \eqref{eq:mu1}.
  
Properties of the functions $\psi_{r,j,z}$ are summarized in the following theorem.
\begin{theorem}\label{thm:mra}
(i) The function system $\psi_{r,j,z}(x)$ defined in \eqref{eq:psi} forms an orthonormal basis of the space $L_2(X).$

(ii) For each $r\in\R_0$ the subsystem of functions $\psi_{r,j,z}(x)$ forms an orthonormal basis of the space $\Delta_r L_2(X).$

(iii) For each $r\in\R_0$ and $j=1,\dots,n(\tau_+(r))$ the subsystem of functions $\psi_{r,j,z}(x)$ forms an orthonormal basis
in the space $\Delta_{r,j}L_2(X).$
\end{theorem}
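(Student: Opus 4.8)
\textbf{Proof plan for Theorem \ref{thm:mra}.}

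The strategy is to deduce everything from the spectral facts already assembled: by \eqref{eq:sa} the operators $\Delta_{r,j}$ are pairwise orthogonal self-adjoint projectors, by \eqref{eq:sI} they sum to the identity in $L_2(X)$, and by Lemma~\ref{lemma:of} their kernels factor as in \eqref{eq:Drj}--\eqref{eq:Dbrj}, which upon introducing the functions $\psi_{r,j,z}$ in \eqref{eq:psi} becomes \eqref{eq:gk}. So the whole statement reduces to showing that \eqref{eq:gk} is precisely the statement that $\{\psi_{r,j,z}\}_z$ is an orthonormal basis of the range of $\Delta_{r,j}$, and then assembling parts (ii) and (i) by summing over $j$ and over $(r,j)$ respectively.

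First I would prove part (iii). Fix $r\in\R_0$ and $j\in\{1,\dots,n(\tau_+(r))\}$. Orthonormality: from the explicit formula \eqref{eq:psi}, $\psi_{r,j,z}$ is supported on the coset $B(\tau_+(r))+z$ and has modulus $\mu(B(\tau_+(r)))^{-1/2}$ there (since $\theta_{j,\tilde r}$ is a character, hence unimodular); for $z\ne z'$ the cosets $B(\tau_+(r))+z$ and $B(\tau_+(r))+z'$ are disjoint, so $\langle\psi_{r,j,z},\psi_{r,j,z'}\rangle=\delta_{z,z'}\mu(B(\tau_+(r)))^{-1}\mu(B(\tau_+(r)))=\delta_{z,z'}$. (Equivalently, one can read this off \eqref{eq:Dbrj} evaluated via the idempotent property $\Delta_{r,j}^2=\Delta_{r,j}$.) Completeness in $\Delta_{r,j}L_2(X)$: since $\Delta_{r,j}$ is the integral operator with kernel \eqref{eq:gk}, for any $f\in L_2(X)$ we have $\Delta_{r,j}f(x)=\int_X\Delta_{r,j}(x,y)f(y)\,d\mu(y)=\sum_{z\in X/B(\tau_+(r))}\psi_{r,j,z}(x)\int_X\overline{\psi_{r,j,z}(y)}f(y)\,d\mu(y)=\sum_z\langle f,\psi_{r,j,z}\rangle\,\psi_{r,j,z}(x)$, the interchange of sum and integral being legitimate because the $\psi_{r,j,z}$ have pairwise disjoint supports (so the sum over $z$ is, pointwise in $x$, a single term). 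Hence every element of $\Delta_{r,j}L_2(X)$ is the $L_2$-limit of its Fourier series in the orthonormal system $\{\psi_{r,j,z}\}_z$, and since the $\psi_{r,j,z}$ themselves lie in $\Delta_{r,j}L_2(X)$ (apply $\Delta_{r,j}$ to \eqref{eq:psi} and use $\Delta_{r,j}^2=\Delta_{r,j}$), they form an orthonormal basis of that subspace. The compact boundary case $r=\bar r$, where $\psi_{\bar r}\equiv 1$ and $\Delta_{\bar r}$ projects onto the constants, is handled by the same remark together with the normalization $\mu(X)=1$.

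Parts (ii) and (i) then follow by orthogonal-direct-sum bookkeeping. For (ii): \eqref{eq:Ds} gives $\Delta_r=\sum_{j=1}^{\hat n(\tilde r)-1}\Delta_{r,j}$ with the $\Delta_{r,j}$ mutually orthogonal, so $\Delta_rL_2(X)=\bigoplus_{j}\Delta_{r,j}L_2(X)$, and a union of orthonormal bases of mutually orthogonal subspaces is an orthonormal basis of the sum; for $z\ne z'$ or $j\ne j'$ orthogonality of $\psi_{r,j,z}$ and $\psi_{r,j',z'}$ is immediate from \eqref{eq:sa} applied between ranges of $\Delta_{r,j}$ and $\Delta_{r,j'}$. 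For (i): \eqref{eq:sI} gives $L_2(X)=\bigoplus_{r\in\R_0}\bigoplus_{j}\Delta_{r,j}L_2(X)$, and the same principle shows that the totality $\{\psi_{r,j,z}: r\in\R_0,\ 1\le j\le n(\tau_+(r)),\ z\in X/B(\tau_+(r))\}$ is an orthonormal basis of $L_2(X)$. I do not expect a genuine obstacle here; the only point requiring a little care is the interchange of summation and integration in the kernel-to-coefficient computation and the uniform handling of the compact versus locally compact cases and the degenerate index $r=\bar r$, all of which are dispatched by the disjoint-support structure of the $\psi_{r,j,z}$ and the normalizations fixed earlier.
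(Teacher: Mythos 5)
Your proposal is correct and follows essentially the same route as the paper: orthonormality of the $\psi_{r,j,z}$ plus the kernel factorization \eqref{eq:gk} (equivalently, the decomposition of $\Delta_{r,j}$ into the rank-one projectors onto the spans of the $\psi_{r,j,z}$), followed by summing over $j$ and over $(r,j)$ via \eqref{eq:Ds} and \eqref{eq:sI}. The only cosmetic difference is that for indices with $(r_1,j_1)\ne(r_2,j_2)$ you derive orthogonality from the mutual orthogonality of the projectors \eqref{eq:sa}, whereas the paper passes to the Fourier side via \eqref{eq:ip} and \eqref{eq:foupsi} and uses disjointness of the supports $\hat\Phi_{j_1}(\tilde r_1)$, $\hat\Phi_{j_2}(\tilde r_2)$; both are valid and rest on facts already established.
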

\begin{proof}
Let us prove orthogonality:
  \begin{equation}\label{eq:port}
  \int_X\psi_{r_1,j_1,z_1}(x)\overline{\psi_{r_2,j_2,z_2}(x)}d\mu(x)=\delta_{r_1,r_2}\delta_{j_1,j_2}\delta_{z_1,z_2}.
  \end{equation}
Using \eqref{eq:ip} we can write
   $$
   \int_X\psi_{r_1,j_1,z_1}(x)\overline{\psi_{r_2,j_2,z_2}(x)}d\mu(x)=
   \int_{\hat X} \tilde\psi_{r_1,j_1,z_1}(\xi)\overline{\tilde \psi_{r_2,j_2,z_2}(\xi)}d\hat \mu(\xi)
  $$
Substituting \eqref{eq:foupsi} into the right-hand side, we conclude that \eqref{eq:port} holds true if $r_1\ne r_2$ or $j_1\ne j_2$
since in this case $\hat\Phi_{j_1}(\tilde r_1)\cap\hat\Phi_{j_2}(\tilde r_2)=\emptyset.$ Similarly, substituting \eqref{eq:psi}
into the left-hand side, we conclude that \eqref{eq:port} holds true if $z_1\ne z_2$ since in this case
$(B(\tau_+(r))+z_1)\cap(B(\tau_+(r))+z_2)=\emptyset.$

Let us prove that these functions form a basis. Introduce operators $\Psi_{r,j,z}$ given by
  $$
  \Psi_{r,j,z}f(x)=\psi_{r,j,z}(x)\int_X\overline{\psi_{r,j,z}(y)}f(y)d\mu(y)
  $$
These operators are orthogonal projectors in $L_2(X)$ on one-dimensional subspaces spanned by the functions $\psi_{r,j,z}(x).$
Using \eqref{eq:sa} and \eqref{eq:port} we see that 
   \begin{gather*}
   \Psi^\ast_{r,j,z}=\Psi_{r,j,z}\\
   \Psi_{r_1,j_1,z_1}\Psi_{r_2,j_2,z_2}=\delta_{r_1,r_2}\delta_{j_1,j_2}\delta_{z_1,z_2}\Psi_{r_1,j_1,z_1}.
   \end{gather*}
In particular, the operators $\Psi$ are commuting idempotents.   
   
Eq.\eqref{eq:gk} can be written as an operator equality:
  \begin{equation}\label{eq:oe}
  \Delta_{r,j}=\sum_{z\in X/B(\tau_+(r))}\Psi_{r,j,z},
  \end{equation} 
whereupon \eqref{eq:Ds} and \eqref{eq:sI} take the form
  \begin{gather}
  \Delta_{r,j}=\sum_{j=1}^{\hat n(\tilde r)-1}\sum_{z\in X/B(\tau_+(r))}\Psi_{r,j,z}\label{eq:DP}\\
  \sum_{r\in\R_0}\sum_{j=1}^{\hat n(\tilde r)-1}\sum_{z\in X/B(\tau_+(r))}\Psi_{r,j,z}=I.\label{eq:DPI}
  \end{gather}
The operator equalities \eqref{eq:oe}-\eqref{eq:DPI} are understood in the strong sense: they hold true upon being applied
to any function $f\in L_2(X),$ where the corresponding series converge in the metric of the space $L_2(X).$
\end{proof}

On account of this theorem, Part(i), any function $f\in L_2(X)$ has the Fourier series
  \begin{equation}\label{eq:pf}
  f(x)\cong \sum_{r\in\R_0}\sum_{j=1}^{\hat n(\tilde r)-1}\sum_{z\in X/B(\tau_+(r))} a_{r,j,z}(f)\psi_{r,j,z}(x)
  \end{equation}
with coefficients 
  $$
  a_{r,j,z}(f)=\int_X\overline{\psi_{r,j,z}(x)}f(x)d\mu(x),
  $$
and the following relation holds:
  \begin{equation}\label{eq:energy}
  \int_X |f(x)|^2d\mu(x)=
    \sum_{r,j,z}|a_{r,j,z}(f)|^2.
  \end{equation}

Similarly to the Littlewood-Paley theory it turns out that the expansion \eqref{eq:pf} is stable with respect to the norm change. Define the following quadratic function:
  \begin{align*}
  S_\Psi f(x)&=\Big(\sum_{r\in\R_0}\sum_{j=1}^{\hat n(\tilde r)-1}\sum_{z\in X/B(\tau_+(r))} |a_{r,j,z}(f)\psi_{r,j,z}(x)|^2\Big)^\half\\
  &=\Big(\sum_{r\in\R_0}\sum_{j=1}^{\hat n(\tilde r)-1}\sum_{z\in X/B(\tau_+(r))} |a_{r,j,z}(f)|^2
  \mu(B(\tau_+(r)))^{-1}\chi[B(\tau_+(r));x-z]\Big)^\half.
  \end{align*}
Using this notation, we can rewrite \eqref{eq:energy} as follows:
  $$
  \|S_\Psi f\|_2=\|f\|_2.
  $$
It turns out that \eqref{eq:pf} is stable if the Hilbert space $L_2(X)$ is replaced by the Banach space $L_\alpha(X), 1<\alpha<\infty$
(note that condition $\alpha\ne 1$ is essential here). Specifically, for any function $f\in L_\alpha(X)$ we have
   $$
   c_1(\alpha)\|S_\Psi(f)\|_\alpha\le \|f\|_\alpha\le c_2(\alpha)\|S_\Psi f\|_\alpha,
   $$
where the positive constants $c_1,c_2$ do not depend on $f$. 
We do not prove this inequality here because it involves nontrivial aspects of multiplier theory in the spaces $L_\alpha(X)$ \cite{edw77},
which again is outside the scope of this paper.

A natural way to think of functions \eqref{eq:psi} introduced above is by interpreting them as {\em Haar-like wavelets} on the
group $X.$ 
To explain this point of view, recall the definition of wavelets in the context  of {\em multiresolution analysis} (e.g., \cite{Grochenig92}; \cite[Sect.~2.2,\,8.3]{Wojta99}, \cite[Ch.2]{NPS11}). 
\index{multiresolution analysis}
A sequence of monotone increasing closed subspaces $V_j\subset V_{j+1}, j\in \integers$ of $L_2(X)$ 
is called a multiresolution approximation of $L_2(X)$ if $\cap_{j} V_j=\{0\}$ and $\overline{\cup_{j} V_j}$ is dense in $L_2(X),$
if there exists a {\em scaling function} $\phi\in V_0$ whose translations form an orthonormal basis of $V_0$, and if there is a
dilation operator $A$ such that $f(x)\in V_j$ if and only if $f(Ax)\in V_{j+1}.$
In this case the set of all dilations and translations of the scaling function forms a complete orthonormal system in $L_2(G),$
called a wavelet basis.
Because of this, the obtained basis is said to have a {\em self-similarity property}. Note that a general construction of self-similar
wavelets generated by partiotoins of the group $\reals^n$ was defined in \cite{Grochenig92} and is discussed in detail in 
\cite[Sect.2.8]{NPS11}. In \cite{Lukom10} this construction was extended to zero-dimensional Abelian groups.

In our situation, the corresponding sequence of subspaces is given by $\cL_2[K(r)]$, and properties \eqref{eq:mra0}, \eqref{eq:mra1}, and \eqref{eq:dense} ensure that it forms a multiresolution approximation of $L_2(X).$ Moreover, by Theorem \ref{thm:mra}(ii), for
each $r\in\R_0$ the subsystem of functions \eqref{eq:psi} forms an orthonormal basis in the orthogonal complement $\cW_r=\Delta_r L_2(X),$
see \eqref{eq:Wr}. The only difference with the classical definition is that the system of functions \eqref{eq:psi} generally does not
have the self-similarity property.

Non-self-similar wavelets were considered in the literature; see, e.g., \cite{Girardi97,Novikov2012}. In \cite{Girardi97},
Sect.8 they are even called ``second generation wavelets,'' but the approach taken in that paper is so general that the corresponding
theory essentially coincides with martingale theory. Wavelets \eqref{eq:psi} considered here are much more specific in that they fully account for the group structure of the measure space $(X,\mu).$
{Paper \cite{Novikov2012} considers non-self-similar wavelet bases such that the spaces $V_j$ are generated
by characteristic functions of a partition of a general measure space $\Omega.$ This paper identifies general sufficient conditions for a 
partition to form a multiresolution approximation of $L_2(\Omega)$. }

We note that if the sequence of nested balls \eqref{eq:BB1} is self-similar with respect to some expansive (or contractive)
automorphism of the group $X$, then the system of functions \eqref{eq:psi} that arises from this chain will also be self-similar.
Such automorphisms of $X$ are easily defined if, for instance, all the quotient groups $B(\tau_+(r))/B(r), r\in \R_0$ are isomorphic
to the one and the same finite Abelian group. 

Using the mapping of $X\to [0,1]$ \eqref{eq:map} (or $X\to[0,\infty)$ \eqref{eq:map-lc})
it is possible to represent the wavelets on $X$ as functions on the real line. 
For the self-similar case this is done in \cite{Lukom10}, while the general case has not been studied in detail.

We shall limit our discussion in this part to the above brief remarks because wavelet theory {\em per se} is not a subject of the 
present work. Our main goal here is to point out a link between wavelet theory and association schemes and spectrally dual partitions
on zero-dimensional groups introduced in this paper.

%
\section{Subsets in association schemes: Coding theory}\label{sect:coding}
One of the main applications of the classical theory of association schemes relates to coding theory \cite{del73a,del98}. 
In particular, association
schemes provide a natural context for the study of code duality including the celebrated MacWilliams theorem \cite{mac63,mac91}, its numerous extensions and applications. From the perspective of harmonic analysis, the MacWilliams theorem is an instance of the Poisson summation formula.
It it is similarly well known that this theorem is naturally connected with spectrally dual partitions \cite{zin96,Forney98,hgl12}. In this section we derive an extension of these concepts to the general association schemes introduced in this paper.

Let $X$ and $\hat X$ be a pair of dual locally compact Abelian groups and let $\cN=(N_i,i\in\I)$ and $\widehat\cN=(\hat N_i, i\in\I)$ be
finite or countably infinite partitions of $X$ and $\hat X$ respectively (recall that by our assumption, the measure of each 
block of the partition is finite). We assume that the partitions $\cN$ and $\widehat\cN$
are spectrally dual and give rise to association schemes $\cX$ and $\widehat\cX$ according to the result of Theorem \ref{thm:SDP}.

Define a {\em code} $Y$ \index{code}
to be a compact subgroup of $X$. The {\em dual code} \index{code!dual} of $Y$ is the annihilator of $Y$ in $\hat X$:
  $$
  Y^\bot=\{\phi\in \hat X: \phi(y)=1 \text{ for all }y\in Y\}.
  $$
Clearly, $Y^\bot$ is a compact subgroup of $\hat Y,$ and $Y^\bot\cong\widehat{X/Y}.$ By the Poisson summation formula \cite[\S31]{hew6370}
        \begin{equation}\label{eq:poisson}
        \int_Y f(x)d\mu(x)=\int_{Y^\bot} \tilde f(\phi)d\hat\mu(\phi),
       \end{equation}
where $f$ is a function on $Y$ taking values in $\complexes$ or in any finite-dimensional vector space over $\complexes,$ 
and the integrals are assumed to exist. 
A particular form of \eqref{eq:poisson} that is commonly used in coding theory is related to the notion of the weight distribution of the code
(see \cite[Thm.6.3]{del73a},~\cite[p.71]{bro89}).
Define the {\em weight distribution} \index{weight distribution}
of $Y$ as $m=(m_i, i\in\I_0),$ where $m_i=\mu(Y\cap N_i).$ Similarly, 
$\hat m=(\hat m_i,i\in\hat\I_0),$ where $\hat m_i=\hat\mu(Y^\bot\cap \hat N_i),$ is the weight distribution of the dual code $Y^\bot.$ Note that $  \sum_{i\in \I_0} m_i=\mu(Y).$

\begin{theorem} The weight distributions of a code $Y$ and its dual code $Y^\bot$ satisfy the MacWilliams relations
   \begin{equation} \label{eq:mw}
      \hat m_i=\frac1{\mu(Y)}\sum_{k\in \I_0} q_i(k) m_k, \quad m_i=\mu(Y)\sum_{k\in\hat\I_0} p_i(k)\hat m_k.
   \end{equation}
Therefore $\sum_{k\in \I_0} q_i(k) m_k\ge 0$ for all $i\in\hat\I_0.$
   \end{theorem}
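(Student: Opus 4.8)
The plan is to derive the MacWilliams relations \eqref{eq:mw} directly from the Poisson summation formula \eqref{eq:poisson} combined with the Fourier-transform expressions for the indicator functions of the blocks $N_i$ that are built into the definition of spectrally dual partitions. First I would take $f$ in \eqref{eq:poisson} to be the indicator function $\chi_i(x)=\chi[N_i;x]$ of a block $N_i$ of the partition $\cN$, restricted to the code $Y$. The left-hand side of \eqref{eq:poisson} is then $\int_Y\chi_i(x)\,d\mu(x)=\mu(Y\cap N_i)=m_i$. For the right-hand side I need the Fourier transform $\tilde\chi_i(\phi)$, and here I would invoke Lemma~\ref{lemma:pointwise}, specifically the pointwise identity \eqref{eq:pw1}, which gives $\tilde\chi_i(\phi)=\sum_{k}p_i(k)\hat\chi_k(\phi)$ (with the convention $p_i(0)=\mu(N_i)$). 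Substituting this into $\int_{Y^\bot}\tilde\chi_i(\phi)\,d\hat\mu(\phi)$ and using that $\hat\chi_k$ is the indicator of $\hat N_k$, the right-hand integral becomes $\sum_k p_i(k)\,\hat\mu(Y^\bot\cap\hat N_k)=\sum_k p_i(k)\hat m_k$. This yields $m_i=\sum_{k\in\hat\I_0}p_i(k)\hat m_k$; comparing with the second relation in \eqref{eq:mw} I would then need to reconcile the normalization, which is where the factor $\mu(Y)$ enters.

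The second step is to run the dual computation. Applying \eqref{eq:poisson} with the roles of $X$ and $\hat X$ swapped — i.e., using the inverse Poisson formula $\int_{Y^\bot}g(\phi)\,d\hat\mu(\phi)=\int_Y g^\natural(x)\,d\mu(x)$ valid because $(Y^\bot)^\bot=Y$ — and taking $g=\hat\chi_i$, the left side is $\hat\mu(Y^\bot\cap\hat N_i)=\hat m_i$, while the right side, via the pointwise identity \eqref{eq:pw2}, becomes $\sum_k q_i(k)\,\mu(Y\cap N_k)=\sum_k q_i(k)m_k$. To get the $\tfrac1{\mu(Y)}$ factor in \eqref{eq:mw} I would use the relation $\mu(Y)\hat\mu(Y^\bot)=1$, which follows from \eqref{eq:PW} applied to the open compact subgroup $D=Y$ (noting $Y$ is open because the blocks have finite measure and the partition structure forces subgroups to be clopen, or more directly because a compact subgroup of a locally compact zero-dimensional group of finite Haar measure is open). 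The normalizations of the Haar measures $\mu,\hat\mu$ have been fixed by the Parseval conditions \eqref{eq:Parseval}, \eqref{eq:ip}, so this scaling is forced, and a careful bookkeeping of where $\mu(Y)$ versus $\hat\mu(Y^\bot)$ appears gives exactly the two asymmetric forms in \eqref{eq:mw}. The mutual inverse relation $PQ=\hat I$, $QP=I$ from \eqref{eq:PQ} provides a consistency check: the two formulas in \eqref{eq:mw} must be inverse to each other, and this is automatic once the $\mu(Y)$ factors are placed correctly.

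The final assertion, $\sum_{k\in\I_0}q_i(k)m_k\ge 0$ for all $i\in\hat\I_0$, is then immediate: by the first relation in \eqref{eq:mw} this sum equals $\mu(Y)\hat m_i=\mu(Y)\,\hat\mu(Y^\bot\cap\hat N_i)$, which is a product of a nonnegative measure $\mu(Y)>0$ with the nonnegative measure $\hat\mu(Y^\bot\cap\hat N_i)\ge 0$. This is the point of the remark — it is the ``linear programming bound'' type positivity constraint that the weight distribution of any code must satisfy, and it comes for free from the fact that $\hat m_i$ is genuinely a measure of a set.

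I expect the main obstacle to be the careful handling of the boundary block $N_0=\{0\}$ (equivalently $\hat N_0=\{1\}$) together with the measure normalizations. In the uncountable case $\mu(N_0)=0$, so the index $0$ is excluded from $\I_0$, and one must check that the pointwise identities \eqref{eq:pw1}--\eqref{eq:pw2} — which were proved precisely to handle this degenerate block via the conventions $p_i(0)=\mu(N_i)$, $q_i(0)=\hat\mu(\hat N_i)$ — interact correctly with the Poisson formula when $Y$ (or $Y^\bot$) happens to contain the identity. A secondary technical point is verifying that the integrals in \eqref{eq:poisson} genuinely converge for $f=\chi_i$: this holds because $\chi_i$ is bounded and $Y$ is compact (hence of finite measure), and $\tilde\chi_i$ restricted to $Y^\bot$ is a finite linear combination of indicators of the finite-measure sets $\hat N_k\cap Y^\bot$. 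Neither of these is deep, but they require the same kind of case analysis (discrete vs.\ non-discrete $X$) that permeates the paper, so the proof should be written to make the normalization $\mu(Y)\hat\mu(Y^\bot)=1$ and the treatment of the zero block completely explicit.
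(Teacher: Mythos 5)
Your route is sound but genuinely different from the paper's. The paper never actually invokes the Poisson formula \eqref{eq:poisson} in its proof: it writes $\hat m_i$ as a quadratic form, $\hat m_i=\mu(Y)^{-2}\int_X\chi\,(E_i\chi)\,d\mu$ with $\chi=\mathbbold{1}_Y$, using the kernel representation \eqref{eq:Ei4} of the projector $E_i$ and Fubini, then expands $E_i=\sum_k q_i(k)A_k$ via \eqref{eq:Ei2} and evaluates $\int_X\chi\,(A_k\chi)\,d\mu=\mu(Y)m_k$; the second relation is then obtained by inverting with $PQ=I$, exactly as you suggest using as a consistency check. Your approach — feed the indicators $\chi_i$ and $\hat\chi_i$ directly into the two dual Poisson formulas and use the pointwise expansions \eqref{eq:pw1}--\eqref{eq:pw2} — is shorter and makes the duality more transparent; the paper's computation is in effect a self-contained proof of the needed instance of Poisson summation, so it avoids relying on \eqref{eq:poisson} at all.

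The one point you must actually nail down, rather than defer to ``bookkeeping,'' is the normalization of \eqref{eq:poisson}: as displayed, with both integrals taken against the restrictions of the ambient Haar measures $\mu$ and $\hat\mu$, it is off by a constant. Test it with $f=\chi[Y;\cdot]$: the left side is $\mu(Y)$, while by \eqref{eq:D} and \eqref{eq:PW} the right side is $\mu(Y)\hat\mu(Y^\bot)=1$. The correct form is $\mu(Y)^{-1}\int_Y f\,d\mu=\int_{Y^\bot}\tilde f\,d\hat\mu$, equivalently $\int_{Y^\bot}g\,d\hat\mu=\hat\mu(Y^\bot)^{-1}\cdot\hat\mu(Y^\bot)\int_{Y^\bot}g\,d\hat\mu$ paired with $\mu(Y)\hat\mu(Y^\bot)=1$ on the dual side. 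With that constant in place your two computations give $\sum_k p_i(k)\hat m_k=m_i/\mu(Y)$ and $\sum_k q_i(k)m_k=\mu(Y)\hat m_i$ directly, which are exactly the two asymmetric forms in \eqref{eq:mw}, and the positivity claim follows as you say from $\hat m_i\ge 0$. Your convergence remarks are fine (the interchange of sum and integral is justified by $|\tilde\chi_i|\le\mu(N_i)$ on the compact set $Y^\bot$ together with disjointness of the $\hat N_k$), and the worry about the zero block is harmless since $\mu(Y\cap N_0)\le\mu(N_0)=0$ in the non-discrete case.
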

{\em Remark:} The summation regions can be extended from $\I_0$ to $\I$ because (assuming that the measures
are complete), $\mu(Y\cap N_i)=0$ if $\mu(N_i)=0.$

\begin{proof}
Clearly,
   $$
   \int_Y \phi(y) d\mu(y)=\mathbbold{1}\{\phi\in Y^\bot\}\cdot \mu(Y).
   $$
Therefore,
  \begin{align}
  \hat m_i&=\hat\mu(Y^\bot\cap \hat N_i)=\frac1{\mu(Y)}\int_{\hat N_i}\int_Y \overline{\phi(y)}d\mu(y)d\hat\mu(\phi)\nonumber\\
    &=\frac1{\mu(Y)^2}\int_{\hat N_i}\Big(\int_{Y}\int_Y \overline{\phi(y)}{\phi(y')}d\mu(y) d\mu(y')\Big)d\hat\mu(\phi)\label{eq:iii}
      \end{align}
At the same time, let $\chi(y)=\mathbbold{1}\{y\in Y\}.$ Using \eqref{eq:Ek} and \eqref{eq:Ei4} we obtain that
  \begin{equation}\label{eq:imp}
  E_i\chi(y)=\int_X \hat \chi_i^\natural(y-y')\chi(y')d\mu(y')
  =\int_X\int_{\hat N_i}\overline{\phi(y)}\phi(y')\chi(y')d\hat\mu(\phi)d\mu(y').
    \end{equation}
Using Fubini's theorem, we conclude that
  \begin{equation}\label{eq:hmi}
  \hat m_i=\frac1{\mu(Y)^2}\int_X\chi(y)(E_i\chi)(y)d\mu(y).
  \end{equation}
On the other hand, using \eqref{eq:Ei2}, we obtain
  \begin{equation}\label{eq:mim}
  \int_X\chi(y)(E_i\chi)(y)d\mu(y)=\sum_{k\in \I_0}q_i(k)\int_X\chi(y)(A_k\chi)(y)d\mu(y).
  \end{equation}
We observe that $A_k\chi(y)=\mu\{y'\in Y: y-y'\in N_k\},$ so the last integral evaluates to $\mu(Y)m_k,$ establishing
the first of the claimed relations. To prove the second, multiply the first one by $p_j(i)$ and sum on $i\in\hat\I_0:$
  $$
  \sum_{i\in\hat\I_0}p_j(i)\hat m_i=\frac1{\mu(Y)}\sum_{k\in\I_0} m_k\sum_{i\in\I_0}p_j(i)q_i(k).
  $$
To interchange the order of summation we need absolute convergence which can be checked similarly to the proof 
of Theorem \ref{thm:int}. On account of \eqref{eq:pipj} and \eqref{eq:mv}, the sum on $i$ on the right equals
$\delta_{jk},$ establishing the second equality in \eqref{eq:mw}.
\end{proof} 
  
We make two remarks that parallel the classical coding theory.

1. ({\em Delsarte inequalities.}) 
\index{inequalities!Delsarte}
Let $\cX$ be an association scheme defined on a topological Abelian group $X$, and let $Y\subset X$ be an arbitrary
subset of finite measure. Define 
   $$   
n_k=\frac 1{\mu(Y)}\mu\{(y,y')\in Y^2:\,(y,y')\in R_k\}, k\in\I_0.
   $$
Then 
  $$
   \sum_{k\in\I_0}q_i(k)n_k\ge 0.
  $$
This inequality follows because the projectors $E_k, k\in \I$ are positive semidefinite. Indeed, let $\chi$ be the 
indicator function of $Y$ in $X$. Arguing as in \eqref{eq:mim}, we observe that 
   $$
     \mu(Y)n_k=\sum_{k\in \I_0}q_i(k)\int_X\chi(y)(A_k\chi)(y)d\mu(y).
   $$
Using \eqref{eq:imp}, the left-hand side of \eqref{eq:mim} can be evaluated, and we obtain
   \begin{align*}
  \mu(Y)\sum_{k\in\I_0} n_kq_i(k)&=\int_{\hat N_i} |\tilde\chi(\phi)|^2 d\hat\mu(\phi)\ge 0.
  \end{align*}

2. Similarly to the classical case, it is also possible to define designs in association schemes. Namely, call a subgroup $Y\in X$ a $T$-{\em design} if $\sum_{k\in\I_0}q_i(k)n_k=0$ for all $i\in T,$ where $T$ is a subset of $\hat\I_0.$
\index{$T$-design}

\vspace*{.05in}
We have defined a code as a compact subgroup of $X$. At the same time, in classical coding theory, a code is a {\em finite} subset
of the ambient metric space or association scheme \cite{mac91,bro89}. Adopting this definition, we observe that for the case of 
non-Archimedean metric many problems of the classical theory become trivial. 

\vspace*{.05in}
{\em Finite codes:} Let $X$ be a homogeneous metric space with distance function $\rho$ and let $Y$ be a finite subset which we call a code.  \index{code!finite}
The value $\rho(Y)=\min\{\rho(x,x'), x,x'\in Y, x\ne x'\}$ is called the minimum distance of the code $Y$.
Denote by $B(x,r)$ a metric ball in $X$ of radius $r$. Let $r$ be a value of the radius such that $B(x,r)\cap B(x',r)=\emptyset$
if $x,x'\in Y, x\ne x'.$ If at the same time, $\cup_{x\in Y} B(x,r)=X$ then the code $Y$ is called {\em perfect}.  
Existence of perfect codes in Hamming spaces is one of the major open problems of coding theory, in which 
the answer is known only if $X$ forms a vector space over a finite field; see \cite{mac91}, Ch.~6. At the same time, metric schemes of 
Sect.~\ref{sect:metric} contain perfect codes for any value of the radius $r$. Indeed, let $X$ be a zero-dimensional topological 
compact group
with distance \eqref{eq:dis} and consider a partition \eqref{eq:pb1} of
$X$ into balls:
   \begin{equation}\label{eq:pb2}
     X=\bigcup_{z\in X/B(r)} B(r)+z,
   \end{equation}
 where $B(r)$ denotes the ball of radius $r$ around the identity element.
Thus, the translations of $B(r)$ form a tight packing of the group $X$. Now form a code $Y$ by taking any one point in each of the 
balls. By a standard fact in non-Archimedean geometry, every point of the ball is its center. Indeed, let $x$ be the chosen center,
let $y$ be such that $\rho(x,y)\le r,$ and let $x'\in B(r), x'\ne x.$ From \eqref{eq:um1}, $\rho(x',y)\le r,$ and so $y\in B(x',r).$
The same argument obviously applies to a translation of $B(r)$ by an element $z$.
Thus the collection of points $Y=\{0\}\cup\{z\in X/B(r)\},$ where the coset representatives are chosen arbitrarily,
forms a perfect code in $X$. The minimum distance of the code is $\rho(Y)=r$. 
Note moreover that every pair of distinct points $y_1,y_2\in Y$ satisfy $\rho(y_1,y_2)=r$, making $Y$ into a ``simplex'' code.
This remark again should be contrasted with the classical case of the Hamming space in which simplex codes exist only for
a very special set of parameters \cite{mac91}, Ch.1.
 
   Observe that this construction also resolves the non-Archimedean version of the main metric problem of coding theory which concerns the cardinality of the
largest packing of the metric space $X$. For instance, if $X$ is the Hamming space, then the best known general results are obtained
by a greedy procedure that packs balls of radius $d-1$ into $X$ ``for as long as possible.'' The cardinality of the resulting code
is said to attain the Gilbert-Varshamov bound. At the same time, upper bounds on $|Y|$ for a given value of distance $\rho(Y)$
diverge from this bound, leaving a gap between the known constructions and the impossibility theorems; see \cite[Ch. 17]{mac91} and 
\cite{del98}. In particular, according to the Hamming bound, any code $Y$ satisfies $|Y|\le |X|/\text{vol\,}(B(\half(d-1))).$ 
In the infinite non-Archimedean case there is no gap 
between the upper and lower sphere-packing bounds on codes (note that all the balls in the partition \eqref{eq:pb2} have equal measure \eqref{eq:Hm}).
 
Finally, in classical coding theory, most attention is devoted to group and linear codes, i.e., finite subgroups of the space $X$.
Examples of such codes in ultrametric spaces of the form \eqref{eq:NRT} were extensively studied in the literature \cite{skr01,mar99}.
In the case of zero-dimensional groups, finite subgroups exist if $X$ is periodic, e.g., a Cantor-type group, and do not exist  
for non-periodic groups such as the additive group of $p$-adic numbers.

\providecommand{\bysame}{\leavevmode\hbox to3em{\hrulefill}\thinspace}
\providecommand{\MR}{\relax\ifhmode\unskip\space\fi MR }
\providecommand{\MRhref}[2]{%
  \href{http://www.ams.org/mathscinet-getitem?mr=#1}{#2}
}
\providecommand{\href}[2]{#2}

\clearpage


\printglossary
{\renewcommand{\thefootnote}{}\footnotetext{
\vspace{-.3in}
We list only objects and functions related to the group $X$, omitting the analogous notions for the dual group $\hat X$.
}}
\renewcommand{\thefootnote}{\arabic{footnote}}

\printindex
\end{document}